\documentclass[12pt]{amsart}

\usepackage{mathtools}
\usepackage{amssymb}
\usepackage{fullpage}
\usepackage{hyperref}

\newtheorem{theorem}{Theorem}[section]
\newtheorem{lemma}[theorem]{Lemma}
\newtheorem{corollary}[theorem]{Corollary}
\newtheorem{proposition}[theorem]{Proposition}

\theoremstyle{plain}
\newtheorem{definition}[theorem]{Definition}

\theoremstyle{remark}
\newtheorem{remark}[theorem]{Remark}

\numberwithin{equation}{section}

\newcommand{\C}{\mathbb{C}}
\newcommand{\R}{\mathbb{R}}
\newcommand{\E}{\mathbb{E}}
\newcommand{\Z}{\mathbb{Z}}

\newcommand{\cQ}{\mathcal{Q}}
\newcommand{\cT}{\mathcal{T}}
\newcommand{\cP}{\mathcal{P}}
\newcommand{\eps}{\varepsilon}
\renewcommand{\d}{\mathrm d}
\newcommand{\sgn}{\operatorname{sgn}}
\renewcommand{\pmod}[1]{\,\,(\mathrm{mod}\,{#1})}

\title{Random linear configurations in dense sets and primes}

\author{Lasse Grimmelt}
\address{Department of Pure Mathematics and Mathematical Statistics, University of Cambridge, Cambridge CB3 0WB, UK}
\email{lpg31@cam.ac.uk}

\author{Joni Ter\"av\"ainen}
\address{Department of Pure Mathematics and Mathematical Statistics, University of Cambridge, Cambridge CB3 0WB, UK}
\email{joni.p.teravainen@gmail.com}

\begin{document}

\begin{abstract}
We prove that every polylogarithmically dense subset of $[N]$ contains a nontrivial configuration $x+b_1m,\ldots,x+b_km$ for almost all choices of the coefficient vector $(b_1,\ldots, b_k)$ in a wide range of scales.  We prove the same statement for polylogarithmically relatively dense subsets of the primes, in a shorter range of scales.  The main ingredients are a new quantitative generalised von Neumann theorem, degree lowering to the $U^{1+}$ norm, and densification arguments that transfer the result to the primes.
\end{abstract}

\pagestyle{headings}
\markboth{L. Grimmelt and J. Ter\"av\"ainen}{Random linear configurations in dense sets and primes}

\maketitle

\section{Introduction}

Let $A\subset[N]=\{1,2,\ldots,\lfloor N\rfloor\}$ and let
$\mathbf{b}=(b_1,\ldots,b_k)\in\mathbb{Z}^k$ have entries of modulus at most
$B$.  We study when $A$ contains a configuration of the form
\begin{align}\label{eq:nonlin-config}
    x+b_1m,\,x+b_2m,\ \ldots,\ x+b_km
    \qquad\text{with } x\in\mathbb{Z},\ m\in[H],
\end{align}
where $H=\lfloor N/B\rfloor$.  These are translation invariant linear
configurations in two variables.  Indeed, any finite family of integer linear forms in two variables
whose image is invariant under simultaneous integer translations of all
coordinates can be written after a unimodular change of variables as
$(x+b_i m)_{i=1}^k$ for some $k\in\mathbb N$ and some integers $b_i$.  The
case $b_i=i$ gives arithmetic progressions of length $k$. 

For arithmetic progressions in dense sets, the best quantitative bounds have been intensively studied in connection with Szemer\'edi's theorem. If  $\delta_{k}(N)$ is an upper bound for the minimal density of a subset of $[N]$ guaranteeing an arithmetic progression of length $k$, then one may take $\delta_3(N)=\exp(-c(\log N)^{c'})$ with absolute constants $c,c'>0$ by the work of Kelley--Meka and Bloom--Sisask~\cite{kelley-meka,bloom-sisask}, $\delta_4(N)=(\log N)^{-c}$ by the work of Green and Tao~\cite{green-tao-r4}, and $\delta_k(N)=\exp(-(\log\log N)^{c})$ with $c=c(k)>0$ for $k\geq 5$ by the work of Leng, Sah and Sawhney~\cite{leng-sah-sawhney}.  Much less seems to be known for a fixed coefficient tuple $\mathbf b$ whose entries are large, for example larger than powers of $\log N$.

Our first main result shows that one can obtain stronger results for random translation invariant configurations.  If $\mathbf b$ in~\eqref{eq:nonlin-config} is chosen to be of size $B$, then for almost every tuple a polylogarithmic density threshold already forces a nontrivial configuration, for a wide range of $B$.

\begin{theorem}[Random configurations in dense sets of integers]\label{thm:main}
Let $k\geq 3$ and assume that $N$ is sufficiently large in terms of $k$.
There exist constants $c_k,c_k'>0$ such that the following holds. Let
$$
    (\log N)^{1/c_k} \leq B \leq N\exp\bigl(-(\log N)^{c_k'}\bigr),
$$
and let $A\subseteq[N]$ satisfy
\begin{align}\label{eq:main-density-threshold}
    |A| \geq \frac{N}{(\log N)^{c_k}}.
\end{align}
Then, for all but an $O_k((\log N)^{-c_k})$ proportion of tuples
$\mathbf{b}\in((B/2,B]\cap\mathbb Z)^k$, the set
$A$ contains a configuration
\begin{align}\label{eq:main-existence}
    x+b_1m,\,x+b_2m,\ \ldots,\ x+b_{k}m
    \qquad\text{with } x\in\mathbb{Z},\ m\in[N/B].
\end{align}
\end{theorem}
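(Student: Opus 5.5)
We will count configurations with weights and average over $\mathbf b$. Set $f=1_A$, $\delta=|A|/N$, $H=\lfloor N/B\rfloor$, and define
\[
\Lambda_{\mathbf b}(f_1,\dots,f_k)=\frac{1}{N H}\sum_{x\in\Z}\sum_{m\in[H]}\prod_{i=1}^k f_i(x+b_im).
\]
The goal is to show that $\Lambda_{\mathbf b}(f,\dots,f)\gg \delta^{O_k(1)}$ for all but $O_k((\log N)^{-c_k})$ of the tuples $\mathbf b\in((B/2,B]\cap\Z)^k$. The degenerate solutions, coming from $m=0$ (excluded since $m\in[H]$) or from coincidences $b_i=b_j$, are harmless. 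Coincident tuples form a proportion $O(k^2/B)$, and $B\geq(\log N)^{1/c_k}$ makes this acceptable.

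Split $f=g+h$ with $g=\delta 1_{[N]}$ smoothed, or more precisely with $g$ a projection of $f$ onto a ``low-complexity'' structured part. Expanding $\Lambda_{\mathbf b}(f,\dots,f)$ multilinearly, the main term $\Lambda_{\mathbf b}(g,\dots,g)$ is $\gg\delta^{k}$ for every $\mathbf b$, since positivity survives once $g$ is essentially constant on intervals of length $\gg BH/\text{polylog}$. Each of the remaining $2^k-1$ terms contains at least one factor $h$, and we need it to be small on average over $\mathbf b$.

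The key step is a generalised von Neumann theorem averaged over $\mathbf b$. The pointwise Cauchy--Schwarz argument bounds $\Lambda_{\mathbf b}$ by a $U^{k-1}$-type norm of $h$ with differences in the directions $(b_j-b_i)m$. Averaging over random $\mathbf b$, the step sizes $(b_j-b_i)m$ range over products of a random integer of size $\asymp B$ and $m\le H$. Their distribution is close to equidistributed on scales up to $BH\approx N$, because the number of representations of $n$ as $bm$ is a divisor-type sum. So $\E_{\mathbf b}|\Lambda_{\mathbf b}(\dots,h,\dots)|^{2^{k}}$ should be controlled by a genuine Gowers-type norm of $h$ with almost uniform weights, up to divisor-function losses absorbed via moment bounds.

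Then we do degree lowering. Using the additional averaging over the $k$ independent coefficients, we would show inductively that the $U^{s}$ control reduces to $U^{s-1}$, down to a ``$U^{1+}$'' norm: essentially $\sup_{I}\bigl|\E_{n\in I}h(n)\bigr|$ over intervals $I$ of length $\approx N\exp(-(\log N)^{c'})$, or sums over short progressions. The mechanism is that the random multiplier $b$ destroys the dependence of nonlinear phases on fixed frequencies. For a polynomial phase obstruction, the averages over $b\sim B$ of $e(\alpha b^j m^j)$ are small unless $\alpha$ is major arc, by Weyl/Vinogradov bounds, and the major-arc case is then absorbed into local structure. If $g$ is chosen as the conditional expectation of $f$ on a partition into intervals (and residue classes to small moduli) at the appropriate scale, then $\|h\|_{U^{1+}}$ is small by construction. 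An energy-increment step chooses the scale so that this holds with polylogarithmic losses. The upper constraint $B\le N\exp(-(\log N)^{c'})$ ensures $H$ is large enough that these intervals contain many $m$-steps.

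Finally, Markov's inequality over $\mathbf b$ gives the stated exceptional proportion, given the averaged bound $\E_{\mathbf b}\bigl|\Lambda_{\mathbf b}(f,\dots,f)-\Lambda_{\mathbf b}(g,\dots,g)\bigr|\ll(\log N)^{-c}$. With $\delta\ge(\log N)^{-c_k}$ and $c_k$ small, this forces $\Lambda_{\mathbf b}>0$ for the remaining $\mathbf b$, and hence the configuration \eqref{eq:main-existence} exists. I expect the main obstacle to be the degree-lowering step: getting polylogarithmic (rather than tower or $\exp\exp$) dependence in the passage from $U^{k-1}$ control to $U^{1+}$ control, using only the randomness in $\mathbf b$. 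The quantitative von Neumann theorem with divisor-weighted steps is a secondary difficulty.
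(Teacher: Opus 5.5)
You have the right overall shape for the inverse-theorem side: averaging over $\mathbf b$, a quantitative generalised von Neumann estimate, degree lowering down to $U^{1+}$, and duality/Markov over $\mathbf b$. The genuine gap is in how you turn $U^{1+}$ control into existence. A split $f=g+h$ with a non-constant structured $g$ cannot give $\Lambda_{\mathbf b}(g,\dots,g)\gg\delta^k$ with polylogarithmic losses.

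\begin{itemize}
\item If $g$ is constant on cells (intervals of length $N/Q$ intersected with residue classes mod $q$), the natural argument restricts to $q\mid m$ and $Bm\le N/Q$. This only gives $\Lambda_{\mathbf b}(g,\dots,g)\gg\delta^k/(qQ)$. A lower bound free of the complexity, for an arbitrary such $g$, is essentially the original problem again (for instance, for the profile of $g$ on residue classes).
\item Forcing $\|h\|_{U^{1+}[N]}\le\rho$ needs $qQ$ exponential in a power of $\rho^{-1}$. Already all moduli up to $\rho^{-1}$ must be resolved, and an energy increment may take $\rho^{-2}$ refinements.
\item The error terms are only $O(\rho^{c})$. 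So you would need $\rho^{c}\ll\delta^k\exp(-\rho^{-c'})$, which has no solution.
\end{itemize}

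The missing idea is a \emph{density increment} on subprogressions, using the affine invariance of the configurations.
\begin{itemize}
\item If $\|1_A-\delta1_{[N]}\|_{U^{1+}[N]}>\rho$, pass to a progression $\{qn+a\}$ on which the pullback has density at least $\delta+\rho/12$.
\item A configuration $x'+b_im'$ in the pullback lifts to $(qx'+a)+b_i(qm')$. Counting with $m'\le N_P/(2B)$ keeps $qm'\le N/B$.
\item Since densities are at most $1$, there are $O(\rho^{-1})$ steps and total compression $Q_*=\exp(O(\rho^{-1}\log(1/\rho)))$.
\item On the terminal progression the structured part is the \emph{constant} $\delta_M1_{[N_M]}$. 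Its count is $\gg\delta_M^kN_M^2/B$ for every $\mathbf b$, with no complexity loss.
\end{itemize}
Taking $\rho\asymp(\log N)^{-\theta}$ with $\theta<c_k'$ is what produces the constraint $B\le N\exp(-(\log N)^{c_k'})$, since one needs $2B(\log N)^{C_k}Q_*\le N$. It is not about fitting $m$-steps into intervals.

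Two steps in your inverse-theorem sketch would also have to change.

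First, controlling the steps $bm$ by divisor-moment bounds loses a power of $\log\min(B,H)$. In the range of the theorem this can be a power of $\log N$ rather than of $\delta$. The increment iteration can only afford $\rho^{-1}\log(1/\rho)=o(\log N)$, so such a loss, propagated polynomially through degree lowering, cannot be absorbed. The paper's concatenation lemma avoids this:
\begin{itemize}
\item apply Cauchy--Schwarz to pass to differences $bh-b'h'$;
\item discard pairs with $\gcd(b,b')>D\asymp\alpha^{-2}$;
\item bound the remaining representation function by $\ll\log(2D)/(B_1H_1)$.
\end{itemize}
This loses only $\log(2/\alpha)$.

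Second, the degree lowering does not go through phases $e(\alpha b^jm^j)$. The paper's mechanism is:
\begin{itemize}
\item apply the $U^2$ inverse theorem to $\Delta_{\mathbf h}D$ for the dual function $D$, with frequencies $\phi(\mathbf h)$ on a grid $Q^{-1}\mathbb Z/\mathbb Z$;
\item move the differences onto the summands of $D$ by the dual--difference interchange lemma;
\item after $x\mapsto x-b_2m$, the only dependence on $b_2$ is the bilinear phase $e(-\partial\phi(\mathbf h^0,\mathbf h^1)\,b_2m)$;
\item a Type I bound and Vinogradov's lemma put $\partial\phi$ on a major arc, and pigeonholing on the grid makes it constant on a dense set of pairs;
\item the low-rank lemma then lowers the degree.
\end{itemize}
The final step from $U^2$ to $U^{1+}$ uses a smoothed bilinear estimate to avoid a logarithmic loss.
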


We also prove a relative version in the primes. This may be compared with the Green--Tao theorem~\cite{GT} and its quantifications. Indeed, if  $\delta_{k,\mathbb{P}}(N)$ is an upper bound for the minimal relative density of a subset of the primes in $[N]$ guaranteeing an arithmetic progression of length $k$, then  one can take $\delta_{4,\mathbb{P}}(N)=(\log \log N)^{-c}$ for $k=4$ and $\delta_{k,\mathbb{P}}(N)=\exp(-(\log \log \log N)^c)$ for $k\geq 5$, with $c=c(k)>0$, by~\cite{teravainen-wang-sparse}.  For random translation invariant configurations we again obtain a polylogarithmic relative density threshold, although in a shorter range of $B$.

The shorter coefficient range in the prime setting comes ultimately from the $W$-trick and the uniform
linear forms estimate needed for the sieve majorant.  It may however be possible to enlarge the range of $B$ with additional work, see
Remark~\ref{rmk:prime-majorant-range}.

\begin{theorem}[Random configurations in dense subsets of the primes]
\label{thm:prime-main}
Let $k\geq 3$ and assume that $N$ is sufficiently large in terms of $k$.
There exists a constant $c_k>0$ such that the following holds.  Let
\begin{align}\label{eq:prime-main-B-range}
    (\log N)^{1/c_k} \leq B \leq \exp\bigl((\log N)^{c_k}\bigr),
\end{align}
and let $A\subseteq\{p\leq N:p\text{ prime}\}$ have relative density at least
$(\log N)^{-c_k}$ in the primes, in the sense that
\begin{align}\label{eq:prime-main-density}
    |A| \geq \frac{\pi(N)}{(\log N)^{c_k}}.
\end{align}
Then, for all but an $O_k((\log N)^{-c_k})$ proportion of tuples
$\mathbf b\in((B/2,B]\cap\mathbb Z)^k$, the set $A$ contains a
configuration
\begin{align}\label{eq:prime-main-config}
    x+b_1m,\,x+b_2m,\ \ldots,\ x+b_km
    \qquad\text{with }x\in\mathbb Z,\ m\in[N/B].
\end{align}
\end{theorem}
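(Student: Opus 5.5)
The plan is to deduce Theorem~\ref{thm:prime-main} from the dense-setting argument behind Theorem~\ref{thm:main}. We pass through a pseudorandom majorant and a densification (dense model) step. The abstract lists these ingredients: a quantitative generalised von Neumann theorem, degree lowering to $U^{1+}$, and densification.

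\textbf{Step 1: the $W$-trick.} Let $w$ be a small power of $\log\log N$ and $W=\prod_{p\le w}p$. Pigeonhole a residue class $r\pmod W$ with $(r,W)=1$ such that $A_r=\{n:Wn+r\in A\}$ still has relative density $\gg(\log N)^{-c_k}$ in the $W$-tricked primes. Put $f=\frac{\phi(W)\log N}{W}1_{A_r}$. This function is bounded by a Goldston--Y{\i}ld{\i}r{\i}m type sieve majorant $\nu$ on $[N/W]$.

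Configurations $x+b_im$ in $A_r$ give configurations $r+W x+b_i(Wm)$ in $A$. Since $m$ ranges up to $N/(WB)$, this costs only the harmless factor $W$ in the scale.

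\textbf{Step 2: the counting operator.} Fix $\mathbf b$ and define
\[
\Lambda_{\mathbf b}(f_1,\dots,f_k)=\E_{x}\E_{m\in[H]}\prod_i f_i(x+b_im),\qquad H\asymp N/(WB).
\]
Two inputs are needed.

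(a) A generalised von Neumann bound for functions dominated by $\nu$, for almost all $\mathbf b$. This asks that $\nu$ satisfy a linear forms condition for the forms $(x+b_im)$ and their Cauchy--Schwarz duplicates. The condition has to hold uniformly in coefficients of size $B$. Sieve majorant linear forms estimates have error terms that grow polynomially in the coefficient sizes relative to $\exp$ of a power of $\log N$. This is exactly why the range is restricted to $B\le\exp((\log N)^{c_k})$. I would verify the condition by the standard Goldston--Y{\i}ld{\i}r{\i}m contour computation, tracking the dependence on $B$. Local factors at primes $p>w$ dividing some $b_i-b_j$ are handled by averaging over $\mathbf b$, discarding the tuples where such local factors are abnormally large. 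These form a proportion $O((\log N)^{-c})$.

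(b) Using the dense-case degree-lowering to the $U^{1+}$ norm, the average over $\mathbf b$ of $|\Lambda_{\mathbf b}(f_1,\dots,f_k)|$ is controlled by a weak $U^{1+}$-type norm of one of the $f_i-g_i$. Here the $\nu$-weighted von Neumann inequality replaces boundedness.

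\textbf{Step 3: densification.} Apply a dense model theorem with polylogarithmic losses, for the relevant dual norm. The natural choice is the Fourier/$U^{1+}$ dual, which gives better quantitative losses than Gowers norms. This yields $0\le g\le 1+o(1)$ with $\|f-g\|_{U^{1+}}\le(\log N)^{-C}$. Because $\nu$ is Fourier-pseudorandom with polylogarithmic savings (a major/minor arc computation), this step costs only polylogarithmic factors.

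Then $\E g\gg(\log N)^{-c_k}$. By the dense argument of Theorem~\ref{thm:main}, applied with $g$ in place of $1_A$, the average $\Lambda_{\mathbf b}(g,\dots,g)\gg(\log N)^{-O_k(c_k)}$ for all but an $O((\log N)^{-c_k})$ proportion of $\mathbf b$. This uses the lower range $B\ge(\log N)^{1/c_k}$.

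\textbf{Step 4: comparison.} Telescope $\Lambda_{\mathbf b}(f,\dots,f)-\Lambda_{\mathbf b}(g,\dots,g)$ and apply Step 2 averaged over $\mathbf b$. Markov's inequality shows the difference is small for all but a $(\log N)^{-c_k}$ proportion of $\mathbf b$. Hence $\Lambda_{\mathbf b}(f,\dots,f)>0$ for those $\mathbf b$.

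Trivial configurations with $m=0$ are excluded since $m\in[H]$. Degenerate tuples with $b_i=b_j$ form a proportion $O(1/B)$ and are absorbed into the exceptional set.

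\textbf{Main obstacle.} The main difficulty is Step 2(a): establishing a linear forms condition for the sieve majorant that is uniform over coefficient tuples of size up to $\exp((\log N)^{c_k})$, with only polylogarithmically many bad $\mathbf b$. Everything else transfers from the dense case with polylogarithmic bookkeeping.
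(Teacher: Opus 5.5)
\textbf{The main gap is in Steps~3--4.} Theorem~\ref{thm:main} is proved by a density increment and gives only existence at its threshold. The terminal uniform progression has length $N_M\geq N/Q_*$ with $\log Q_*\asymp(\log N)^{\theta}\log\log N$ (Lemma~\ref{lem:di-reduction}). So the configurations lifted back to $[N]$ number only about $\delta^kQ_*^{-2}N^2/B$. Since $(\log N)^{-c_k}$ is the threshold itself, Varnavides averaging does not help either. Nothing therefore gives $\Lambda_{\mathbf b}(g,\dots,g)\gg(\log N)^{-O_k(c_k)}$, and a transference error of size $(\log N)^{-C}$ swamps the main term you can actually prove.

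You also cannot run the increment on $g$ and compare $f$ with $g$ only at the end. The bound $\|f-g\|_{U^{1+}[N]}\leq\epsilon$ gives merely $\|(f-g)_P\|_{U^{1+}[N_P]}\leq(N/N_P)\epsilon\leq Q_*\epsilon$ on a progression $P$ of length $N_P$.

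The paper instead runs the density increment on the weighted prime function itself, $F_0(n)=c_*\frac{\varphi(W)}{W}\Lambda(Wn+b_0)1_A(Wn+b_0)$.
\begin{itemize}
\item This needs a pseudorandom majorant for every affine pullback along progressions of modulus up to $2Q_*$, supplied by Proposition~\ref{prop:prime-majorant}(ii). Nonprimitive progressions carry at most one prime and are handled separately.
\item The majorant also caps every pullback mean at $1/2$, so the iteration terminates.
\item A count is taken only at the terminal, $U^{1+}$-uniform pullback, via Lemma~\ref{lem:relative-count-uniformity}. Only existence is then lifted back to $A$.
\end{itemize}

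\textbf{Two further points need repair.}

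First, Step~2(b) is where the real work lies, and ``dense degree lowering plus a $\nu$-weighted von Neumann inequality'' does not supply it. A weighted von Neumann bound gives only $U^k$ control. The passage from $U^k$ down to $U^{1+}$ uses the $U^2$ inverse theorem, Lemma~\ref{lem:DDI} with $1$-bounded summands, and the Type~I estimate, and all of these need a bounded dual function. The paper proves Theorem~\ref{thm:relative-u1plus} in two stages.
\begin{itemize}
\item It shows that a dual function with sparse slots is $L^2$-close to a bounded one, using the weighted and clipping estimates of Proposition~\ref{prop:relative-dual-estimates}.
\item It then replaces sparse slots one at a time by progression indicators, using the bounded endpoint and anchoring.
\end{itemize}

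Second, taking $w$ to be a power of $\log\log N$ rules out polylogarithmic savings.
\begin{itemize}
\item The generic local factors already satisfy $\prod_{p>w}\beta_p(S)-1\asymp1/(w\log w)$ for the concurrent systems that arise.
\item $\E_{n\in[N]}(\nu(n)-1)e(an/p)$ has size $\asymp1/p$ for primes $p$ just above $w$, so $\nu$ is not Fourier-pseudorandom to polylogarithmic accuracy.
\item The tuples $\mathbf b$ with such a prime dividing some $b_i-b_j$ are far more than a $(\log N)^{-c}$ proportion, so they cannot be discarded.
\end{itemize}
You need $w=(\log N)^{\gamma}$. The restriction $B\leq\exp((\log N)^{c_k})$ then comes from requiring $Q_*B\leq\exp(w^{1/2})$. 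This makes $\sum 1/p$ over primes $p>w$ dividing a slope difference or a modulus $q$ negligible. It does not come from errors growing polynomially in the coefficient size, as you suggest.
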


\subsection{Strategy}

We first introduce the counting operators and norm that play a central role in the proofs of the main theorems.  

Let $\mathbf b\in\mathbb Z^k$ and let $H$ be a positive integer. For finitely
supported functions $f_1,\ldots,f_k\colon\mathbb Z\to\mathbb C$, define
\begin{align}\label{eq:r-def}
    r_H(\mathbf b;f_1,\ldots,f_k)
    \coloneqq
    \sum_{x\in\mathbb Z}\sum_{m\in[H]}
    \prod_{i=1}^k f_i(x+b_im).
\end{align}
For a $1$-bounded coefficient function
$\lambda\colon\mathbb Z^k\to\mathbb C$, set
\begin{align}\label{eq:R-def}
    R_H(\lambda;f_1,\ldots,f_k)
    \coloneqq
    \sum_{\mathbf b\in\mathbb Z^k}
    \lambda(\mathbf b)\,
    r_H(\mathbf b;f_1,\ldots,f_k).
\end{align}
We aim to connect these counting operators to the 
$U^{1+}$ norm, introduced explicitly in~\cite{taoblog}, defined by
\begin{align}\label{eq:U1-plus-def}
    \|f\|_{U^{1+}[N]}
    \coloneqq
    \frac{1}{|[N]|}
    \max_{\substack{P\subseteq[N]\\
                    P\text{ arithmetic progression}}}
    \left|\sum_{n\in P}f(n)\right|.
\end{align}

\textbf{Strategy for Theorem~\ref{thm:main}.}

Let $A\subset [N]$ satisfy $|A|=\delta N$ with $\delta\geq (\log N)^{-c_k}$ for a small constant $c_k>0$. If Theorem~\ref{thm:main} failed for $A$, then a standard splitting to balanced functions would show that there exists a $1$-bounded function $\lambda$ and functions $g_1,\ldots, g_k\in \{\delta 1_{[N]},1_{A}-\delta1_{[N]}\}$, not all equal to $\delta 1_{[N]}$, such that
\begin{align*}
|R_H(\lambda; g_1,\ldots, g_k)|\gg_k \delta^k B^k\frac{N^2}{B}.     
\end{align*}
A key technical result in this paper is that there is a polynomial inverse theorem for the counting operator $R_H$ in terms of the $U^{1+}$ norm.

\begin{theorem}[$U^{1+}$ control of the counting operator]\label{thm:1+}
Let $N\geq1$ and $C_0\geq1$.  There are constants $c=c(k,C_0)>0$ and
$C=C(k)\geq1$ such that the following holds.  Let $0<\delta<c$, let $H$
be a positive integer, set $B=N/H$, and suppose that
\[
    \delta^{-C}\leq B\leq N\delta^C.
\]
Let $\lambda\colon\mathbb Z^k\to\mathbb C$ be $1$-bounded and supported on
$([-C_0B,C_0B]\cap\mathbb Z)^k$, and let
$f_1,\ldots,f_k\colon\mathbb Z\to\mathbb C$ be $1$-bounded and supported on
$[N]$.  If
\begin{align}\label{eq:R-large}
    |R_H(\lambda;f_1,\ldots,f_k)|
    \geq
    \delta B^k\frac{N^2}{B},
\end{align}
then
\begin{align}\label{eq:gowers-control-conclusion}
    \min_{1\leq i\leq k}\|f_i\|_{U^{1+}[N]}
    \geq c\delta^C.
\end{align}
\end{theorem}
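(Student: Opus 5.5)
The plan is to remove the arbitrary weight $\lambda$ with a single Cauchy--Schwarz in $\mathbf b$, and then run a Fourier argument on each $f_i$ separately. By symmetry it suffices to prove $\|f_k\|_{U^{1+}[N]}\geq c\delta^C$; the same argument applies to every index $i$.

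\textbf{Step 1: removing $\lambda$.} Apply Cauchy--Schwarz in $\mathbf b$ over the box $([-C_0B,C_0B]\cap\mathbb Z)^k$, using $|\lambda|\leq 1$:
\[
|R_H|^2\ll_{k,C_0} B^k\sum_{\mathbf b}|r_H(\mathbf b;f_1,\dots,f_k)|^2
= B^k\sum_{x,x'}\sum_{m,m'\in[H]}\prod_{i=1}^k G_i(x,x',m,m'),
\]
where
\[
G_i(x,x',m,m')=\sum_{|b|\leq C_0B}f_i(x+bm)\overline{f_i(x'+bm')}.
\]
The point is that after expanding the square the sum over $\mathbf b$ factorises coordinatewise, because $x$ and $m$ are shared by all coordinates.

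We bound $|G_i|\ll B$ trivially for $i<k$. The sum is supported on $x,x'\ll N$ (since $|bm|\ll N$), so \eqref{eq:R-large} gives
\[
\sum_{x,x',m,m'}|G_k(x,x',m,m')|\gg \delta^2 N^2H^2B .
\]
Pigeonholing in $(x',m')$ yields a $1$-bounded sequence $c_b=\overline{f_k(x'+bm')}$ with
\[
\sum_{x,m}\Bigl|\sum_{|b|\leq C_0B}c_bf_k(x+bm)\Bigr|\gg\delta^2NHB.
\]

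\textbf{Step 2: passing to Fourier space.} Apply Cauchy--Schwarz again over $(x,m)$ and expand the square. This gives
\[
\sum_{b,b'}c_b\overline{c_{b'}}\sum_{m\in[H]}A\bigl((b-b')m\bigr)\gg\delta^4NHB^2,
\qquad A(h)=\sum_xf_k(x+h)\overline{f_k(x)}=\int_0^1|\widehat{f_k}(\theta)|^2e(h\theta)\,\d\theta .
\]
The diagonal $b=b'$ contributes $O(BHN)$, which is negligible because $B\geq\delta^{-C}$. Regrouping by $d=b-b'$ with $1$-bounded weights $w(d)$ ($|w(d)|\ll B$) gives
\[
\int|\widehat{f_k}(\theta)|^2\sum_{0<|d|\ll B}|w(d)|\,\Bigl|\sum_{m\leq H}e(dm\theta)\Bigr|\,\d\theta\gg\delta^4NHB^2 .
\]
Since $\int|\widehat{f_k}|^2\leq N$, the set of $\theta$ satisfying
\[
\sum_{|d|\ll B}\min\bigl(H,\|d\theta\|^{-1}\bigr)\gg\delta^{4}HB
\]
must carry $\gg\delta^{O(1)}N$ of the $L^2$ mass of $\widehat{f_k}$.

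\textbf{Step 3: the major arc lemma (main obstacle).} This is the step I expect to be hardest. I need a Vinogradov-type lemma: if $\|d\theta\|\leq\delta^{-O(1)}/H$ for $\gg\delta^{O(1)}B$ values of $d\in[1,C_0B]$, then there exists $q\leq\delta^{-O(1)}$ with $\|q\theta\|\leq\delta^{-O(1)}/(BH)=\delta^{-O(1)}/N$. I would prove it by the standard argument: take differences of the good $d$'s, apply Dirichlet's theorem, and count solutions. The hypotheses $B\geq\delta^{-C}$ and $H=N/B\geq\delta^{-C}$ are exactly what is needed for this counting to work. The constants must be tracked carefully so that the $\delta$-losses stay polynomial.

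The set of such major arcs has measure $\ll\delta^{-O(1)}/N$. Since $\sup|\widehat{f_k}|^2\cdot|\text{arcs}|$ must be at least their mass $\gg\delta^{O(1)}N$, some $\theta$ in them satisfies $|\widehat{f_k}(\theta)|\gg\delta^{O(1)}N$.

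\textbf{Step 4: from a large Fourier coefficient to a progression.} Write $\theta=a/q+\beta$ with $q\leq\delta^{-O(1)}$ and $|\beta|\leq\delta^{-O(1)}/N$. Partition $[N]$ into arithmetic progressions of common difference $q$ and length $\asymp\delta^{O(1)}N/q$. On each such progression $e(n\theta)$ is constant up to an error $O(\delta^{O(1)})$. Applying the triangle inequality then gives one progression $P$ with $|\sum_{n\in P}f_k(n)|\gg\delta^{O(1)}N$, which is \eqref{eq:gowers-control-conclusion}.
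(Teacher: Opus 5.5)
Your proof is correct, and it takes a genuinely different and much more elementary route than the paper.

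\textbf{The paper's route.} The paper proceeds in four stages.
\begin{enumerate}
\item It proves $U^k$ control of $R_H$ (Lemma~\ref{lem:fibrewise-CS} followed by quantitative concatenation, giving Proposition~\ref{prop:Uk}).
\item It applies this to the dual function and lowers the degree from $U^k$ to $U^2$ (Lemma~\ref{lem:degreelower}, via the $U^2$ inverse theorem, the interchange lemma and the low-rank lemma).
\item It passes from $U^2$ to $U^{1+}$ with a smoothed bilinear estimate (Lemma~\ref{lem:dualU1+}).
\item It brings each $f_i$ outside by $k-1$ rounds of anchoring and slot replacement (Lemma~\ref{lem:bounded-slot-replacement}), losing a power of $\delta$ in each round.
\end{enumerate}

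\textbf{Your route.} You observe that, because $\lambda$ lives on a product box and $x,m,x',m'$ are shared by all coordinates, the identity $\sum_{\mathbf b}|r_H(\mathbf b)|^2=\sum_{x,x',m,m'}\prod_i G_i$ holds exactly. One Cauchy--Schwarz in $\mathbf b$ therefore decouples the slots, and $k-1$ of them are discarded via $|G_i|\ll B$. What remains is the two-point quantity $\sum_{b,b'}\bigl|\sum_{m\in[H]}A((b-b')m)\bigr|^2$ of a single function. Parseval and Vinogradov's lemma convert this into a large Fourier coefficient on a major arc.

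Your Step~3 is exactly the standard Vinogradov lemma that the paper also invokes, so it is not really an obstacle. The bookkeeping closes:
\begin{itemize}
\item the diagonal $b=b'$ is negligible once $B\gg\delta^{-4}$;
\item Vinogradov needs the tolerance $\delta^{-4}/H$ to be small compared with the proportion $\delta^{4}$ of good $d$, i.e.\ $H\gg\delta^{-8}$;
\item both follow from $\delta^{-C}\leq B\leq N\delta^{C}$.
\end{itemize}

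\textbf{What each route buys.} Yours gives brevity, no higher-order Fourier analysis, and a symmetric treatment of all slots. It also gives an exponent of $\delta$ independent of $k$, with only the implied constant depending on $k$ and $C_0$; by contrast, the paper's $C(k)$ grows rapidly with $k$.

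The paper's route buys the intermediate results it advertises: a fully quantitative generalised von Neumann theorem and degree lowering for these dual functions. It also provides a Cauchy--Schwarz engine phrased for unbounded majorants. Your step $|G_i|\ll B$ uses pointwise boundedness essentially and would fail under $|f_i|\leq\nu$.

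That said, the rest of the paper only uses Theorem~\ref{thm:1+} as a black box for bounded functions (in Corollary~\ref{cor:bounded-dual-endpoint} and Lemma~\ref{lem:count}). So, as far as the later arguments are concerned, your proof could replace the paper's.
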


Once Theorem~\ref{thm:1+} is proved, we may apply it to the functions $g_i$ above to find a long subproression $P\subset [N]$ on which $A$ has polynomially increased density, that is, $|A\cap P|/|P|\geq \delta+c_k\delta^{O_k(1)}$. We may then pass to this progression by translation invariance of the pattern under consideration and perform a density increment argument to obtain a contradiction with the assumption $\delta\geq (\log N)^{-c_k}$.

Our proof of Theorem~\ref{thm:1+} builds on the quantitative concatenation and
degree lowering developed by Peluse--Prendiville in their work on the nonlinear
Roth theorem~\cite{PP}.  The new feature here is that the coefficient vector
ranges over a growing box: exploiting this additional averaging, we recover
estimates uniform in $B$ and obtain polynomial control by the $U^{1+}$ norm, in
the form needed for random configurations with large coefficients.

For a fixed coefficient tuple $\mathbf b$, a standard generalised von Neumann
argument controls the corresponding configuration \eqref{eq:nonlin-config} by the $U^k$ Gowers norm.  When the coefficients have size $B$, however, repeated
Cauchy--Schwarz produces differences whose increments involve the large
coefficient differences $b_i-b_j$.  A direct comparison with the
$U^k[N]$ norm consequently loses uniformity as $B$ grows.

The averaging over $\mathbf b$ compensates for this loss.  Iterated
Cauchy--Schwarz first produces differences of the form $h b_i'$, where
$b_i'$ ranges over an interval of length comparable with
$B$ and $h$ over an interval of length comparable with
$H$.  Quantitative concatenation then converts this to an
ordinary Gowers norm.  We give the required form as
Lemma~\ref{lem:pp_5.3} and iterate it in
Corollary~\ref{cor:iterated-concatenation}.  Combined with the iterated
Cauchy--Schwarz estimate, this proves Theorem~\ref{thm:1+} with the $U^k[N]$ norm in place of the $U^{1+}[N]$ norm.

The next task is to lower the degree of this control. By pigeonholing and making a linear change of variables,~\eqref{eq:R-large} implies
\begin{align*}
\left|\sum_{x}f_1(x)D(x)\right|\gg_{k,C_0} \delta B^{k-2}N^2,    
\end{align*}
where $D$ is the dual function
\begin{align*}
 D(x)
 =1_{[N]}(x)
 \sum_{\mathbf b\in\mathbb Z^{k-1}}\mu(\mathbf b)
 \sum_{m\in[H]}
 \prod_{i=2}^k f_i(x+b_im),
\end{align*}
with $\mu$ a $1$-bounded function constructed from $\lambda$. Cauchy--Schwarz and the preceding \(U^k\) control then give
\begin{align*}
    \|D\|_{U^k[N]}
    \gg_{k,C_0}\delta^{O_k(1)}B^{k-2}N.
\end{align*}
We then perform a degree lowering argument showing that a
large \(U^s[N]\) norm of \(D\) forces a large \(U^{s-1}[N]\) norm.

To illustrate the argument, suppose that \(k=3\).  The recursion for the Gowers norms and
the \(U^2\) inverse theorem show that, for many \(h\), there is a frequency
\(\alpha(h)\in\mathbb T\) such that
\begin{align}\label{eq:intro-frequency-correlation}
    \left|
        \sum_x\Delta_hD(x)e(\alpha(h)x)
    \right|
    \gg_{C_0}\delta^{O(1)}B^2N^3.
\end{align}
The difficulty is that \(\alpha(h)\) may initially depend arbitrarily on
\(h\). If we can show that $\alpha(h)$ is constant for many $h$, the control complexity is lowered.

Write
\begin{align*}
    \psi_{\mathbf b,m}(x)
    =
    1_{[N]}(x)f_2(x+b_2m)f_3(x+b_3m),
\end{align*}
so that $D=\sum_{\mathbf b\in \mathbb{Z}^2,m\in [H]}\mu(\mathbf b)\psi_{\mathbf b,m}$.
The dual--difference interchange lemma,
Lemma~\ref{lem:DDI}, transfers the
difference from $D$ to these individual summands.  Applied
to~\eqref{eq:intro-frequency-correlation}, it gives a set
$\mathcal H\subseteq[-N,N]\cap\mathbb Z$ of size
$\gg_{C_0}\delta^{O(1)}N$ such that
\begin{align*}
    \sum_{h,h'\in\mathcal H}
    \left|
        \sum_x\sum_{\mathbf b\in \mathbb{Z}^2,m\in [H]}
        \Delta_{h'-h}\psi_{\mathbf b,m}(x)
        e\bigl(\partial\alpha(h,h')x\bigr)
    \right|
    \gg_{C_0}\delta^{O(1)}BN^4,
\end{align*}
where
\begin{align*}
    \partial\alpha(h,h')
    \coloneqq\alpha(h)-\alpha(h').
\end{align*}

Let $u=h'-h$ and $E_u=[N]\cap([N]-u)$.  After the substitution
$x\mapsto x-b_2m$ and the replacement of $b_3-b_2$ by $b_3$, the inner sum
in the preceding display becomes
\begin{align*}
    \sum_{b_2,b_3\in \mathbb{Z}}\sum_{m\in[H]}\sum_x
    1_{E_u}(x-b_2m)\Delta_u f_2(x)\Delta_u f_3(x+b_3m)
    e\bigl(\partial\alpha(h,h')x\bigr)
    e\bigl(-\partial\alpha(h,h')b_2m\bigr),
\end{align*}
where the transformed coefficient variables still lie in a box of side
$O_{C_0}(B)$.  Thus the dependence on $b_2$ and $m$ includes the 
phase
\begin{align*}
    e\bigl(-\partial\alpha(h,h')b_2m\bigr),
\end{align*}
with the $b_2$ and $m$ variables being unweighted after splitting into short intervals.
A classical Type I argument shows that largeness forces
\(\partial\alpha(h,h')\) to lie on a major arc: for some natural number
\(q\ll_{C_0}\delta^{-O(1)}\), we have
\begin{align*}
    \|q\partial\alpha(h,h')\|
    \ll_{C_0}\frac{\delta^{-O(1)}}{N}.
\end{align*}
At the inverse theorem step we can choose the frequencies $\alpha(h)$ from a finite grid
$Q^{-1}\mathbb Z/\mathbb Z$, where $Q\asymp N\delta^{-O(1)}$.
Hence, pigeonholing the major arc approximation 
shows that \(\partial\alpha(h,h')\) is constant for many pairs.  It follows
that \(\alpha(h)\) is constant on a large set, which is precisely the
low rank condition needed to deduce that \(\|D\|_{U^2[N]}\) is large.

\textbf{Strategy for Theorem~\ref{thm:prime-main}.}

A useful way of transferring statements about dense functions to unbounded ones is densification, introduced by Conlon, Fox and Zhao~\cite{Conlon-Fox-Zhao}.
For the prime setting, we adapt the quantitative densification strategy of~\cite{teravainen-wang-sparse} (see also~\cite{tao-teravainen-gowers-vonmangoldt}) for the quasipolynomial inverse theorem for the Gowers norms. The final conclusion is that Theorem~\ref{thm:1+} continues to hold for unbounded functions $f_i$ that satisfy a technical condition, the $(K, \delta^{-K},\delta^K)$ linear forms condition at scales $(N, B, H)$ (with $K$ large in terms of $k$); see Theorem~\ref{thm:relative-u1plus} for the precise statement. The relevant linear forms condition is then verified in Section~\ref{sec:prime-majorant} for the von Mangoldt function by extending the work of Green and Tao~\cite{green-tao-linear-equations} on correlations of GPY sieve weights. 

For the proof of the unbounded $U^{1+}$ inverse theorem, we combine the iterated Cauchy--Schwarz estimate of Lemma~\ref{lem:fibrewise-CS} with Lemma~\ref{lem:terminal-LFC}, which uses the linear forms condition for the majorant, to show that the dual function $D$ is ``essentially bounded'' in a precise sense.  This combination yields a generalised von Neumann estimate that may also be of independent interest: it holds uniformly over the full permitted ranges of all the parameters, with all parameter dependencies made quantitative.  We may then apply Theorem~\ref{thm:1+} to replace $f_1$ by the indicator of an arithmetic progression while preserving~\eqref{eq:R-large} up to polynomial losses.  Iterating this procedure replaces all but one of the functions in $R_H$ by indicators of arithmetic progressions, at which point the remaining function $f_k$ must have large $U^{1+}$ norm.  By symmetry, the same conclusion holds for every $f_i$.

After this, the proof of Theorem~\ref{thm:prime-main} uses a similar density increment strategy as Theorem~\ref{thm:main}.

%The employed techniques, are based on relatively recent developments and besides proving the main configuration theorems, one of our aims has been exposition.   
%This is particularly true of degree lowering.  For this
%reason, we include detailed, and occasionally modified, proofs of some results
%essentially contained in~\cite{PP}. 

\subsection{Organisation}

In Section~\ref{sec:gowers-control}, we prove that \(R_H\) admits \(U^k\)
control.  The main ingredients are a Cauchy--Schwarz process, which we
formulate in a general setting, and concatenation.  In
Section~\ref{sec:degree-lowering}, we develop the degree lowering argument.
We apply it in Section~\ref{sec:proof-1+} to reduce the \(U^k\) control to
\(U^{1+}\) control and thereby prove Theorem~\ref{thm:1+}.

In Section~\ref{sec:relative-u1plus}, we transfer this conclusion to functions
dominated by a pseudorandom majorant, using a densification argument.  In
Section~\ref{sec:prime-majorant}, we construct a suitable majorant for the
function obtained by applying the \(W\)-trick to the von Mangoldt function
and verify the required linear forms condition.  Finally,
Section~\ref{sec:density-increment} combines the bounded
and relative $U^{1+}$ inverse theorems with a density increment argument to prove
Theorems~\ref{thm:main} and~\ref{thm:prime-main}.

\subsection{Acknowledgements}
The authors were supported by European Union's Horizon Europe research and innovation programme under ERC grant agreement no. 101162746.

\section{Notation}

Throughout, $k\geq 2$ is a fixed integer and all implied constants are allowed to depend
on $k$. We write $X\ll Y$, $Y\gg X$ or $X=O(Y)$ to mean $|X|\leq CY$ for some
constant $C$, and $X\asymp Y$ to mean $X\ll Y\ll X$; a subscript such as
$\ll_s$ or $O_s(\cdot)$ records an additional dependence of the constant on a
parameter $s$. 

For a real number $N\geq 1$ we write $[N]=\{1,2,\ldots,\lfloor N\rfloor\}$, and
for a set $S$ we write $1_S$ for its indicator function. For a finite nonempty
set $S$ and $g\colon S\to\mathbb{C}$ we write
$\mathbb{E}_{n\in S}\,g(n)=\frac{1}{|S|}\sum_{n\in S}g(n)$. We use $\sum_{n}f(n)$ to mean $\sum_{n\in \mathbb{Z}}f(n)$.

We set
$e(\theta)=e^{2\pi i\theta}$ and write $\|\theta\|=\min_{n\in\mathbb{Z}}|\theta-n|$
for the distance from $\theta\in\mathbb{R}$ to the nearest integer.  We write
$\mathbb T=\mathbb R/\mathbb Z$ and for $z\in\mathbb C$ set
\[
    \sgn z=
    \begin{cases}
        z/|z|,&z\ne0,\\
        0,&z=0.
    \end{cases}
\]
We also use the notation $x_{+}=\max(x,0)$ for real $x$. We write
$\Lambda$ for the von Mangoldt function, $\mu$ for the M\"obius function,
$\varphi$ for Euler's totient function, $\pi(N)$ for the number of primes at
most $N$, and $\tau$ for the divisor function.

For a finitely supported function $f\colon \mathbb{Z}\to \mathbb{C}$ and a
positive integer $s$,
we recall the (unnormalised) Gowers norm
\begin{align*}
\|f\|_{U^s(\mathbb{Z})}
=\left(\sum_{x,h_1,\ldots, h_s\in \mathbb{Z}}
\prod_{\omega\in \{0,1\}^s}\mathcal{C}^{|\omega|}f(x+\omega\cdot\mathbf{h})\right)^{1/2^s},
\end{align*}
where $|\omega|=\omega_1+\cdots+\omega_s$ and $\mathcal{C}$ denotes complex
conjugation, and where
$\omega\cdot\mathbf h=\sum_{j=1}^s\omega_jh_j$. In particular
\begin{align*}
    \|1_{[N]}\|_{U^s(\mathbb{Z})}^{2^s}\asymp_s N^{s+1}.
\end{align*}
For $N\geq 1$ and $f\colon \mathbb{Z}\to\mathbb{C}$ we define the normalised
interval Gowers norm
\begin{align}\label{eq:Gowersdef}
\|f\|_{U^s[N]}=\frac{\|f\,1_{[N]}\|_{U^s(\mathbb{Z})}}{\|1_{[N]}\|_{U^s(\mathbb{Z})}}.
\end{align}

For $h\in\mathbb{Z}$ we write $\Delta_h f(x)=f(x)\overline{f(x+h)}$ for the
multiplicative difference operator, and for
$\mathbf{h}=(h_1,\ldots,h_s)\in\mathbb{Z}^s$ we set
$$
\Delta_{\mathbf{h}}f=\Delta_{h_1}\cdots\Delta_{h_s}f.
$$
We also sometimes write this as $\Delta_{(h_i)_{i\leq s}}f$. When $s=0$, we interpret this quantity as $f$.
With this notation the
Gowers norms satisfy the recursive identity
\begin{align}\label{eq:gowers-recursion}
 \|f\|_{U^{s+1}(\mathbb{Z})}^{2^{s+1}}
 =\sum_{h\in \mathbb{Z}}\|\Delta_h f\|_{U^{s}(\mathbb{Z})}^{2^s}
 \quad\text{for }s\geq 1,
\end{align}
where $\|f\|_{U^1(\mathbb{Z})}=\bigl|\sum_{x\in\mathbb{Z}}f(x)\bigr|$.
We also use the $U^{1+}$ norm defined in~\eqref{eq:U1-plus-def}.

We record the invariance of the counting operators~\eqref{eq:r-def} under diagonal shifts.
\begin{remark}\label{rmk:symmetric}
For any $t\in\mathbb{Z}$, the substitution $x\mapsto x-tm$ in the inner sum of~\eqref{eq:r-def} gives
\begin{align*}
    r_H(\mathbf{b}+t(1,\ldots,1);f_1,\ldots,f_{k})
    =r_H(\mathbf{b};f_1,\ldots,f_{k}),
\end{align*}
so $r_H$ depends only on the differences $b_i-b_1$. In particular, with $t=-b_1$ we obtain the \emph{anchored representation}
\begin{align*}
    r_H(\mathbf{b};f_1,\ldots,f_{k})
    =\sum_{x\in\mathbb{Z}}\sum_{m\in[H]}f_1(x)\prod_{i=2}^{k}f_i\bigl(x+(b_i-b_1)m\bigr).
\end{align*}
Correspondingly, $R_H(\lambda;f_1,\ldots,f_{k})$ is unchanged if $\lambda$ is replaced by $\lambda'(\mathbf{b})=\lambda(\mathbf{b}+t(1,\ldots,1))$, and $\lambda'$ is supported on a cube of the same side length as $\lambda$. The hypotheses of Theorem~\ref{thm:1+} are therefore invariant under recentring the support of $\lambda$. We use this invariance in the anchoring reductions below to centre the coefficient support.
\end{remark}

\section{Gowers control}\label{sec:gowers-control}

We now prove that \(R_H\) is controlled by the \(U^k\) norm.  Instead of
assuming that the functions involved are pointwise bounded, we impose a
weaker boundedness condition.  This formulation will be useful when the unbounded setting is considered
in Section~\ref{sec:relative-u1plus}.

\subsection{Iterated Cauchy--Schwarz and the terminal form}\label{sec:engine}

We begin with the moment condition which bounds the majorant factors created
by repeated applications of Cauchy--Schwarz.

\begin{definition}[Boundedness condition]\label{def:cube-moment}
Let $N,B,A,C_0\geq1$, let $H$ and $r$ be positive integers, and let
$w\colon\Z\to\R_{\geq0}$ be finitely supported.  We say that $w$ satisfies
the $(r,A,C_0)$ boundedness condition at scale $(N,B,H)$ if, for every
$0\leq j\leq r$ and all nonzero integers $a_1,\ldots,a_j$ with
$|a_i|\leq C_0B$, one has
\begin{align}\label{eq:cube-moment-condition}
 \sum_x\sum_{u_1,\ldots,u_j\in[0,H-1]}
 \Delta_{a_1u_1,\ldots,a_ju_j}w(x)
 \leq ANH^j.
\end{align}
For $j=0$, this means $\sum_xw(x)\leq AN$.
\end{definition}

Expanding the $j$ iterated differences produces a product over the $2^j$
vertices of a discrete cube of dimension $j$.  This is the reason for referring
to \eqref{eq:cube-moment-condition} as a cube moment bound.

For a finite interval $I\subseteq\Z$ and $j\geq0$, define
\begin{align}\label{eq:Qj-def}
 \cQ_j(I)\coloneqq {}&\{(m,h_1,\ldots,h_j)\in\Z^{j+1}:
       m-\boldsymbol\omega\cdot\mathbf h\in I
       \text{ for every }\boldsymbol\omega\in\{0,1\}^j\},
\end{align}
where $\boldsymbol\omega\cdot\mathbf h=\sum_{\ell=1}^j\omega_\ell h_\ell$.
Thus $\cQ_0(I)=I$.  Separating the two possibilities for the last coordinate
of $\boldsymbol\omega$ gives the elementary identity
\begin{align}\label{eq:Q-recursion}
 &(m,h_1,\ldots,h_j)\in\cQ_j(I)\notag\\
 &\qquad\Longleftrightarrow
 \begin{cases}
 (m,h_1,\ldots,h_{j-1})\in\cQ_{j-1}(I),\\
 (m-h_j,h_1,\ldots,h_{j-1})\in\cQ_{j-1}(I).
 \end{cases}
\end{align}

\begin{definition}[Terminal form]\label{def:terminal}
Let $I\subseteq\Z$ be a finite interval, let $c_1,\ldots,c_r\in\Z$ be
pairwise distinct, let $F\colon\Z\to\C$ be finitely supported, and let
$\nu_i\colon\Z\to\R_{\geq0}$ be finitely supported.  Define
\begin{align}\label{eq:terminal-one}
\cT_{\mathbf c,I}(F;\nu_1,\ldots,\nu_r)
\coloneqq {}&\sum_{(m,h_1,\ldots,h_r)\in\cQ_r(I)}\sum_x
 \Delta_{c_1h_1,\ldots,c_rh_r}F(x)\notag\\
&\quad\times\prod_{i=1}^r
 \Delta_{\big((c_\ell-c_i)h_\ell\big)_{\ell\neq i}}
       \nu_i(x+c_im).
\end{align}
Here
$\Delta_{\big((c_\ell-c_i)h_\ell\big)_{\ell\neq i}}$ means the
iterated difference over all $\ell\in\{1,\ldots,r\}\setminus\{i\}$.
When $I=[H]$, we omit $I$ from the notation $\mathcal{T}_{\mathbf{c},I}$. When all the functions $\nu_i$ are
equal to $\nu$, we write simply $\cT_{\mathbf c,I}(F;\nu)$.
\end{definition}

The next lemma is the algebraic core of the argument.

\begin{lemma}[Iterated Cauchy--Schwarz]\label{lem:fibrewise-CS}
Let $I\subseteq\Z$ be a finite interval, and let
$c_1,\ldots,c_r\in\Z$ be pairwise distinct.  Let
$f_0,\ldots,f_r\colon\Z\to\C$ and
$\nu_1,\ldots,\nu_r\colon\Z\to\R_{\geq0}$ be finitely supported, with
\begin{align}\label{eq:fibre-domination}
    |f_j|\leq\nu_j\qquad \textnormal{for all}\,\,\, 1\leq j\leq r.
\end{align}
Set
\begin{align}\label{eq:Lambda-one-fibre}
    \mathcal{L}
    \coloneqq \sum_x\sum_{m\in I}f_0(x)\prod_{j=1}^rf_j(x+c_jm),
\end{align}
and, for $1\leq j\leq r$, set
\begin{align}\label{eq:Pj}
 \cP_j\coloneqq {}&
 \sum_{\substack{h_1,\ldots,h_{j-1}\in\Z\\
          \exists m:\ (m,h_1,\ldots,h_{j-1})\in\cQ_{j-1}(I)}}
 \sum_y
 \Delta_{\big((c_\ell-c_j)h_\ell\big)_{\ell<j}}\nu_j(y).
\end{align}
Then every $\cP_j$ and
$\cT_{\mathbf c,I}(f_0;\nu_1,\ldots,\nu_r)$ is nonnegative, and
\begin{align}\label{eq:fibrewise-CS}
 |\mathcal{L}|
 \leq
 \cT_{\mathbf c,I}(f_0;\nu_1,\ldots,\nu_r)^{2^{-r}}
 \prod_{j=1}^r\cP_j^{2^{-j}}.
\end{align}
Moreover, let $I\subseteq[H]$, let $A,C_0\geq1$, suppose that every
$\nu_j$ satisfies the $(r,A,2C_0)$ boundedness condition at scale
$(N,B,H)$, and suppose that $|c_j|\leq C_0B$ for every $j$.  Then
\begin{align}\label{eq:iterated-CS-conclusion}
 |\mathcal{L}|
 \leq
 2A^{1-2^{-r}}NH
 \left(
 \frac{\cT_{\mathbf c,I}(f_0;\nu_1,\ldots,\nu_r)}
      {NH^{r+1}}
 \right)^{2^{-r}}.
\end{align}
\end{lemma}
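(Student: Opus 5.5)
I would prove \eqref{eq:fibrewise-CS} by induction on $r$. I will formulate a slightly more general statement in which the sum over $m\in I$ is replaced by a sum over $(m,h_1,\ldots,h_{s})\in\cQ_{s}(I)$, and in which each function is replaced by its appropriate iterated difference. At stage $s$ the expression has the form
\[
\mathcal L_s=\sum_{(m,\mathbf h)\in\cQ_s(I)}\sum_x \Delta_{(c_\ell h_\ell)_{\ell\le s}}f_0(x)\prod_{j>s}\Delta_{((c_\ell-c_j)h_\ell)_{\ell\le s}}f_j(x+c_jm)\prod_{i\le s}\Delta_{((c_\ell-c_i)h_\ell)_{\ell\le s,\ell\ne i}}\nu_i(x+c_im).
\]
The claim at stage $s$ is $|\mathcal L_{s}|^2\le \cP_{s+1}\,\mathcal L_{s+1}$. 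Here the $\nu$-factors with $i\le s$ are exactly those accumulated so far, and $f_{s+1}$ is about to be removed.

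The step $s\to s+1$ goes as follows. Substitute $y=x+c_{s+1}m$, so that the factor $\Delta f_{s+1}(y)$ no longer depends on $m$. Bound it by $\Delta_{((c_\ell-c_{s+1})h_\ell)_{\ell\le s}}\nu_{s+1}(y)$ using \eqref{eq:fibre-domination}; this is where $|f_j|\le\nu_j$ is used, since $|\Delta_h f|\le \Delta_h\nu$ for nonnegative $\nu$. Then apply Cauchy--Schwarz in $(y,h_1,\ldots,h_s)$ with weight $\Delta\nu_{s+1}(y)$. The weight sum is exactly $\cP_{s+1}$: the constraint ``$\exists m$ with $(m,\mathbf h)\in\cQ_s(I)$'' is what survives of the $m$-support. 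In the remaining square, expand $|\sum_m\cdots|^2$ as a double sum over $m,m'$ and write $h_{s+1}=m-m'$. By \eqref{eq:Q-recursion}, the pair $(m,m')$ ranges over exactly those $(m,\mathbf h,h_{s+1})\in\cQ_{s+1}(I)$. Shifting back with $x=y-c_{s+1}m$ turns each old factor $g(x+c_jm)$ and its conjugate copy into $\Delta_{(c_j-c_{s+1})h_{s+1}}$ of the old factor. For $f_0$ the shift is by $c_{s+1}h_{s+1}$. The new factor $\nu_{s+1}$ appears with $\Delta$ over the $\ell\le s$ indices, as required. At $s=r$ this yields $\cT_{\mathbf c,I}(f_0;\nu_1,\ldots,\nu_r)$, and chaining $|\mathcal L_0|\le\cP_1^{1/2}|\mathcal L_1|^{1/2}\le\cdots$ gives \eqref{eq:fibrewise-CS}.

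Nonnegativity follows from the same computation. Each $\mathcal L_{s+1}$ is a weighted sum of $|\cdot|^2$ with nonnegative weights. In particular this holds for $\cT$, which is the case $s=r$. The $\cP_j$ are handled by writing $\Delta$-products of $\nu_j$ as sums of squares via the Gowers--Cauchy--Schwarz structure. If that fails, I would note that the fibrewise identity already shows $\cT$ and $\cP_j$ are nonnegative whenever $\nu_j\ge0$: both are limits of expressions $\sum|\cdot|^2$.

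For \eqref{eq:iterated-CS-conclusion}, I bound each $\cP_j$ using the boundedness condition. Since $I\subseteq[H]$, the condition $\exists m$ with $(m,\mathbf h)\in\cQ_{j-1}(I)$ forces $|h_\ell|\le H-1$. Splitting into signs reduces to the case $h_\ell\in[0,H-1]$, since $\Delta_{-h}\nu(y)=\Delta_h\nu(y-h)$ up to conjugation, and $\nu$ is real. The shifts $a_\ell=c_\ell-c_j$ are nonzero with $|a_\ell|\le 2C_0B$. The $(r,A,2C_0)$ condition then gives $\cP_j\le 2^{j-1}ANH^{j-1}$. Multiplying, $\prod_j\cP_j^{2^{-j}}\le A^{1-2^{-r}}N^{1-2^{-r}}H^{\sum_j(j-1)2^{-j}}\cdot 2^{\sum_j (j-1)2^{-j}}$. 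Since $\sum_{j\le r}(j-1)2^{-j}\le 1$, the constant is at most $2$. Checking the exponent of $H$ against $H\cdot H^{-(r+1)2^{-r}}$ is routine: $\sum_{j\le r}(j-1)2^{-j}=1-(r+1)2^{-r}$.

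The main obstacle is the bookkeeping in the inductive step. The difference operators on each $\nu_i$ must carry exactly the indices $\ell\ne i$ with shifts $(c_\ell-c_i)h_\ell$, and the domain must match $\cQ_{s+1}(I)$ precisely. I would check this carefully via \eqref{eq:Q-recursion} and the substitution $y=x+c_{s+1}m$.
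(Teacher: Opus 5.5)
Your proposal is correct and follows the paper's proof essentially step for step. It uses the same family $\mathcal L_s$ and the recursion $|\mathcal L_{s}|^2\le\cP_{s+1}\mathcal L_{s+1}$, obtained from the substitution $y=x+c_{s+1}m$ and weighted Cauchy--Schwarz with weight $\Delta\nu_{s+1}$. It then expands with $h_{s+1}=m-m'$, matches the domain to $\cQ_{s+1}(I)$ by \eqref{eq:Q-recursion}, and bounds $\cP_j\le 2^{j-1}ANH^{j-1}$ by splitting signs.

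Two small fixes are needed. First, with $h_{s+1}=m-m'$ the new difference on the $j$th factor is $\Delta_{(c_{s+1}-c_j)h_{s+1}}$, not $\Delta_{(c_j-c_{s+1})h_{s+1}}$; only this sign reproduces your own stage-$s$ formula at stage $s+1$. Second, $\cP_j\ge0$ needs no Gowers--Cauchy--Schwarz or limiting argument: each summand is a product of values of $\nu_j\ge0$. You already rely on this same fact to make $\Delta\nu_{s+1}$ a legitimate weight.
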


\begin{proof}
For $0\leq j\leq r$, let
\begin{align}\label{eq:Lambdaj}
 \mathcal{L}_j\coloneqq {}&
 \sum_{(m,h_1,\ldots,h_j)\in\cQ_j(I)}\sum_x
 \Delta_{(c_\ell h_\ell)_{\ell\leq j}}f_0(x)\notag\\
 &\times\prod_{i=1}^j
 \Delta_{\big((c_\ell-c_i)h_\ell\big)_{\substack{\ell\leq j\\ \ell\neq i}}}
       \nu_i(x+c_im)\notag\\
 &\times\prod_{i=j+1}^r
 \Delta_{\big((c_\ell-c_i)h_\ell\big)_{\ell\leq j}}
       f_i(x+c_im).
\end{align}
Thus $\mathcal{L}_0=\mathcal{L}$ and
$\mathcal{L}_r=\cT_{\mathbf c,I}(f_0;\nu_1,\ldots,\nu_r)$.

We claim that, for $1\leq j\leq r$,
\begin{align}\label{eq:scalar-recursion}
    |\mathcal{L}_{j-1}|^2\leq\cP_j\mathcal{L}_j.
\end{align}
Fix $h_1,\ldots,h_{j-1}$ and substitute $y=x+c_jm$ in the sum in
$\mathcal{L}_{j-1}$.  The factor in the $j$th slot becomes
\begin{align*}
 \Phi_j(y)
 =\Delta_{\big((c_\ell-c_j)h_\ell\big)_{\ell<j}}f_j(y),
\end{align*}
and, by~\eqref{eq:fibre-domination}, it is bounded pointwise by
\begin{align*}
 \Psi_j(y)
 =\Delta_{\big((c_\ell-c_j)h_\ell\big)_{\ell<j}}\nu_j(y).
\end{align*}
Write the remaining sum over $m$ as
\begin{align*}
G_j(y,\mathbf h_{<j})
={}&\sum_{\substack{m\in\Z\\
       (m,\mathbf h_{<j})\in\cQ_{j-1}(I)}}
 \Delta_{(c_\ell h_\ell)_{\ell<j}}f_0(y-c_jm)\notag\\
&\times\prod_{i<j}
 \Delta_{\big((c_\ell-c_i)h_\ell\big)_{\substack{\ell<j\\ \ell\neq i}}}
 \nu_i\bigl(y+(c_i-c_j)m\bigr)\notag\\
&\times\prod_{i>j}
 \Delta_{\big((c_\ell-c_i)h_\ell\big)_{\ell<j}}
 f_i\bigl(y+(c_i-c_j)m\bigr),
\end{align*}
where $\mathbf h_{<j}=(h_1,\ldots,h_{j-1})$.  We then have
\begin{align*}
    \mathcal{L}_{j-1}
    =\sum_{h_1,\ldots,h_{j-1}}\sum_y
       \Phi_j(y)G_j(y,\mathbf h_{<j}).
\end{align*}

Expanding $|G_j(y,\mathbf h_{<j})|^2$ and writing its two variables as
$m,m'$, we make the change of variables $h_j=m-m'$.  The recursion
\eqref{eq:Q-recursion} identifies the resulting domain with $\cQ_j(I)$, and
comparison of the two copies of each factor gives
\begin{align}\label{eq:square-identity}
    \mathcal{L}_j
    =\sum_{h_1,\ldots,h_{j-1}}\sum_y
       \Psi_j(y)|G_j(y,\mathbf h_{<j})|^2.
\end{align}
This identity shows that $\mathcal{L}_j\geq0$.  Applying weighted
Cauchy--Schwarz with weight $\Psi_j$ to the preceding expression for
$\mathcal{L}_{j-1}$ produces $\cP_j$ as its first factor and the right side
of~\eqref{eq:square-identity} as its second.  This proves
\eqref{eq:scalar-recursion}. From the product defining $\cP_j$ it immediately follows that
$\cP_j\geq0$.

Iterating~\eqref{eq:scalar-recursion} proves~\eqref{eq:fibrewise-CS}.
For the final assertion, the projection of $\cQ_{j-1}(I)$ to each
$h_\ell$ coordinate lies in $[-H+1,H-1]$.  Split each such interval
into its nonnegative and nonpositive parts.  The summands in~\eqref{eq:Pj}
are nonnegative and
$c_\ell-c_j\neq0$, so~\eqref{eq:cube-moment-condition} gives
\begin{align}\label{eq:Pj-bound}
    \cP_j\leq2^{j-1}ANH^{j-1}.
\end{align}
The identities
\begin{align*}
 \sum_{j=1}^r2^{-j}=1-2^{-r},
 \qquad
 \sum_{j=1}^r(j-1)2^{-j}=1-(r+1)2^{-r}
\end{align*}
give~\eqref{eq:iterated-CS-conclusion}. The precise leading constant is
$2^{1-(r+1)2^{-r}}\leq2$.
\end{proof}
We now specialise the estimate to bounded majorants.

For $u\in[0,H-1]$, set $\varpi(0)=1$ and $\varpi(u)=2$ for $u>0$; for
$\mathbf h=(h_1,\ldots,h_r)$ put
$\varpi(\mathbf h)=\prod_{i=1}^r\varpi(h_i)$.  This weight records the
multiplicity arising from folding the signs of the $h_i$.

\begin{lemma}[Bounded majorant estimate]\label{lem:flat-CS}
Let $r$ and $H$ be positive integers, let $C_1\geq1$ and $N\geq1$, and set
$B=N/H$.  Let $c_1,\ldots,c_r$ be pairwise distinct nonzero integers with
$|c_i|\leq C_1B$.  Let
$f_0,f_1,\ldots,f_r\colon\mathbb Z\to\mathbb C$ be $1$-bounded and supported
on $[N]$, and set
\begin{align*}
    \mathcal{L}_{\mathbf c}
    =\sum_x\sum_{m\in[H]}f_0(x)\prod_{i=1}^r f_i(x+c_im).
\end{align*}
Define
\begin{align}\label{eq:Tc-def}
T_{\mathbf c}(f_0)
={}&
\sum_{\substack{h_1,\ldots,h_r\in[0,H-1]\\
                 h_1+\cdots+h_r\leq H-1}}
\varpi(\mathbf h)(H-h_1-\cdots-h_r)\notag\\
&\qquad\times
\sum_x\Delta_{c_1h_1,\ldots,c_rh_r}f_0(x).
\end{align}
Then
\begin{align}\label{eq:flat-CS-conclusion}
    |\mathcal{L}_{\mathbf c}|
    \ll_{r,C_1}
    NH
    \left(
    \frac{|T_{\mathbf c}(f_0)|}{NH^{r+1}}
    \right)^{1/2^r}.
\end{align}
\end{lemma}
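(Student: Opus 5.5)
Apply Lemma~\ref{lem:fibrewise-CS} with $I=[H]$, with $\nu_j=1_{[N]}$ for every $j$, and with the given $f_0,\ldots,f_r$. Since the $f_j$ are $1$-bounded and supported on $[N]$, the domination \eqref{eq:fibre-domination} holds. The boundedness condition is immediate for $\nu=1_{[N]}$: every iterated difference of $1_{[N]}$ is an indicator of a subset of $[N]$, so the left side of \eqref{eq:cube-moment-condition} is at most $NH^j$. Hence each $\nu_j$ satisfies the $(r,1,2C_1)$ boundedness condition at scale $(N,B,H)$. Then \eqref{eq:iterated-CS-conclusion} gives
\[
|\mathcal L_{\mathbf c}|\leq 2NH\Bigl(\frac{\cT_{\mathbf c}(f_0;1_{[N]})}{NH^{r+1}}\Bigr)^{2^{-r}},
\]
so the remaining task is to compare $\cT_{\mathbf c}(f_0;1_{[N]})$ with $T_{\mathbf c}(f_0)$.

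Unfold \eqref{eq:terminal-one}. The set $\cQ_r([H])$ consists of $(m,\mathbf h)$ with $m-\boldsymbol\omega\cdot\mathbf h\in[H]$ for all $\boldsymbol\omega$. For fixed $\mathbf h$, the number of admissible $m$ is $(H-\sum_i|h_i|)_+$, because the extreme values of $\boldsymbol\omega\cdot\mathbf h$ differ by $\sum_i|h_i|$. Fold the signs: replacing $h_i$ by $-h_i$ changes $\Delta_{c_ih_i}F(x)$ into a shifted conjugate, $\Delta_{-a}F(x)=\overline{\Delta_aF(x-a)}$. The $x$-sum of the full iterated difference is therefore either unchanged or conjugated under each sign flip. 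This is why the statement carries the weight $\varpi$ and a real-part or absolute-value structure.

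The main obstacle is the $\nu$-factors. These are products $\prod_i\Delta_{((c_\ell-c_i)h_\ell)_{\ell\ne i}}1_{[N]}(x+c_im)$, and they couple $x$, $m$ and $\mathbf h$, so $\cT$ is not literally $\sum_x\Delta_{\mathbf c\mathbf h}f_0(x)$ times a count of $m$. There are two ways to handle this.

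\textbf{Option 1: Fourier-expand the cutoffs.} Expand each indicator $1_{[N]}$ in these factors. Since $|c_i|\le C_1B$ and $|m|,|h_\ell|\le H$, all arguments vary over ranges of size $O_{C_1}(N)$. One could absorb the resulting linear phases in $x$ into $f_0$, at a cost of $O_{r,C_1}(1)$ factors.

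\textbf{Option 2: avoid the cutoffs entirely (preferred).} Choose majorants more cleverly, so that no cutoff appears: take $\nu_j\equiv1$ on a large interval, or rerun the Cauchy--Schwarz recursion of the proof of Lemma~\ref{lem:fibrewise-CS} without inserting $\nu_i$. Since $|f_i|\le1$, one can use the bound $|\Phi_j|\le 1_{[N]}$ only in the weight of the Cauchy--Schwarz step and drop it from $\mathcal L_j$. This is legitimate because the positive weight $\Psi_j$ can be bounded by $1$ on the support of $G_j$, via $|G|^2\ge0$, after enlarging the $y$-range to an interval of length $O_{C_1}(N)$. The $\cP_j$ then remain $O_{C_1}(NH^{j-1})$, and $\mathcal L_r$ becomes exactly
\[
\sum_{\mathbf h}(H-\textstyle\sum_i|h_i|)_+\sum_x\Delta_{c_1h_1,\ldots,c_rh_r}f_0(x).
\]
Folding signs as above converts this into $T_{\mathbf c}(f_0)$ up to conjugations of individual terms. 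To handle those conjugations, I would keep absolute values on the folded groups, or note that the full sum is real and nonnegative by \eqref{eq:square-identity}. That gives $\mathcal L_r\le|T_{\mathbf c}(f_0)|$ up to constants, which yields \eqref{eq:flat-CS-conclusion}.
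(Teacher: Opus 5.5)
Your preferred Option 2 is correct and is essentially the paper's proof: the paper takes $\nu_j=1_J$ for an interval $J\supseteq[N]$ of length $O_{r,C_1}(N)$ containing every argument of the cutoff factors, so all those factors equal $1$ wherever $\Delta_{c_1h_1,\ldots,c_rh_r}f_0(x)\neq0$; it then counts the $(H-\sum_i|h_i|)_+$ admissible $m$ and folds signs to get $\cT_{\mathbf c}(f_0;1_J)=\operatorname{Re}T_{\mathbf c}(f_0)\leq|T_{\mathbf c}(f_0)|$. Your other variant, discarding the weight $0\leq\Psi_j\leq1$ from the second Cauchy--Schwarz factor, also works and even makes the constant independent of $C_1$.

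For the conjugations, use the realness of the full sum: since $\operatorname{Re}\overline{z}=\operatorname{Re}z$, it equals exactly $\operatorname{Re}T_{\mathbf c}(f_0)$. Keeping absolute values on the folded groups would only give $\sum_{\mathbf u}\varpi(\mathbf u)(H-\sum_iu_i)_+\bigl|\sum_x\Delta_{c_1u_1,\ldots,c_ru_r}f_0(x)\bigr|$, which is not bounded by $|T_{\mathbf c}(f_0)|$. Option 1 is unnecessary and, as sketched, would not produce $T_{\mathbf c}(f_0)$ itself.
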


\begin{proof}
Choose an interval $J$ of length $N_0\asymp_{r,C_1}N$ such that $[N]\subset J$
and 
\begin{align}\label{eq:terminal-argument}
    x+c_jm+\sum_{i\neq j}\omega_i(c_i-c_j)h_i\in J
\end{align}
for all $x\in[N]$, $1\leq j\leq r$, $m\in\{0,\ldots,H\}$,
$h_i\in\{-H,\ldots,H\}$ and $\omega_i\in\{0,1\}$.  Enlarge $J$ by a
constant factor, if necessary, so that $|c_i|\leq B_0$, where
$B_0=N_0/H$.  Set $\nu_1=\cdots=\nu_r=1_J$.

For every $0\leq j\leq r$ and all integers $a_1,\ldots,a_j$,
\begin{align*}
 \sum_x\sum_{u_1,\ldots,u_j\in[0,H-1]}
 \Delta_{a_1u_1,\ldots,a_ju_j}1_J(x)
 \leq N_0H^j,
\end{align*}
because the cube is a product of values of $1_J$ and includes the factor
$1_J(x)$.  Thus the final assertion of Lemma~\ref{lem:fibrewise-CS} applies
with
$N=N_0$, $B=B_0$, $A=C_0=1$, and
\begin{align}\label{eq:flat-applied}
 |\mathcal{L}_{\mathbf c}|
 \leq2N_0H
 \left(
 \frac{\cT_{\mathbf c}(f_0;1_J)}{N_0H^{r+1}}
 \right)^{2^{-r}}.
\end{align}

It remains to simplify the terminal form in~\eqref{eq:flat-applied}.  Set
\begin{align*}
    A(\mathbf c,\mathbf h)
    =
    \sum_x\Delta_{c_1h_1,\ldots,c_rh_r}f_0(x).
\end{align*}
On expanding $\cT_{\mathbf c}(f_0;1_J)$ according
to~\eqref{eq:terminal-one}, the summands involve the factor
$\Delta_{c_1h_1,\ldots,c_rh_r}f_0(x)$.  Whenever this factor is nonzero,
we must have
$f_0(x)\neq0$, and hence $x\in[N]$.  Moreover,
$(m,\mathbf h)\in\cQ_r([H])$ implies that $m\in[H]$ and
$|h_i|\leq H-1$ for every $i$.  It follows from the choice of $J$
and~\eqref{eq:terminal-argument} that every factor arising from $1_J$ is
equal to $1$.  Therefore
\begin{align}\label{eq:terminal-reduced}
    \cT_{\mathbf c}(f_0;1_J)
    =
    \sum_{(m,\mathbf h)\in\cQ_r([H])}
    A(\mathbf c,\mathbf h).
\end{align}

For fixed $\mathbf h$, the condition
$(m,\mathbf h)\in\cQ_r([H])$ is equivalent to
\begin{align*}
    1+\boldsymbol\omega\cdot\mathbf h
    \leq m
    \leq H+\boldsymbol\omega\cdot\mathbf h
    \qquad
    \text{for every }\boldsymbol\omega\in\{0,1\}^r.
\end{align*}
Thus the admissible integers $m$ are precisely those satisfying
\begin{align*}
    1+\sum_{i=1}^r\max(h_i,0)
    \leq m
    \leq H+\sum_{i=1}^r\min(h_i,0).
\end{align*}
Their number is
\begin{align}\label{eq:m-count}
    \left(H-\sum_{i=1}^r|h_i|\right)_+.
\end{align}
Consequently,~\eqref{eq:terminal-reduced} becomes
\begin{align*}
    \cT_{\mathbf c}(f_0;1_J)
    =
    \sum_{\mathbf h\in\mathbb Z^r}
    \left(H-\sum_{i=1}^r|h_i|\right)_+
    A(\mathbf c,\mathbf h).
\end{align*}

Changing the sign of one $h_i$ and translating the variable $x$ gives
\begin{align*}
    A(\mathbf c,h_1,\ldots,-h_i,\ldots,h_r)
    =
    \overline{A(\mathbf c,h_1,\ldots,h_i,\ldots,h_r)}.
\end{align*}
Folding the sum over the signs of the $h_i$ therefore yields
\begin{align*}
    \cT_{\mathbf c}(f_0;1_J)
    &=
    \sum_{\mathbf u\in\mathbb Z_{\geq0}^r}
    \varpi(\mathbf u)
    \left(H-\sum_{i=1}^r u_i\right)_+
    \operatorname{Re}A(\mathbf c,\mathbf u)\\
    &=
    \operatorname{Re}T_{\mathbf c}(f_0).
\end{align*}
Here, when $\mathbf u=\mathbf0$, the cube is
$|f_0|^{2^r}$ and is already real.  Hence
\begin{align*}
    \cT_{\mathbf c}(f_0;1_J)
    \leq |T_{\mathbf c}(f_0)|.
\end{align*}
Inserting this estimate into~\eqref{eq:flat-applied} and using
$N_0\asymp_{r,C_1}N$ proves~\eqref{eq:flat-CS-conclusion}.
\end{proof}

\subsection{Concatenation}\label{sec:concatenation}

We need the following variant of the quantitative
concatenation estimate of Peluse and Prendiville~\cite[Lemma~5.3]{PP}. We give a somewhat different argument that avoids Fourier analysis and keeps track of the quantitative dependencies in the exponents.

\begin{lemma}[Concatenation]\label{lem:pp_5.3}
Let $s,B_1,H_1$ be positive integers with
$B_1H_1\asymp_sN$, and let $f\colon\mathbb Z\to\mathbb C$ be $1$-bounded and
supported on $[N]$.  Set
\begin{align*}
    \alpha=
    \mathbb E_{b\in[B_1]}\mathbb E_{h\in[H_1]}
    \|\Delta_{bh}f\|_{U^s[N]}^{2^s}.
\end{align*}
Then
\begin{align}\label{eq:sharp-concat-conclusion}
    \|f\|_{U^{s+1}[N]}^{2^{s+1}}
    \gg_s \frac{\alpha^2}{\log(2/\alpha)}.
\end{align}
\end{lemma}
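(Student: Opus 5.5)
The plan is to exploit the fact that $\alpha$ is the pairing of the nonnegative function $a(n)\coloneqq\|\Delta_nf\|_{U^s[N]}^{2^s}$ with the multiplicative measure $\mu\coloneqq\mathbb E_{b\in[B_1]}\mathbb E_{h\in[H_1]}\delta_{bh}$. The aim is to replace $\mu$ by a measure that is essentially flat on $[-O(N),O(N)]$, because pairing $a$ with the flat measure $N^{-1}1_{[-N,N]}$ gives $\gg_s\|f\|_{U^{s+1}[N]}^{2^{s+1}}$ by the recursion~\eqref{eq:gowers-recursion}. A direct comparison of $\mu$ with the flat measure loses powers of $\log N$, since $\mu(n)\approx\tau(n)/(B_1H_1)$. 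So I would first pass through one Cauchy--Schwarz step, which replaces $\mu$ by $\mu*\tilde\mu$ (the law of $bh-b'h'$), and then truncate according to $\gcd(b,b')$.

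\textbf{Step 1: symmetrise.} Peel one difference off $a(n)$. For $s\geq1$ write
\[
\|\Delta_n f1_{[N]}\|_{U^s(\mathbb Z)}^{2^s}=\sum_{\mathbf h'\in\mathbb Z^{s-1}}\sum_x g_{\mathbf h'}(x)\overline{g_{\mathbf h'}(x+n)},\qquad g_{\mathbf h'}=\Delta_{\mathbf h'}(f1_{[N]}),
\]
so that $a(n)$ is a sum over $\mathbf h'$ of autocorrelations $(g_{\mathbf h'}\star g_{\mathbf h'})(n)$. For $s=1$ the sum over $\mathbf h'$ is absent and $g=f1_{[N]}$. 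Then
\[
\alpha\asymp_s N^{-s}\sum_{\mathbf h'}\langle g_{\mathbf h'},g_{\mathbf h'}*\mu\rangle.
\]
Apply Cauchy--Schwarz in $x$ and then in $\mathbf h'$, using that $g_{\mathbf h'}$ is $1$-bounded and supported on an interval of length $N$, with $O(N^{s-1})$ relevant $\mathbf h'$. This gives
\[
\alpha^2\ll_s N^{-s}\sum_{\mathbf h'}\|g_{\mathbf h'}*\mu\|_2^2=N^{-s}\sum_n(\mu*\tilde\mu)(n)\sum_{\mathbf h'}(g_{\mathbf h'}\star g_{\mathbf h'})(n).
\]
Here $\tilde\mu(n)=\mu(-n)$, and the inner sum over $\mathbf h'$ is again the normalised $a(n)$ up to constants. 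Thus $\alpha^2\ll_s\sum_n\nu(n)a(n)$ with $\nu=\mu*\tilde\mu$, which is the law of $bh-b'h'$.

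\textbf{Step 2: decompose $\nu$ by the gcd.} Write $\nu=\nu_{\le L}+\nu_{>L}$ according to whether $d=\gcd(b,b')\leq L$ or $d>L$. The proportion of pairs $(b,b')\in[B_1]^2$ with $\gcd>L$ is at most $\sum_{d>L}d^{-2}\ll1/L$. Since $a\leq1$, this gives $\sum_n\nu_{>L}(n)a(n)\ll1/L$. For fixed $b,b'$ with gcd $d$, the number of $(h,h')\in[H_1]^2$ with $bh-b'h'=n$ is zero unless $d\mid n$, and is otherwise $\ll dH_1/\max(b,b')+1$. The central difficulty is to show that $\sum_{d\le L}$ of these contributions gives
\[
\nu_{\le L}(n)\ll\frac{1}{N}\sum_{d\le L,\ d\mid n}w_d,
\]
with weights whose total effect is $\ll\log(2L)/N$ after averaging against $a$. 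This is the step I expect to be the main obstacle. One must handle the additive $+1$ term, where $b,b'$ are small, and the divisibility restriction $d\mid n$. For the latter I would show that $\sum_{d\mid n}a(n)$ is not much larger than the full average. Failing that, I would use the bound $\sum_{d\le L}\sum_{b,b':\gcd=d}\frac{d}{B_1^2}\cdot\frac{1}{\max(b,b')H_1}\cdot N\ll\frac{\log L}{N}\cdot N$, which is crude but sufficient, together with positivity of $a$ and the trivial bound of $a$ on each residue class by $\sum_n a(n)$. With this in hand,
\[
\alpha^2\ll_s\frac{\log(2L)}{N}\sum_{|n|\le O(N)}a(n)+\frac1L\ll_s\log(2L)\,\|f\|_{U^{s+1}[N]}^{2^{s+1}}+\frac1L.
\]

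\textbf{Step 3: optimise.} Choose $L\asymp C_s\alpha^{-2}$ so that the $1/L$ term is at most $\alpha^2/2$. This gives $\|f\|_{U^{s+1}[N]}^{2^{s+1}}\gg_s\alpha^2/\log(2/\alpha)$, which is~\eqref{eq:sharp-concat-conclusion}. The hypothesis $B_1H_1\asymp_sN$ is used only to ensure that all differences $bh-b'h'$ lie in $[-O_s(N),O_s(N)]$, and that the normalisations $N$ and $B_1H_1$ are interchangeable.
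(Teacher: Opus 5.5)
Your route is the paper's: one Cauchy--Schwarz step gives $\alpha^2\ll_s\mathbb E_{b,b',h,h'}\|\Delta_{bh-b'h'}f\|_{U^s[N]}^{2^s}$. You then discard pairs with $\gcd(b,b')>L\asymp\alpha^{-2}$ at cost $O(1/L)$, bound the remaining representation function by $\log(2L)/N$, and finish with the recursion~\eqref{eq:gowers-recursion}. Two points need repair.

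\textbf{The index in Step 1.} The sum must run over $\mathbf h'\in\mathbb Z^{s}$, not $\mathbb Z^{s-1}$. For $s=1$ your formula gives $\sum_x f(x)\overline{f(x+n)}$ instead of $|\sum_x f(x)\overline{f(x+n)}|^2$. There are then $O(N^s)$ relevant $\mathbf h'$, and the normalisation is $N^{-(s+1)}$. The Cauchy--Schwarz step is otherwise unchanged.

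\textbf{The multiplicity step.} You misplace the difficulty here.
\begin{itemize}
\item The restriction $d\mid n$ is not an obstacle. Since $a\geq0$, a bound on $\nu_{\le L}(n)$ uniform in $n$ suffices. You may therefore drop $d\mid n$ and sum the count $\ll B_1H_1/d$ over all $d\le L$, which gives $\nu_{\le L}(n)\ll\log(2L)/(B_1H_1)$. This is your ``fallback'', and it is exactly what the paper does.
\item The additive $+1$ term, however, is left unresolved, and your fallback display simply omits it. Summed over pairs $(b,b')$ with $\gcd(b,b')=d\le L$, it contributes $\frac{1}{B_1^2H_1^2}\sum_{d\le L}(B_1/d)^2\ll H_1^{-2}$ to $\nu_{\le L}(n)$.
\item This is $\ll 1/(B_1H_1)$ only when $B_1\le H_1$. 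When $B_1\gg H_1$ (say $H_1=1$), the per-$(b,b')$ count bound is useless. Note also that this term dominates when $b,b'$ are \emph{large}, i.e.\ $\max(b,b')>dH_1$, not small.
\end{itemize}

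The paper closes this gap with one observation at the outset. Both $\alpha=\mathbb E_{b\in[B_1]}\mathbb E_{h\in[H_1]}a(bh)$ and the hypothesis $B_1H_1\asymp_sN$ are symmetric in $(B_1,H_1)$. So you may assume $B_1\le H_1$ and impose the gcd truncation on the shorter pair of variables. Then $(B_1/d)^2\le B_1H_1/d$, and your argument goes through as planned, with $\log(2L)\asymp\log(2/\alpha)$.
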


\begin{proof}
We may assume that $\alpha>0$.  Since the roles of $B_1$ and $H_1$ are
symmetric, we may also assume that $B_1\leq H_1$.  By the definition of the
Gowers norm,
\begin{align*}
    \alpha\|1_{[N]}\|_{U^s(\mathbb Z)}^{2^s}
    ={}&
    \mathbb E_{b\in[B_1]}\mathbb E_{h\in[H_1]}
    \sum_{x\in\mathbb Z}\sum_{\mathbf y\in\mathbb Z^s}
    \Delta_{\mathbf y}f(x)
    \overline{\Delta_{\mathbf y}f(x+bh)}.
\end{align*}
Moreover, since $f$ is $1$-bounded and supported on $[N]$,
\begin{align*}
    \sum_{x\in\mathbb Z}\sum_{\mathbf y\in\mathbb Z^s}
    |\Delta_{\mathbf y}f(x)|^2
    \leq \|1_{[N]}\|_{U^s(\mathbb Z)}^{2^s}.
\end{align*}
The Cauchy--Schwarz inequality gives
\begin{align*}
&\alpha^2\|1_{[N]}\|_{U^s(\mathbb Z)}^{2^{s+1}}\notag\\
=&\left|
\sum_{x\in\mathbb Z}\sum_{\mathbf y\in\mathbb Z^s}
\Delta_{\mathbf y}f(x)
\mathbb E_{b\in[B_1]}\mathbb E_{h\in[H_1]}
\overline{\Delta_{\mathbf y}f(x+bh)}
\right|^2\notag\\
\leq&
\|1_{[N]}\|_{U^s(\mathbb Z)}^{2^s}
\sum_{x\in\mathbb Z}\sum_{\mathbf y\in\mathbb Z^s}
\left|
\mathbb E_{b\in[B_1]}\mathbb E_{h\in[H_1]}
\overline{\Delta_{\mathbf y}f(x+bh)}
\right|^2\notag\\
=&
\|1_{[N]}\|_{U^s(\mathbb Z)}^{2^s}
\mathbb E_{b,b'\in[B_1]}\mathbb E_{h,h'\in[H_1]}
\|\Delta_{bh-b'h'}f\|_{U^s(\mathbb Z)}^{2^s},
\end{align*}
where for the last identity, we expanded the square and translated $x$ by $b'h'$.
Consequently,
\begin{align}\label{eq:concat-CS}
    \alpha^2N^{2s+2}
    \ll_s
    N^{s+1}\mathbb E_{b,b'\in[B_1]}\mathbb E_{h,h'\in[H_1]}
    \|\Delta_{bh-b'h'}f\|_{U^s(\mathbb Z)}^{2^s}.
\end{align}

Set $D=\max(2,C_s\alpha^{-2})$, where $C_s$ is sufficiently large.  The
proportion of pairs $(b,b')\in [B_1]^2$ for which $\gcd(b,b')>D$ is
\begin{align*}
    \ll\sum_{d>D}\frac1{d^2}\ll D^{-1}.
\end{align*}
Since $\|\Delta_tf\|_{U^s(\mathbb Z)}^{2^s}\ll_sN^{s+1}$ for every $t$,
the contribution of these pairs to the right side
of~\eqref{eq:concat-CS} can be absorbed into its left side.  If $D>B_1$,
there are no such pairs.

For $t\in\mathbb Z$, let $r_D(t)$ be the proportion of quadruples
$(b,b',h,h')\in[B_1]^2\times[H_1]^2$ such that
\begin{align*}
    bh-b'h'=t,
    \qquad
    \gcd(b,b')\leq D.
\end{align*}
After discarding the pairs with $\gcd(b,b')>D$, the expectation on the
right side of~\eqref{eq:concat-CS} is
\begin{align}\label{eq:rD-identity}
    \sum_{t\in\mathbb Z}r_D(t)
    \|\Delta_tf\|_{U^s(\mathbb Z)}^{2^s}.
\end{align}
Fix $d=\gcd(b,b')$ and set $b=du$ and $b'=du'$, where
$\gcd(u,u')=1$.  For fixed $u,u'$, the solutions of
\begin{align*}
    uh-u'h'=t/d
\end{align*}
are empty unless $d\mid t$.  If $(h_0,h_0')$ is one solution, then all the
integer solutions are
\begin{align*}
    (h,h')=(h_0+u'n,h_0'+un),
    \qquad n\in\mathbb Z.
\end{align*}
Their number in $[H_1]^2$ is
\begin{align*}
    \ll 1+\frac{H_1}{\max(u,u')}.
\end{align*}
After summing over $u,u'\leq B_1/d$, the contribution of the first term is
$O((B_1/d)^2)=O(B_1H_1/d)$, and, by symmetry, the contribution of the second term
is at most
\begin{align*}
    2\sum_{u'\leq B_1/d}\,\sum_{u\leq u'}\frac{H_1}{u'}
    \ll \frac{B_1H_1}{d}.
\end{align*}
Consequently, for each $d\leq\min(D,B_1)$,
\begin{align*}
&\#\{(b,b',h,h')\in[B_1]^2\times[H_1]^2:
bh-b'h'=t,\ \gcd(b,b')=d\}\notag\\
&\qquad\ll \frac{B_1H_1}{d}.
\end{align*}
It follows that
\begin{align}\label{eq:multiplicity-bound}
    r_D(t)
    \ll
    \frac{1}{B_1^2H_1^2}
    \sum_{d\leq\min(D,B_1)}\frac{B_1H_1}{d}
    \ll
    \frac{\log(2D)}{B_1H_1}.
\end{align}
Hence from~\eqref{eq:rD-identity} we obtain
\begin{align*}
    \alpha^2N^{2s+2}
    &\ll_s
    \frac{N^{s+1}\log(2/\alpha)}{B_1H_1}
    \sum_{t\in\mathbb Z}\|\Delta_tf\|_{U^s(\mathbb Z)}^{2^s}\\
    &=
    \frac{N^{s+1}\log(2/\alpha)}{B_1H_1}
    \|f\|_{U^{s+1}(\mathbb Z)}^{2^{s+1}}.
\end{align*}
Using $B_1H_1\asymp_sN$ and the normalisation of the Gowers norms proves the
result.
\end{proof}

As a consequence we obtain the following iterated concatenation result.

\begin{corollary}[Iterated averaged concatenation]
\label{cor:iterated-concatenation}
Let $s,r,B_1,H_1$ be positive integers with $B_1H_1\asymp_sN$, let
$0<\alpha\leq1$, and let $f$ be $1$-bounded and supported on $[N]$.  If
\begin{align}\label{eq:iterated-concat-hyp}
    \mathbb E_{\mathbf b\in[B_1]^r}
    \mathbb E_{\mathbf h\in[H_1]^r}
    \|\Delta_{b_1h_1,\ldots,b_rh_r}f\|_{U^s[N]}^{2^s}
    \geq\alpha,
\end{align}
then
\begin{align}\label{eq:iterated-concat-conclusion}
    \|f\|_{U^{s+r}[N]}^{2^{s+r}}
    \gg_{s,r}
    \frac{\alpha^{2^r}}
    {(\log(2/\alpha))^{2^r-1}}.
\end{align}
\end{corollary}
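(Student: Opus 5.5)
We argue by induction on $r$, applying Lemma~\ref{lem:pp_5.3} once for each of the $r$ difference variables. The base case $r=1$ is exactly Lemma~\ref{lem:pp_5.3}: the hypothesis~\eqref{eq:iterated-concat-hyp} with $r=1$ says that $\alpha$ is at most the averaged quantity in that lemma. Since $x\mapsto x^2/\log(2/x)$ is increasing on $(0,1]$, the lemma gives $\|f\|_{U^{s+1}[N]}^{2^{s+1}}\gg_s\alpha^2/\log(2/\alpha)$, which is~\eqref{eq:iterated-concat-conclusion} for $r=1$.

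For the inductive step, assume the result for $r-1$, with all values of $s$ allowed. We peel off the last variable pair $(b_r,h_r)$. For fixed $(b_1,\dots,b_{r-1},h_1,\dots,h_{r-1})$, put $g=\Delta_{b_1h_1,\ldots,b_{r-1}h_{r-1}}f$. This function is $1$-bounded, but it is supported only on $[N]\cap\bigcap([N]-\cdots)$, a subset of $[N]$. Lemma~\ref{lem:pp_5.3} only needs support in $[N]$, so it applies. Writing $\alpha(\mathbf b',\mathbf h')=\mathbb E_{b_r,h_r}\|\Delta_{b_rh_r}g\|_{U^s[N]}^{2^s}$, the lemma gives
\[
\|g\|_{U^{s+1}[N]}^{2^{s+1}}\gg_s \alpha(\mathbf b',\mathbf h')^2/\log(2/\alpha(\mathbf b',\mathbf h')).
\]
Here the differences commute, so $\Delta_{b_rh_r}g=\Delta_{b_1h_1,\dots,b_rh_r}f$.

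Next we average over $(\mathbf b',\mathbf h')\in[B_1]^{r-1}\times[H_1]^{r-1}$. The function $\phi(x)=x^2/\log(2/x)$ is convex on $(0,1]$; its second derivative is positive, which can be checked directly, or one can use $x^2/\log(2/x)\ge x^2/\log(2/\alpha)$ wherever $x\le 1$ and $x\ge$ something. A cleaner route avoids convexity entirely. Since $\log(2/x)\le\log(2/\alpha)$ whenever $x\ge\alpha/2$, the values with $\alpha(\cdot)<\alpha/2$ contribute at most $\alpha/2$ to the mean, so
\[
\mathbb E\,\alpha(\cdot)1_{\alpha(\cdot)\ge\alpha/2}\ge\alpha/2.
\]
Cauchy--Schwarz then gives
\[
\mathbb E\|g\|_{U^{s+1}[N]}^{2^{s+1}}\gg_s\frac{\mathbb E\,\alpha(\cdot)^21_{\alpha(\cdot)\ge\alpha/2}}{\log(4/\alpha)}\geq\frac{\alpha^2}{4\log(4/\alpha)}=:\alpha_1.
\]

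This is the hypothesis~\eqref{eq:iterated-concat-hyp} with $s+1$ in place of $s$, $r-1$ in place of $r$, and $\alpha_1$ in place of $\alpha$. The condition $B_1H_1\asymp_{s+1}N$ is harmless, as all constants may depend on $s$ and $r$. By the inductive hypothesis,
\[
\|f\|_{U^{s+r}[N]}^{2^{s+r}}\gg_{s,r}\alpha_1^{2^{r-1}}/(\log(2/\alpha_1))^{2^{r-1}-1}.
\]
Substituting $\alpha_1\gg\alpha^2/\log(2/\alpha)$ and $\log(2/\alpha_1)\ll\log(2/\alpha)$ gives the exponent of $\log(2/\alpha)$ in the denominator as $2^{r-1}+(2^{r-1}-1)=2^r-1$, and the power of $\alpha$ as $2^r$. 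This is exactly~\eqref{eq:iterated-concat-conclusion}.

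The only delicate point is passing the logarithmic loss through the average. A plain Jensen argument with $\log(2/\alpha(\cdot))$ fails, because the pointwise $\alpha(\cdot)$ may be much smaller than $\alpha$. The truncation at $\alpha/2$ described above handles this. The remaining bookkeeping is routine, namely checking that the exponents combine to $(2^r,\,2^r-1)$ and that $\log(4/\alpha)\asymp\log(2/\alpha)$ for $\alpha\le1$.
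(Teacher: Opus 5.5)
Your proposal is correct and follows essentially the same route as the paper: it applies Lemma~\ref{lem:pp_5.3} once per variable pair, fibrewise over the remaining differences, and climbs from $U^s$ to $U^{s+r}$ while tracking the same exponents. The only difference is how the logarithm passes through the average: the paper applies Jensen to the convex increasing function $\phi(u)=u^2/\log(2/u)$, which does work (contrary to your closing remark), whereas your truncation at $\alpha/2$ followed by Cauchy--Schwarz gets the same bound at the cost of a harmless constant.
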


\begin{proof}
We first establish an averaged estimate which will be iterated to prove the claim.  Let
$\Omega$ be a finite nonempty probability space, and for $\omega\in \Omega$ let $f_\omega$ be
$1$-bounded and supported on $[N]$, and set
\[
 \beta_\omega
 =\mathbb E_{b\in[B_1]}\mathbb E_{h\in[H_1]}
   \|\Delta_{bh}f_\omega\|_{U^s[N]}^{2^s}.
\]
If $\mathbb E_{\omega\in \Omega}\beta_\omega\geq\beta$, then we claim that
\begin{align}\label{eq:averaged-concat}
 \mathbb E_{\omega\in \Omega}\|f_\omega\|_{U^{s+1}[N]}^{2^{s+1}}
 \gg_s\frac{\beta^2}{\log(2/\beta)}.
\end{align}
Indeed, the normalisation gives $0\leq\beta_\omega\leq1$, and
Lemma~\ref{lem:pp_5.3} gives the right side with $\mathbb{E}_{\omega\in \Omega}\beta_\omega^2/\log(2/\beta_{\omega})$ in place
of $\beta$.  The function
\[
 \phi(u)=\frac{u^2}{\log(2/u)},\qquad \phi(0)=0,
\]
is increasing and convex on $[0,1]$: writing $L_u=\log(2/u)$, one has
\[
 \phi'(u)=\frac{2u}{L_u}+\frac{u}{L_u^2},\qquad
 \phi''(u)=\frac{2}{L_u}+\frac{3}{L_u^2}+\frac{2}{L_u^3}>0.
\]
Thus~\eqref{eq:averaged-concat} follows from Jensen's inequality.

For $0\leq j<r$, let
\begin{align*}
    \alpha_j={}&
    \mathbb E_{\mathbf b\in[B_1]^{r-j}}
    \mathbb E_{\mathbf h\in[H_1]^{r-j}}
    \|\Delta_{b_1h_1,\ldots,b_{r-j}h_{r-j}}f\|_{U^{s+j}[N]}^{2^{s+j}},
\end{align*}
and let $\alpha_r=\|f\|_{U^{s+r}[N]}^{2^{s+r}}$.  Thus
$\alpha_0\geq\alpha$.  Apply~\eqref{eq:averaged-concat} successively.  At
step $j$, take
$\Omega=[B_1]^{r-j-1}\times[H_1]^{r-j-1}$ with the uniform measure and
\[
    f_\omega
    =\Delta_{b_1h_1,\ldots,b_{r-j-1}h_{r-j-1}}f.
\]
The variables averaged in the  estimate are
$b_{r-j}$ and $h_{r-j}$.
At every step,
\begin{align*}
    \alpha_{j+1}\gg_{s,r}
    \frac{\alpha_j^2}{\log(2/\alpha_j)}.
\end{align*}
Induction gives
\begin{align*}
    \alpha_j\gg_{s,r}
    \frac{\alpha^{2^j}}
    {(\log(2/\alpha))^{2^j-1}}.
\end{align*}
The inductive lower bound also gives
$\log(2/\alpha_j)\ll_{s,r}\log(2/\alpha)$, so each intermediate logarithm
is controlled by the original one.  Taking $j=r$ proves the result.
\end{proof}

\subsection{Generalised von Neumann estimates in the bounded setting}\label{sec:bounded-gvn}

We combine the iterated Cauchy--Schwarz estimate of
Subsection~\ref{sec:engine} with iterated concatenation from
Subsection~\ref{sec:concatenation}.

This yields a generalised von Neumann theorem in which all quantitative
dependencies are tracked simultaneously.

\begin{proposition}[Quantitative generalised von Neumann estimate for bounded functions]\label{prop:Uk}
Let $k\geq2$ and $C_0\geq1$.  There are constants
$c_{k,C_0},C_{k,C_0}'>0$ such that the following holds.  Let
$0<\delta<c_{k,C_0}$, let $H$ be a positive integer, set $B=N/H$, and assume
\begin{align}\label{eq:Uk-range}
    B\geq C_{k,C_0}'\delta^{-1},
    \qquad
    H\geq C_{k,C_0}'\delta^{-2^{k-1}}.
\end{align}
Let $\lambda\colon\mathbb Z^k\to\mathbb C$ be $1$-bounded and supported on
$([-C_0B,C_0B]\cap\mathbb Z)^k$, and let
$f_1,\ldots,f_k\colon\mathbb Z\to\mathbb C$ be $1$-bounded and supported on
$[N]$.  If
\begin{align}\label{eq:Uk-hyp}
    |R_H(\lambda;f_1,\ldots,f_k)|
    \geq \delta B^k\frac{N^2}{B},
\end{align}
then, for every $1\leq i\leq k$,
\begin{align}\label{eq:Uk-conclusion}
    \|f_i\|_{U^k[N]}
    \gg_{k,C_0}
    \frac{\delta^{2^{k-1}}}
    {(\log(2/\delta))^{(2^{k-1}-1)/2^k}}.
\end{align}
\end{proposition}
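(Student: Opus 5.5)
The plan is to fix an index $i$ and anchor the configuration at $b_i$. We then run Lemma~\ref{lem:flat-CS} fibrewise in $\mathbf b$, average the resulting terminal forms over $\mathbf b$, and feed the averaged quantity into Corollary~\ref{cor:iterated-concatenation} with $s=1$ and $r=k-1$. By Remark~\ref{rmk:symmetric} (anchoring at $b_i$ rather than $b_1$), for each $\mathbf b$ we have
\[
r_H(\mathbf b;f_1,\ldots,f_k)=\sum_x\sum_{m\in[H]}f_i(x)\prod_{j\neq i}f_j\bigl(x+(b_j-b_i)m\bigr).
\]
So $r_H(\mathbf b)=\mathcal L_{\mathbf c}$ with $f_0=f_i$, $r=k-1$ and $\mathbf c=\mathbf c(\mathbf b)=(b_j-b_i)_{j\neq i}$, whose entries satisfy $|c_j|\leq 2C_0B$.

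\emph{Step 1: degenerate tuples.} Tuples $\mathbf b$ for which the $c_j$ are not pairwise distinct and nonzero (i.e.\ some $b_j=b_\ell$) form an $O_k(B^{-1})$ proportion of the box. Each contributes trivially $\leq NH$. Since $B\geq C'\delta^{-1}$, their total is at most $\frac12\delta B^kN^2/B$. Hence the nondegenerate $\mathbf b$ still give $\sum_{\mathbf b}|r_H(\mathbf b)|\geq\frac12\delta B^{k-1}N^2$.

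\emph{Step 2: Cauchy--Schwarz and H\"older.} Lemma~\ref{lem:flat-CS} with $C_1=2C_0$ gives $|r_H(\mathbf b)|\ll NH\,(|T_{\mathbf c}(f_i)|/NH^{k})^{2^{1-k}}$. H\"older over the $\asymp B^k$ tuples then yields
\[
\mathbb E_{\mathbf b}\,|T_{\mathbf c(\mathbf b)}(f_i)|\gg \delta^{2^{k-1}}NH^{k}.
\]
Next bound $|T_{\mathbf c}(f_i)|\leq 2^{k-1}H\sum_{\mathbf h\in[0,H-1]^{k-1}}|A(\mathbf c,\mathbf h)|$, where $A(\mathbf c,\mathbf h)=\sum_x\Delta_{c_1h_1,\ldots}f_i(x)$. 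Applying Cauchy--Schwarz gives
\[
\mathbb E_{\mathbf b}\,\mathbb E_{\mathbf h\in[0,H-1]^{k-1}}|A(\mathbf c,\mathbf h)|^2\gg \delta^{2^k}N^2.
\]
Here $|A|^2=\|\Delta_{\mathbf c\cdot\mathbf h}f_i\|_{U^1(\Z)}^2$, and after normalisation this is $\asymp N^2\|\Delta_{c_1h_1,\ldots}f_i\|_{U^1[N]}^{2}$.

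\emph{Step 3: reshaping into the concatenation average.} This is where nonnegativity of the summands $|A|^2$ is essential. For fixed $\mathbf c$, the map $\mathbf b\mapsto\mathbf c$ has fibres of size $\ll B$, so the average over $\mathbf b$ is dominated by a constant times the average over $\mathbf c\in([-2C_0B,2C_0B]\setminus\{0\})^{k-1}$. Using $A(\ldots,-c_j,\ldots)=\overline{A}$ (after translating $x$), the signs of the $c_j$ fold into the positive range. Now set $B_1=\lceil 2C_0B\rceil$ and $H_1=H$, so that $B_1H_1\asymp N$. Because the terms are nonnegative, enlarging the ranges to $\mathbf c\in[B_1]^{k-1}$ and $\mathbf h\in[H_1]^{k-1}$ costs only constant factors. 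The terms with some $h_j=0$ are harmless here, since they only add nonnegative mass. This yields hypothesis~\eqref{eq:iterated-concat-hyp} with $s=1$, $r=k-1$ and $\alpha\gg_{k,C_0}\delta^{2^k}$.

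\emph{Step 4: concatenation and bookkeeping.} Corollary~\ref{cor:iterated-concatenation} then gives
\[
\|f_i\|_{U^k[N]}^{2^k}\gg \frac{\delta^{2^k\cdot2^{k-1}}}{(\log(2/\delta))^{2^{k-1}-1}},
\]
and taking $2^k$-th roots produces exactly~\eqref{eq:Uk-conclusion}. I expect the main obstacle to be Step 3: ensuring that every passage between the $\mathbf b$-average, the signed $\mathbf c$-range, the triangle-shaped $\mathbf h$-region with weight $(H-\sum h)$, and the clean boxes $[B_1]^{k-1}\times[H_1]^{k-1}$ happens only after the nonnegative quantity $|A|^2$ has been created. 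The assumption $H\geq C'\delta^{-2^{k-1}}$ is where I would absorb boundary and rounding losses, for instance if $H$ is too small for $H_1=H$ and $B_1H_1\asymp N$ to be compatible with the integer ranges. I would check that assumption at this step first.
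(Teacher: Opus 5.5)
There is one genuine gap, in Step~3: the tuples $\mathbf h$ with some $h_j=0$. Apart from that, your outline matches the paper's proof. You anchor at the slot being estimated and discard coincident coefficients using $B\geq C'\delta^{-1}$. You then apply Lemma~\ref{lem:flat-CS} and H\"older, pass to $|A(\mathbf c,\mathbf h)|^2$, fold signs, and invoke Corollary~\ref{cor:iterated-concatenation} with $s=1$, $r=k-1$. Summing over all $\mathbf b$ with the fibre bound for $\mathbf b\mapsto\mathbf c$, instead of pigeonholing one coordinate first, is an inessential difference. Your final exponent bookkeeping is correct.

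The hypothesis \eqref{eq:iterated-concat-hyp} averages over $\mathbf h\in[H_1]^{k-1}=\{1,\ldots,H\}^{k-1}$, but your lower bound is for a sum over $[0,H-1]^{k-1}$. Nonnegativity only lets you pass from a sum over a set to a sum over a superset. The terms with some $h_j=0$ lie in your source range but not in the target range, so the fact that they ``only add nonnegative mass'' points the wrong way. A priori your entire lower bound could be carried by them; for instance, if $H=1$ it is. You must therefore show they form a small part of it.

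If you do this where you wrote it, after Cauchy--Schwarz, their contribution to $\mathbb E_{\mathbf b}\mathbb E_{\mathbf h}|A|^2$ is $O(N^2/H)$, against a lower bound of order $\delta^{2^k}N^2$. That would require $H\gg\delta^{-2^k}$, which the hypothesis $H\geq C'\delta^{-2^{k-1}}$ does not give. That hypothesis is also not needed to make $B_1H_1\asymp N$, which holds for every $H\geq1$.

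The repair is to remove these terms one step earlier, at the first-moment stage. Start from
\[
 \mathbb E_{\mathbf b}\sum_{\mathbf h\in[0,H-1]^{k-1}}|A(\mathbf c,\mathbf h)|
 \gg\delta^{2^{k-1}}NH^{k-1}.
\]
Here the $\mathbf h$ with some $h_j=0$ contribute at most $(k-1)NH^{k-2}$, which is negligible exactly when $H\geq C'\delta^{-2^{k-1}}$. Then apply Cauchy--Schwarz over $\mathbf h\in[1,H-1]^{k-1}$, and only afterwards enlarge to $[H]^{k-1}$. This is how the paper uses the second condition in \eqref{eq:Uk-range}: it discards these terms from the weighted first-moment sum before Cauchy--Schwarz. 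With this reordering the rest of your argument goes through.
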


Since $H=N/B$, the more restrictive symmetric range
\begin{align}\label{eq:Uk-symmetric-range}
    C_{k,C_0}'\delta^{-2^{k-1}}\leq B
    \leq c_{k,C_0}N\delta^{2^{k-1}}
\end{align}
is sufficient for~\eqref{eq:Uk-range}.

\begin{proof}
The hypotheses and the counting form are invariant under simultaneous
permutation of the functions and the coefficient coordinates, so it suffices
to prove the conclusion for $f_k$.  The iterated Cauchy--Schwarz
estimate produces an averaged $U^1$ lower bound, which iterated concatenation
raises to a $U^k$ lower bound.

Set $r=k-1$.  There are $O_{k,C_0}(B)$ possible values of $b_k$ on the
support of $\lambda$.
First pigeonholing in $b_k$ and then making the change of variables $x\mapsto x-b_k^*m$ for a suitable $b_k^{*}$ gives
\begin{align}\label{eq:anchored-large}
\left|
\sum_{\mathbf c}\lambda_1(\mathbf c)
\sum_x\sum_{m\in[H]}f_k(x)
\prod_{i=1}^r f_i(x+c_im)
\right|
\gg_{k,C_0} \delta B^rNH,
\end{align}
where $c_i=b_i-b_k^*$ and
\begin{align*}
 \lambda_1(\mathbf c)
 =\lambda(c_1+b_k^*,\ldots,c_r+b_k^*,b_k^*).
\end{align*} 
In particular, $\lambda_1$ is supported on a centred cube of side length
$O_{k,C_0}(B)$.

Call $\mathbf c$ \emph{degenerate} if some $c_i=0$ or some $c_i=c_j$ with
$i\ne j$.  These tuples lie in $O_k(1)$ affine hyperplanes and hence their number is
$O_{k,C_0}(B^{r-1})$.  Each sum over $(x,m)$ is at most $NH$, so their total
contribution to~\eqref{eq:anchored-large} is $O_{k,C_0}(B^{r-1}NH)$.  The first
condition in~\eqref{eq:Uk-range} allows us to discard them.  Writing
\begin{align*}
 \mathcal{L}_{\mathbf c}
 =\sum_x\sum_{m\in[H]}f_k(x)
 \prod_{i=1}^r g_i(x+c_im),
\end{align*}
we obtain
\begin{align}\label{eq:sum-counts-large}
 \sum_{\mathbf c\ \mathrm{nondegenerate}}|\mathcal{L}_{\mathbf c}|
 \gg_{k,C_0}\delta B^rNH.
\end{align}

Apply Lemma~\ref{lem:flat-CS} to each term
in~\eqref{eq:sum-counts-large}.  Since there are
$O_{k,C_0}(B^r)$ coefficient tuples,
H\"older's inequality gives
\begin{align}\label{eq:first-terminal-moment}
 \sum_{\mathbf c\ \mathrm{nondegenerate}}|T_{\mathbf c}(f_k)|
 \gg_{k,C_0}\delta^{2^r}B^rNH^{r+1}.
\end{align}
For $\mathbf h=(h_1,\ldots,h_r)$, set
\begin{align*}
 A(\mathbf c,\mathbf h)
 &=\sum_x\Delta_{c_1h_1,\ldots,c_rh_r}f_k(x),\\
 w(\mathbf h)
 &=\varpi(\mathbf h)(H-h_1-\cdots-h_r)_+.
\end{align*}
The triangle inequality and~\eqref{eq:first-terminal-moment} imply
\begin{align}\label{eq:terminal-absolute-sum}
 \sum_{\mathbf c\ \mathrm{nondegenerate}}
 \sum_{\mathbf h\in[0,H-1]^r}
 w(\mathbf h)|A(\mathbf c,\mathbf h)|
 \gg_{k,C_0}\delta^{2^r}B^rNH^{r+1}.
\end{align}
The contribution from tuples with some $h_i=0$ is
$O_{k,C_0}(B^rNH^r)$.
The second condition in~\eqref{eq:Uk-range} allows us to
discard these tuples.  Since
\begin{align*}
 \sum_{\mathbf c,\mathbf h}w(\mathbf h)
 \ll_{k,C_0}B^rH^{r+1},
\end{align*}
the Cauchy--Schwarz inequality gives
\begin{align*}
&\sum_{\substack{\mathbf c\ \mathrm{nondegenerate}\\
1\leq h_1,\ldots,h_r\leq H-1\\
h_1+\cdots+h_r\leq H-1}}
w(\mathbf h)|A(\mathbf c,\mathbf h)|^2
\gg_{k,C_0}\delta^{2^{r+1}}B^rN^2H^{r+1}.
\end{align*}
Using $w(\mathbf h)\ll_kH$, we conclude that
\begin{align}\label{eq:U1-square-sum}
\sum_{\substack{\mathbf c\ \mathrm{nondegenerate}\\
1\leq h_1,\ldots,h_r\leq H-1\\
h_1+\cdots+h_r\leq H-1}}
|A(\mathbf c,\mathbf h)|^2
\gg_{k,C_0}\delta^{2^{r+1}}B^rH^rN^2.
\end{align}

Choose a positive integer $B_1\asymp_{k,C_0}B$ so that every coefficient tuple
$\mathbf c$ having nonzero contribution to the sum~\eqref{eq:U1-square-sum} lies in
$(([-B_1,B_1]\cap\mathbb Z)\setminus\{0\})^r$.  All summands in
\eqref{eq:U1-square-sum} are nonnegative, so we may enlarge the coefficient
set $\mathbf{c}$ and the simplex in $\mathbf h$ to this product set and to $[H]^r$,
respectively.  Reversing the sign of any $c_i$ translates the variable $x$ in
$A(\mathbf c,\mathbf h)$ and possibly conjugates the sum; in particular, it
leaves $|A(\mathbf c,\mathbf h)|$ unchanged.  Folding the coefficient signs
and using
\begin{align*}
 \|\Delta_{a_1h_1,\ldots,a_rh_r}f_k\|_{U^1[N]}^2
 =\frac{|A(\mathbf a,\mathbf h)|^2}{|[N]|^2}
\end{align*}
therefore gives
\begin{align}\label{eq:initial-U1-average}
 \mathbb E_{a_1,\ldots,a_r\in[B_1]}
 \mathbb E_{h_1,\ldots,h_r\in[H]}
 \|\Delta_{a_1h_1,\ldots,a_rh_r}f_k\|_{U^1[N]}^2
 \gg_{k,C_0}\delta^{2^{r+1}}.
\end{align}
Here $B_1H\asymp_{k,C_0}N$.  Let
$\eta=c(k,C_0)\delta^{2^k}$, where $c(k,C_0)>0$ is
sufficiently small.  Since $r+1=k$, equation~\eqref{eq:initial-U1-average}
implies the hypothesis of Corollary~\ref{cor:iterated-concatenation} with
$s=1$, $r=k-1$, and $\alpha=\eta$.  This yields
\begin{align}\label{eq:Uk-power-bound}
 \|f_k\|_{U^k[N]}^{2^k}
 \gg_{k,C_0}
 \frac{\delta^{2^{2k-1}}}
 {(\log(2/\delta))^{2^{k-1}-1}},
\end{align}
since
\begin{align*}
 \eta^{2^{k-1}}\asymp_{k,C_0}\delta^{2^{2k-1}},
 \qquad
 \log(2/\eta)\asymp_{k,C_0}\log(2/\delta).
\end{align*}
Taking roots of order $2^k$ proves~\eqref{eq:Uk-conclusion} for $f_k$, and hence for
every slot by the symmetry observed.
\end{proof}

We also record an anchored version of the preceding estimate.

\begin{corollary}[Anchored generalised von Neumann estimate]
\label{cor:Uk-anchored}
Let $k\geq2$ and $C_0\geq1$, and let $0<\delta<c_{k,C_0}$ and $H$ be a
positive integer.  Set $B=N/H$, and assume
\begin{align*}
    B\geq C_{k,C_0}'\delta^{-1},
    \qquad
    H\geq C_{k,C_0}'\delta^{-2^{k-1}}.
\end{align*}
Let $\lambda\colon\mathbb Z^{k-1}\to\mathbb C$ be $1$-bounded and supported
on $([-C_0B,C_0B]\cap\mathbb Z)^{k-1}$, and let
$f_1,\ldots,f_k$ be $1$-bounded and supported on $[N]$.  If
\begin{align*}
\left|
\sum_{\mathbf b}\lambda(\mathbf b)
\sum_x\sum_{m\in[H]}f_1(x)\prod_{i=2}^kf_i(x+b_im)
\right|
\geq\delta B^{k-2}N^2,
\end{align*}
then, for every $1\leq i\leq k$,
\begin{align*}
 \|f_i\|_{U^k[N]}
 \gg_{k,C_0}
 \frac{\delta^{2^{k-1}}}
 {(\log(2/\delta))^{(2^{k-1}-1)/2^k}}.
\end{align*}
\end{corollary}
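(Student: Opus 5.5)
The plan is to reduce directly to Proposition~\ref{prop:Uk} by lifting the anchored coefficient function to a $k$-dimensional one. Define $\lambda'\colon\mathbb Z^k\to\mathbb C$ by
\begin{align*}
    \lambda'(b_1,b_2,\ldots,b_k)=\lambda(b_2,\ldots,b_k)\,1_{b_1=0}.
\end{align*}
This function is $1$-bounded and supported on $([-C_0B,C_0B]\cap\mathbb Z)^k$. Since $b_1=0$ on its support, $f_1(x+b_1m)=f_1(x)$ there. Hence
\begin{align*}
    R_H(\lambda';f_1,\ldots,f_k)=\sum_{\mathbf b}\lambda(\mathbf b)\sum_x\sum_{m\in[H]}f_1(x)\prod_{i=2}^kf_i(x+b_im).
\end{align*}
By hypothesis this has modulus at least $\delta B^{k-2}N^2$.

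The difficulty is the normalisation. Proposition~\ref{prop:Uk} asks for a lower bound of size $\delta B^{k}N^2/B=\delta B^{k-1}N^2$, whereas here we only have $\delta B^{k-2}N^2$. This is exactly the loss of one factor of $B$ coming from collapsing $b_1$ to a single value. So the proposition cannot be cited verbatim. Instead I would rerun its proof with the anchored sum as the starting point.

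In that proof, the first step pigeonholes in $b_k$ and shifts $x$ so as to reach \eqref{eq:anchored-large}, namely a lower bound $\gg\delta B^rNH$ with $r=k-1$. Our hypothesis already has this anchored shape, with $f_1$ in the anchor slot. Since $NH=N^2/B$, we have $\delta B^{k-2}N^2=\delta B^{r}NH$, so it is precisely \eqref{eq:anchored-large}, with $\lambda_1=\lambda$ supported on a centred cube of side $O(C_0B)$. From there the remaining steps go through unchanged:
\begin{itemize}
    \item discard degenerate $\mathbf c$, using $B\geq C'\delta^{-1}$;
    \item apply Lemma~\ref{lem:flat-CS} and H\"older's inequality;
    \item discard $h_i=0$, using $H\geq C'\delta^{-2^{k-1}}$;
    \item apply Cauchy--Schwarz, fold signs, and invoke Corollary~\ref{cor:iterated-concatenation}.
\end{itemize}
This yields the bound \eqref{eq:Uk-conclusion} for the anchor function $f_1$.

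For the other slots $f_j$ with $j\geq2$, I would re-anchor at $f_j$. Substitute $x\mapsto x-b_jm$, which turns the sum into
\begin{align*}
    \sum_x\sum_m f_j(x)\,f_1(x-b_jm)\prod_{i\neq1,j}f_i\bigl(x+(b_i-b_j)m\bigr).
\end{align*}
The new coefficient vector $(-b_j,(b_i-b_j)_{i\neq1,j})$ is a unimodular image of $(b_2,\ldots,b_k)$. The transformed weight $\lambda''$ is therefore $1$-bounded and supported on a cube of side at most $2C_0B$, with the same total sum. Applying the argument above with $2C_0$ in place of $C_0$ gives the bound for $f_j$.

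The only point needing care is bookkeeping: the constants $c_{k,C_0}$ and $C'_{k,C_0}$ must be taken as those of Proposition~\ref{prop:Uk} with $C_0$ replaced by $2C_0$. Alternatively, one can absorb this change into the implied constants.
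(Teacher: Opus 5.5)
Your proposal is correct and is the paper's own argument: since $\delta B^{k-2}N^2=\delta B^{k-1}NH$, the hypothesis is exactly \eqref{eq:anchored-large} with $f_1$ as anchor, so the proof of Proposition~\ref{prop:Uk} runs from that display onward. For $f_i$ with $i\geq2$, the substitution $x\mapsto x-b_im$ together with the unimodular change of coefficients gives the same form with support $O_{k,C_0}(B)$. You are also right that lifting to a $k$-dimensional weight loses a factor of $B$, which is why the proposition cannot simply be quoted as a black box.
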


\begin{proof}
For $f_1$, the proof of Proposition~\ref{prop:Uk} applies from
\eqref{eq:anchored-large} onwards.  For $f_i$ with $i\geq2$, the change of
variables $x\mapsto x-b_im$ makes the coefficient of $f_i$ equal to zero.
The remaining coefficients are $-b_i$ in the first slot and $b_j-b_i$ for
$j\ne i$.  This invertible integral change of coefficient variables preserves
the $1$-boundedness of the weight and maps the original support cube into a
centred cube of side length $O_{k,C_0}(B)$.  The resulting expression has the
form of~\eqref{eq:anchored-large}, so the proof of
Proposition~\ref{prop:Uk} from that display onward gives the claimed estimate
for every $f_i$.
\end{proof}

\section{Degree lowering}\label{sec:degree-lowering}

We prove that a large $U^s$ norm of the dual function forces a large
$U^{s-1}$ norm for $s\geq3$, and that the resulting $U^2$ control forces
$U^{1+}$ structure.  The strategy follows Peluse--Prendiville~\cite{PP}:
we expand the Gowers norm, replace the shifted dual functions by exponential
phases through the $U^2$ inverse theorem, and lower the complexity of those
phases using dual--difference interchange and low rank arguments.  We include proofs of
several preparatory results from that strategy in order to record the
quantitative dependencies and to formulate them in the form used here.

\subsection{Preparation}
We use the following inverse theorem for the $U^2(\mathbb{Z})$ norm.

\begin{lemma}\label{lem:U2inv} Let $N\geq 1$ and $\delta\in (0,1)$. Let $f\colon \mathbb{Z}\to \mathbb{C}$ be $1$-bounded and supported on $[N]$. Suppose that
\begin{align*}
\|f\|_{U^2(\mathbb{Z})}^4\geq \delta^4 N^3.
\end{align*}
Then, for every integer $Q\geq20N\delta^{-2}$, there exists
$\alpha\in Q^{-1}\mathbb Z/\mathbb Z$ such that
\begin{align*}
\left|\sum_{x\in [N]}f(x)e(\alpha x)\right|\geq \frac{\delta^{2}}{2}N.
\end{align*}

\end{lemma}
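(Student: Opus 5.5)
The plan is to prove Lemma~\ref{lem:U2inv} by the standard Fourier-analytic argument on $\mathbb T$, followed by a discretisation step that moves the frequency onto the grid $Q^{-1}\mathbb Z/\mathbb Z$.

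\textbf{Finding a large Fourier coefficient.} Write $\hat f(\theta)=\sum_x f(x)e(\theta x)$. Parseval gives
\begin{align*}
\|f\|_{U^2(\mathbb Z)}^4=\int_{\mathbb T}|\hat f(\theta)|^4\,\d\theta
\qquad\text{and}\qquad
\int_{\mathbb T}|\hat f(\theta)|^2\,\d\theta=\sum_x|f(x)|^2\leq N.
\end{align*}
Hence
\begin{align*}
\delta^4N^3\leq \sup_\theta|\hat f(\theta)|^2\cdot N,
\end{align*}
so there is some $\theta_0\in\mathbb T$ with $|\hat f(\theta_0)|\geq\delta^2N$.

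\textbf{Discretising the frequency.} Choose $\alpha\in Q^{-1}\mathbb Z/\mathbb Z$ with $\|\alpha-\theta_0\|\leq 1/(2Q)$. Since $f$ is supported on $[N]$, we have $\sum_{x\in[N]}f(x)e(\alpha x)=\hat f(\alpha)$. For $x\in[N]$,
\begin{align*}
|e(\alpha x)-e(\theta_0 x)|\leq 2\pi x\|\alpha-\theta_0\|\leq \frac{\pi N}{Q}.
\end{align*}
Summing over $x$ gives
\begin{align*}
|\hat f(\alpha)-\hat f(\theta_0)|\leq \frac{\pi N^2}{Q}\leq \frac{\pi}{20}\delta^2 N,
\end{align*}
using $Q\geq 20N\delta^{-2}$. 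Therefore
\begin{align*}
|\hat f(\alpha)|\geq\Bigl(1-\frac{\pi}{20}\Bigr)\delta^2N\geq\frac{\delta^2}{2}N.
\end{align*}

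\textbf{Potential obstacle.} The only point needing care is the choice of constant. The condition $Q\geq 20N\delta^{-2}$ gives the perturbation bound $\pi/20<1/2$ with room to spare, so I do not expect any genuine difficulty here.
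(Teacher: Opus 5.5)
Your proof is correct and follows essentially the same route as the paper. Both arguments bound $\int_{\mathbb T}|\hat f|^4$ by $\sup|\hat f|^2$ times the Parseval mass $\sum_x|f(x)|^2\leq N$ to obtain $\theta_0$ with $|\hat f(\theta_0)|\geq\delta^2N$, and then pass to the nearest point of $Q^{-1}\mathbb Z/\mathbb Z$ at a cost of at most $\pi\delta^2N/20$.
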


\begin{proof}
Writing
\begin{align*}
\widehat{f}(\beta)=\sum_{x\in \mathbb{Z}}f(x)e(\beta x),    
\end{align*}
by expanding out the fourth power we have
\begin{align*}
    \int_{0}^1|\widehat f(\beta)|^4\,\d\beta
    =\sum_{\substack{x,h_1,h_2\in\mathbb Z\\
    x,\ x+h_1,\ x+h_2,\ x+h_1+h_2\in[N]}}
    f(x)\overline{f(x+h_1)}
    \overline{f(x+h_2)}
    f(x+h_1+h_2)=\|f\|_{U^2(\mathbb{Z})}^4.
\end{align*}
Hence 
\begin{align*}
\|f\|_{U^2(\mathbb{Z})}^4\leq \int_{0}^1 |\widehat f(\beta)|^2  \d\beta \cdot \max_{\beta \in \mathbb{T}} |\widehat{f}(\beta)|^2.
\end{align*}
Using Parseval's identity, we conclude that for some $\beta_0\in\mathbb T$ we have
\begin{align*}
\left|\sum_{x\in \mathbb{Z}}f(x)e(\beta_0 x)\right|\geq \delta^2 N.    \end{align*}
Let $\beta'$ be a nearest point to $\beta_0$ in the grid
$Q^{-1}\mathbb Z/\mathbb Z$.  Then
$\|\beta'-\beta_0\|\leq(2Q)^{-1}\leq\delta^2/(40N)$, so, since $f$ is
$1$-bounded and $|e(x)-e(y)|\leq2\pi\|x-y\|$, we have
\begin{align*}
\left|\sum_{x\in \mathbb{Z}}f(x)e(\beta' x)\right|\geq \frac{\delta^2}{2} N
\end{align*}
as required.
\end{proof}

We also need the following elementary lemma, which we extract from the
proof of~\cite[Lemma 6.5]{PP}. We follow their notation of using $\partial$ to
denote certain alternating sums that lift functions from $s$ to $2s$ variables.

\begin{lemma}[Constant $\partial$ implies low complexity]\label{lem:lowcomplexity}
Let $N\geq 1$, let $s\in\mathbb{N}$ and let $\delta \in (0,1)$. Let $\partial \phi\colon \mathbb{Z}^{2s}\to \mathbb{T}$ and $\phi\colon \mathbb{Z}^s\to \mathbb{T}$ be functions such that for all $\mathbf{h}^0,\mathbf{h}^1\in \mathbb{Z}^s$ we have
\begin{align*}
\partial\phi(\mathbf{h}^0,\mathbf{h}^1)=\sum_{\omega \in \{0,1\}^s}(-1)^{|\omega|}\phi(\mathbf{h}^{\omega}),
\qquad \mathbf{h}^{\omega}\coloneqq(h_1^{\omega_1},\ldots,h_s^{\omega_s}).
\end{align*}
Suppose also that there is $c_0\in \mathbb{T}$ such that $\partial\phi(\mathbf{h}^0,\mathbf{h}^1)=c_0$ for all $(\mathbf{h}^0,\mathbf{h}^1)\in \mathcal{H}$, where $\mathcal{H}\subset [N]^{2s}$ is some set with $|\mathcal{H}|\geq \delta N^{2s}$. Then $\phi$ is of low rank in the following sense: there exists a set $\mathcal{H}'\subset [N]^s$, contained in the projection of $\mathcal{H}$ to the first $s$ coordinates, with $|\mathcal{H}'|\geq \delta N^{s}$ and functions $\phi_i\colon \mathbb{Z}^s\to \mathbb{T}$ for $i\in \{1,\ldots, s\}$, with $\phi_i$ independent of the $i$th coordinate, such that for $\mathbf{h}\in \mathcal{H}'$ we have
\begin{align*}
\phi(\mathbf{h})=\sum_{i=1}^{s}\phi_i(\mathbf{h}). \end{align*}
\end{lemma}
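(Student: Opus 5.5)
Pigeonhole on the second half of the variables, freeze it, and read off the decomposition directly from the alternating sum.

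\textbf{Step 1: freeze $\mathbf h^1$.} Write $\mathcal H=\bigcup_{\mathbf k\in[N]^s}\mathcal H_{\mathbf k}\times\{\mathbf k\}$, where $\mathcal H_{\mathbf k}=\{\mathbf h^0\in[N]^s:(\mathbf h^0,\mathbf k)\in\mathcal H\}$. Since $\sum_{\mathbf k}|\mathcal H_{\mathbf k}|=|\mathcal H|\geq\delta N^{2s}$ and there are $N^s$ choices of $\mathbf k$, some $\mathbf k=\mathbf h^1_*$ has $|\mathcal H_{\mathbf h^1_*}|\geq\delta N^s$. Set $\mathcal H'=\mathcal H_{\mathbf h^1_*}$. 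This set lies in $[N]^s$, has the required size, and is contained in the projection of $\mathcal H$ to the first $s$ coordinates.

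\textbf{Step 2: isolate $\phi(\mathbf h)$.} For $\mathbf h\in\mathcal H'$, apply the hypothesis at $(\mathbf h,\mathbf h^1_*)$. The $\omega=\mathbf 0$ term of the alternating sum is $\phi(\mathbf h)$, so
\[
\phi(\mathbf h)=c_0-\sum_{\omega\neq\mathbf 0}(-1)^{|\omega|}\phi\bigl(\mathbf h^{\omega}\bigr),
\]
where the coordinates of $\mathbf h^\omega$ are $h_j$ when $\omega_j=0$ and $(h^1_*)_j$ when $\omega_j=1$.

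\textbf{Step 3: distribute the terms.} For each $\omega\neq\mathbf 0$ let $i(\omega)$ be the least index with $\omega_{i(\omega)}=1$. Then $\mathbf h^\omega$ has the fixed value $(h^1_*)_{i(\omega)}$ in coordinate $i(\omega)$, so the map $\mathbf h\mapsto\phi(\mathbf h^\omega)$ does not depend on $h_{i(\omega)}$. Define, for all $\mathbf h\in\mathbb Z^s$,
\[
\phi_i(\mathbf h)=-\sum_{\omega\neq\mathbf 0:\ i(\omega)=i}(-1)^{|\omega|}\phi\bigl(\mathbf h^{\omega}\bigr),
\]
and add the constant $c_0$ to $\phi_1$. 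Each $\phi_i$ is defined on all of $\mathbb Z^s$ and is independent of the $i$th coordinate.

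By Step 2, $\phi(\mathbf h)=\sum_i\phi_i(\mathbf h)$ for every $\mathbf h\in\mathcal H'$. The argument uses no analytic input; the only point needing care is that the $\phi_i$ are extended to all of $\mathbb Z^s$ by the same formula, which the definition above already does.
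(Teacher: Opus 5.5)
Your proposal is correct and follows the paper's proof essentially verbatim. You pigeonhole a fixed second block $\mathbf h^1_*$ so that the slice has size at least $\delta N^s$, solve the alternating sum for the $\omega=\mathbf 0$ term, and group the remaining terms by an index $i$ with $\omega_i=1$, absorbing $c_0$ into one of the $\phi_i$; your choice of the least such index simply makes the paper's grouping explicit.
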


\begin{proof}
By the pigeonhole principle, there is some $\widetilde{\mathbf{h}}\in [N]^{s}$ such that $\mathcal{H}'\coloneqq \{\mathbf{h}\in[N]^s:(\mathbf{h},\widetilde{\mathbf{h}})\in \mathcal{H}\}$ has size $\geq \delta N^{s}$. For $\mathbf{h}\in \mathcal{H}'$, taking $\mathbf{h}^0=\mathbf{h}$ and $\mathbf{h}^1=\widetilde{\mathbf{h}}$, we have
\begin{align*}
\phi(\mathbf{h})=c_0-\sum_{\omega\in \{0,1\}^s\setminus \{0\}}(-1)^{|\omega|}\phi(\mathbf h^{\omega}).
\end{align*}
The claim follows since each of the functions $\mathbf h\mapsto\phi(\mathbf{h}^{\omega})$ with $\omega\in \{0,1\}^s\setminus \{0\}$ is independent of every coordinate $h_i$ with $\omega_i=1$, so grouping the terms by such an $i$ (and absorbing the constant $c_0$ into one of them) gives the required decomposition.
\end{proof}

The difference operator $\Delta_h$ does not commute with summation, so a
difference of a dual function is not generally the dual of the corresponding
differences.  The following interchange inequality for dual functions and
differences
is~\cite[Lemma~6.3]{PP}; we include its proof in the present notation.  It
transfers the iterated differences from the dual function to its summands in
the form required for degree lowering.

\begin{lemma}[The interchange lemma]\label{lem:DDI}
Let $r$ be a positive integer. Let $T$ be a finite set and let
$$
D=\sum_{t\in T} c_t\psi_t,
$$
where $|c_t|\le 1$, and each $\psi_t\colon \mathbb Z\to\mathbb C$ is $1$-bounded and supported in an interval of length at most $N$. Let $I_1,\ldots,I_r\subseteq\mathbb Z$ be intervals with $|I_i|=H$ for each $i$, and let
$$
\mathcal H\subseteq I_1\times\cdots\times I_r
$$
be any subset. Then, for every
$\phi:\mathbb Z^r\to\mathbb T$, one has
\begin{align}\nonumber
&\left(
\sum_{\mathbf h\in\mathcal H}
\left|
\sum_x \Delta_{\mathbf h}D(x)e(\phi(\mathbf h)x)
\right|
\right)^{2^r} \\
\label{eq:DDI-unnormalised}&\qquad\le
N^{2^r-1}|T|^{4^r-1}H^{r(2^r-2)}
\sum_{\mathbf h^0,\mathbf h^1\in\mathcal H}
\left|
\sum_x
\left(
\sum_{t\in T}
\Delta_{\mathbf h^1-\mathbf h^0}\psi_t(x)
\right)
e(\partial\phi(\mathbf h^0,\mathbf h^1)x)
\right|,
\end{align}
where
\begin{align}
\label{eq:partialdef}\partial\phi(\mathbf h^0,\mathbf h^1)
\coloneqq
\sum_{\omega\in\{0,1\}^r}
(-1)^{|\omega|}
\phi(\mathbf h^\omega),
\qquad
\mathbf h^\omega
\coloneqq
(h_1^{\omega_1},\dots,h_r^{\omega_r}).
\end{align}
\end{lemma}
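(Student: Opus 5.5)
We argue by induction on $r$, following the proof of~\cite[Lemma~6.3]{PP}, applying one Cauchy--Schwarz per coordinate of $\mathbf h$. First we remove the absolute values. Set $\theta(\mathbf h)=\sgn\bigl(\sum_x\Delta_{\mathbf h}D(x)e(\phi(\mathbf h)x)\bigr)$ on $\mathcal H$ and $\theta=0$ off $\mathcal H$. The left side of~\eqref{eq:DDI-unnormalised} is then the $2^r$-th power of
\[
S=\sum_{\mathbf h\in I_1\times\cdots\times I_r}\theta(\mathbf h)\sum_x\Delta_{\mathbf h}D(x)e(\phi(\mathbf h)x).
\]

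Write $\Delta_{\mathbf h}D(x)=\prod_{\omega\in\{0,1\}^r}\mathcal C^{|\omega|}D(x+\omega\cdot\mathbf h)$. We isolate the factors with $\omega_r=1$, which depend on $x+h_r$ and $h_1,\ldots,h_{r-1}$. We then expand only the factor $\omega=(0,\ldots,0,1)$, namely $\overline{D(x+h_r)}=\sum_t\overline{c_t\psi_t(x+h_r)}$. The plan is to make $x'=x+h_r$ together with $h_1,\ldots,h_{r-1}$ the outer variables, and to leave inside the sum over $h_r$ everything that depends on $h_r$: the factors with $\omega_r=0$, the weight $\theta$ and the phase. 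Cauchy--Schwarz in the outer variables, with $t$ also taken outside (costing a factor $|T|$ and $N$ from the support of $\psi_t$), doubles $h_r$ into $h_r^0,h_r^1$. This creates the difference $\Delta_{h_r^1-h_r^0}$ on the $\omega_r=0$ block, and the phase becomes $e((\phi(\cdot,h_r^0)-\phi(\cdot,h_r^1))x)$ after a translation. Each remaining copy of $D$ is then expanded as a sum over $T$ and bounded by the triangle inequality.

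Iterating over the coordinates $r-1,\ldots,1$ doubles every $h_i$. In the end each of the $2^r$ vertex copies of $D$ has been replaced by a single $\psi_t$ with the indices $t$ constrained to agree (the diagonal structure), and the phase becomes the alternating sum $\partial\phi(\mathbf h^0,\mathbf h^1)$ from~\eqref{eq:partialdef}. At each stage, the outer variables that disappear in the Cauchy--Schwarz steps contribute factors of $H$, with total $H^{r(2^r-2)}$. The sums over the $t$'s contribute $|T|^{4^r-1}$ and the $x$-supports contribute $N^{2^r-1}$; these exponents are read off from the recursion $E_r=2E_{r-1}+\cdots$.

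The main obstacle is bookkeeping. The constraint $\mathbf h^0,\mathbf h^1\in\mathcal H$ (rather than the full box) must survive, and the $\theta$ weights can be discarded only at the very end. To handle this, we keep $\theta(\mathbf h^\omega)$ inside throughout. At the last step we bound $|\theta|\le1_{\mathcal H}$, using only the factors at $\mathbf h^0$ and $\mathbf h^1$ to retain the restriction to $\mathcal H$, and drop the others. We also have to check that the surviving inner sum is exactly $\sum_t\Delta_{\mathbf h^1-\mathbf h^0}\psi_t$ with a common $t$, which requires choosing, at each Cauchy--Schwarz, the expanded vertex so that a single $\psi_t$ appears in all $2^r$ positions.
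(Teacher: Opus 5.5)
There is a genuine gap, and it sits exactly where the lemma has its content. The lemma asserts that you can produce the \emph{diagonal} sum $\sum_{t\in T}\Delta_{\mathbf u}\psi_t$, with one common index $t$ at all $2^r$ vertices, inside a single absolute value; your sketch does not produce it.

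In your first Cauchy--Schwarz step the only copy you expand is $\overline{D(x+h_r)}=\overline{D(x')}$. With $x'$ an outer variable this copy does not depend on $h_r$, so it and its index $t$ end up in the factor that Cauchy--Schwarz discards. What survives is the doubled $\omega_r=0$ block, namely $\Delta_{h_r^0-h_r^1}\Delta_{\mathbf h'}D$ after translating. This is still a product of $2^r$ copies of the full dual function $D$.

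Iterating over the other coordinates never turns a surviving copy of $D$ into a single $\psi_t$. Your final step (expand the remaining copies and use the triangle inequality) then yields $\sum_{(t_\omega)\in T^{\{0,1\}^r}}\bigl|\sum_x\prod_\omega\mathcal C^{|\omega|}\psi_{t_\omega}(x+\omega\cdot\mathbf u)e(\cdots)\bigr|$, with independent indices at the vertices. That is not the right-hand side of the lemma and is not bounded by it in general. Already for $r=1$ your procedure gives $N|T|^2\sum_{h^0,h^1}\bigl|\sum_x\Delta_{h^0-h^1}D(x)e(\cdots)\bigr|$ rather than the interchanged form. This is not bookkeeping: the degree-lowering application needs every vertex to carry the same $(\mathbf b,m)$.

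Choosing a different single vertex per step does not help either. If you expand one $h_r$-dependent vertex, that vertex pair gets a common index after squaring. But the other $2^{r-1}-1$ pairs still carry $\Delta_{u_r}D\neq\sum_t\Delta_{u_r}\psi_t$, so the surviving form is no longer $\Delta_{\mathbf h'}$ of a single function and the iteration stalls.

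The missing idea is that the indices taken outside each Cauchy--Schwarz must belong to the copies that \emph{depend on} the variable being doubled. Then doubling duplicates each $\psi_t$ with the same $t$.

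The paper's base case does this by expanding both $D(x)$ and $\overline{D(x+h)}$ and applying Cauchy--Schwarz in $(x,t,t')$, with $\sum_h\beta_h\overline{\psi_{t'}(x+h)}e(\phi(h)x)$ kept inside. For general $r$ the paper proceeds as follows:
\begin{itemize}
\item it writes $\Delta_{\mathbf h}D=\Delta_{\mathbf h'}(\Delta_{h_r}D)$ and treats $\Delta_{h_r}D$ as a dual function indexed by $T^2$;
\item it applies the $(r-1)$-dimensional case after H\"older in $h_r$;
\item it recognises $\sum_{t,t'}\Delta_{\mathbf u'}\bigl(\psi_t\overline{\psi_{t'}(\cdot+h)}\bigr)=\Delta_h\bigl(\sum_t\Delta_{\mathbf u'}\psi_t\bigr)$ by multiplicativity;
\item it finishes with Cauchy--Schwarz over the pairs $(\mathbf h'{}^0,\mathbf h'{}^1)$ and the $r=1$ case applied to $\sum_t\Delta_{\mathbf u'}\psi_t$.
\end{itemize}

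Your one-step-per-coordinate plan can be repaired in the same spirit. Expand the \emph{whole} $h_r$-dependent block $\Delta_{\mathbf h'}D(x'-h_r)$ over $\mathbf t\in T^{\{0,1\}^{r-1}}$, with $\mathbf t$ outside the Cauchy--Schwarz. After squaring, the sum over $\mathbf t$ recombines into $\Delta_{\mathbf h'}G$ with $G=\sum_t\Delta_{h_r^0-h_r^1}\psi_t$, and you induct on $G$. This reproduces the constants $N^{2^r-1}|T|^{4^r-1}H^{r(2^r-2)}$.

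Finally, watch the orientation. Your first step actually produces $\Delta_{h_r^0-h_r^1}$ next to the phase $\phi(\cdot,h_r^0)-\phi(\cdot,h_r^1)$, not $\Delta_{h_r^1-h_r^0}$. For odd $r$ and complex-valued $\psi_t$ this is not cosmetic. Take $r=1$, $N=4$, $D=\psi$ with $\psi(x)=1_{[4]}(x)e(x^2/8)$, $\mathcal H=\{0,1\}$, $\phi(0)=0$ and $\phi(1)=1/4$. Then the left side of the displayed inequality is $49$ while its right side is $40$. The version with $\Delta_{\mathbf h^0-\mathbf h^1}$ is what the argument proves. The difference is harmless in the degree-lowering application, which is insensitive to the sign of $\partial\phi$.
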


\begin{proof}

We prove \eqref{eq:DDI-unnormalised} by induction on $r$.

\textbf{The case $r=1$.}
Let $\mathcal H\subseteq I$ with $|I|=H$, and let
$$
S
\coloneqq
\sum_{h\in \mathcal H}
\left|
\sum_x \Delta_hD(x)e(\phi(h)x)
\right|.
$$
For each $h\in \mathcal H$, choose $\beta_h\in\mathbb C$ with $|\beta_h|= 1$ such that
$$
S
=
\sum_{h\in \mathcal H}
\beta_h
\sum_x \Delta_hD(x)e(\phi(h)x).
$$
Expanding $D=\sum_{t\in T}c_t\psi_t$, we obtain
\begin{align*}
S
&=
\sum_x
\sum_{t\in T} c_t\psi_t(x)
\sum_{t'\in T}\overline{c_{t'}}
\sum_{h\in \mathcal H}
\beta_h\overline{\psi_{t'}(x+h)}e(\phi(h)x).
\end{align*}
By Cauchy--Schwarz in $(x,t,t')$, using that the functions $\psi_t$ are
$1$-bounded and supported on intervals of length at most $N$, we get
\begin{align*}
S^2
&\le
\left(
\sum_x\sum_{t\in T}\sum_{t'\in T}|\psi_t(x)|^2
\right)
\sum_x \sum_{t\in T} \sum_{t'\in T} 
\left|
\sum_{h\in \mathcal H}
\beta_h\overline{\psi_{t'}(x+h)}e(\phi(h)x)
\right|^2 \\
&\le
N|T|^3
\sum_x\sum_{t'\in T}
\left|
\sum_{h\in \mathcal H}
\beta_h\overline{\psi_{t'}(x+h)}e(\phi(h)x)
\right|^2.
\end{align*}
Expanding the square gives
\begin{align*}
S^2
&\le
N|T|^3
\sum_{h^0,h^1\in \mathcal H}
\beta_{h^0}\overline{\beta_{h^1}}
\sum_x\sum_{t'\in T}
\psi_{t'}(x+h^0)\overline{\psi_{t'}(x+h^1)}
e((\phi(h^0)-\phi(h^1))x) \\
&\le
N|T|^3
\sum_{h^0,h^1\in \mathcal H}
\left|
\sum_x\sum_{t\in T}
\psi_t(x+h^0)\overline{\psi_t(x+h^1)}
e((\phi(h^0)-\phi(h^1))x)
\right|.
\end{align*}
After the change of variables $x\mapsto x-h^0$, this becomes
$$
S^2
\le
N|T|^3
\sum_{h^0,h^1\in \mathcal H}
\left|
\sum_x
\left(
\sum_{t\in T}\Delta_{h^1-h^0}\psi_t(x)
\right)
e((\phi(h^0)-\phi(h^1))x)
\right|.
$$
Since for $r=1$ one has
$$
\partial\phi(h^0,h^1)=\phi(h^0)-\phi(h^1),
$$
this matches~\eqref{eq:DDI-unnormalised} in the case $r=1$.

\textbf{The induction step.}
Assume \eqref{eq:DDI-unnormalised} holds in dimension $r-1$. Write $\mathcal B'=I_1\times\cdots\times I_{r-1}$ and $I=I_r$, decompose $\mathbf h=(\mathbf h',h)$ with
$\mathbf h'\in\mathcal B'$ and $h\in I$, and for $h\in I$ let
$$
\mathcal H_h\coloneqq\{\mathbf h'\in\mathcal B':(\mathbf h',h)\in\mathcal H\}
$$
denote the corresponding slice of $\mathcal H$. For each $h\in I$, observe
$$
\Delta_h D(x)=
D(x)\overline{D(x+h)}
=
\sum_{t,t'\in T}
c_t\overline{c_{t'}}
\psi_t(x)\overline{\psi_{t'}(x+h)}.
$$
Then
$$
\Delta_{\mathbf h}D(x)
=
\Delta_{\mathbf h'}\Delta_h D(x).
$$
Moreover, $\Delta_h D$ is a linear combination indexed by $T^2$, and each function
$$
x\mapsto \psi_t(x)\overline{\psi_{t'}(x+h)}
$$
is $1$-bounded and supported in an interval of length at most $N$.

For each $h\in I$, set
$$
S_h
\coloneqq
\sum_{\mathbf h'\in\mathcal H_h}
\left|
\sum_x
\Delta_{\mathbf h'}\Delta_h D(x)e(\phi(\mathbf h',h)x)
\right|.
$$
By H\"older's inequality,
\begin{align}\label{eq:Holder}
\left(\sum_{h\in I}S_h\right)^{2^r}
\le
H^{2^r-2}
\left(
\sum_{h\in I} S_h^{2^{r-1}}
\right)^2.
\end{align}
Applying the induction hypothesis in dimension $r-1$ to $\Delta_h D$, with index set
$T^2$ and the subset $\mathcal H_h\subseteq\mathcal B'$, gives
\begin{align*}
S_h^{2^{r-1}}
&\le
N^{2^{r-1}-1}
|T|^{2(4^{r-1}-1)}
H^{(r-1)(2^{r-1}-2)} \sum_{\mathbf h'{}^0,\mathbf h'{}^1\in\mathcal H_h}|\Sigma_{\mathbf h'{}^0,\mathbf h'{}^1}|, 
\end{align*}
where
\begin{align*}
\Sigma_{\mathbf h'{}^0,\mathbf h'{}^1}=
\sum_x
\sum_{t,t'\in T}
\Delta_{\mathbf h'{}^1-\mathbf h'{}^0}
\bigl(\psi_t(x)\overline{\psi_{t'}(x+h)}\bigr)
e(\partial \phi_h(\mathbf h'{}^0,\mathbf h'{}^1)x),
\end{align*}
where
$$
\phi_h(\mathbf h')\coloneqq \phi(\mathbf h',h)
$$
and we use $\partial$ here to denote the alternating derivative in dimension
$r-1$ (cf.~\eqref{eq:partialdef}).

Using multiplicativity of $\Delta_{\mathbf h'{}^1-\mathbf h'{}^0}$, we see that
\begin{align*}
\Sigma_{\mathbf h'{}^0,\mathbf h'{}^1}=\sum_x
\left(
\sum_{t\in T}
\Delta_{\mathbf h'{}^1-\mathbf h'{}^0}\psi_t(x)
\right)
\overline{
\left(
\sum_{t'\in T}
\Delta_{\mathbf h'{}^1-\mathbf h'{}^0}\psi_{t'}(x+h)
\right)}
e(\partial\phi_h(\mathbf h'{}^0,\mathbf h'{}^1)x).
\end{align*}

Then we have
$$
\Sigma_{\mathbf h'{}^0,\mathbf h'{}^1}=\sum_x
\Delta_h\left(\sum_{t\in T}\Delta_{\mathbf h'{}^1-\mathbf h'{}^0}\psi_t\right)(x)
e(\partial\phi_h(\mathbf h'{}^0,\mathbf h'{}^1)x).
$$

Abbreviating the constant from the induction hypothesis by
$$
K\coloneqq N^{2^{r-1}-1}|T|^{2(4^{r-1}-1)}H^{(r-1)(2^{r-1}-2)},
$$
summing that bound over $h\in I$ and regrouping (note $\mathbf h'{}^0,\mathbf h'{}^1\in\mathcal H_h$ if and only if $h\in J_{\mathbf h'{}^0,\mathbf h'{}^1}\coloneqq\{h\in I:(\mathbf h'{}^0,h),(\mathbf h'{}^1,h)\in\mathcal H\}$) gives
$$
\sum_{h\in I}S_h^{2^{r-1}}
\le
K\sum_{\mathbf h'{}^0,\mathbf h'{}^1\in\mathcal B'}\ 
\sum_{h\in J_{\mathbf h'{}^0,\mathbf h'{}^1}}
\left|
\sum_x
\Delta_h\left(\sum_{t\in T}\Delta_{\mathbf h'{}^1-\mathbf h'{}^0}\psi_t\right)(x)
e(\partial\phi_h(\mathbf h'{}^0,\mathbf h'{}^1)x)
\right|.
$$
Squaring, applying Cauchy--Schwarz over the $H^{2(r-1)}$ pairs
$(\mathbf h'{}^0,\mathbf h'{}^1)$, and the $r=1$ base case to the inner sum for each fixed pair (with $\sum_{t\in T} \Delta_{\mathbf h'{}^1-\mathbf h'{}^0}\psi_t(x)$ in place of $D$, $h\mapsto\partial\phi_h(\mathbf h'{}^0,\mathbf h'{}^1)$ in place of $\phi$, and the subset $J_{\mathbf h'{}^0,\mathbf h'{}^1}\subseteq I$), we obtain
\begin{align*}
\left(\sum_{h\in I} S_h^{2^{r-1}}
\right)^2
&\le
K^2 H^{2(r-1)}N|T|^3
\sum_{\mathbf h'{}^0,\mathbf h'{}^1\in\mathcal B'}
\sum_{h^0,h^1\in J_{\mathbf h'{}^0,\mathbf h'{}^1}} \\
&\qquad\times
\left|
\sum_x
\left(
\sum_{t\in T}
\Delta_{h^1-h^0}
\Delta_{\mathbf h'{}^1-\mathbf h'{}^0}\psi_t(x)
\right)
e\bigl((\partial\phi_{h^0}(\mathbf h'{}^0,\mathbf h'{}^1)
-\partial\phi_{h^1}(\mathbf h'{}^0,\mathbf h'{}^1))x\bigr)
\right|.
\end{align*}
If $h^0,h^1\in J_{\mathbf h'{}^0,\mathbf h'{}^1}$, then $(\mathbf h'{}^0,h^0)$ and $(\mathbf h'{}^1,h^1)$ both lie in $\mathcal H$, so each term on the right side corresponds to a distinct pair $(\mathbf h^0,\mathbf h^1)\in\mathcal H^2$ with $\mathbf h^i=(\mathbf h'{}^i,h^i)$. Furthermore, observe that
$$
\partial\phi_{h^0}(\mathbf h'{}^0,\mathbf h'{}^1)
-
\partial\phi_{h^1}(\mathbf h'{}^0,\mathbf h'{}^1)
=
\partial\phi((\mathbf h'{}^0,h^0),(\mathbf h'{}^1,h^1))=\partial\phi(\mathbf h^0,\mathbf h^1).
$$
Combining this with~\eqref{eq:Holder} and writing $\mathbf h^i=(\mathbf h'{}^i,h^i)$ for $i=0,1$, we get
\begin{align*}
&\left(
\sum_{\mathbf h\in\mathcal H}
\left|
\sum_x \Delta_{\mathbf h}D(x)e(\phi(\mathbf h)x)
\right|
\right)^{2^r} \\
&\qquad\le
N^{2^r-1}|T|^{4^r-1}H^{r(2^r-2)}
\sum_{\mathbf h^0,\mathbf h^1\in\mathcal H}
\left|
\sum_x
\left(
\sum_{t\in T}
\Delta_{\mathbf h^1-\mathbf h^0}\psi_t(x)
\right)
e(\partial\phi(\mathbf h^0,\mathbf h^1)x)
\right|,
\end{align*}
since the powers combine as
\begin{align*}
    N^{2(2^{r-1}-1)}N&=N^{2^r-1},\\
    |T|^{4(4^{r-1}-1)}|T|^3&=|T|^{4^r-1},\\
    H^{2^r-2}
H^{2(r-1)(2^{r-1}-2)}
H^{2(r-1)}
&=
H^{r(2^r-2)}.
\end{align*}
This proves \eqref{eq:DDI-unnormalised}.
\end{proof}

The final ingredient is the following lemma, which connects the low rank structure of the phase to a lower degree Gowers norm.

\begin{lemma}[Low rank correlation implies lower degree]\label{lem:lowrank}
Let $N\geq1$, let $s\geq1$ and $0\leq m\leq s$ be integers, and let
$f\colon \mathbb{Z} \to \mathbb{C}$ be a $1$-bounded function with support in
$[N]$. Then for
$$
\phi_1,\ldots,\phi_m : \mathbb{Z}^{s-1} \to \mathbb{T},
$$
we have
\begin{equation}\label{eq:low-rank-bound}
\frac{1}{N^{s+1}}
\sum_{h_1,\ldots,h_s}
\left|
\sum_x
\Delta_{\mathbf h} f(x)\,
e\left(
\sum_{i=1}^{m}
\phi_i(h_1,\ldots,h_{i-1},h_{i+1},\ldots,h_s)\,x
\right)
\right|
\;\ll_s\;
\left(
\frac{\|f\|_{U^{s+1}(\mathbb{Z})}^{\,2^{s+1}}}{N^{s+2}}
\right)^{2^{-m-1}}.
\end{equation}
\end{lemma}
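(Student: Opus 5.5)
We prove~\eqref{eq:low-rank-bound} by induction on $m$, removing one phase $\phi_i$ at a time with a single Cauchy--Schwarz step that doubles the variable $h_i$. For brevity write $\mathcal S_m(f;\phi_1,\ldots,\phi_m)$ for the left side of~\eqref{eq:low-rank-bound}.

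\textbf{Base case $m=0$.} There is no phase, and we must show
\[
N^{-s-1}\sum_{\mathbf h}\Bigl|\sum_x\Delta_{\mathbf h}f(x)\Bigr|\ll_s\bigl(\|f\|_{U^{s+1}(\Z)}^{2^{s+1}}/N^{s+2}\bigr)^{1/2}.
\]
The summand vanishes unless $\mathbf h\in[-N,N]^s$, so there are $O_s(N^s)$ relevant $\mathbf h$. Cauchy--Schwarz gives
\[
\sum_{\mathbf h}\Bigl|\sum_x\Delta_{\mathbf h}f(x)\Bigr|\ll_s N^{s/2}\Bigl(\sum_{\mathbf h}\Bigl|\sum_x\Delta_{\mathbf h}f(x)\Bigr|^2\Bigr)^{1/2}.
\]
The inner sum of squares is exactly $\|f\|_{U^{s+1}(\Z)}^{2^{s+1}}$, by expanding the square and substituting $h_{s+1}$ for the difference of the two $x$ variables. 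Dividing by $N^{s+1}$ gives the bound with exponent $1/2$.

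\textbf{Inductive step.} Suppose the claim holds with $m-1$ phases, for every $s$ and every admissible $f$. The phase $\phi_m$ does not depend on $h_m$, so we fix the remaining variables $\mathbf h_{\hat m}=(h_i)_{i\neq m}$ and treat the sum over $h_m$. Write
\[
\Delta_{\mathbf h}f=\Delta_{h_m}g,\qquad g=g_{\mathbf h_{\hat m}}=\Delta_{\mathbf h_{\hat m}}f .
\]
For each $\mathbf h$, insert a unimodular constant $\beta_{\mathbf h}$ so that the absolute value becomes a signed sum. The contribution from a fixed $\mathbf h_{\hat m}$ is then
\[
\sum_x g(x)\,e(\phi_m x)\sum_{h_m}\beta_{\mathbf h}\,\overline{g(x+h_m)}\,e\Bigl(\sum_{i<m}\phi_i\,x\Bigr),
\]
and the key point is that $e(\phi_m x)$ factors out of the $h_m$ sum. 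Apply Cauchy--Schwarz in $(\mathbf h_{\hat m},x)$ to get rid of $g(x)e(\phi_m x)$, which is $1$-bounded and supported on an interval of length $N$. Expanding the square in $h_m,h_m'$ and substituting $x\mapsto x-h_m$, $h_m'=h_m+k$ produces $\overline{g(x+h_m)}g(x+h_m+k)$, which is a difference of $\Delta_{h_m}g$ in the new variable $k$. The phases $\phi_i$ with $i<m$ appear twice, once at $h_m$ and once at $h_m+k$, and their difference still does not depend on the $i$th coordinate of the enlarged tuple of $s+1$ variables. After this step the quantity is bounded, up to a square and powers of $N$, by the analogous expression with $s+1$ difference variables and $m-1$ phases. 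One checks that the phase multiplies the correct difference variable, where $x$ is shifted by $h_m$, so this shift must be absorbed into the phases or shown harmless.

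\textbf{Main obstacle.} The naive induction increases $s$ at each step, which would give a bound in terms of $U^{s+m+1}$ rather than $U^{s+1}$. This cannot be right, so the Cauchy--Schwarz step should instead use the variable $h_m$ on both sides, $h_m$ and $h_m'$, and then pass to the pair $(h_m,h_m')$ with $\Delta_{h_m'-h_m}$ applied to $g$ evaluated at $x+h_m$. The new tuple keeps length $s$ once $h_m$ is absorbed as a translation, at the cost of an average over a shift.

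Done this way, each Cauchy--Schwarz halves the exponent: one application per phase, followed by the $m=0$ base case, gives the exponent $2^{-m-1}$. Counting the powers of $N$ via $|\mathbf h|\ll N^s$ recovers the normalisation $N^{s+1}$ and $N^{s+2}$. Ensuring this, and not losing extra variables, is where I expect the technical care to lie.
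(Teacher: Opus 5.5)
Your base case is correct, and induction on $m$ with one Cauchy--Schwarz step per phase is the right strategy. The paper does not reprove this lemma but cites \cite[Lemma~6.4]{PP}, whose argument is an induction on $m$ of this kind with $s$ held fixed. However, your inductive step contains a misidentification, and that misidentification is what creates your ``main obstacle''. After Cauchy--Schwarz in $(\mathbf h_{\hat m},x)$, expanding the square in $h_m,h_m'$ and substituting $x\mapsto x-h_m$, the product of the two $g$ factors is
\[
\overline{g(x)}\,g(x+k)=\overline{\Delta_k g(x)}=\overline{\Delta_{h_1,\ldots,h_{m-1},k,h_{m+1},\ldots,h_s}f(x)},\qquad k=h_m'-h_m .
\]
This is a difference of $g=\Delta_{\mathbf h_{\hat m}}f$ in the variable $k$, not a difference of $\Delta_{h_m}g$. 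Your expression $\overline{g(x+h_m)}g(x+h_m+k)$ is the form before the substitution, and it equals $\overline{\Delta_k g(x+h_m)}$. So the old variable $h_m$ drops out of the function entirely and no $(s+1)$st difference variable is ever created. Your claim that one lands on an expression with $s+1$ difference variables and $m-1$ phases is therefore false. The repair you sketch afterwards points the right way, but the checks you defer there (``this shift must be absorbed into the phases or shown harmless'') are the whole content of the step.

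Here is how the step closes. The variable $h_m$ survives in only two places. First, the translation produces a factor $e(-\theta h_m)$, which is independent of $x$ and disappears in the absolute value. Second, $h_m$ enters the new phase
\[
\theta=\sum_{i<m}\bigl[\phi_i(\widehat{\mathbf h}_i)-\phi_i(\widehat{\mathbf h'}_i)\bigr],
\]
where $\mathbf h'$ is $\mathbf h$ with $h_m$ replaced by $h_m+k$.

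Fix $h_m$ and regard $(h_1,\ldots,h_{m-1},k,h_{m+1},\ldots,h_s)$ as the new $s$-tuple. The $i$th summand of $\theta$ does not involve $h_i$, so $\theta$ is a sum of $m-1$ phases of the required shape, with $h_m$ an external parameter. The same holds for $-\theta$, which appears after conjugating. The induction hypothesis therefore applies, for each fixed $h_m$, with the same $s$ and the same $f$.

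For the bookkeeping, restrict from the outset to $|h_i|<N$ for every $i$, since all other terms vanish. Write $\Sigma$ for the unnormalised left side of~\eqref{eq:low-rank-bound} and $X=\|f\|_{U^{s+1}(\Z)}^{2^{s+1}}/N^{s+2}$. Then:
\begin{itemize}
\item the first Cauchy--Schwarz factor is $\ll_s N^{s}$;
\item the trivial sum over the $O(N)$ values of $h_m$ costs a factor $N$;
\item the induction hypothesis bounds each fixed-$h_m$ sum over $(\mathbf h_{\hat m},k)$ by $\ll_s N^{s+1}X^{2^{-m}}$, after enlarging the $k$-range to $\Z$ by positivity.
\end{itemize}
Hence $\Sigma^2\ll_s N^{2s+2}X^{2^{-m}}$, which is the claim. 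No translation needs to be absorbed and no shift averaged, beyond this trivial sum over $h_m$.
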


\begin{proof}
This is~\cite[Lemma 6.4]{PP}.
\end{proof}

\subsection{Reduction to \texorpdfstring{$U^2$}{U2}}

We now prove that a large $U^s$ norm of the dual function already forces a
large $U^2$ norm.
\begin{lemma}[Degree lowering]\label{lem:degreelower}
Let $s\geq3$ be an integer and let $C_0\geq1$.  There are constants
$c=c(s,k,C_0)>0$ and $C=C(s,k)\geq1$ such that the following holds.  Let
$0<\delta<c$, let $H$ be a positive integer, set $B=N/H$, and
suppose that
$\delta^{-C}\leq B\leq N\delta^C$. Let
$f_2,\ldots,f_{k}\colon\mathbb Z\to\mathbb C$ be $1$-bounded
functions supported on $[N]$, let $\lambda\colon\mathbb Z^{k-1}\to\mathbb C$
be $1$-bounded and supported on
$([-C_0B,C_0B]\cap\mathbb Z)^{k-1}$, and define
$$
    D(x)
    =1_{x\in [N]}
    \sum_{\mathbf b\in\mathbb Z^{k-1}}
    \lambda(\mathbf b)
    \sum_{m\in[H]}
    \prod_{i=2}^{k} f_i(x+b_i m).
$$
If 
\begin{equation}\label{eq:degree-lowering-hypothesis}
    \|D\|_{U^s[N]}
    \geq
    \delta B^{k-2}N,
\end{equation}
then
\begin{equation}\label{eq:degree-lowering-conclusion}
    \|D\|_{U^2[N]}
    \geq
    c\delta^CB^{k-2}N.
\end{equation}
\end{lemma}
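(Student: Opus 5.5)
We argue by downward induction on $s$: it suffices to show that for $s\geq3$, $\|D\|_{U^s[N]}\geq\delta B^{k-2}N$ implies $\|D\|_{U^{s-1}[N]}\geq c\delta^{C}B^{k-2}N$. Iterating this $s-2$ times then gives the claim, with $C$ depending on $s$.

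\textbf{Step 1: extracting phases.} Normalise $\tilde D=D/(C_1B^{k-2}H)$ with $C_1=C_1(k,C_0)$ so that $\tilde D$ is $1$-bounded. This is possible since $|D(x)|\ll B^{k-1}H$. Note that $B^{k-2}N=B^{k-1}H$, so the hypothesis reads $\|\tilde D\|_{U^s[N]}\gg\delta$. Apply the recursion \eqref{eq:gowers-recursion} with $s-2$ differences, $\|\tilde D\|_{U^s}^{2^s}=\sum_{\mathbf h\in\Z^{s-2}}\|\Delta_{\mathbf h}\tilde D\|_{U^2}^4$. By pigeonholing, we obtain a set $\mathcal H\subseteq[-N,N]^{s-2}$ of size $\gg\delta^{O(1)}N^{s-2}$ on which $\|\Delta_{\mathbf h}\tilde D\|_{U^2(\Z)}^4\gg\delta^{O(1)}N^3$. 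Lemma~\ref{lem:U2inv}, applied with a common $Q\asymp N\delta^{-O(1)}$, then gives phases $\phi(\mathbf h)\in Q^{-1}\Z/\Z$ with
\[
\sum_{\mathbf h\in\mathcal H}\Bigl|\sum_x\Delta_{\mathbf h}\tilde D(x)e(\phi(\mathbf h)x)\Bigr|\gg\delta^{O(1)}N^{s-1}.
\]

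\textbf{Step 2: interchange.} Write $D=\sum_{t}\lambda(\mathbf b)\psi_{\mathbf b,m}$, where $t=(\mathbf b,m)$ and $\psi_{\mathbf b,m}(x)=1_{[N]}(x)\prod_{i\geq2}f_i(x+b_im)$. Apply Lemma~\ref{lem:DDI} with $r=s-2$. The factor $|T|^{4^r-1}$ is matched by the normalisation, after dividing out $|T|\asymp B^{k-1}H$ in the appropriate power. The output is
\[
\sum_{\mathbf h^0,\mathbf h^1\in\mathcal H}\Bigl|\sum_x\sum_{\mathbf b,m}\Delta_{\mathbf h^1-\mathbf h^0}\psi_{\mathbf b,m}(x)e(\partial\phi(\mathbf h^0,\mathbf h^1)x)\Bigr|\gg\delta^{O(1)}B^{k-1}HN^{2s-3}.
\]
I expect the bookkeeping of the $|T|$ powers here to be delicate: the crude bound $|T|^{4^r-1}$ looks too lossy against the normalised count. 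If so, I would instead apply the lemma with $T$ the index set of $\mathbf b$ alone and keep the $m$-sum inside $\psi_{\mathbf b}$, or rescale so that each $\psi_t$ carries the weight $1/|T|$.

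\textbf{Step 3: major arc (main obstacle).} For a fixed pair, set $\mathbf u=\mathbf h^1-\mathbf h^0$ and $\theta=\partial\phi(\mathbf h^0,\mathbf h^1)$. Substitute $x\mapsto x-b_2m$ and replace $b_i$ by $b_i-b_2$ for $i\geq3$, as in the introduction. The inner sum becomes
\[
\sum_{b_2,m}e(-\theta b_2m)\sum_x 1_{E_{\mathbf u}}(x-b_2m)\,\Delta_{\mathbf u}f_2(x)\prod_{i\geq3}\Delta_{\mathbf u}f_i(x+b_im)\,e(\theta x),
\]
where $E_{\mathbf u}$ is the set of $x$ for which all shifted cube points lie in $[N]$; it is an interval. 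The factor $\lambda$ is absorbed by fixing $b_3,\dots,b_k$ and $x$. The catch is that $\lambda$ depends on $b_2$ jointly with the other coordinates. So I would first apply Cauchy--Schwarz to eliminate $\lambda$; alternatively, pigeonhole in $(b_3,\dots,b_k)$ after the change of variables and then Cauchy--Schwarz in $x$, which doubles $b_2$ and removes the dependence on $(b_2,\lambda)$.

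The cutoff $1_{E_{\mathbf u}}(x-b_2m)$ is handled by splitting $b_2,m$ into short intervals of length $\delta^{O(1)}B$ and $\delta^{O(1)}H$, on which it is essentially constant. What remains is the classical Type I bound
\[
\sum_{b\in I}\Bigl|\sum_{m\in J}e(\theta bm)\Bigr|\ll\sum_{b}\min\bigl(H,\|\theta b\|^{-1}\bigr).
\]
Since $\delta^{-C}\leq B\leq N\delta^C$, the standard Vinogradov lemma yields $q\ll\delta^{-O(1)}$ with $\|q\theta\|\ll\delta^{-O(1)}/N$, for $\gg\delta^{O(1)}N^{2(s-2)}$ pairs.

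\textbf{Step 4: constancy and conclusion.} Since $\theta\in Q^{-1}\Z/\Z$ with $Q\asymp N\delta^{-O(1)}$, the number of admissible values of $\theta$ is $\delta^{-O(1)}$. Pigeonholing therefore gives a constant $c_0$ with $\partial\phi=c_0$ on $\gg\delta^{O(1)}N^{2(s-2)}$ pairs, and we also retain the correlation lower bound on those pairs by restricting the sum before pigeonholing.

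Lemma~\ref{lem:lowcomplexity} then writes $\phi=\sum_i\phi_i$ on a set $\mathcal H'$ of size $\gg\delta^{O(1)}N^{s-2}$, with $\phi_i$ independent of $h_i$. Restrict the Step 1 correlation to $\mathcal H'$; here one must choose $\mathcal H'$ compatibly, using that $\mathcal H'$ is a fibre of the popular pairs set. Lemma~\ref{lem:lowrank}, applied with $s-2$ in place of $s$ and $m=s-2$, then gives $\|\tilde D\|_{U^{s-1}(\Z)}^{2^{s-1}}\gg\delta^{O(1)}N^{s}$. Undoing the normalisation proves the inductive step.
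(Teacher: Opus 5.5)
Your proposal is correct and follows the paper's proof essentially step for step. The steps are: popular $\mathbf h$ with grid-valued phases from Lemma~\ref{lem:U2inv}, Lemma~\ref{lem:DDI} with $T=\mathcal B\times[H]$, the substitution $x\mapsto x-b_2m$ exposing the phase $e(-\theta b_2m)$, Vinogradov's lemma, pigeonholing on the grid $Q^{-1}\mathbb Z/\mathbb Z$, and then Lemmas~\ref{lem:lowcomplexity} and~\ref{lem:lowrank}, iterated down to $U^2$.

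Your hedges are unnecessary, for three reasons.
\begin{itemize}
\item Applying Lemma~\ref{lem:DDI} to the unnormalised $D$ with $c_t=\lambda(\mathbf b)$, the factor $|T|^{4^r-1}\asymp_{k,C_0}M^{4^r-1}$, where $M=B^{k-2}N$, cancels exactly. Your fallback of weights $1/|T|$ would be the lossy choice.
\item That lemma has already removed $\lambda$, as your own Step~2 display shows, so no further Cauchy--Schwarz is needed in Step~3.
\item For fixed $(x,m,b_3,\ldots,b_k)$ the admissible $b_2$ form an interval. So the geometric sum should be taken over $b_2$, not over $m$, which still enters $f_i(x+b_im)$ for $i\geq3$. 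This gives $\min(B,\|\theta m\|^{-1})$ directly, with no need to split into short blocks.
\end{itemize}
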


\begin{proof}
Write
$$
    M=B^{k-2}N,
$$
so that $\|D\|_{U^j[N]}=\|D\|_{U^j(\mathbb Z)}/\|1_{[N]}\|_{U^j(\mathbb Z)}$ for every $j$, and note that
$|D(x)|\leq K_0M$ for all $x$, where $K_0=(2C_0+1)^{k-1}$.
Set
\[
    \mathcal B\coloneqq ([-C_0B,C_0B]\cap\mathbb Z)^{k-1}.
\]
Extending the coefficient list by zeros, we may write
$$
    D=\sum_{(\mathbf b,m)\in\mathcal B\times[H]}\lambda(\mathbf b)\,\psi_{\mathbf b,m},
    \qquad
    \psi_{\mathbf b,m}\coloneqq 1_{[N]}\cdot\prod_{i=2}^{k}f_i(\cdot+b_i m),
$$
where each $\psi_{\mathbf b,m}$ is $1$-bounded and supported in an interval of length at most $N$.

There are constants $a=a(s,k,C_0)>0$ and $A=A(s,k)\geq1$ for which the
argument below proves the implication
\begin{equation}\label{eq:one-step-lowering}
    \|D\|_{U^s[N]}
    \geq
    \delta M
    \quad\Longrightarrow\quad
    \|D\|_{U^{s-1}[N]}
    \geq
    a\delta^AM,
\end{equation}
and then we can iterate it until the $U^2[N]$ norm is reached.  We choose $A$
independently of $C_0$ and absorb all multiplicative constants depending
on $C_0$ into $a$.

By the recursion~\eqref{eq:gowers-recursion} applied $s-2$ times,
$$
    \|D\|_{U^s(\mathbb Z)}^{2^s}
    = \sum_{\mathbf h\in\mathbb Z^{s-2}}\|\Delta_{\mathbf h}D\|_{U^2(\mathbb Z)}^4 .
$$
By \eqref{eq:degree-lowering-hypothesis} and \eqref{eq:Gowersdef}, the left side is $\gg_s(\delta M)^{2^s}N^{s+1}$. Each summand vanishes unless $\mathbf h\in([-N,N]\cap\mathbb Z)^{s-2}$, and is $O_{s,k,C_0}(M^{2^s}N^3)$ since $|\Delta_{\mathbf h}D|\leq (K_0M)^{2^{s-2}}$ pointwise. A popularity argument therefore gives $\gg_{s,k,C_0} \delta^{2^s}N^{s-2}$ tuples $\mathbf h$ with
$$
    \|\Delta_{\mathbf h}D\|_{U^2(\mathbb Z)}^{4}
    \gg_{s,k,C_0}\delta^{2^s}M^{2^s}N^3.
$$
For each such $\mathbf h$ we apply Lemma~\ref{lem:U2inv} to the $1$-bounded
function $\Delta_{\mathbf h}D/(K_0M)^{2^{s-2}}$, with
$\delta^{2^{s-2}}$ (up to a constant depending on $s,k,C_0$) in place of
$\delta$ and with
\[
    Q=\left\lceil20N\delta^{-A}\right\rceil.
\]
For the remaining $\mathbf h$ we set
$\phi(\mathbf h)=0$. This yields a function
$$
    \phi\colon\mathbb Z^{s-2}\to\mathbb T,
    \qquad
    \phi(\mathbf h)\in Q^{-1}\mathbb Z/\mathbb Z,
$$
and a subset $\mathcal{H}\subset ([-N,N]\cap\mathbb Z)^{s-2}$ of size
$\gg_{s,k,C_0}\delta^{O_{s,k}(1)}N^{s-2}$ such that 
\begin{align}\label{eq:large-alpha-correlation}
\left|
    \sum_x\Delta_{\mathbf h}D(x)
        e\bigl(\phi(\mathbf h)x\bigr)
    \right|
    \gg_{s,k,C_0}
    \delta^{O_{s,k}(1)} M^{2^{s-2}}N
    \qquad \textnormal{for all}\qquad \mathbf{h}\in \mathcal{H}.
\end{align}

Set $n=\lfloor N\rfloor$.  We now apply Lemma~\ref{lem:DDI}, with
$r=s-2$, phase $\phi(\mathbf h)$, and the subset $\mathcal H$ of
$([-n,n]\cap\mathbb Z)^{s-2}$, so with $2n+1$ in the role of $H$ there, to
the representation of $D$ displayed above. The index set
$T=\mathcal B\times[H]$ has size
$|T|\ll_{k,C_0} B^{k-1}H\ll_{k,C_0} M$, and the coefficients are the
$1$-bounded values $\lambda(\mathbf b)$, including zeros.
Summing~\eqref{eq:large-alpha-correlation} over
$\mathbf{h}\in \mathcal{H}$, raising to the power $2^{s-2}$ and applying
Lemma~\ref{lem:DDI} gives
\begin{align}
    \sum_{\mathbf h^0,\mathbf h^1\in\mathcal{H}}
    \left|
        \sum_x
        \Bigl(\sum_{\mathbf b,m}
        \Delta_{\mathbf h^1-\mathbf h^0}\psi_{\mathbf b,m}(x)\Bigr)
        e\bigl(\partial\phi(\mathbf h^0,\mathbf h^1)x\bigr)
    \right|
    \gg_{s,k,C_0}
    \delta^{O_{s,k}(1)}MN^{2s-3}.
    \label{eq:after-ddi-lambda-removed}
\end{align}
By multiplicativity of the difference operators,
$$
    \Delta_{\mathbf u}\psi_{\mathbf b,m}(x)
    =1_{E_{\mathbf u}}(x)\prod_{i=2}^{k}\Delta_{\mathbf u}f_i(x+b_i m),
    \qquad
    E_{\mathbf u}\coloneqq\bigcap_{\omega\in\{0,1\}^{s-2}}\bigl([N]-\omega\cdot\mathbf u\bigr),
$$
where $E_{\mathbf u}$ is an interval depending only on $\mathbf u=\mathbf h^1-\mathbf h^0$.  For a fixed pair
$(\mathbf h^0,\mathbf h^1)$, denote the absolute value in
\eqref{eq:after-ddi-lambda-removed} by
\begin{align*}
    S_{\mathbf h^0,\mathbf h^1}
    \coloneqq
    \left|
        \sum_{\mathbf b\in\mathcal B}
        \sum_{m\in[H]}\sum_{x\in E_{\mathbf u}}
        \prod_{i=2}^{k}
        \Delta_{\mathbf u}f_i(x+b_i m)
        e\bigl(\partial\phi(\mathbf h^0,\mathbf h^1)x\bigr)
    \right|.
\end{align*}
Reparametrise this sum by first substituting $x\mapsto x-b_2m$ and then
replacing $b_j-b_2$ by $b_j$ for $3\leq j\leq k$.  The factor
$\Delta_{\mathbf u}f_2(x)$ is then independent of $m$ and restricts $x$ to
$[N]$, while the phase becomes
\[
 e\bigl(\partial\phi(\mathbf h^0,\mathbf h^1)x\bigr)
 e\bigl(-\partial\phi(\mathbf h^0,\mathbf h^1)b_2m\bigr).
\]
For fixed $x,m,b_3,\ldots,b_k$, the admissible values of $b_2$ are the
intersection of the interval imposed by $x-b_2m\in E_{\mathbf u}$ with the
finitely many interval constraints defining the original cube $\mathcal B$.
They therefore form an interval $J$ of length $O_{k,C_0}(B)$.  The sum over $b_2$ has
remained inside the modulus, so
\[
 \left|\sum_{b_2\in J}e(\theta b_2)\right|
 \leq\min\bigl(|J|,\tfrac12\|\theta\|^{-1}\bigr).
\]
Bounding the remaining factors by $1$ and applying the triangle inequality
to the other variables gives
\begin{align}
    S_{\mathbf h^0,\mathbf h^1}
    \ll_{k,C_0}
    \sum_{x\in[N]}
    \sum_{b_3,\ldots,b_{k}}
    \sum_{m\in[H]}
    \min\Bigl(B,\ \bigl\|\partial\phi(\mathbf h^0,\mathbf h^1)m\bigr\|^{-1}\Bigr).
    \label{eq:only-b1-phase}
\end{align}

Combining \eqref{eq:after-ddi-lambda-removed} and
\eqref{eq:only-b1-phase}, and summing trivially over the $O(N)$ values of
$x$ and the $O_{k,C_0}(B^{k-2})$ values of $b_3,\ldots,b_{k}$, we obtain
\begin{equation}\label{eq:large-geometric-average}
    \sum_{\mathbf h^0,\mathbf h^1\in\mathcal H}
    \sum_{m\in[H]}
    \min\Bigl(B,\ \bigl\|\partial\phi(\mathbf h^0,\mathbf h^1)m\bigr\|^{-1}\Bigr)
    \gg_{s,k,C_0}
    \delta^{O_{s,k}(1)}|\mathcal H|^2HB.
\end{equation}
Since the summand is at most $B$, it follows that for a proportion
$\gg_{s,k,C_0}\delta^{O_{s,k}(1)}$ of pairs
$(\mathbf h^0,\mathbf h^1)\in\mathcal H^2$, one has
\begin{equation}\label{eq:large-geometric-for-pairs}
    \sum_{m\in[H]}
    \min\Bigl(B,\ \bigl\|\partial\phi(\mathbf h^0,\mathbf h^1)m\bigr\|^{-1}\Bigr)
    \gg_{s,k,C_0}
    \delta^{O_{s,k}(1)}HB.
\end{equation}
Fix constants $a'=a'(s,k,C_0)>0$ and $A'=A'(s,k)\geq1$ so that the
right side of~\eqref{eq:large-geometric-for-pairs} is at least
$\varepsilon HB$, where
$\varepsilon=a'\delta^{A'}$.  If fewer than
$(\varepsilon/2)H$ values of $m$ satisfy
$\min(B,\|\partial\phi(\mathbf h^0,\mathbf h^1)m\|^{-1})
\geq(\varepsilon/2)B$, then the sum in that display is less than
$\varepsilon HB$.  Hence at least $(\varepsilon/2)H$ values satisfy
\begin{equation}\label{eq:omega-m-small}
    \|\partial\phi(\mathbf h^0,\mathbf h^1)m\|
    \leq \frac{2}{\varepsilon B}.
\end{equation}

By Vinogradov's lemma (\cite[Lemma~4.5]{GT-nil}) using
$\min(B,H)\geq\delta^{-O_{s,k}(1)}$ from the range hypothesis, for every pair
$(\mathbf h^0,\mathbf h^1)$ in this proportion there is an integer
$1\leq q\ll_{s,k,C_0}\delta^{-O_{s,k}(1)}$ such that
\begin{equation}\label{eq:qomega-small}
    \|q\partial\phi(\mathbf h^0,\mathbf h^1)\|
    \ll_{s,k,C_0}
    \frac{\delta^{-O_{s,k}(1)}}{N}.
\end{equation}
Pigeonholing in $q$, there is a single integer
$1\leq q\ll_{s,k,C_0}\delta^{-O_{s,k}(1)}$ for which
\eqref{eq:qomega-small} holds for a proportion
$\gg_{s,k,C_0}\delta^{O_{s,k}(1)}$ of pairs in $\mathcal H^2$.
For this fixed $q$, each value of $\partial\phi(\mathbf h^0,\mathbf h^1)$
occurring in this proportion lies within
$O_{s,k,C_0}(\delta^{-O_{s,k}(1)}/N)$ of one
of the $q$ rationals $a/q$, $a\in\mathbb Z/q\mathbb Z$.  Since
$q\ll_{s,k,C_0}\delta^{-O_{s,k}(1)}$, another pigeonhole step gives a fixed residue
$a\in\mathbb Z/q\mathbb Z$ such that 
\begin{equation}\label{eq:omega-near-fixed-rational}
    \left\|
        \partial\phi(\mathbf h^0,\mathbf h^1)-\frac{a}{q}
    \right\|
    \ll_{s,k,C_0}
    \frac{\delta^{-O_{s,k}(1)}}{N}
\end{equation}
for a proportion $\gg_{s,k,C_0}\delta^{O_{s,k}(1)}$ of pairs
$(\mathbf h^0,\mathbf h^1)\in\mathcal H^2$.  Since
$\partial\phi$ takes values in the finite grid
$Q^{-1}\mathbb Z/\mathbb Z$, the arc in
\eqref{eq:omega-near-fixed-rational} contains
\[
    O_{s,k,C_0}\left(
        1+\frac{Q\delta^{-A_1}}{N}
    \right)
    \leq C_1\delta^{-A_2}
\]
grid points, for fixed constants $A_1,A_2$ depending only on $s,k$ and
$C_1$ depending on $s,k,C_0$.  Pigeonholing once more
among these points, we obtain a constant
$c_0\in\mathbb T$ such that
\begin{equation}\label{eq:partial-alpha-constant}
    \partial\phi(\mathbf h^0,\mathbf h^1)=c_0
\end{equation}
for a proportion $\gg_{s,k,C_0}\delta^{O_{s,k}(1)}$ of pairs
$(\mathbf h^0,\mathbf h^1)\in\mathcal H^2$.

Define $\phi^\ast\colon\mathbb Z^{s-2}\to\mathbb T$ by
\[
 \phi^\ast(\mathbf v)
 \coloneqq \phi\bigl(\mathbf v-(n+1)\mathbf 1\bigr).
\]
Translation preserves the alternating difference $\partial\phi$, so
\eqref{eq:partial-alpha-constant} gives
$\partial\phi^\ast=c_0$ on a subset of
$[2n+1]^{2(s-2)}$ of density
$\gg_{s,k,C_0}\delta^{O_{s,k}(1)}$.
Lemma~\ref{lem:lowcomplexity}, applied to $\phi^\ast$ with $s-2$ in place
of $s$, now shows that $\phi^\ast$ has low rank on a set
$\mathcal H^\ast\subset[2n+1]^{s-2}$ contained in
$\mathcal H+(n+1)\mathbf1$.  Translating back gives a set
$\mathcal H'=\mathcal H^\ast-(n+1)\mathbf1\subseteq\mathcal H$ of size
$|\mathcal H'|\gg_{s,k,C_0}\delta^{O_{s,k}(1)}N^{s-2}$. Thus there exist functions
$\phi_1,\ldots,\phi_{s-2}$, where $\phi_j$ is independent of
coordinate $j$, such that
\begin{equation}\label{eq:alpha-low-rank}
    \phi(\mathbf h)
    =
    \phi_1(\widehat{\mathbf h}_1)+\cdots+
    \phi_{s-2}(\widehat{\mathbf h}_{s-2})
\end{equation}
for all $\mathbf h\in\mathcal H'$, where $\widehat{\mathbf h}_i=(h_1,\ldots, h_{i-1},h_{i+1},\ldots, h_{s-2})$.  Returning to
\eqref{eq:large-alpha-correlation}, restricting to $\mathcal H'\subseteq\mathcal H$ and enlarging the resulting sum to all $\mathbf h\in\mathbb Z^{s-2}$, we find that Lemma~\ref{lem:lowrank}, applied to the $1$-bounded function $D/(K_0M)$ with $s-2$ in place of $s$ and with $m=s-2$ phase functions, gives
$$
    \|D\|_{U^{s-1}[N]}
    \geq a\delta^AM.
$$
This proves \eqref{eq:one-step-lowering}.  Iterating
\eqref{eq:one-step-lowering} from degree $s$ down to degree $2$,  gives
$$
    \|D\|_{U^2[N]}
    \geq c\delta^CB^{k-2}N,
$$
as required.
\end{proof}

\subsection{Reduction to \texorpdfstring{$U^{1+}$}{U1+} norm}

To pass from $U^2[N]$ norm to $U^{1+}[N]$ norm we use the inverse theorem for the $U^2[N]$ norm (which has polynomial dependencies) and then observe that thecorrelation of $D$ with a linear phase can be written as a bilinear exponential sum. This shows that  the
frequency $\alpha$ supplied by the $U^2$ inverse theorem must have many multiples $u$ with $\|u\alpha\|$ small, which implies that $\alpha$ is major arc, giving reduction to the $U^{1+}[N]$ norm. However, applying a standard bilinear estimate would lose a logarithm in the density, which we cannot afford. To overcome this, we smooth the sum in one of the variables and use Poisson summation to get more rapid decay away from the major arcs.

\begin{lemma}[Smoothed bilinear estimate]\label{lem:bilinear}
Let $U,V\geq 1$, let $A\geq 2$, and let $\phi,\beta\colon \mathbb{Z}\to \mathbb{C}$ be $1$-bounded sequences supported on intervals of lengths at most $U$ and $V$, respectively. Then for every $\theta\in\mathbb{R}$,
\begin{align}\label{eq:smoothed-bilinear-count}
  \Bigl|\sum_{m}\sum_{n}\phi(m)\beta(n)e(\theta mn)\Bigr|^2 \ll_A  U^2V\sum_{0\leq |u|\leq V}\bigl(1+U\|\theta u\|\bigr)^{-A}.
\end{align}
\end{lemma}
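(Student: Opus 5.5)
We prove \eqref{eq:smoothed-bilinear-count} by Cauchy--Schwarz in $n$, inserting a smooth majorant for the interval containing the support of $\beta$, and then applying Poisson summation. Let the support of $\beta$ lie in $[n_0,n_0+V]$. Fix a smooth bump $w\colon\mathbb R\to[0,\infty)$ with $w\geq1$ on $[0,1]$, supported in $[-1,2]$, and set $w_V(n)=w((n-n_0)/V)$. Cauchy--Schwarz gives
\begin{align*}
\Bigl|\sum_m\sum_n\phi(m)\beta(n)e(\theta mn)\Bigr|^2
\leq \Bigl(\sum_n|\beta(n)|^2\Bigr)\sum_n w_V(n)\Bigl|\sum_m\phi(m)e(\theta mn)\Bigr|^2,
\end{align*}
and the first factor is at most $V+1\ll V$. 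Expanding the square, the second factor equals
\begin{align*}
\sum_{m,m'}\phi(m)\overline{\phi(m')}\sum_n w_V(n)e(\theta(m-m')n).
\end{align*}
We write $u=m-m'$. Since $\phi$ is supported on an interval of length at most $U$, each $u$ occurs for at most $U+1$ pairs, and only $|u|\leq U$ contributes. So it suffices to bound the inner sum $\bigl|\sum_n w_V(n)e(\theta un)\bigr|$.

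Poisson summation gives
\begin{align*}
\sum_n w_V(n)e(\theta un)=V\sum_{\ell\in\mathbb Z}\widehat w\bigl(V(\ell-\theta u)\bigr)e(\cdots),
\end{align*}
where $|\widehat w(\xi)|\ll_A(1+|\xi|)^{-A}$. Summing over $\ell$ then yields
\begin{align*}
\Bigl|\sum_n w_V(n)e(\theta un)\Bigr|\ll_A V\bigl(1+V\|\theta u\|\bigr)^{-A},
\end{align*}
with the tail over $\ell$ absorbed because $\sum_{j\geq1}(Vj)^{-A}\ll (1+V\|\theta u\|)^{-A}$ whenever $\|\theta u\|\leq1/2$. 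Combining these bounds gives
\begin{align*}
|\cdot|^2\ll_A V\cdot U\cdot V\sum_{|u|\leq U}(1+V\|\theta u\|)^{-A}=UV^2\sum_{|u|\leq U}(1+V\|\theta u\|)^{-A}.
\end{align*}

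This is the stated bound with the roles of $U$ and $V$ interchanged. To get the stated form instead, we run the same argument with Cauchy--Schwarz in $m$ and the smooth weight placed on the $m$-interval. Then the first factor is $\ll U$, the differences $u=n-n'$ satisfy $|u|\leq V$ with multiplicity at most $V+1$, and Poisson summation in $m$ gives inner sums $\ll_A U(1+U\|\theta u\|)^{-A}$. Altogether this yields $U\cdot V\cdot U\sum_{|u|\leq V}(1+U\|\theta u\|)^{-A}$, which is exactly \eqref{eq:smoothed-bilinear-count}.

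The main point needing care is the Poisson step, namely the uniform decay $V\sum_\ell|\widehat w(V(\ell-\theta u))|\ll_A V(1+V\|\theta u\|)^{-A}$; in the final form this reads $U\sum_\ell|\widehat w(U(\ell-\theta u))|\ll_A U(1+U\|\theta u\|)^{-A}$. Isolating the nearest $\ell$ and using $|\ell-\theta u|\geq 1/2$ for all other $\ell$ handles this, since $U\geq1$ and so the remaining terms sum to $O_A(U^{-A})$. Everything else is routine bookkeeping.
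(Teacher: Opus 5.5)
Your final argument is correct and is essentially the paper's proof: Cauchy--Schwarz in $m$ against a smooth weight that majorises the $m$-interval, then expanding the square in $(n,n')$ with at most $V+1$ pairs for each difference $u=n-n'$, then Poisson summation with rapid decay of $\widehat w$ to get $\ll_A U(1+U\|\theta u\|)^{-A}$. The preliminary version with Cauchy--Schwarz in $n$ only gives the bound with $U$ and $V$ interchanged, so you can drop it.
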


\begin{proof}
Choose a smooth function $W\colon\mathbb{R}\to[0,1]$ with $W=1$ on $[0,1]$ and $\operatorname{supp}W\subseteq[-1,2]$.
Let $m_0\in\mathbb Z$ and let $J_U=[m_0,m_0+U]$ be an interval containing
the support of $\phi$.  Set $w_m=W((m-m_0)/U)$, so that $w_m=1$ for
$m\in J_U$. By the Cauchy--Schwarz inequality with the weight $w$,
\begin{align*}
  \Bigl|\sum_m\sum_n\phi(m)\beta(n)e(\theta mn)\Bigr|^2
  &\leq
  \Bigl(\sum_{m\in J_U}\frac{|\phi(m)|^2}{w_m}\Bigr)
  \sum_m w_m\Bigl|\sum_n\beta(n)e(\theta mn)\Bigr|^2 \\
  &\ll
  U\sum_{n,n'}|\beta(n)\beta(n')|
  \left|\sum_m w_m e\bigl(\theta m(n-n')\bigr)\right|.
\end{align*}
By Poisson summation, for any $\xi\in\mathbb{R}$,
\begin{align*}
  \sum_{m\in\mathbb{Z}}W\Bigl(\frac{m-m_0}{U}\Bigr)e(\xi m)
  =e(\xi m_0)\,U\sum_{j\in\mathbb{Z}}\widehat W\bigl(U(j-\xi)\bigr),
  \qquad
  \widehat W(y)\coloneqq\int_{\mathbb{R}} W(t)e(-yt)\,\d t,
\end{align*}
and since $W$ is smooth and compactly supported, $|\widehat W(y)|\ll_A(1+|y|)^{-A-2}$ by repeated integration by parts. As $U|j-\xi|\geq U\|\xi\|$ and $U\geq1$, summing over $j$ gives
\begin{align*}
  \Bigl|\sum_{m\in\mathbb{Z}}W\Bigl(\frac{m-m_0}{U}\Bigr)e(\xi m)\Bigr|\ll_A U\bigl(1+U\|\xi\|\bigr)^{-A}.
\end{align*}
Inserting this with $\xi=\theta(n-n')$, writing $u=n-n'$ and noting that each value of $u$ arises from at most $V+1$ pairs $(n,n')$, we obtain~\eqref{eq:smoothed-bilinear-count}.
\end{proof}

\begin{lemma}[Degree lowering to $U^{1+}$ norm]\label{lem:dualU1+}
Let $C_0\geq1$.  There are constants $c=c(k,C_0)>0$ and
$C=C(k)\geq1$ such that the following holds.  Let $0<\delta<c$, let $H$
be a positive integer, set $B=N/H$, suppose that
$\delta^{-C}\leq B\leq N\delta^C$, and let $D$ be as in
Lemma~\ref{lem:degreelower} with support parameter $C_0$. If
    \begin{align*}
           \|D\|_{U^{2}[N]}\geq \delta B^{k-2}N,
    \end{align*}
    then
    \begin{align*}
        \|D\|_{U^{1+}[N]}\geq c\delta^C B^{k-2}N.
    \end{align*}
\end{lemma}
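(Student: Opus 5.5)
We normalise $D$ by $K_0M$ with $M=B^{k-2}N$, so that $D/(K_0M)$ is $1$-bounded. The hypothesis then gives $\|D\|_{U^2(\mathbb Z)}^4\gg_{k,C_0}\delta^4M^4N^3$. Lemma~\ref{lem:U2inv} supplies a frequency $\alpha\in\mathbb T$ with
\begin{align*}
\Bigl|\sum_{x\in[N]}D(x)e(\alpha x)\Bigr|\gg_{k,C_0}\delta^2MN.
\end{align*}
The plan is to show that $\alpha$ is major arc: $\|q\alpha\|\ll\delta^{-O(1)}/N$ for some $q\ll\delta^{-O(1)}$. Once this holds, we split $[N]$ into $q$ residue classes modulo $q$, and each class into $\delta^{-O(1)}$ subprogressions of length $\gg\delta^{O(1)}N/q$. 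On each of these the phase $e(\alpha x)$ varies by $O(\delta^{O(1)})$ from a constant. Pigeonholing then produces a progression $P\subseteq[N]$ with $|\sum_{x\in P}D(x)|\gg\delta^{O(1)}MN$, which is exactly $\|D\|_{U^{1+}[N]}\gg\delta^{O(1)}M$.

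To obtain the major arc property, expand $D$ and substitute as in the proof of Lemma~\ref{lem:degreelower}. Put $x\mapsto x-b_2m$ and replace $b_j-b_2$ by $b_j$ for $j\ge3$. The correlation becomes
\begin{align*}
\sum_{b_3,\ldots,b_k}\sum_{x}f_2(x)e(\alpha x)\sum_{b_2}\sum_{m\in[H]}\lambda'(\mathbf b)\,1_{[N]}(x-b_2m)\prod_{j\ge3}f_j(x+b_jm)\,e(-\alpha b_2m),
\end{align*}
with the coefficients still in a box of side $O_{k,C_0}(B)$. The constraints on $b_2$ are intervals, so after dividing the relevant ranges into $\delta^{-O(1)}$ short pieces they become approximately product constraints. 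Pigeonhole in $x$ and $b_3,\ldots,b_k$. The troublesome factor is $\lambda'$, which depends on $b_2$. Since $\lambda'$ is an arbitrary $1$-bounded weight, the natural object is a bilinear form in $(b_2,m)$ with coefficients $\phi(b_2)=\lambda'(\cdot)$ and $\beta(m)=\prod_j f_j(x+b_jm)$. We therefore fix $x,b_3,\ldots,b_k$ with a large contribution. This leaves
\begin{align*}
\Bigl|\sum_{b_2}\sum_m\phi(b_2)\beta(m)e(-\alpha b_2m)\Bigr|\gg\delta^{O(1)}BH,
\end{align*}
where $\phi$ is $1$-bounded and supported on an interval of length $U\asymp B$, and $\beta$ is supported on $[H]$. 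The cutoff $1_{[N]}(x-b_2m)$ couples the two variables. We remove it by splitting $b_2$ and $m$ into $\delta^{-O(1)}$ short intervals, each of relative length $\delta^{O(1)}$. Inside each such block the coupling cuts out a nearly rectangular region, up to a boundary of relative size $\delta^{O(1)}$, and we discard that boundary.

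Next apply Lemma~\ref{lem:bilinear} with $U\asymp B$, $V=H$ and a large $A$. It gives
\begin{align*}
\sum_{|u|\le H}(1+B\|\alpha u\|)^{-A}\gg\delta^{O(1)}H.
\end{align*}
Terms with $\|\alpha u\|>\delta^{-C'}/B$ contribute at most $H\delta^{AC'}$, which is negligible. Hence $\gg\delta^{O(1)}H$ values $u\in[-H,H]$ satisfy $\|\alpha u\|\le\delta^{-O(1)}/B$. Vinogradov's lemma (\cite[Lemma~4.5]{GT-nil}), applied with $\min(B,H)\ge\delta^{-C}$, then yields $q\ll\delta^{-O(1)}$ with $\|q\alpha\|\ll\delta^{-O(1)}/(BH)=\delta^{-O(1)}/N$, as required.

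The main obstacle is keeping every loss polynomial. This is precisely why the smoothed estimate with rapid decay is used in place of the standard $\min(U,\|\theta u\|^{-1})$ bound, which would cost a factor $\log$ in the count of good $u$. A second delicate point is that the dependence of $\lambda'$ and of the support cutoffs on $b_2$ is handled by the short-interval splitting, and this must cost only $\delta^{-O(1)}$ factors.
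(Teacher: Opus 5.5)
Your proposal is correct and follows essentially the same route as the paper: the $U^2$ inverse theorem, the substitution $x\mapsto x-b_2m$, a block decomposition of the $(b_2,m)$ ranges, the smoothed bilinear estimate of Lemma~\ref{lem:bilinear} (used precisely to avoid the logarithmic loss), Vinogradov's lemma, and extraction of a progression. The only difference is cosmetic. You fix $x$ as well as $b_3,\ldots,b_k$ and discard the blocks meeting the boundary of the hyperbolic region cut out by $1_{[N]}(x-b_2m)$; you then need to pigeonhole a single block before applying the bilinear lemma, a step you leave implicit. The paper instead keeps $x$ summed inside $\rho(m)$ and replaces $[N]+b_2m$ by $[N]+u_0$ on each block.
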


\begin{proof}
Write $M=B^{k-2}N$, put
$K_0=(2C_0+1)^{k-1}$, and let $C_1$ be a sufficiently large constant
depending only on $k$. We take $C$ larger than $4+C_1$.
By decreasing $c=c(k,C_0)$ if necessary, all multiplicative constants
depending on $C_0$ may be absorbed without changing either $C_1$ or $C$.

Choose an integer $Q\geq20N(K_0/\delta)^2$.  Lemma~\ref{lem:U2inv}, applied
to the $1$-bounded function $D/(K_0M)$ with this value of $Q$, gives
$\alpha\in\mathbb T$ such that
\begin{align}\label{eq:U1plus-start}
      \Bigl| \sum_{x} D(x) e(\alpha x)\Bigr|\gg_{k,C_0} \delta^2 MN.
\end{align}
Opening the definition of $D$, substituting $x\mapsto x-b_2m$ and replacing $b_j-b_2$ by $b_j$ for $j=3,\ldots,k$ as in the proof of Lemma~\ref{lem:degreelower}, define
\[
 \lambda'(b_2,b_3,\ldots,b_k)
 \coloneqq \lambda(b_2,b_3+b_2,\ldots,b_k+b_2).
\]
This weight is $1$-bounded and supported on a cube of side
$O_{k,C_0}(B)$.  With
this notation the left side of~\eqref{eq:U1plus-start} equals
\begin{align*}
    \Bigl|\sum_{b_3,\ldots,b_{k}}\sum_{m\in[H]}\sum_{b_2}\lambda'(b_2,\ldots,b_{k})\,
    e(-\alpha b_2 m)
    \sum_{x\in[N]+b_2m}e(\alpha x)f_2(x)\prod_{i=3}^{k}f_i(x+b_im)\Bigr|,
\end{align*}
where the condition $x\in[N]+b_2m$ records the support of the removed factor $1_{[N]}(x-b_2m)$, and $f_2$ restricts $x$ to $[N]$.

Set
\[
    L_B=\max(1,\lfloor\delta^{C_1}B\rfloor),\qquad
    L_H=\max(1,\lfloor\delta^{C_1}H\rfloor).
\]
The range hypotheses ensure that
$L_B\asymp\delta^{C_1}B$ and $L_H\asymp\delta^{C_1}H$.  Split the ranges of
$b_2$ and $m$ into $O(\delta^{-2C_1})$ blocks
$J_B\times J_H$ of cardinalities at most $L_B$ and $L_H$, respectively.
On each block the product $b_2m$ varies by $O_{k,C_0}(\delta^{C_1}N)$, so
replacing $[N]+b_2m$ by $[N]+u_0$, with $u_0$ a fixed reference value of
$b_2m$ on the block, changes each inner sum over $x$ by
$O_{k,C_0}(\delta^{C_1}N)$ and hence the total by
$O_{k,C_0}(\delta^{C_1}MN)$, which is negligible for $C_1$ large. Fixing the
tuple $(b_3,\ldots,b_{k})$ and then the block giving the largest contribution,
of which there are $O_{k,C_0}(B^{k-2})$ and $O(\delta^{-2C_1})$ respectively,
we obtain, suppressing the fixed $b_3,\ldots,b_{k}$ from $\lambda'$,
\begin{align}\label{eq:block-sum-large}
    \Bigl|\sum_{(b_2,m)\in J_B\times J_H}\lambda'(b_2)\,\frac{\rho(m)}{N}\,e(-\alpha b_2m)\Bigr|
    \gg_{k,C_0} \delta^{2+2C_1}
    \frac{MN}{B^{k-2}N}=\delta^{2+2C_1} N,
\end{align}
where, for the fixed $(b_3,\ldots,b_{k})$ and the reference value $u_0$ of the chosen block,
$$
    \rho(m)\coloneqq\sum_{x\in[N]\cap([N]+u_0)}e(\alpha x)f_2(x)\prod_{i=3}^{k}f_i(x+b_im)
$$
is independent of $b_2$ and satisfies $|\rho(m)|\leq N$.

We now apply Lemma~\ref{lem:bilinear}
with $\theta=-\alpha$, $U=L_B$, $V=L_H$, the $1$-bounded sequences
$\lambda'$ and $\rho/N$ supported on the intervals $J_B$ and $J_H$, and
$A=10$. Comparing with~\eqref{eq:block-sum-large} and using $BH=N$, we obtain
\begin{align}\label{eq:count-large}
    \sum_{0\leq|u|\leq \delta^{C_1}H}\bigl(1+\delta^{C_1}B\|\alpha u\|\bigr)^{-10}
    \gg_{k,C_0}
    \frac{(\delta^{2+2C_1}N)^2}{(\delta^{C_1}B)^2\,\delta^{C_1}H}
    = \delta^{4}\cdot\delta^{C_1}H.
\end{align}
The terms in~\eqref{eq:count-large} with $\|\alpha u\|>\delta^{-1-C_1}/B$ contribute $O(\delta^{10}\cdot\delta^{C_1}H)$, which is negligible.  The term $u=0$ contributes
$1\leq\tfrac12\delta^{4+C_1}H$, since the range hypothesis gives
$H\geq\delta^{-C}$, provided $C>4+C_1$. Hence, after reflecting
$u\mapsto -u$, there are
$\gg_{k,C_0}\delta^{4}\cdot\delta^{C_1}H$ values of
$1\leq u\leq \delta^{C_1}H$ with
$$
    \|\alpha u\|\leq \delta^{-1-C_1}/B.
$$
By Vinogradov's lemma, \cite[Lemma~4.5]{GT-nil}, there is an integer
$1\leq q\ll_{k,C_0}\delta^{-O_k(1)}$ with
$$
    \|q\alpha\|\ll_{k,C_0}
    \frac{\delta^{-O_k(1)}}{B\cdot\delta^{C_1}H}
    \ll_{k,C_0}\frac{\delta^{-O_k(1)}}{N}.
$$
Write $\alpha=a/q+\beta$ with $a\in\mathbb{Z}$ and
$|\beta|\ll_{k,C_0}\delta^{-O_k(1)}/N$.

Finally, we extract a progression. Partition $[N]$ into arithmetic progressions of the form
$$
    P_{r,j}=\{x\in I_j: x\equiv r\ (\mathrm{mod}\ q)\},
    \qquad r\in\mathbb{Z}/q\mathbb{Z},
$$
where the $I_j$ are intervals of length
\[
 \ell\coloneqq
 \max\left(1,\left\lfloor
 \delta^{C_1}\min(|\beta|^{-1},N)
 \right\rfloor\right),
\]
with the convention $|0|^{-1}=\infty$, covering $[N]$. On each $P_{r,j}$ the phase $e((a/q)x)=e(ar/q)$ is constant and $e(\beta x)=e(\beta x_j)+O(\delta^{C_1})$ for any fixed $x_j\in I_j$. Hence, by~\eqref{eq:U1plus-start},
$$
    \delta^{2}MN
    \ll_{k,C_0}
    \sum_{r,j}\Bigl|\sum_{x\in P_{r,j}}D(x)\Bigr|
    +\delta^{C_1}\sum_x|D(x)|,
$$
and the second term is negligible for $C_1$ large. The number of pairs
$(r,j)$ is at most
$q\,(N/\ell+1)\ll_{k,C_0}\delta^{-O_k(1)}$, so for some single
progression $P=P_{r,j}\subseteq[N]$ we have
$$
    \Bigl|\sum_{x\in P}D(x)\Bigr|
    \gg_{k,C_0}\delta^{O_k(1)}MN,
$$
which by the definition~\eqref{eq:U1-plus-def} of the $U^{1+}[N]$ norm gives
$\|D\|_{U^{1+}[N]}\gg_{k,C_0}\delta^{O_k(1)}M$.  We now choose the constant $C$
in the statement larger than $4+C_1$ and than the finitely many implicit exponents
above.  This proves the result.
\end{proof}

\section{Proof of Theorem~\ref{thm:1+}}\label{sec:proof-1+}

The goal of this section is to prove Theorem~\ref{thm:1+} by turning a large
average involving the normalised dual function into correlations of the input
functions with arithmetic progressions.  The anchoring identity below allows
us to replace one input slot at a time by the indicator of an arithmetic
progression, and after iterating this procedure the remaining correlation is
detected directly by the $U^{1+}$ norm.

\begin{definition}[Normalised dual function]\label{def:dual}
Let $N\geq1$ and $C_0\geq1$, let $H$ be a positive integer, set $B=N/H$,
and suppose $B\geq1$.  For a $1$-bounded weight
$\mu\colon\Z^{k-1}\to\C$ supported on
$([-C_0B,C_0B]\cap\Z)^{k-1}$, and for
$g_2,\ldots,g_k\colon\Z\to\C$, set
\begin{align}\label{eq:normalised-dual-function}
    \mathcal D_\mu(g_2,\ldots,g_k)(x)
    \coloneqq
    \frac{1}{M}
    \sum_{\mathbf b\in\Z^{k-1}}\mu(\mathbf b)
    \sum_{m\in[H]}\prod_{i=2}^k g_i(x+b_i m),
    \qquad
    M\coloneqq B^{k-2}N=B^{k-1}H .
\end{align}
Here $\mathbf b=(b_2,\ldots,b_k)$.
\end{definition}

Thus $M\,1_{[N]}\mathcal D_\mu$ is the unnormalised dual function denoted by
$D$ in Section~\ref{sec:degree-lowering}.  We use the calligraphic notation
from this point onwards so that both the normalisation and the coefficient
weight remain visible.

The normalisation makes the dual function pointwise bounded.  More precisely,
if $\mu$ is supported on
$([-C_0B,C_0B]\cap\Z)^{k-1}$ and the functions $u_2,\ldots,u_k$ are
$1$-bounded and supported on $[N]$, then
\begin{align}\label{eq:normalised-dual-pointwise-bound}
    |\mathcal D_\mu(u_2,\ldots,u_k)(x)|
    \leq (2C_0+1)^{k-1}
    \quad\text{for all }x\in\Z .
\end{align}
Indeed, the numerator in
\eqref{eq:normalised-dual-function} is at most
$(2C_0B+1)^{k-1}H$, while $M=B^{k-1}H$.

The following identity records the coefficient weight whenever the outside
slot is changed.

\begin{lemma}[Anchoring]\label{lem:anchoring}
Let $2\leq\ell\leq k$, let $C_0\geq1$, and let
$\mu\colon\Z^{k-1}\to\C$ be $1$-bounded and supported on
$([-C_0B,C_0B]\cap\Z)^{k-1}$.  Then there is a $1$-bounded
$\widetilde\mu\colon\Z^{k-1}\to\C$, supported on
$([-2C_0B,2C_0B]\cap\Z)^{k-1}$, such that for
all $h,g_2,\ldots,g_k\colon\Z\to\C$ supported on $[N]$,
\begin{align}\label{eq:anchoring}
    \E_{x\in[N]}h(x)\,\mathcal D_\mu(g_2,\ldots,g_k)(x)
    =
    \E_{y\in[N]}g_\ell(y)\,
    \mathcal D_{\widetilde\mu}(h,g_2,\ldots,g_{\ell-1},g_{\ell+1},\ldots,g_k)(y).
\end{align}
Explicitly, $\widetilde\mu$ is the pushforward of $\mu$ under the map
\begin{align}\label{eq:anchoring-tuple}
    T\colon(b_2,\ldots,b_k)\longmapsto
    (-b_\ell,\ b_2-b_\ell,\ \ldots,\ b_{\ell-1}-b_\ell,\ b_{\ell+1}-b_\ell,\ \ldots,\ b_k-b_\ell),
\end{align}
which is a bijection of $\Z^{k-1}$.
\end{lemma}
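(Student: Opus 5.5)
The identity is a change of variables, so the plan is to expand both sides and match summands. Since $h$ and every $g_i$ are supported on $[N]$, the restrictions $x\in[N]$ and $y\in[N]$ can be dropped, and both averages become $\frac{1}{\lfloor N\rfloor M}$ times sums over all of $\Z$. The left side is then
\[
\frac{1}{\lfloor N\rfloor M}\sum_{\mathbf b}\mu(\mathbf b)\sum_{m\in[H]}\sum_{x\in\Z}h(x)\prod_{i=2}^k g_i(x+b_im).
\]
For fixed $\mathbf b$ and $m$, substitute $y=x+b_\ell m$, so that $x=y-b_\ell m$. The inner sum becomes
\[
\sum_{y} g_\ell(y)\,h\bigl(y+(-b_\ell)m\bigr)\prod_{i\neq\ell}g_i\bigl(y+(b_i-b_\ell)m\bigr).
\]
This is exactly the summand of $\mathcal D$ at the slot ordering $(h,g_2,\ldots,\widehat{g_\ell},\ldots,g_k)$ with coefficient tuple $T(\mathbf b)$ from \eqref{eq:anchoring-tuple}.

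Next, define $\widetilde\mu(\mathbf b')=\mu(T^{-1}\mathbf b')$. The map $T$ is a bijection of $\Z^{k-1}$: its inverse recovers $b_\ell=-b'_1$ and $b_i=b'_i-b'_1$ (with indices shifted appropriately). So $\widetilde\mu$ is the pushforward of $\mu$, and it is $1$-bounded because no two preimages are added together. For the support claim, if $|b_i|\leq C_0B$ for all $i$, then $|-b_\ell|\leq C_0B$ and $|b_i-b_\ell|\leq 2C_0B$, so $\widetilde\mu$ is supported on $([-2C_0B,2C_0B]\cap\Z)^{k-1}$.

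Reindexing the $\mathbf b$-sum by $\mathbf b'=T(\mathbf b)$ and reinserting $1_{[N]}(y)$ through the support of $g_\ell$ gives the right side of \eqref{eq:anchoring}. Both sides carry the same normalisation $M=B^{k-2}N$, because $\mathcal D_{\widetilde\mu}$ is defined with the same $B$ and $H$; the doubled support constant affects only the pointwise bound \eqref{eq:normalised-dual-pointwise-bound}, not the normalisation.

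The only step needing care is the bookkeeping of slot order. One has to check that the coordinate $-b_\ell$ in the first position of $T(\mathbf b)$ pairs with $h$, and that the remaining coordinates $b_i-b_\ell$ pair with $g_i$ for $i\neq\ell$, in the order listed in \eqref{eq:anchoring-tuple}. There is no analytic difficulty: all sums are finite and the manipulation is an exact identity.
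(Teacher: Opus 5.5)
Your proposal is correct and follows the paper's proof. Both arguments drop the support restriction using the support of $h$ and of $g_\ell$, substitute $y=x+b_\ell m$, and reindex the coefficient sum through the bijection $T$ with $\widetilde\mu=\mu\circ T^{-1}$. The $1$-boundedness and support checks are also the same as in the paper.
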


\begin{proof}
Writing out the normalised dual function \eqref{eq:normalised-dual-function}, and using
that $h$ is supported on $[N]$,
\begin{align}\label{eq:move-sparse-out-expanded}
    &\E_{x\in[N]}h(x)\mathcal D_\mu(g_2,\ldots,g_k)(x)\notag\\
    &\quad =
    \frac1{\lfloor N\rfloor M}
    \sum_{\mathbf b}\mu(\mathbf b)
    \sum_{m\in[H]}
    \sum_{x\in\Z}
    h(x)g_\ell(x+b_\ell m)
    \prod_{\substack{2\leq i\leq k\\ i\neq \ell}}g_i(x+b_i m)\\
    &\quad =
    \frac1{\lfloor N\rfloor M}
    \sum_{\mathbf b}\mu(\mathbf b)
    \sum_{m\in[H]}
    \sum_{y\in\Z}
    g_\ell(y)h(y-b_\ell m)
    \prod_{\substack{2\leq i\leq k\\ i\neq \ell}}g_i(y+(b_i-b_\ell)m),
    \notag
\end{align}
by the substitution $y=x+b_\ell m$, under which $x=y-b_\ell m$ and
$x+b_im=y+(b_i-b_\ell)m$.

The map $T$ of \eqref{eq:anchoring-tuple} is a bijection of $\Z^{k-1}$: its first coordinate
determines $b_\ell$, and then the remaining coordinates determine the remaining $b_i$.  Set
$\widetilde\mu=\mu\circ T^{-1}$, which is $1$-bounded, and which is supported on
$([-2C_0B,2C_0B]\cap\Z)^{k-1}$ because $|-b_\ell|\leq C_0B$ and
$|b_i-b_\ell|\leq2C_0B$
whenever $\mu(\mathbf b)\neq0$.  Reindexing the sum over $\mathbf b$ in
\eqref{eq:move-sparse-out-expanded} by $\widetilde{\mathbf b}=T(\mathbf b)$, the last line of
\eqref{eq:move-sparse-out-expanded} becomes
\begin{align*}
    \frac1{\lfloor N\rfloor}
    \sum_{y\in\Z}g_\ell(y)\,
    \mathcal D_{\widetilde\mu}(h,g_2,\ldots,g_{\ell-1},g_{\ell+1},\ldots,g_k)(y),
\end{align*}
the slots of the new dual function carrying, in order, the functions
$h,g_2,\ldots,g_{\ell-1},g_{\ell+1},\ldots,g_k$ with the coefficients listed in
\eqref{eq:anchoring-tuple}.  Since $g_\ell$ is supported on $[N]$, this is the right side
of \eqref{eq:anchoring}.
\end{proof}

The next lemma carries out the basic replacement step: a large correlation
with the dual function allows any chosen input slot to be replaced by the
indicator of an arithmetic progression.

\begin{lemma}[Bounded slot replacement]\label{lem:bounded-slot-replacement}
For every $C_0\geq1$ there are constants $A=A(k)\geq1$ and
$c=c(k,C_0)>0$ such that the following holds.  Let $0<\kappa<1$, let
$g_1,\ldots,g_k$ be $1$-bounded and supported on $[N]$, and let $\mu$ be a
$1$-bounded coefficient weight supported on
$([-C_0B,C_0B]\cap\mathbb Z)^{k-1}$.  Assume
\[
    \kappa^{-A}\leq B\leq N\kappa^A.
\]
If
\begin{align}\label{eq:claim-hyp}
 \left|\E_{x\in[N]}g_1(x)\mathcal D_\mu(g_2,\ldots,g_k)(x)\right|
 \geq\kappa,
\end{align}
then for every $2\leq i\leq k$ there exist an arithmetic progression
$P\subseteq[N]$ and a $1$-bounded weight $\widetilde\mu$, supported on
$([-2C_0B,2C_0B]\cap\mathbb Z)^{k-1}$, such that
\begin{align}\label{eq:claim-conc}
 \left|\E_{x\in[N]}g_i(x)
 \mathcal D_{\widetilde\mu}
 (1_P,g_2,\ldots,g_{i-1},g_{i+1},\ldots,g_k)(x)\right|
 \geq c\kappa^A.
\end{align}
\end{lemma}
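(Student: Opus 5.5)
The plan is to show that the dual function $\mathcal D:=\mathcal D_\mu(g_2,\ldots,g_k)$ has large $U^{1+}$ norm, replace $g_1$ by the indicator of a progression, and then move slot $i$ outside using Lemma~\ref{lem:anchoring}.

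\textbf{Step 1: $U^k$ control of the dual function.} By Cauchy--Schwarz, \eqref{eq:claim-hyp} gives
\[
\kappa\le \bigl(\E_{x\in[N]}|\mathcal D(x)|^2\bigr)^{1/2}.
\]
Write $\E_{x\in[N]}|\mathcal D|^2=\E_{x\in[N]}\overline{\mathcal D(x)}\,\mathcal D(x)$. This is a counting expression of the anchored form in Corollary~\ref{cor:Uk-anchored}, with $\overline{\mathcal D}1_{[N]}$ in the first slot. That slot is bounded by $(2C_0+1)^{k-1}$ by \eqref{eq:normalised-dual-pointwise-bound}, so after normalisation it is $1$-bounded. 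Corollary~\ref{cor:Uk-anchored} applied to that slot gives
\[
\|1_{[N]}\mathcal D\|_{U^k[N]}\gg_{k,C_0}\kappa^{O_k(1)}.
\]
The range hypothesis $\kappa^{-A}\le B\le N\kappa^A$, with $A$ large, secures the conditions $B\ge C'\delta^{-1}$ and $H\ge C'\delta^{-2^{k-1}}$ there. In the unnormalised notation $D=M1_{[N]}\mathcal D$ this reads $\|D\|_{U^k[N]}\gg\kappa^{O(1)}B^{k-2}N$.

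\textbf{Step 2: degree lowering and extraction of a progression.} Apply Lemma~\ref{lem:degreelower} with $s=k$ (if $k\ge3$), then Lemma~\ref{lem:dualU1+}. This gives an arithmetic progression $P\subseteq[N]$ with
\[
\Bigl|\sum_{x\in P}D(x)\Bigr|\gg\kappa^{O(1)}MN,\qquad\text{that is,}\qquad \bigl|\E_{x\in[N]}1_P(x)\,\mathcal D(x)\bigr|\gg\kappa^{O(1)}.
\]
If $k=2$, Step 1 already gives $U^2$ control, and we go straight to Lemma~\ref{lem:dualU1+}.

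\textbf{Step 3: anchoring.} Apply Lemma~\ref{lem:anchoring} with $h=1_P$ and $\ell=i$. This produces a $1$-bounded weight $\widetilde\mu$ supported on $([-2C_0B,2C_0B]\cap\mathbb Z)^{k-1}$ such that
\[
\E_{x\in[N]}1_P\,\mathcal D_\mu(g_2,\ldots,g_k)=\E_{y\in[N]}g_i(y)\,\mathcal D_{\widetilde\mu}(1_P,g_2,\ldots,\widehat{g_i},\ldots,g_k)(y),
\]
which is exactly \eqref{eq:claim-conc}. Finally, choose $A$ larger than all exponents arising, and $c$ to absorb the $C_0$-dependent constants.

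\textbf{Main obstacle.} The key step is Step 1: turning the correlation hypothesis into $U^k$ control of $\mathcal D$ itself. This needs the "dual function inside its own dual" trick, and the bookkeeping must be done carefully:
\begin{itemize}
\item the coefficient $-b_i$ in the first slot and the differences $b_j-b_i$ must stay within an $O_{k,C_0}(B)$ cube;
\item the normalisation $M=B^{k-2}N$ must match the hypothesis of Corollary~\ref{cor:Uk-anchored};
\item the polynomial losses must remain compatible with the range conditions of the degree-lowering lemmas. For this, $A$ is chosen after the exponents $C$ from those lemmas.
\end{itemize}
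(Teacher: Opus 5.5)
Your proposal is correct and follows the paper's proof step for step: Cauchy--Schwarz gives $\E_{x\in[N]}|\mathcal D_\mu(g_2,\ldots,g_k)(x)|^2\geq\kappa^2$, Corollary~\ref{cor:Uk-anchored} applied to the normalised function $\overline{\mathcal D_\mu(g_2,\ldots,g_k)}\,1_{[N]}/(2C_0+1)^{k-1}$ in the anchored slot gives $U^k[N]$ control, Lemma~\ref{lem:degreelower} (for $k\geq3$) and Lemma~\ref{lem:dualU1+} produce the progression $P$, and Lemma~\ref{lem:anchoring} with $h=1_P$ and $\ell=i$ moves $g_i$ outside. The paper closes the argument with the same bookkeeping you describe: $A$ is chosen after the exponents of the intermediate lemmas, and the $C_0$-dependent constants are absorbed into $c$.
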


\begin{proof}
Since $|g_1|\leq1_{[N]}$, Cauchy--Schwarz and
\eqref{eq:claim-hyp} give
\[
 \E_{x\in[N]}|\mathcal D_\mu(g_2,\ldots,g_k)(x)|^2\geq\kappa^2.
\]
 The pointwise estimate
\eqref{eq:normalised-dual-pointwise-bound} and
Corollary~\ref{cor:Uk-anchored}, with support parameter $C_0$ and applied to
$\overline{\mathcal D_\mu(g_2,\ldots,g_k)}1_{[N]}/(2C_0+1)^{k-1}$ in the
anchored slot,
therefore give
\[
 \|\mathcal D_\mu(g_2,\ldots,g_k)\|_{U^k[N]}
  \geq c\kappa^A.
\]
If $k\geq3$, Lemma~\ref{lem:degreelower}, applied with $s=k$ to the
corresponding unnormalised dual restricted to $[N]$ and with support parameter
$C_0$, lowers this estimate to $U^2[N]$.  For $k=2$ the preceding estimate is
already a $U^2[N]$ estimate.
Thus Lemma~\ref{lem:dualU1+} applies in both cases and yields
\[
 \|\mathcal D_\mu(g_2,\ldots,g_k)\|_{U^{1+}[N]}
  \geq c\kappa^A.
\]
Hence there is an arithmetic progression $P\subseteq[N]$ such that
\[
 \left|\E_{x\in[N]}1_P(x)
 \mathcal D_\mu(g_2,\ldots,g_k)(x)\right|
  \geq c\kappa^A.
\]
Lemma~\ref{lem:anchoring}, with $h=1_P$ and $\ell=i$, gives
\eqref{eq:claim-conc} and identifies $\widetilde\mu$ as the pushforward of
$\mu$ under \eqref{eq:anchoring-tuple}.  Enlarging $A$ once absorbs the
finitely many polynomial losses in these applications.
\end{proof}

\begin{proof}[Proof of Theorem~\ref{thm:1+}]
By simultaneous permutation of the functions and coefficient variables, it
suffices to control $f_k$.  Pigeonholing the
sum over $b_1$ in \eqref{eq:R-large} gives a value $b_1^\ast$ for which the
corresponding slice has modulus $\gg_{k,C_0}\delta B^{k-2}N^2$.  Define
\[
 \mu_0(c_2,\ldots,c_k)
 \coloneqq \lambda(b_1^\ast,c_2+b_1^\ast,\ldots,c_k+b_1^\ast).
\]
The substitution $x\mapsto x-b_1^\ast m$ shows that $\mu_0$ is
$1$-bounded, supported on a centred cube of side length at most $4C_0B$, and
\begin{align}\label{eq:sec5-anchored}
 \left|\E_{x\in[N]}f_1(x)
 \mathcal D_{\mu_0}(f_2,\ldots,f_k)(x)\right|
 \geq\kappa_0,
\end{align}
where $\kappa_0=c_{k,C_0}'\delta$ for a fixed $c_{k,C_0}'>0$.

For $0\leq j\leq k-1$, set
$C_j^{\mathrm{sup}}=2^{j+1}C_0$.  Take
\[
    c_{\mathrm{rep}}
    =
    \min_{0\leq j<k-1}c(k,C_j^{\mathrm{sup}}),
\]
where the constants on the right are supplied by
Lemma~\ref{lem:bounded-slot-replacement}.  Let $A=A(k)$ be the exponent in
that lemma and define
\[
    \kappa_{j+1}=c_{\mathrm{rep}}\kappa_j^A
    \qquad\textnormal{for }0\leq j<k-1.
\]
Choose the outer exponent $C=C(k)$ in the theorem larger than the finitely
many powers of $A$ arising from this recurrence.  After decreasing
$c=c(k,C_0)$, the range hypothesis then implies
$\kappa_j^{-A}\leq B\leq N\kappa_j^A$ for every $j<k-1$.
We now iterate Lemma~\ref{lem:bounded-slot-replacement}.  After $j$ steps,
where $0\leq j\leq k-1$, there are progressions
$P_1,\ldots,P_j\subseteq[N]$ and a $1$-bounded weight $\mu_j$ supported on
$([-C_j^{\mathrm{sup}}B,C_j^{\mathrm{sup}}B]\cap\mathbb Z)^{k-1}$ such
that the outside function is
$f_{j+1}$, the inner list consists of the $j$ progression indicators and
$f_{j+2},\ldots,f_k$, and the corresponding normalised correlation is at
least $\kappa_j$.  Assertion \eqref{eq:sec5-anchored} is the case
$j=0$.  Given the assertion for $j<k-1$, the slot containing $f_{j+2}$
may be moved outside by Lemma~\ref{lem:bounded-slot-replacement}, used with
support parameter $C_j^{\mathrm{sup}}$. The
pushforward formula \eqref{eq:anchoring-tuple} preserves the required
$1$-boundedness and gives support with parameter
$C_{j+1}^{\mathrm{sup}}=2C_j^{\mathrm{sup}}$.  This proves the assertion for
$j+1$.

At step $j=k-1$ we have
\[
 \left|\E_{x\in[N]}f_k(x)
 \mathcal D_{\mu_{k-1}}(1_{P_1},\ldots,1_{P_{k-1}})(x)\right|
 \geq\kappa_{k-1}.
\]
Opening the dual function and using that there are $O_{k,C_0}(M)$ admissible
pairs $(\mathbf b,m)$, we find fixed $\mathbf b$ and $m$ such that
\[
 \left|\sum_x f_k(x)
 \prod_{j=1}^{k-1}1_{P_j}(x+b_{j+1}m)\right|
 \gg_{k,C_0}\kappa_{k-1}N.
\]
The product is the indicator of an intersection of translates of arithmetic
progressions, hence of an arithmetic progression.  Since $f_k$ is supported on $[N]$,
\eqref{eq:U1-plus-def} gives
$\|f_k\|_{U^{1+}[N]}\gg_{k,C_0}\kappa_{k-1}$.  The recurrence gives
$\kappa_{k-1}\geq c'(k,C_0)\delta^C$, which proves the theorem.
\end{proof}

\section{A relative form of Theorem~\ref{thm:1+}}\label{sec:relative-u1plus}

We prove a relative version of Theorem~\ref{thm:1+}.  The argument combines
the Cauchy--Schwarz estimate of Section~\ref{sec:engine} with weighted
and clipping estimates for the normalised dual function from
Definition~\ref{def:dual}.  A replacement induction then transfers the
bounded conclusion to functions dominated by a pseudorandom majorant, in the
spirit of~\cite{teravainen-wang-sparse}.

Set $B=N/H$ and
$M=B^{k-2}N=B^{k-1}H$.  We use the pointwise bound
\eqref{eq:normalised-dual-pointwise-bound} and the anchoring identity of
Lemma~\ref{lem:anchoring}.  We retain an explicit support parameter $C_0$:
multiplicative constants may depend on $C_0$, but all exponents in the
quantitative bounds below depend only on $k$.  The following reformulation of
Theorem~\ref{thm:1+} supplies the bounded
endpoint from which the replacement induction will begin.  Unlike the slot
replacement argument of Lemma~\ref{lem:bounded-slot-replacement}, it preserves
the outside function and produces a progression correlating directly with it.

\begin{corollary}[Bounded dual endpoint]\label{cor:bounded-dual-endpoint}
Let $C_0\geq1$.  There are constants $c=c(k,C_0)>0$ and $C=C(k)\geq1$
such that the following holds.  Let $0<\kappa<c$, let
$F,u_2,\ldots,u_k$ be $1$-bounded and supported on $[N]$, and let $\mu$ be
$1$-bounded and supported on
$([-C_0B,C_0B]\cap\mathbb Z)^{k-1}$.  Assume
\[
 \kappa^{-C}\leq B\leq N\kappa^C
\]
and
\begin{align}\label{eq:bounded-endpoint-hypothesis}
 \left|\E_{x\in[N]}F(x)
 \mathcal D_\mu(u_2,\ldots,u_k)(x)\right|\geq\kappa.
\end{align}
Then there is an arithmetic progression $P\subseteq[N]$ such that
\begin{align}\label{eq:bounded-endpoint-progression}
 \left|\E_{x\in[N]}1_P(x)F(x)\right|\geq c\kappa^C.
\end{align}
In particular, $\|F\|_{U^{1+}[N]}\geq c\kappa^C$.
\end{corollary}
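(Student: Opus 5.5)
The plan is to derive the corollary directly from Theorem~\ref{thm:1+}, via the anchoring identity, so that $F$ becomes a genuine slot of a counting operator $R_H$. Theorem~\ref{thm:1+} controls every slot, in particular the one occupied by $F$, and its proof ends with an explicit progression. The final assertion about $\|F\|_{U^{1+}[N]}$ then follows at once from \eqref{eq:U1-plus-def}, since $P\subseteq[N]$ and $\E_{x\in[N]}$ has normalisation $1/\lfloor N\rfloor$.

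\textbf{Step 1 (rewrite as $R_H$).} Opening \eqref{eq:normalised-dual-function}, the hypothesis \eqref{eq:bounded-endpoint-hypothesis} reads
\[
 \Bigl|\sum_{\mathbf b}\mu(\mathbf b)\sum_x\sum_{m\in[H]}F(x)\prod_{i=2}^k u_i(x+b_im)\Bigr|\geq \kappa\lfloor N\rfloor M\gg \kappa B^{k-2}N^2 .
\]
Define $\lambda(b_1,\ldots,b_k)=\mu(b_2,\ldots,b_k)1_{b_1=0}$. This is $1$-bounded and supported on $([-C_0B,C_0B]\cap\Z)^k$, and the display becomes
\[
 |R_H(\lambda;F,u_2,\ldots,u_k)|\gg\kappa B^k\frac{N^2}{B}.
\]

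\textbf{Step 2 (apply Theorem~\ref{thm:1+}).} With $\delta\asymp_{k}\kappa$, and after shrinking $c$ and enlarging $C$ so that the range condition $\delta^{-C}\leq B\leq N\delta^C$ follows from $\kappa^{-C}\leq B\leq N\kappa^C$, Theorem~\ref{thm:1+} gives $\|F\|_{U^{1+}[N]}\geq c\kappa^{C}$. By the definition \eqref{eq:U1-plus-def}, this norm is realised by some progression $P\subseteq[N]$ with $|\sum_{x\in P}F(x)|\geq c\kappa^C\lfloor N\rfloor$. This is exactly \eqref{eq:bounded-endpoint-progression}.

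\textbf{Step 3 (sanity check via the slot-replacement route).} To make sure nothing in Theorem~\ref{thm:1+} hides an issue, I would trace how the slot containing $F$ is handled. Move $F$ last by permuting slots. Iterate Lemma~\ref{lem:bounded-slot-replacement} $k-1$ times, replacing $u_2,\ldots,u_k$ in turn by progression indicators $1_{P_j}$; each time the outside function changes via Lemma~\ref{lem:anchoring}, and the support parameter doubles. At the end, $F$ is outside against $\mathcal D_{\mu_{k-1}}(1_{P_1},\ldots,1_{P_{k-1}})$. Pigeonhole a single $(\mathbf b,m)$ among the $O_{k,C_0}(M)$ terms. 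The product $\prod_j 1_{P_j}(x+b_{j+1}m)$ is then the indicator of a single progression contained in $[N]$, since it is an intersection of translated progressions and $F$ is supported on $[N]$. This gives the required $P$ with losses polynomial in $\kappa$.

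\textbf{Main obstacle.} The only real care needed is bookkeeping. One must check that the range hypotheses $\kappa_j^{-A}\leq B\leq N\kappa_j^A$ survive the recursion $\kappa_{j+1}=c\kappa_j^A$, which is handled by taking $C$ larger than $A^{k}$. One must also check that the support parameter grows only to $2^kC_0$, which affects constants but not exponents. Both points are exactly as in the proof of Theorem~\ref{thm:1+}, so I expect them to go through verbatim.
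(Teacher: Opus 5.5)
\textbf{The gap is in Step~1: your normalisation is off by a factor of $B$.} Opening the dual function gives
\[
\Bigl|\sum_{\mathbf b}\mu(\mathbf b)\sum_x\sum_{m\in[H]}F(x)\prod_{i=2}^k u_i(x+b_im)\Bigr|\geq\kappa\lfloor N\rfloor M\asymp\kappa B^{k-2}N^2 .
\]
However, hypothesis \eqref{eq:R-large} of Theorem~\ref{thm:1+} asks for $\delta B^kN^2/B=\delta B^{k-1}N^2$, which is larger by a factor of $B$. No adjustment of constants can fix this. Your weight $\lambda(\mathbf b)=\mu(b_2,\ldots,b_k)1_{b_1=0}$ has only $O_{k,C_0}(B^{k-1})$ nonzero values, and each $r_H$ is at most $NH$. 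So trivially $|R_H(\lambda;F,u_2,\ldots,u_k)|\ll_{k,C_0}B^{k-1}NH=B^{k-2}N^2$. Hence \eqref{eq:R-large} can only hold with $\delta\ll_{k,C_0}1/B$, which is incompatible with the range condition $\delta^{-C}\leq B$. Theorem~\ref{thm:1+} is normalised for weights spread over a full $k$-dimensional box, and putting $\lambda$ on a hyperplane wastes one dimension. Consequently Step~2 cannot be carried out with $\delta\asymp\kappa$.

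\textbf{The missing idea is the diagonal invariance of Remark~\ref{rmk:symmetric}.} Spread $\mu$ along the diagonal by setting $\lambda_I(t,t+b_2,\ldots,t+b_k)=\mu(b_2,\ldots,b_k)$ for $t\in I=[B]$, and $\lambda_I=0$ elsewhere. This parametrisation is injective, so $\lambda_I$ is $1$-bounded and supported in a cube with parameter $C_0+1$. The substitution $y=x+tm$ gives
\[
R_H(\lambda_I;F,u_2,\ldots,u_k)=|I|\,M\lfloor N\rfloor\,\E_{x\in[N]}F(x)\mathcal D_\mu(u_2,\ldots,u_k)(x).
\]
The extra factor $|I|\asymp B$ is exactly what you lost. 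Theorem~\ref{thm:1+} with support parameter $C_0+1$ then applies with $\delta\asymp\kappa$, and your Step~2 goes through. This is the paper's proof.

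\textbf{Why Step~3 did not catch the problem.} It is not actually a trace of the proof of Theorem~\ref{thm:1+} applied to your $\lambda$.
\begin{itemize}
\item If you permute $F$ into slot $k$, the weight lives on $b_k=0$. The initial pigeonholing in $b_1$ in that proof then loses precisely the factor $B$.
\item If you keep $F$ in slot $1$, that pigeonholing is lossless. But the iteration then controls the function in slot $k$, not $F$.
\end{itemize}

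\textbf{Step~3 can still be made into a correct direct proof that bypasses Theorem~\ref{thm:1+}.} Two conditions are needed:
\begin{itemize}
\item The first move must be Lemma~\ref{lem:anchoring} with $h=F$ and $\ell=2$, putting $u_2$ outside and $F$ into an inner slot. Permuting slots cannot move the outside function into the dual function.
\item The last of the $k-1$ applications of Lemma~\ref{lem:bounded-slot-replacement} must be made with $i$ equal to the slot of $F$.
\end{itemize}
With these changes, the bookkeeping you describe (support parameter at most $2^kC_0$, and thresholds $\kappa_{j+1}=c\kappa_j^A$) works. The final pigeonhole over the $O_{k,C_0}(M)$ pairs $(\mathbf b,m)$ then yields the progression.
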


\begin{proof}
Let $I=[B]$.  Define a weight $\lambda_I$ on $\Z^k$ by
\begin{align*}
 \lambda_I(t,t+b_2,\ldots,t+b_k)
 \coloneqq \mu(b_2,\ldots,b_k)
 \qquad\textnormal{for all }t\in I,
\end{align*}
and set $\lambda_I=0$ elsewhere.  This parametrisation is injective, so
$\lambda_I$ is $1$-bounded and supported on
$([-(C_0+1)B,(C_0+1)B]\cap\Z)^k$.  Expanding the counting operator and
substituting $y=x+tm$, we obtain
\begin{align*}
 R_H(\lambda_I;F,u_2,\ldots,u_k)
 &=
 \sum_{t\in I}\sum_{\mathbf b}\mu(\mathbf b)
 \sum_x\sum_{m\in[H]}F(x+tm)
 \prod_{i=2}^ku_i(x+(t+b_i)m)\\
 &=
 |I|\sum_{\mathbf b}\mu(\mathbf b)
 \sum_y\sum_{m\in[H]}F(y)
 \prod_{i=2}^ku_i(y+b_im)\\
 &=
 |I|M|[N]|\,
 \E_{y\in[N]}F(y)\mathcal D_\mu(u_2,\ldots,u_k)(y).
\end{align*}
Since $B\geq1$, we have $|I|=\lfloor B\rfloor\geq B/2$.  The hypothesis
therefore implies
\[
 |R_H(\lambda_I;F,u_2,\ldots,u_k)|
 \gg_{k,C_0}\kappa B^{k-1}N^2.
\]
Theorem~\ref{thm:1+}, applied with support parameter $C_0+1$, gives
$\|F\|_{U^{1+}[N]}\geq c\kappa^C$.
Equation~\eqref{eq:U1-plus-def} yields
\eqref{eq:bounded-endpoint-progression}.
\end{proof}

\subsection{The linear forms condition}

All affine linear systems below have one variable at scale $N$ and a bounded
number of variables at scale $H$.  We require a linear forms condition
adapted precisely to these scales.

\begin{definition}[Linear forms condition]\label{def:relative-LFC}
Let $N,B\geq1$, let $H$ be a positive integer, let $K,L\geq2$, and let
$0<\eta<1$.
Extend $\nu\colon[N]\to\R_{\geq0}$ by zero outside $[N]$.  We say that $\nu$
satisfies the $(K,L,\eta)$ linear forms condition at scale $(N,B,H)$ if the
following holds.

Let $0\leq s\leq K$ and $1\leq J\leq K$.  Let $I\subseteq[N]$ and
$I_1,\ldots,I_s\subseteq[H]$ be intervals with
\[
    |I|\geq N/L,
    \qquad |I_t|\geq H/L\quad\textnormal{for}\,\, 1\leq t\leq s.
\]
For $1\leq j\leq J$, let
\[
    \Psi_j(x,\mathbf h)=x+a_j+\sum_{t=1}^sc_{j,t}h_t,
\]
where $|a_j|\leq KN$, $|c_{j,t}|\leq KB$, and suppose the vectors
$(c_{j,1},\ldots,c_{j,s})$ are pairwise distinct.  We say that
$\Psi=(\Psi_1,\ldots,\Psi_J)$ has structural complexity at most $K$ if
these conditions hold; the least such integer $K$ is called the structural
complexity of $\Psi$.  Then, for every
$W_1,\ldots,W_J\in\{\nu,1_{[N]}\}$,
\begin{align}\label{eq:relative-LFC}
&\left|\E_{x\in I}\E_{h_1\in I_1,\ldots,h_s\in I_s}
 \prod_{j=1}^JW_j(\Psi_j(x,\mathbf h))-
 \E_{x\in I}\E_{h_1\in I_1,\ldots,h_s\in I_s}
 \prod_{j=1}^J1_{[N]}(\Psi_j(x,\mathbf h))\right|\leq\eta.
\end{align}
When $s=0$, the expectations over the $h_t$ are omitted.
\end{definition}

For example, taking $J=1$, $s=0$, $I=[N]$, and $W_1=\nu$ in
Definition~\ref{def:relative-LFC} gives the requirement
\begin{align}\label{eq:single-form-mean}
    \E_{x\in[N]}\nu(x)=1+O(\eta).
\end{align}

The next lemma supplies the two inputs needed when the
Cauchy--Schwarz estimate is applied to the sums indexed by $d$ below, if the
majorants satisfy the linear forms condition.  Its first conclusion
gives the required cube moment bounds, and its second gives cancellation after
the terminal contributions have been summed over $d$.

For $|d|<H$, set
\begin{align}\label{eq:Id-relative}
    I_{d,H}\coloneqq \{m\in\Z:m,m+d\in[H]\}.
\end{align}
We use the cube domain $\cQ_J(I)$ defined in~\eqref{eq:Qj-def}. 

\begin{lemma}[Cube moments and terminal cancellation]\label{lem:terminal-LFC}
Let $J_0$ be a positive integer, let $C_2\geq1$ and $L\geq2$, and let
$0<\eta<1$.  There is
$K_0=K_0(J_0,C_2)$ such that the following holds.  Suppose that $K\geq K_0$
and that $\nu$ satisfies the $(K,L,\eta)$ linear forms condition at scale
$(N,B,H)$.  Set
\begin{align}\label{eq:eta-star}
    \eta_*\coloneqq \eta+L^{-1}+H^{-1}.
\end{align}
Then every translate $w(\,\cdot+t)$, with
$w\in\{\nu,1_{[N]}\}$ and $|t|\leq C_2N$, satisfies the
$(J_0,A,C_2)$ boundedness condition of Definition~\ref{def:cube-moment} at
scale $(N,B,H)$ for some $A=O_{J_0,C_2}(1)$.  Equivalently, uniformly for
$0\leq j\leq J_0$ and nonzero $q_1,\ldots,q_j$ with
$|q_i|\leq C_2B$,
\begin{align}\label{eq:translated-cube-moment}
 \sum_x\sum_{u_1,\ldots,u_j\in[0,H-1]}
 \Delta_{q_1u_1,\ldots,q_ju_j}w(x+t)
 \ll_{J_0,C_2}NH^j.
\end{align}

Moreover, let $F=\nu-1_{[N]}$, let $1\leq J\leq J_0$, let
$a_1,\ldots,a_J$ be pairwise distinct nonzero integers of modulus at most
$C_2B$, let $\epsilon_j\in\{0,1\}$, and let
$W_j\in\{\nu,1_{[N]}\}$.  Define
\begin{align}\label{eq:sliced-terminal-def}
 \mathcal T_d\coloneqq {}&
 \sum_{(m,h_1,\ldots,h_J)\in\cQ_J(I_{d,H})}\sum_x
 \Delta_{a_1h_1,\ldots,a_Jh_J}F(x)\times\prod_{j=1}^J
 \Delta_{((a_i-a_j)h_i)_{i\ne j}}
 W_j(x+a_jm+\epsilon_ja_jd).
\end{align}
Then
\begin{align}\label{eq:sliced-terminal-LFC}
    \left|\sum_{|d|<H}\mathcal T_d\right|
    \ll_{J_0,C_2}\eta_*NH^{J+2}.
\end{align}
\end{lemma}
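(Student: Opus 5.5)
Both assertions reduce to the linear forms condition of Definition~\ref{def:relative-LFC}. The work is to expand every cube into a product of $\nu$ and $1_{[N]}$ evaluated at affine forms in one variable at scale $N$ and boundedly many variables at scale $H$. We then check that these forms have bounded structural complexity, with pairwise distinct coefficient vectors. For this we take $K_0$ large in terms of $J_0$ and $C_2$, so that all the forms below (at most $2^{J_0+2}$ of them, with coefficients $O_{J_0,C_2}(B)$ and shifts $O_{J_0,C_2}(N)$) are admissible.

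\textbf{Cube moments.} Fix $j\leq J_0$, nonzero $q_1,\dots,q_j$ with $|q_i|\leq C_2B$, and $|t|\leq C_2N$. The sum in \eqref{eq:translated-cube-moment} is
\[
\sum_x\sum_{\mathbf u\in[0,H-1]^j}\prod_{\omega\in\{0,1\}^j}w\bigl(x+t+\textstyle\sum_i\omega_iq_iu_i\bigr).
\]
The forms are $\Psi_\omega(x,\mathbf u)=x+t+\sum_i\omega_iq_iu_i$. Their coefficient vectors $(\omega_iq_i)_i$ are pairwise distinct because every $q_i\neq0$. Since $w$ is supported on $[N]$, the variable $x$ ranges over an interval of length $\le N$ (the $\omega=0$ factor). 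The range $[0,H-1]$ differs from $[H]$ only by a shift by $1$, which we absorb into $a_j$.

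Applying \eqref{eq:relative-LFC} with $W_\omega\in\{\nu,1_{[N]}\}$ and $I=[N]$, we replace every factor by $1_{[N]}$ at an error of $\eta N H^j$. The main term is at most $NH^j$, so the sum is $\ll NH^j$ as $\eta<1$. One technicality: the $x$-range after translation by $t$ is $[N]-t$, not a subinterval of $[N]$. To handle this, substitute $y=x+t$, which moves $t$ into the $a_j$'s. This gives the boundedness condition with $A=O_{J_0,C_2}(1)$.

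\textbf{Terminal cancellation.} Write $F=\nu-1_{[N]}$ and expand each of the $2^J$ factors $\mathcal C^{|\omega|}F(x+\sum\omega_ia_ih_i)$ into a signed sum of $\nu$ and $1_{[N]}$. Then $\mathcal T_d$ becomes $\sum_{S}(-1)^{|S|}\mathcal T_d^{(S)}$, where $S$ records which $F$-vertices received $1_{[N]}$. Summing over $d$ together with the sum over $(m,\mathbf h)\in\cQ_J(I_{d,H})$ gives a sum over the variables $(x,m,d,h_1,\dots,h_J)$. The count domain is a polytope in $(m,d,\mathbf h)$ cut out by $O_J(1)$ linear inequalities with bounded coefficients.

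The forms are $x+\sum\omega_ia_ih_i$ for the $F$-vertices, and $x+a_jm+\epsilon_ja_jd+\sum_{i\neq j}\omega_i(a_i-a_j)h_i$ for the $W_j$-vertices. We check distinctness of their coefficient vectors in $(m,d,\mathbf h)$ as follows.
\begin{itemize}
\item $F$-vertices have $m$-coefficient $0$, while $W_j$-vertices have $m$-coefficient $a_j\neq0$.
\item Among the $F$-vertices, distinctness holds because the $a_i$ are nonzero.
\item Among the $W_j$-vertices, different $j$ give different $m$-coefficients since the $a_j$ are pairwise distinct. For the same $j$, different $\omega$ give different vectors since $a_i\ne a_j$.
\end{itemize}

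To apply \eqref{eq:relative-LFC}, which requires product boxes, we decompose the polytope into $O(L^{O(1)})$ boxes of side $\ge H/L$. The remainder near the boundary has relative measure $O(L^{-1})$; we bound it using the cube moment bound from the first part, via Cauchy--Schwarz or a direct positivity argument. The effect of integer rounding is $O(H^{-1})$, which accounts for $\eta_*$.

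On each box, every $\mathcal T^{(S)}$ equals the all-$1_{[N]}$ count up to $\eta$ times the box volume times $N$. The all-$1_{[N]}$ counts are independent of $S$, so the alternating sum $\sum_S(-1)^{|S|}$ of main terms vanishes. What remains is $\ll\eta_*NH^{J+2}$, since the total volume in $(m,d,\mathbf h)$ is $O(H^{J+2})$.

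The main obstacle is the boundary and box decomposition. The linear forms condition only applies to product intervals, and the domain $\cQ_J(I_{d,H})$ is a polytope coupling $m$, $d$ and $\mathbf h$. Handling the boundary pieces needs nonnegativity: we bound the $\nu$-weighted boundary mass with the cube moments in a way that keeps the loss at $O(L^{-1})$. Our first attempt is to majorise each boundary piece by a product box of thickness $H/L$ in one coordinate and apply the linear forms condition to that thin box directly.
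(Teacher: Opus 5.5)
Your proposal is correct and follows essentially the same route as the paper. The cube moments come from the substitution $y=x+t$ and one application of the linear forms condition. The terminal cancellation comes from expanding $F=\nu-1_{[N]}$, checking that the slope vectors in $(m,d,\mathbf h)$ are pairwise distinct, and cutting the polytope into cells of side $\asymp H/L$. On interior cells the alternating expansion cancels. Boundary cells form an $O(L^{-1}+H^{-1})$ proportion and are bounded using $|F|\le\nu+1_{[N]}$, enlargement to the full product cell, and the linear forms condition. That is your ``first attempt'', and it is the boundary argument the paper uses; the cube-moment/Cauchy--Schwarz alternative is not needed and would not by itself keep the $L^{-1}$ saving.

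Taking $I=[N]$ for $x$, instead of also partitioning $x$ as the paper does, is harmless.
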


\begin{proof}
For~\eqref{eq:translated-cube-moment}, substitute $y=x+t$ and expand the
multiplicative differences.  The resulting forms are
\[
    y+\sum_{i=1}^j\omega_iq_iu_i,
    \qquad \boldsymbol\omega\in\{0,1\}^j.
\]
Their slope vectors in $(u_1,\ldots,u_j)$ are pairwise distinct because every
$q_i$ is nonzero.  The factor corresponding to $\boldsymbol\omega=\mathbf0$
restricts $y$ to $[N]$.  After translating $[0,H-1]$ into $[H]$, the
linear forms condition shows that the normalised moment is
$O_{J_0,C_2}(1)$.  For a translate of $1_{[N]}$, the same estimate follows
directly from the support condition.

We then prove~\eqref{eq:sliced-terminal-LFC}.  The summation domain is the set of
lattice points satisfying
\begin{align}\label{eq:sliced-polytope}
 m-\boldsymbol\omega\cdot\mathbf h\in[H],\qquad
 m-\boldsymbol\omega\cdot\mathbf h+d\in[H]
 \qquad\textnormal{for all }\boldsymbol\omega\in\{0,1\}^J.
\end{align}
It is a convex polytope in $(d,m,h_1,\ldots,h_J)$, contained in a box of side
length $O_J(H)$ and cut out by $O_J(1)$ linear inequalities.

After expanding all differences, the forms carrying a $F$ factor are
\begin{align}\label{eq:sliced-root-forms}
 \Phi_{\boldsymbol\omega}(x,\mathbf h)
 =x+\sum_{i=1}^J\omega_ia_ih_i,
 \qquad \boldsymbol\omega\in\{0,1\}^J,
\end{align}
whereas the forms from the $j$th majorant are
\begin{align}\label{eq:sliced-slot-forms}
 \Psi_{j,\boldsymbol\omega'}(x,m,d,\mathbf h)
 =x+a_jm+\epsilon_ja_jd
  +\sum_{i\ne j}\omega_i'(a_i-a_j)h_i,
\end{align}
with $\boldsymbol\omega'\in\{0,1\}^{[J]\setminus\{j\}}$.
These forms have pairwise distinct slope vectors. 

The $F$ form corresponding to \(\boldsymbol\omega=\mathbf0\) is \(F(x)\).
Since \(F\) is supported on \([N]\), we may restrict \(x\) to \([N]\), while
the remaining variables range over the polytope
\eqref{eq:sliced-polytope}, contained in a box of side length \(O_J(H)\).

To apply the linear forms condition, partition the range of \(x\) into intervals
of length comparable to \(N/L\), and each parameter range into intervals of
length comparable to \(H/L\).  The cells meeting the boundary of the
polytope account for an
\[
    O_{J_0}(L^{-1}+H^{-1})
\]
proportion of the lattice points in the ambient box.  Every other cell is
either contained in the polytope or disjoint from it.

On a cell contained in the polytope, translate the coordinate intervals to
subintervals of \([N]\) and \([H]\).  The forms
\eqref{eq:sliced-root-forms} and \eqref{eq:sliced-slot-forms} then satisfy
the hypotheses of the linear forms condition.  Expanding each $F$ factor as
\(F=\nu-1_{[N]}\), we may replace every term by the corresponding term
involving only \(1_{[N]}\), with normalised error
\(O_{J_0,C_2}(\eta)\).  The comparison terms cancel by the alternating
expansion.

On the boundary cells, we instead use
\[
    |F|\leq \nu+1_{[N]}.
\]
After enlarging to the corresponding product cells, the linear forms
condition bounds the resulting normalised averages by
\(O_{J_0,C_2}(1)\).  Combining the interior and boundary contributions gives
\[
    \left|\sum_{|d|<H}\mathcal T_d\right|
    \ll_{J_0,C_2}
    \bigl(\eta+L^{-1}+H^{-1}\bigr)NH^{J+2},
\]
which is \eqref{eq:sliced-terminal-LFC}.
\end{proof}

\subsection{Weighted estimates and clipping for dual functions}

For $R\geq1$ and $z\in\C$ set
\[
 \operatorname{clip}_{R}(z)=
 \begin{cases}z,&|z|\leq R,\\ Rz/|z|,&|z|>R.
 \end{cases}
\]
We also write
\[
    R_\nu\coloneqq \max(1,\|\nu\|_{\ell^\infty([N])}).
\]
Thus $R_\nu$ is the pointwise size parameter of the majorant.

The next proposition is the main input for densification.  Its
weighted estimate \eqref{eq:relative-dual-weighted-L2} permits the outer majorant $\nu$ to be replaced by
$1_{[N]}$, while its clipping estimate \eqref{eq:relative-dual-clipping} replaces an unbounded dual function
by a bounded one.  Both errors are controlled by the linear forms condition,
apart from a diagonal contribution bounded in terms of
$R_\nu^{O_k(1)}/B$.

\begin{proposition}[Weighted and clipping estimates for dual functions]
\label{prop:relative-dual-estimates}
Let $C_0\geq1$.  There are constants $K_0=K_0(k,C_0)$ and $C=C(k)$ such
that the following holds.  Let $N\geq1$, let $H$ be a positive integer, set
$B=N/H$, assume $B\geq1$, let $L\geq2$ and $0<\eta<1$, and suppose that $\nu$ satisfies
the $(K,L,\eta)$ linear forms condition at scale $(N,B,H)$ with
$K\geq K_0$.  Set
\[
    \eta_*=\eta+L^{-1}+H^{-1},
    \qquad c\coloneqq 2^{-2(k-1)}.
\]
Let $\mu\colon\Z^{k-1}\to\C$ be $1$-bounded and supported on
$([-C_0B,C_0B]\cap\Z)^{k-1}$.  Let
$S\subseteq\{2,\ldots,k\}$, and let $g_2,\ldots,g_k$ be supported on $[N]$
with $|g_i|\leq\nu$ for $i\in S$ and $|g_i|\leq1_{[N]}$ otherwise.  Set
$D=\mathcal D_\mu(g_2,\ldots,g_k)$, and choose $K_{\mathrm{bd}}= (2C_{\mathrm{sup}}+1)^{k-1}$, where $C_{\mathrm{sup}}$ is the largest support of the relevant coefficient weights,
\[
    D^\sharp(x)=1_{[N]}(x)\operatorname{clip}_{K_{\mathrm{bd}}}(D(x)).
\]
Then
\begin{align}
 \left|\E_{x\in[N]}(\nu(x)-1)|D(x)|^2\right|
 &\ll_{k,C_0}\eta_*^c+R_\nu^C/B,
 \label{eq:relative-dual-weighted-L2}\\
 \E_{x\in[N]}|D(x)-D^\sharp(x)|^2
 &\ll_{k,C_0}\eta_*^c+R_\nu^C/B.
 \label{eq:relative-dual-clipping}
\end{align}
\end{proposition}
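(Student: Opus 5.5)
The plan is to prove both estimates by expanding $|D|^2$ and comparing with the model in which every $\nu$ is replaced by $1_{[N]}$. For \eqref{eq:relative-dual-weighted-L2}, write
\[
\E_{x\in[N]}(\nu(x)-1)|D(x)|^2=\frac{1}{\lfloor N\rfloor M^2}\sum_{\mathbf b,\mathbf b'}\mu(\mathbf b)\overline{\mu(\mathbf b')}\sum_{m,m'\in[H]}\sum_x F(x)\prod_{i}g_i(x+b_im)\overline{g_i(x+b_i'm')},
\]
with $F=\nu-1_{[N]}$. The $g_i$ are not majorant-like, so the linear forms condition cannot be applied directly. Instead I would first use Cauchy--Schwarz to eliminate the $g_i$, dominating $|g_i|$ by $\nu$ or $1_{[N]}$, in the spirit of Lemma~\ref{lem:fibrewise-CS}.

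Concretely, substitute $m'=m+d$ with $|d|<H$, so $m\in I_{d,H}$ as in \eqref{eq:Id-relative}. For fixed $\mathbf b,\mathbf b'$ and $d$, the inner sum has the shape $\sum_x\sum_{m\in I_{d,H}}F(x)\prod_j \tilde g_j(x+a_jm+\epsilon_ja_jd)$ with at most $2(k-1)$ slots. First merge the slots with coinciding slopes $a_j$. Pairs $(\mathbf b,\mathbf b')$ for which some slope vanishes or two distinct slots coincide in a nontrivial way are degenerate. There are $O(B^{2k-3})$ of them out of $B^{2k-2}$, and bounding each pointwise by powers of $R_\nu$ contributes $R_\nu^{O(1)}/B$. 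This is the source of the diagonal term. For nondegenerate pairs the slopes are pairwise distinct nonzero integers of size $O(C_0B)$.

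Apply Lemma~\ref{lem:fibrewise-CS} with $f_0=F$ and $\nu_j\in\{\nu,1_{[N]}\}$ (translated by $\epsilon_ja_jd$), on the interval $I=I_{d,H}$. The $\cP_j$ factors are bounded by the cube moment bound \eqref{eq:translated-cube-moment} of Lemma~\ref{lem:terminal-LFC}, so each term is bounded by $NH(\mathcal T_d/NH^{J+1})^{2^{-J}}$. To exploit cancellation, I would apply H\"older over $d$ (and over $\mathbf b,\mathbf b'$) before taking roots. This lets the terminal forms be summed over $d$, and \eqref{eq:sliced-terminal-LFC} then gives $\sum_d\mathcal T_d\ll\eta_*NH^{J+2}$. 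Since $J\le 2(k-1)$, the final exponent is $2^{-J}\ge 2^{-2(k-1)}=c$, which explains the constant $c$.

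The main obstacle is this interchange: Lemma~\ref{lem:fibrewise-CS} works for each fixed $d$, but cancellation only appears after summing over $d$. The plan is to run the Cauchy--Schwarz steps with $d$ as an extra outer summation variable, treating $\sum_d$ together with $x$ and the $h$'s in the weighted Cauchy--Schwarz. The $\cP_j$ then acquire an extra factor $H$, still controlled by the cube moments. A further point is that the signs of $\mathcal T_d$ are not uniform, so one should take $\sum_d$ inside the absolute value, which requires the terminal form after Cauchy--Schwarz to be nonnegative as a whole. This holds by the square identity \eqref{eq:square-identity}.

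For \eqref{eq:relative-dual-clipping}, note that $|D-D^\sharp|^2\le |D|^2 1_{|D|>K_{\mathrm{bd}}}$. By \eqref{eq:normalised-dual-pointwise-bound}, it suffices to compare $|D|$ with the bounded dual $\mathcal D_\mu(|g|\text{-majorants replaced by }1_{[N]})$. Write
\[
|D|\le \mathcal D_{|\mu|}(\nu_2,\ldots,\nu_k)=:\bar D,
\]
where the $\nu_i\in\{\nu,1_{[N]}\}$ and the corresponding model $\bar D_0$ (all $\nu_i=1_{[N]}$) is at most $K_{\mathrm{bd}}$. Then $(|D|-K_{\mathrm{bd}})_+\le|\bar D-\bar D_0|$, and $|D-D^\sharp|\le(|D|-K_{\mathrm{bd}})_+$. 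It therefore suffices to show $\E|\bar D-\bar D_0|^2\ll\eta_*^c+R_\nu^C/B$. Expanding the square gives three cross terms, each of the same form as above with all functions in $\{\nu,1_{[N]}\}$ and no $F$ present. Telescoping each product of $\nu$'s into sums of terms containing a factor $\nu-1_{[N]}$ reduces every term to the previous argument, with $F$ now in an inner slot. Moving that slot outside via the substitution of Lemma~\ref{lem:anchoring} puts it in the $f_0$ position, and the same Cauchy--Schwarz and linear-forms estimate applies.
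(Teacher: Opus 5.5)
Your proof of \eqref{eq:relative-dual-weighted-L2} is the paper's argument. You expand $|D|^2$, set $m'=m+d$, and discard the $O_{k,C_0}(B^{2k-3})$ exceptional pairs at cost $R_\nu^C/B$. You then apply Lemma~\ref{lem:fibrewise-CS} on $I_{d,H}$ with $f_0=F$, bound $\cP_j(d)$ by \eqref{eq:translated-cube-moment}, and use H\"older over $d$ so that \eqref{eq:sliced-terminal-LFC} applies to $\sum_d\mathcal T_d$.

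Your worry about signs is unfounded. Lemma~\ref{lem:fibrewise-CS} already gives $\mathcal T_d\geq0$ for every $d$. H\"older with exponents $2^{J},2,4,\ldots,2^{J}$ therefore gives $|\mathcal S(\mathbf b,\mathbf b')|\leq(\sum_d\mathcal T_d)^{2^{-J}}\prod_j(\sum_d\cP_j(d))^{2^{-j}}$ directly. Your variant with $d$ as an outer Cauchy--Schwarz variable yields the same inequality.

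For \eqref{eq:relative-dual-clipping} you make the same reduction $(|D|-K_{\mathrm{bd}})_+\leq|A_\nu-A_0|$ as the paper, but then take a different route. You note yourself that after squaring every factor is $\nu$ or $1_{[N]}$. The paper uses exactly this and applies the linear forms condition directly, with no Cauchy--Schwarz:
\begin{itemize}
\item It writes $A_\nu-A_0$ as a sum over nonempty $T\subseteq S$ of products with factors $\nu-1_{[N]}$ on $T$, and expands.
\item For every $d$ with $|I_{d,H}|\geq H/L$, each term is compared with its all-$1_{[N]}$ analogue in $(x,m)$. The comparison terms cancel because $T,T'\neq\varnothing$.
\item Short $d$ are bounded crudely.
\end{itemize}
This gives $\eta+L^{-2}+H^{-1}+R_\nu^C/B$, linear in $\eta$.

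Your route is to isolate one factor $F$, anchor it outside, and rerun Lemma~\ref{lem:fibrewise-CS}. It also works and yields $\eta_*^{c}$, which suffices. It is heavier, though, and does not literally fit Lemma~\ref{lem:terminal-LFC}. After $y=x+b_\ell m$, the second-copy forms are $y+(b_i'-b_\ell)m+b_i'd$. Their $d$-shift is not $\epsilon_ja_jd$ with $a_j=b_i'-b_\ell$, and there is an extra inner slot $1_{[N]}(y-b_\ell m)$.

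To close this you would need to observe that the proof of Lemma~\ref{lem:terminal-LFC} only uses pairwise distinctness of the slope vectors in $(m,d,\mathbf h)$. Here that distinctness holds because the $m$-coefficients $-b_\ell$, $b_i-b_\ell$ $(i\neq\ell)$ and $b_i'-b_\ell$ are pairwise distinct and nonzero for nonexceptional pairs. The lemma therefore extends to arbitrary shifts $e_jd$ with $|e_j|\ll_{C_0}B$.
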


\begin{proof}
Set $r=k-1$, $J=2r$, $F=\nu-1_{[N]}$, and
$M=B^rH$.  Choose $K_0$ so that Lemma~\ref{lem:terminal-LFC} applies with
$J_0=2r$ and every coefficient bound below.  Expanding the first average gives
\begin{align}\label{eq:weighted-expand}
 \sum_xF(x)|D(x)|^2
 =\frac1{M^2}\sum_{\mathbf b,\mathbf b'}
 \mu(\mathbf b)\overline{\mu(\mathbf b')}
 \mathcal S(\mathbf b,\mathbf b'),
\end{align}
where
\begin{align}\label{eq:Sbb}
 \mathcal S(\mathbf b,\mathbf b')
 =\sum_x\sum_{m,m'\in[H]}F(x)
 \prod_{i=2}^kg_i(x+b_im)
 \prod_{i=2}^k\overline{g_i(x+b_i'm')}.
\end{align}

Call a pair $(\mathbf b,\mathbf b')$ exceptional if the $2r$ integers
\begin{align}\label{eq:combined-coefficients}
    b_2,\ldots,b_k,b_2',\ldots,b_k'
\end{align}
are not pairwise distinct and nonzero.  The exceptional set is a union of
$O_k(1)$ affine hyperplanes in a box of side length $O_{C_0}(B)$, and hence
contains $O_{k,C_0}(B^{2r-1})$ pairs.

Fix a nonexceptional pair and write $m'=m+d$.  For $2\leq i\leq k$, let $W_i$
be the majorant $\nu$ or $1_{[N]}$ associated with $g_i$.  Define the $2r$
coefficients and functions by
\begin{align}\label{eq:scalar-fibre-data}
 \begin{array}{lll}
 a_{i-1}=b_i,& f_{i-1,d}=g_i,&w_{i-1,d}=W_i,\\[1mm]
 a_{r+i-1}=b_i',&
 f_{r+i-1,d}(y)=\overline{g_i(y+b_i'd)},&
 w_{r+i-1,d}(y)=W_i(y+b_i'd).
 \end{array}
\end{align}
Then
\begin{align}\label{eq:S-fibres}
 \mathcal S(\mathbf b,\mathbf b')
 =\sum_{|d|<H}\sum_x\sum_{m\in I_{d,H}}F(x)
   \prod_{j=1}^{2r}f_{j,d}(x+a_jm).
\end{align}
The shifts in the second block have modulus
$|b_i'd|\ll_{C_0}BH\ll_{C_0}N$.  We apply the iterated
Cauchy--Schwarz lemma, Lemma~\ref{lem:fibrewise-CS}, to the sum over
$I_{d,H}$ for each $d$.  Write $\cP_j(d)$ for the auxiliary factor
$\cP_j$ in \eqref{eq:Pj} in the application indexed by $d$.  The translated
cube moment bound \eqref{eq:translated-cube-moment} of
Lemma~\ref{lem:terminal-LFC}, applied to the majorants $w_{j,d}$ in
\eqref{eq:scalar-fibre-data}, gives
\begin{align*}
 \cP_j(d)\ll_{k,C_0} NH^{j-1}
 \qquad\textnormal{for }1\leq j\leq 2r.
\end{align*}
The associated terminal form is precisely the quantity $\cT_d$ in
\eqref{eq:sliced-terminal-def}.  Indeed, for $2\leq i\leq k$,
\begin{align*}
 w_{j,d}(x+a_jm)
 =
 \begin{cases}
 W_i(x+b_im),&j=i-1,\\
 W_i(x+b_i'm+b_i'd),&j=r+i-1,
 \end{cases}
\end{align*}
which corresponds to $\epsilon_j=0$ in the first block and
$\epsilon_j=1$ in the second.  Lemma~\ref{lem:fibrewise-CS} shows that each
$\cT_d$ and each $\cP_j(d)$ is nonnegative.  Lemma~\ref{lem:fibrewise-CS},
H\"older's inequality and \eqref{eq:sliced-terminal-LFC} of
Lemma~\ref{lem:terminal-LFC} now give
\begin{align}\label{eq:nonexceptional-pair}
 |\mathcal S(\mathbf b,\mathbf b')|
 &\leq
 \left(\sum_{|d|<H}\mathcal T_d\right)^{2^{-2r}}
 \prod_{j=1}^{2r}\left(\sum_{|d|<H}\cP_j(d)\right)^{2^{-j}}\notag\\
 &\ll_{k,C_0}
 \left(\eta_*NH^{2r+2}\right)^{2^{-2r}}
 \prod_{j=1}^{2r}\left(NH^j\right)^{2^{-j}}
 =\eta_*^{2^{-2r}}NH^2.
\end{align}

For an exceptional pair, the pointwise bounds and the support conditions give
\begin{align}\label{eq:exceptional-pair}
    |\mathcal S(\mathbf b,\mathbf b')|
    \ll_{k,C_0}R_\nu^{C}NH^2.
\end{align}
Since $M=B^rH$, insertion of
\eqref{eq:nonexceptional-pair}--\eqref{eq:exceptional-pair} into
\eqref{eq:weighted-expand}, followed by division by $|[N]|$, proves
\eqref{eq:relative-dual-weighted-L2}.

We now prove the clipping estimate~\eqref{eq:relative-dual-clipping}.  If
$S=\varnothing$, the pointwise bound
\eqref{eq:normalised-dual-pointwise-bound} gives
$|D|\leq K_{\mathrm{bd}}$, so there is nothing to prove.  Otherwise set
\begin{align*}
 A_\nu(x)&=\frac1M\sum_{\mathbf b}|\mu(\mathbf b)|\sum_{m\in[H]}
  \prod_{i\in S}\nu(x+b_im)\prod_{i\notin S}1_{[N]}(x+b_im),\\
 A_0(x)&=\frac1M\sum_{\mathbf b}|\mu(\mathbf b)|\sum_{m\in[H]}
  \prod_{i=2}^k1_{[N]}(x+b_im).
\end{align*}
Then $|D|\leq A_\nu$, while
\eqref{eq:normalised-dual-pointwise-bound}, applied to $|\mu|$ and the
functions $1_{[N]}$, gives $A_0\leq K_{\mathrm{bd}}$.  Thus, on $[N]$,
\begin{align}\label{eq:clip-A-bound}
 |D-D^\sharp|=(|D|-K_{\mathrm{bd}})_+
 \leq(A_\nu-A_0)_+\leq|A_\nu-A_0|.
\end{align}
For fixed $(\mathbf b,m)$, writing $L_i=x+b_im$, the identity
\begin{align}\label{eq:A-diff-identity}
 &\prod_{i\in S}\nu(L_i)\prod_{i\notin S}1_{[N]}(L_i)
 -\prod_{i=2}^k1_{[N]}(L_i)\notag\\
 &\qquad=\sum_{\varnothing\ne T\subseteq S}
 \prod_{i\in T}(\nu(L_i)-1_{[N]}(L_i))
 \prod_{i\notin T}1_{[N]}(L_i)
\end{align}
gives
\begin{align*}
 A_\nu(x)-A_0(x)
 ={}&\frac1M\sum_{\mathbf b}|\mu(\mathbf b)|\sum_{m\in[H]}
 \sum_{\varnothing\ne T\subseteq S}
 \prod_{i\in T}
 \bigl(\nu(x+b_im)-1_{[N]}(x+b_im)\bigr)\\
 &\qquad\times
 \prod_{i\in\{2,\ldots,k\}\setminus T}1_{[N]}(x+b_im).
\end{align*}

After squaring, consider one pair of nonempty sets $T,T'\subseteq S$ and one
coefficient pair $(\mathbf b,\mathbf b')$.  Write $m'=m+d$ once more.  For a
nonexceptional coefficient pair and fixed $d$, all factors are evaluated at
the forms
\[
   x+b_im,
   \qquad x+b_i'm+b_i'd,
   \qquad 2\leq i\leq k,
\]
whose coefficients of $m$ are the pairwise distinct list
\eqref{eq:combined-coefficients}.  If $|I_{d,H}|\geq H/L$, expand all
factors $\nu-1_{[N]}$ and apply the linear forms condition on
$m\in I_{d,H}$.  Every term differs by $O_{k,C_0}(\eta)$ from the
corresponding term involving only $1_{[N]}$, and these comparison terms
cancel in the alternating expansion because $T$ and $T'$ are nonempty.

It remains to consider those $d$ for which $|I_{d,H}|<H/L$.  For each such
$d$, choose an interval $J_{d,H}\subseteq[H]$ of length
$\lceil H/L\rceil$ containing $I_{d,H}$.  Since $|I_{d,H}|=H-|d|$,
\begin{align}\label{eq:short-fibre-mass}
 \sum_{\substack{|d|<H\\|I_{d,H}|<H/L}}|I_{d,H}|
 \leq
 \sum_{\substack{|d|<H\\|I_{d,H}|<H/L}}|J_{d,H}|
 \ll H^2(L^{-2}+H^{-1}).
\end{align}
On $J_{d,H}$, bound each factor $|\nu-1_{[N]}|$ by
$\nu+1_{[N]}$ and apply the linear forms condition to obtain a uniform
$O_{k,C_0}(1)$ normalised bound.  After dividing the squared expansion by
$M^2|[N]|$, the two contributions from nonexceptional coefficient pairs
are therefore bounded by
\begin{align*}
 \frac1{H^2}
 \sum_{\substack{|d|<H\\|I_{d,H}|\geq H/L}}
 |I_{d,H}|\,O_{k,C_0}(\eta)
 &\ll_{k,C_0}\eta,\\
 \frac1{H^2}
 \sum_{\substack{|d|<H\\|I_{d,H}|<H/L}}
 |J_{d,H}|\,O_{k,C_0}(1)
 &\ll_{k,C_0}L^{-2}+H^{-1}.
\end{align*}
The exceptional coefficient pairs form an
$O_{k,C_0}(B^{-1})$ proportion and contribute
$O_{k,C_0}(R_\nu^C/B)$.  Consequently,
\begin{align*}
 \E_{x\in[N]}|A_\nu(x)-A_0(x)|^2
 \ll_{k,C_0}\eta+L^{-2}+H^{-1}+R_\nu^C/B.
\end{align*}
It follows from \eqref{eq:clip-A-bound} that
\begin{align*}
 \E_{x\in[N]}|D-D^\sharp|^2
 \ll_{k,C_0}\eta+L^{-2}+H^{-1}+R_\nu^C/B
 \ll_{k,C_0}\eta_*^c+R_\nu^C/B,
\end{align*}
which is~\eqref{eq:relative-dual-clipping}.
\end{proof}

\begin{remark}\label{rmk:weak-sup-application}
In sieve applications one can truncate the majorant, at the cost of a
negligible error, so that $0\leq\nu\leq(\log N)^{C_1}$ for some fixed $C_1$.
Thus $R_\nu^C/B$ is negligible when $B$ exceeds a sufficiently large power of
$\log N$.
\end{remark}

\subsection{The replacement step}

The purpose of this subsection is to replace the sparse inner slots in a
dual function one at a time by arithmetic progression indicators.
A translate or finite intersection of arithmetic progressions is again an
arithmetic progression, possibly empty.  Thus a finite product of shifted
progression indicators is the indicator of one progression.  If the
correlated function is supported on $[N]$, intersecting that progression with
$[N]$ does not change the correlation and produces a progression admissible
in the definition of the $U^{1+}[N]$ norm.

We call an inner slot sparse if its function is dominated by $\nu$, and dense
if it is dominated by $1_{[N]}$.  We replace sparse slots one at a time by
progression indicators.

\begin{proposition}[Relative replacement]\label{prop:relative-replacement}
Let $k\geq3$ and $C_0\geq1$.  There are nondecreasing constants
$C_r=C_r(k)$ for $0\leq r\leq k-1$, a constant $C=C(k)$, and constants
$K_0=K_0(k,C_0)$ and $\tau_0=\tau_0(k,C_0)>0$ with the following property.  Let
$0<\tau\leq\tau_0$ and $N\geq1$, let $H$ be a positive integer, set
$B=N/H$, let $K\geq K_0$ and $L\geq2$, let $0<\eta<1$, and
suppose that $\nu$ satisfies the $(K,L,\eta)$ linear forms condition at scale
$(N,B,H)$.  Set
\begin{align*}
 \eta_*\coloneqq\eta+L^{-1}+H^{-1},
 \qquad c\coloneqq2^{-2(k-1)}.
\end{align*}
Assume
\begin{align}\label{eq:replacement-parameters}
 \eta_*^c+R_\nu^C/B\leq\tau^C,
 \qquad \tau^{-C}\leq B\leq N\tau^C.
\end{align}
For every integer $r$ with $0\leq r\leq k-1$, let $\mu$ be $1$-bounded and supported on
$([-C_0B,C_0B]\cap\mathbb Z)^{k-1}$.  Let $g_2,\ldots,g_k$ be supported on
$[N]$, each dominated by
$\nu$ or $1_{[N]}$, with exactly $r$ sparse slots, and let
$h\colon[N]\to\C$.  Set
$D=\mathcal D_\mu(g_2,\ldots,g_k)$ and suppose
\begin{align}\label{eq:replacement-hyp}
    |\E_{x\in[N]}h(x)D(x)|\geq\tau.
\end{align}
Then the following assertions hold.
\begin{enumerate}
\item[\emph{(B)}$_r$] If $|h|\leq1_{[N]}$, then some arithmetic progression $P\subseteq[N]$
satisfies
\[
    \left|\sum_{x\in P}h(x)\right|\gg_{k,C_0}\tau^{C_r}N.
\]
In particular $\|h\|_{U^{1+}[N]}\gg_{k,C_0}\tau^{C_r}$.
\item[\emph{(S)}$_r$] If $|h|\leq\nu$, then some arithmetic progression $P\subseteq[N]$
satisfies
\begin{align}\label{eq:replacement-sparse-conclusion}
    |\E_{x\in[N]}1_P(x)D(x)|\gg_{k,C_0}\tau^{C_r}.
\end{align}
\end{enumerate}
\end{proposition}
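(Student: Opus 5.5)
We will argue by induction on the number $r$ of sparse inner slots, proving (B)$_r$ and (S)$_r$ together, with (S)$_r$ feeding into (B)$_r$ and (B)$_{r-1}$ feeding into (S)$_r$.

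\emph{Base case $r=0$.} All $g_i$ are dominated by $1_{[N]}$, so $D$ is bounded by $K_{\mathrm{bd}}$. If $|h|\leq1_{[N]}$, then (B)$_0$ is exactly Corollary~\ref{cor:bounded-dual-endpoint} applied with $F=h$ and $u_i=g_i$. For (S)$_0$ we have $|h|\leq\nu$ and $D$ bounded. We first transfer the unbounded $h$ onto the bounded side. By Cauchy--Schwarz with weight $\nu$, $\tau^2\leq\E\nu\cdot\E\nu|D|^2$. The weighted estimate \eqref{eq:relative-dual-weighted-L2} together with \eqref{eq:single-form-mean} then gives $\E_{x\in[N]}|D|^2\gg\tau^2$, once \eqref{eq:replacement-parameters} makes the error terms negligible. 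Hence $|\E\,\overline{D}\,D|\gg\tau^2$ with $\overline D/K_{\mathrm{bd}}$ bounded, and Corollary~\ref{cor:bounded-dual-endpoint} with $F=D/K_{\mathrm{bd}}$ yields a progression $P$ with $|\E 1_PD|\gg\tau^{C_0'}$.

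\emph{Inductive step.} First we derive (S)$_r$ from (B)$_r$. As in the base case, Cauchy--Schwarz and \eqref{eq:relative-dual-weighted-L2} give $\E_{x\in[N]}|D|^2\gg\tau^2$. Clipping via \eqref{eq:relative-dual-clipping} then gives $\E_{x\in[N]}\overline{D^\sharp}D\gg\tau^2$, again since the error $\eta_*^c+R_\nu^C/B\leq\tau^C$ is small. Applying (B)$_r$ with $h=\overline{D^\sharp}/K_{\mathrm{bd}}$ produces a progression $P$ with $|\sum_{P}D^\sharp|\gg\tau^{C}N$. A second use of clipping, in the form $|\E 1_P(D-D^\sharp)|\leq(\E|D-D^\sharp|^2)^{1/2}$, converts this into \eqref{eq:replacement-sparse-conclusion}.

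Next we derive (B)$_r$ for $r\geq1$ from (S)$_{r-1}$ and (B)$_{r-1}$. Here $|h|\leq1_{[N]}$, and we pick a sparse slot $g_\ell$. Lemma~\ref{lem:anchoring} moves $g_\ell$ outside: $\E\, g_\ell\,\mathcal D_{\widetilde\mu}(h,\ldots)\geq\tau$. The new dual function has only $r-1$ sparse slots, since $h$ is dense, and its support parameter is $2C_0$. Since $|g_\ell|\leq\nu$, (S)$_{r-1}$ gives a progression $P$ with $|\E 1_P\,\mathcal D_{\widetilde\mu}(h,\ldots)|\gg\tau^{C_{r-1}}$. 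Anchoring back with $\ell$ equal to the slot of $h$ returns $h$ to the outside, with $1_P$ in place of $g_\ell$. This leaves $r-1$ sparse slots, so (B)$_{r-1}$ applies and gives the progression for $h$.

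The main bookkeeping issue is that the support parameter doubles at each anchoring. The constants $K_0$, $\tau_0$ and the implied constants must therefore be chosen for support parameters up to $2^{O(k)}C_0$, with exponents $C_r$ growing polynomially; this is why the $C_r$ are nondecreasing. The hypotheses \eqref{eq:replacement-parameters} must also survive with $\tau$ replaced by $\tau^{C_{r-1}}$, which we arrange by taking $C$ large relative to all the $C_r$. A further point to check is that the weighted estimate is applied to a dual function whose slots satisfy the hypotheses of Proposition~\ref{prop:relative-dual-estimates}; progression indicators are dense, so this holds. I expect the delicate step to be (S)$_r$, specifically keeping the clipping error below $\tau^{2C_r}$ after the polynomial loss, which is handled by choosing $C\geq 2C_r+2$.
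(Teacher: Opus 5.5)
Your proposal is correct and follows the paper's proof essentially step for step. The steps are: (B)$_0$ from Corollary~\ref{cor:bounded-dual-endpoint}; (S)$_r$ from (B)$_r$ via weighted Cauchy--Schwarz, the weighted $L^2$ estimate and clipping (your separate (S)$_0$ argument is just the $r=0$ case of this step); and (B)$_r$ from (S)$_{r-1}$ and (B)$_{r-1}$ by anchoring out and back. As in the paper, $C$ is taken large enough to cover the finitely many smaller thresholds.

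There are two small slips. First, your opening sentence reverses which assertions feed which, although the argument you actually give uses the correct order. Second, the clipping error enters as $(\tau^{C})^{1/2}$ against a main term of size $\tau^{2C_r}$, so you need roughly $C>4C_r$ rather than $C\geq 2C_r+2$.
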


\begin{proof}
Set
\begin{align*}
 C_{\mathrm{sup}}=2^kC_0,
 \qquad K_{\mathrm{bd}}=(2C_{\mathrm{sup}}+1)^{k-1}.
\end{align*}
Every coefficient weight produced by the anchoring steps below has support
parameter at most $C_{\mathrm{sup}}$.  We choose $K_0$ and $\tau_0$ uniformly for
these finitely many support parameters.  This affects multiplicative
constants, but not any of the exponents $C_r$ or $C$.

We establish $(\mathrm B)_0$, then $(\mathrm S)_r$ from $(\mathrm B)_r$,
and, for $r\geq1$,
$(\mathrm B)_r$ from $(\mathrm S)_{r-1}$ and $(\mathrm B)_{r-1}$.  This is an
induction on the number of sparse inner slots. Corollary~\ref{cor:bounded-dual-endpoint} gives $(\mathrm B)_0$.  Its range hypotheses follow
from~\eqref{eq:replacement-parameters} after increasing $C$.

Suppose $(\mathrm B)_r$ is known and $|h|\leq\nu$.  By
\eqref{eq:replacement-hyp}, weighted Cauchy--Schwarz, and
\eqref{eq:single-form-mean},
\begin{align*}
 \tau^2
 &\leq\left|\E_{x\in[N]}h(x)D(x)\right|^2\\
 &\leq\left(\E_{x\in[N]}|h(x)|\right)
       \left(\E_{x\in[N]}|h(x)||D(x)|^2\right)\\
 &\leq\left(\E_{x\in[N]}\nu(x)\right)
       \left(\E_{x\in[N]}\nu(x)|D(x)|^2\right)
 \ll_{k,C_0}\E_{x\in[N]}\nu(x)|D(x)|^2.
\end{align*}
The weighted estimate~\eqref{eq:relative-dual-weighted-L2} of
Proposition~\ref{prop:relative-dual-estimates} and
\eqref{eq:replacement-parameters} therefore give
\begin{align}\label{eq:replacement-unweighted-L2}
   \E_{x\in[N]}|D(x)|^2\gg_{k,C_0}\tau^2.
\end{align}
Set
\[
    D^\sharp(x)=1_{[N]}(x)\operatorname{clip}_{K_{\mathrm{bd}}}(D(x))
\]
and let $\Delta=D-D^\sharp$.  The clipping estimate gives
\begin{align}\label{eq:replacement-clipping-error}
    \E_{x\in[N]}|\Delta(x)|^2\ll_{k,C_0}\tau^C.
\end{align}
Pointwise, we have
\begin{align}\label{eq:clip-identity}
 D^\sharp(x)\overline{D(x)}
 =|D(x)|^2-|\Delta(x)|^2-K_{\mathrm{bd}}|\Delta(x)|.
\end{align}
Indeed, when $|D(x)|\leq K_{\mathrm{bd}}$ the identity is trivial, while for
$|D(x)|>K_{\mathrm{bd}}$ both sides equal
$K_{\mathrm{bd}}|D(x)|$.  Equations
\eqref{eq:replacement-unweighted-L2}--\eqref{eq:replacement-clipping-error},
Cauchy--Schwarz, and a sufficiently large $C$ imply
\begin{align}\label{eq:replacement-sharp-correlation}
 \left|\E_{x\in[N]}\frac{D^\sharp(x)}{K_{\mathrm{bd}}}
 \overline{D(x)}\right|
 \gg_{k,C_0}\tau^2.
\end{align}
Now $D^\sharp/K_{\mathrm{bd}}$ is $1$-bounded, while
$\overline D=\mathcal D_{\overline\mu}(\overline{g_2},\ldots,
\overline{g_k})$ has the same $r$ sparse slots.  Applying $(\mathrm B)_r$ to
\eqref{eq:replacement-sharp-correlation} yields a progression $P$ with
$|\E_{x\in[N]}1_P(x)D^\sharp(x)|
\gg_{k,C_0}\tau^{O_k(1)}$.  Equation
\eqref{eq:replacement-clipping-error} then permits $D^\sharp$ to be replaced
by $D$.  This proves $(\mathrm S)_r$.

Finally, suppose $r\geq1$, $(\mathrm S)_{r-1}$ and
$(\mathrm B)_{r-1}$ are known, and $|h|\leq1_{[N]}$.  Let $g_\ell$ be a
sparse inner slot.
Lemma~\ref{lem:anchoring} gives
\[
 \E_{x\in[N]}h(x)\mathcal D_\mu(g_2,\ldots,g_k)(x)
 =\E_{y\in[N]}g_\ell(y)\,
 \mathcal D_{\widetilde\mu}
 (h,g_2,\ldots,g_{\ell-1},g_{\ell+1},\ldots,g_k)(y).
\]
The new inner list has $r-1$ sparse slots and the outside function $g_\ell$
is sparse.  Applying $(\mathrm S)_{r-1}$ yields a progression $P_1$ and the
corresponding correlation with $1_{P_1}$.  The inverse anchoring change puts
$1_{P_1}$ in the slot formerly occupied by $g_\ell$ and restores $h$ to the
outside.
The resulting dual function has $r-1$ sparse slots, so $(\mathrm B)_{r-1}$
produces a progression correlating with $h$.  This proves $(\mathrm B)_r$.

More explicitly, the bounded number of applications produces thresholds
$\tau^{A_j}$ with $A_j=O_k(1)$.  We choose the constant $C$ in
\eqref{eq:replacement-parameters} larger than the constants required when the
preceding argument is applied at each of these thresholds.  The finitely many
losses can then be absorbed into nondecreasing exponents $C_r$.  This
completes the induction.
\end{proof}

\subsection{The relative theorem}

We are now ready to transfer Theorem~\ref{thm:1+} to the unbounded setting.

\begin{theorem}[Relative $U^{1+}$ control]\label{thm:relative-u1plus}
Let $k\geq3$ and $C_0\geq1$.  There are constants $K=K(k,C_0)$,
$C=C(k)$ and $c=c(k,C_0)>0$ such that the following holds.  Let $N\geq1$
and let $0<\delta<c$ be a threshold.  Let $H$ be a positive integer, set
$B=N/H$, and assume
\[
    \delta^{-C}\leq B\leq N\delta^C.
\]
Set $L=\delta^{-C}$ and let $0<\eta\leq\delta^C$.  Let
$\nu\colon[N]\to\R_{\geq0}$ satisfy the $(K,L,\eta)$ linear forms condition
at scale $(N,B,H)$ and suppose
\begin{align}\label{eq:relative-sup-hypothesis}
    R_\nu^C/B\leq\delta^C.
\end{align}
Let $\lambda\colon\Z^k\to\C$ be $1$-bounded and supported on
$([-C_0B,C_0B]\cap\mathbb Z)^k$, and let $f_1,\ldots,f_k$ be supported on
$[N]$ with $|f_i|\leq\nu$.  If
\begin{align}\label{eq:relative-count-hypothesis}
 |R_H(\lambda;f_1,\ldots,f_k)|
 \geq\delta B^k\frac{N^2}{B},
\end{align}
then
\[
    \min_{1\leq i\leq k}\|f_i\|_{U^{1+}[N]}\geq c\delta^C.
\]
\end{theorem}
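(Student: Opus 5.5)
We deduce the theorem from Proposition~\ref{prop:relative-replacement}, in the same way that Theorem~\ref{thm:1+} was deduced from Lemma~\ref{lem:bounded-slot-replacement}.

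\emph{Reduction to one slot and anchoring.} By simultaneous permutation of the functions and the coefficient coordinates, it suffices to bound $\|f_1\|_{U^{1+}[N]}$. We pigeonhole the sum over $b_1$ in \eqref{eq:relative-count-hypothesis}, which has $O_{C_0}(B)$ values. Substituting $x\mapsto x-b_1^\ast m$ as in the proof of Theorem~\ref{thm:1+} then gives a $1$-bounded weight $\mu_0$ on $\Z^{k-1}$, supported on a cube with parameter $2C_0$, such that
\[
\left|\E_{x\in[N]}f_1(x)\,\mathcal D_{\mu_0}(f_2,\ldots,f_k)(x)\right|\geq\tau\coloneqq c'_{k,C_0}\delta .
\]
In this correlation the outside function $f_1$ is sparse, and all $k-1$ inner slots are sparse.

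\emph{Applying the replacement proposition.} We now use Proposition~\ref{prop:relative-replacement} with $r=k-1$, support parameter $2C_0$, and $h=f_1$.
\begin{itemize}
\item Its hypotheses \eqref{eq:replacement-parameters} must be checked. With $L=\delta^{-C}$ and $\eta\leq\delta^C$, we have $\eta_*\leq 2\delta^C+H^{-1}$. Since $H=N/B\geq\delta^{-C}$, this gives $\eta_*^c\ll\delta^{cC}$, where $c=2^{-2(k-1)}$. The bound $R_\nu^{C'}/B\leq\delta^{C}$ is supplied by \eqref{eq:relative-sup-hypothesis}.
\item The range conditions $\tau^{-C'}\leq B\leq N\tau^{C'}$ follow from the assumed range for $B$.
\item These checks require taking the outer constant $C$ large compared with the constant $C'$ of the proposition times $2^{2(k-1)}$. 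We also take $K\geq K_0(k,2C_0)$.
\end{itemize}
Since $|f_1|\leq\nu$, assertion $(\mathrm S)_{k-1}$ yields a progression $P$ with $|\E_{x}1_P(x)\,\mathcal D_{\mu_0}(f_2,\ldots,f_k)(x)|\gg\tau^{C_{k-1}}$.

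\emph{From $(\mathrm S)$ to a bound on $f_1$.} Assertion $(\mathrm S)$ places a progression outside, not $f_1$, so a further step is needed to return to $f_1$. We apply Lemma~\ref{lem:anchoring} with $h=1_P$ and $\ell=2$. This moves $f_2$ outside and puts $1_P$ into an inner slot, leaving $k-2$ sparse inner slots, all of which are dense or sparse. Applying $(\mathrm S)_{k-2}$ gives a second progression. We repeat this, moving $f_3,\dots,f_k$ outside in turn, until the inner list consists only of progression indicators (hence bounded) and the outside function is $f_k$.

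At that stage the two assertions give different conclusions. Assertion $(\mathrm B)$ would require $|h|\leq 1_{[N]}$, but $f_k$ is sparse, so we use $(\mathrm S)_0$ instead. That does not directly bound $\|f_k\|_{U^{1+}}$.

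To avoid this, it is simpler to start the chain differently: we apply the whole iteration with the roles permuted so that the slot we finally want controlled ends up outside against a dual function of progression indicators. That is, we anchor so the target $f_i$ is outside and the others are replaced one by one.

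In the end we have $|\E_x f_i(x)\,\mathcal D_{\widetilde\mu}(1_{P_1},\dots,1_{P_{k-1}})(x)|\gg\tau^{O_k(1)}$. Opening the dual function and pigeonholing over the $O(M)$ pairs $(\mathbf b,m)$ gives a single intersection of shifted progressions, which is itself a progression, with $|\sum_{x\in P'}f_i(x)|\gg\tau^{O(1)}N$. The pigeonholing is legitimate because the inner dual function is a normalised average of bounded products, and $f_i$ enters linearly.

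\emph{Main obstacle.} The delicate point is bookkeeping. We must keep the coefficient support parameter bounded by $2^kC_0$ through $k$ anchorings, so that a single $K_0$ and $\tau_0$ work. We must also choose the exponent $C$ large enough that \eqref{eq:replacement-parameters} holds at every threshold $\tau^{A_j}$ reached in the iteration, using the decay $\eta_*^c$. I expect no genuinely new analytic input beyond Proposition~\ref{prop:relative-replacement}.
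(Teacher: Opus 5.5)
Your argument is essentially the paper's: pigeonhole $b_1$ to reach an anchored correlation with $f_1$ outside, then alternate $(\mathrm S)$ with Lemma~\ref{lem:anchoring} to replace $f_1,\dots,f_{k-1}$ by progression indicators, and finish by opening the dual function and pigeonholing over $(\mathbf b,m)$. Your parameter checks match the paper's.

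The detour in your third paragraph is unnecessary, and your claim there that the final stage ``does not directly bound $\|f_k\|_{U^{1+}[N]}$'' is mistaken. Your first chain already ends with $f_k$ outside against $1_{P_1},\dots,1_{P_{k-1}}$, and opening the dual function at that point, with no appeal to $(\mathrm B)$ or $(\mathrm S)_0$, bounds $\|f_k\|_{U^{1+}[N]}$, which suffices by symmetry. The paper does exactly this, pigeonholing one anchoring earlier, at the stage with $1_{P_{k-1}}$ outside and $f_k$ still an inner slot.
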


\begin{proof}
Simultaneous permutation relabels the functions and coefficient coordinates,
preserving the support bounds, the linear forms condition, and the counting
form.  It therefore suffices to prove the bound for $f_k$.  The symmetry
under diagonal shifts from
Remark~\ref{rmk:symmetric}, followed by pigeonholing the first coefficient, gives a
$1$-bounded weight $\mu_0$ supported on
$([-2C_0B,2C_0B]\cap\mathbb Z)^{k-1}$ such
that
\begin{align}\label{eq:relative-initial-anchored}
 \left|\E_{x\in[N]}f_1(x)
 \mathcal D_{\mu_0}(f_2,\ldots,f_k)(x)\right|
 \gg_{k,C_0}\delta.
\end{align}
Let $\tau$ denote the lower bound for a correlation to which
Proposition~\ref{prop:relative-replacement} is applied.  In the iteration
below, each such $\tau$ is bounded below by
$c_{k,C_0}\delta^A$ for one of finitely many exponents $A=O_k(1)$.
The range hypothesis gives $H^{-1}\leq\delta^C$, so
$\eta_*\ll\delta^C$.  Together with
\eqref{eq:relative-sup-hypothesis}, the two range bounds on $B$, and a
sufficiently large choice of $C=C(k)$, this verifies
\eqref{eq:replacement-parameters} for each of these values of $\tau$, after
decreasing $c=c(k,C_0)$ if necessary.

The conclusion for a sparse outside function in
Proposition~\ref{prop:relative-replacement}, applied with the appropriate
support parameter to
\eqref{eq:relative-initial-anchored}, yields a correlation of the same form
with the outside factor $f_1$ replaced by a progression indicator.  Anchoring
then puts $f_2$ outside.  Iterating these two deductions, for every
$0\leq j\leq k-2$ there are progressions $P_1,\ldots,P_j$, a
$1$-bounded weight $\mu_j$ supported on
\[
    ([-C_j^{\mathrm{rel}}B,C_j^{\mathrm{rel}}B]\cap\mathbb Z)^{k-1},
    \qquad
    C_j^{\mathrm{rel}}=2^{j+1}C_0,
\]
and an exponent $A_j=O_k(1)$ such that
\begin{align}\label{eq:relative-j-stage}
 \left|\E_{x\in[N]}f_{j+1}(x)
 \mathcal D_{\mu_j}
 (1_{P_1},\ldots,1_{P_j},f_{j+2},\ldots,f_k)(x)\right|
 \gg_{k,C_0}\delta^{A_j}.
\end{align}
Here the list of progression indicators is empty when $j=0$.  The anchoring
identity records the transformed weight at each stage and preserves its
$1$-boundedness, while replacing $C_j^{\mathrm{rel}}$ by
$C_{j+1}^{\mathrm{rel}}$.  Since there are at most $k-1$ stages, $K$ may be
chosen uniformly for these finitely many support parameters.  The choice of
$C=C(k)$ made in Proposition~\ref{prop:relative-replacement} supplies its
parameter hypotheses for every threshold $\delta^{A_j}$ after decreasing
$c=c(k,C_0)$.

Applying the conclusion for a sparse outside function once more when $j=k-2$
replaces the outside factor $f_{k-1}$ and gives
\begin{align}\label{eq:relative-last-correlation}
 \left|\E_{x\in[N]}1_{P_{k-1}}(x)
 \mathcal D_\mu(1_{P_1},\ldots,1_{P_{k-2}},f_k)(x)\right|
 \gg_{k,C_0}\delta^{O_k(1)}.
\end{align}
Expanding the dual function in
\eqref{eq:relative-last-correlation} and pigeonholing first over the
coefficients and then over $m$, there are a coefficient tuple and an
$m\in[H]$ such that, on
setting
\[
    \Psi_m(x)=1_{P_{k-1}}(x)
    \prod_{j=1}^{k-2}1_{P_j}(x+b_{j+1}m),
\]
one has
\begin{align*}
 \left|\sum_x\Psi_m(x)f_k(x+b_km)\right|
 \gg_{k,C_0}\delta^{O_k(1)}N.
\end{align*}
After the change of variables $y=x+b_km$, the remaining factor is
$\Psi_m(y-b_km)$.  By the progression observation preceding
Proposition~\ref{prop:relative-replacement},
this is the indicator of a
single arithmetic progression $P$.  Since $f_k$ is supported on $[N]$, the
same correlation is obtained with $P'=P\cap[N]$, which is a nonempty
arithmetic progression contained in $[N]$.  Hence
\[
    \left|\sum_{y\in P'}f_k(y)\right|
    \gg_{k,C_0}\delta^{O_k(1)}N,
\]
and the definition of the $U^{1+}[N]$ norm gives
$\|f_k\|_{U^{1+}[N]}\gg_{k,C_0}\delta^{O_k(1)}$.  Enlarge $C=C(k)$ once
more to obtain
the stated exponent, which proves the theorem by the symmetry noted above.
\end{proof}

\section{A sieve majorant for the primes}\label{sec:prime-majorant}

We now record a sieve input which supplies majorants satisfying the linear forms
condition used in Section~\ref{sec:relative-u1plus}.  The statement is a
variant of the pseudorandom majorant in
\cite[Lemma~5.3]{teravainen-wang-sparse}, whose proof in turn follows the work
of Green--Tao~\cite[Appendix~D]{green-tao-linear-equations}.  There are two
features of the cited construction to modify and one further obstruction to
control.  First, our linear forms have variables at the two scales $N$ and
$H=\lfloor N/B\rfloor$, and coefficients of the variables at scale $H$ may have size
$O_K(B)$ for a fixed structural complexity $K$.  Secondly, the GPY
majorant is not pointwise bounded, so we truncate it and use bounds for tails
of the divisor function to show that this changes all relevant linear forms averages by a
negligible amount.  Finally, uniformity in the coefficients requires control
of the large prime divisors of their slope differences.

There is a genuine local obstruction unless one controls the large prime
divisors of the coefficients.  For example, if $q\leq B$ is a product of
primes larger than $w$ and one tests the two forms
$x$ and $x+qm$, then these two forms coincide modulo every prime $p\mid q$.
The corresponding local factor is multiplied by
$\prod_{p\mid q,\ p>w}(1+O(1/p))$, which need not be $1+o(1)$ uniformly in
$q\leq B$.  The proposition below avoids this obstruction by imposing the
upper bound $B\leq\exp(w^{1/2})$.  Indeed, this gives
\begin{align*}
    \sum_{\substack{p>w\\p\mid q}}\frac1p
    \leq\frac{\log q}{w\log w}
    \leq\frac1{w^{1/2}\log w}.
\end{align*}
The proof in fact uses only the explicit condition on bad primes stated in
Remark~\ref{rmk:prime-majorant-range}.

Let $\Psi=(\Psi_1,\ldots,\Psi_J)$, with $1\leq J\leq K$, be a system of
structural complexity at most $K$ as in
Definition~\ref{def:relative-LFC}, and write
\begin{align*}
    \mathbf c_j=(c_{j,1},\ldots,c_{j,s})
\end{align*}
for its slope vectors.  For $1\leq i<j\leq J$, set
\begin{align*}
    d_{ij}=\gcd(c_{i,1}-c_{j,1},\ldots,c_{i,s}-c_{j,s}).
\end{align*}
The pairwise distinctness of the slope vectors ensures that $d_{ij}$ is a
positive integer.  Define the slope discriminant by
\begin{align}\label{eq:linear-discriminant-def}
    \mathfrak d(\Psi)
    \coloneqq \prod_{1\leq i<j\leq J}d_{ij},
\end{align}
with the convention that an empty product is $1$.  The coefficient bounds in
Definition~\ref{def:relative-LFC} give
\begin{align}\label{eq:disc-size-bound}
    1\leq \mathfrak d(\Psi)\leq (2KB)^{\binom J2}.
\end{align}
Indeed, $d_{ij}\leq\max_t|c_{i,t}-c_{j,t}|\leq2KB$.
When $s=0$, pairwise distinctness of the slope vectors forces $J=1$, and
then $\mathfrak d(\Psi)=1$ by the convention that an empty product is $1$.

\begin{lemma}[Discriminant bound for large primes]\label{lem:bad-prime-budget}
Let $K\geq2$, let $w\geq2$, and suppose that
$2\leq B\leq \exp(w^{1/2})$.  If
$\Psi$ is a system of structural complexity at most $K$ in
Definition~\ref{def:relative-LFC}, then
\begin{align}\label{eq:bad-prime-budget}
    \sum_{\substack{p>w\\ p\mid \mathfrak d(\Psi)}}\frac1p
    \ll_K \frac1{w^{1/2}\log w}.
\end{align}
\end{lemma}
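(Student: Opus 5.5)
The bound follows from a counting argument on the number of large prime divisors of $\mathfrak d(\Psi)$, combined with the size bound \eqref{eq:disc-size-bound}. The plan is to reduce everything to a bound on $\log\mathfrak d(\Psi)$ and then observe that primes exceeding $w$ dividing a number of bounded logarithm contribute little to a sum of reciprocals.

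First, by \eqref{eq:disc-size-bound} and $J\leq K$, we have
\[
    \log\mathfrak d(\Psi)\leq\binom K2\log(2KB)\ll_K \log B,
\]
where we used $B\geq2$ to absorb $\log(2K)$ into $O_K(\log B)$. Let $\omega_w$ denote the number of distinct primes $p>w$ dividing $\mathfrak d(\Psi)$. Since each such prime exceeds $w$ and their product divides $\mathfrak d(\Psi)$, we get $w^{\omega_w}\leq\mathfrak d(\Psi)$. Hence
\[
    \omega_w\leq\frac{\log\mathfrak d(\Psi)}{\log w}\ll_K\frac{\log B}{\log w}.
\]
Each of these primes satisfies $1/p<1/w$, so
\[
    \sum_{\substack{p>w\\ p\mid\mathfrak d(\Psi)}}\frac1p\leq\frac{\omega_w}{w}\ll_K\frac{\log B}{w\log w}.
\]

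Finally, the hypothesis $B\leq\exp(w^{1/2})$ gives $\log B\leq w^{1/2}$. The right-hand side is then $\ll_K w^{-1/2}(\log w)^{-1}$, which is \eqref{eq:bad-prime-budget}.

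The case $s=0$ is trivial, since then $\mathfrak d(\Psi)=1$ and the sum is empty. There is no genuine obstacle in the argument; the only point requiring care is that the implied constant depends on $K$ solely through the exponent $\binom J2\leq\binom K2$ and the factor $2K$ in \eqref{eq:disc-size-bound}.
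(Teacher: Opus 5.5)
Your proof is correct and is essentially the paper's argument. Both bound the number of primes $p>w$ dividing $\mathfrak d(\Psi)$ by $\log\mathfrak d(\Psi)/\log w$, multiply by $1/w$, and then use \eqref{eq:disc-size-bound} together with $\log B\leq w^{1/2}$.
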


\begin{proof}
For every integer $Q\geq1$,
$$
    \sum_{\substack{p>w\\p\mid Q}}\frac1p
    \leq
    \frac1w\#\{p>w:p\mid Q\}
    \leq
    \frac{\log Q}{w\log w}.
$$
Applying this with $Q=\mathfrak d(\Psi)$ and using~\eqref{eq:disc-size-bound} gives
$$
    \sum_{\substack{p>w\\ p\mid \mathfrak d(\Psi)}}\frac1p
    \ll_K \frac{\log B}{w\log w}
    \ll_K \frac1{w^{1/2}\log w}.
$$
\end{proof}

The following estimate for divisor tails permits the GPY majorant to be
truncated without changing the required linear forms averages.  The auxiliary
modulus $q_0$ is included so that the same estimate applies to affine
pullbacks.

\begin{lemma}[Divisor tails along the relevant forms]\label{lem:divisor-tail-forms}
Let $K\geq1$.  Let
$2\leq L\leq(\log N)^K$, let $w\leq(\log N)^{1/2}$, let
$W=\prod_{p\leq w}p$, let $1\leq q_0\leq\exp(w^{1/2})$, set
$\widetilde W=Wq_0$, and suppose that $(b,\widetilde W)=1$ and $b\geq0$.  Let
$2\leq B\leq\exp(w^{1/2})$ and $H=\lfloor N/B\rfloor$.  Let
$\Psi_1,\ldots,\Psi_J$, with $1\leq J\leq K$, be a system of structural
complexity at most $K$ as in Definition~\ref{def:relative-LFC}.  Let
$I\subseteq[N]$ and $I_1,\ldots,I_s\subseteq[H]$ be intervals satisfying
\begin{align*}
    |I|\geq N/L,
    \qquad |I_t|\geq H/L\quad\textnormal{for}\,\,1\leq t\leq s,
\end{align*}
and set
$\Omega=I\times I_1\times\cdots\times I_s$.  Define
\begin{align*}
    \mathcal K_\Psi
    =\{\mathbf v\in\Omega:\Psi_j(\mathbf v)\in[N]
      \text{ for all }1\leq j\leq J\}.
\end{align*}
Then, for every integer $\ell\geq1$,
\begin{align}\label{eq:divisor-moment-forms}
    \frac1{|\Omega|}\sum_{\mathbf v\in\mathcal K_\Psi}
    \prod_{j=1}^J\tau(\widetilde W\Psi_j(\mathbf v)+b)^\ell
    \ll_{K,\ell} (\log N)^{C(K,\ell)}.
\end{align}
Consequently, for every $A,E\geq1$ there is
$D_0=D_0(K,A,E)$ such that, whenever $D\geq D_0$ and
$1\leq j_0\leq J$,
\begin{align}\label{eq:divisor-tail-forms}
    \frac1{|\Omega|}\sum_{\mathbf v\in\mathcal K_\Psi}
    &\tau(\widetilde W\Psi_{j_0}(\mathbf v)+b)^A
    1_{\tau(\widetilde W\Psi_{j_0}(\mathbf v)+b)>(\log N)^D}\notag\\
    &\hspace{35mm}\times
    \prod_{j\neq j_0}\tau(\widetilde W\Psi_j(\mathbf v)+b)^A
    \ll_{K,A,E} (\log N)^{-E}.
\end{align}
\end{lemma}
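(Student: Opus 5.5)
We will first prove the moment bound \eqref{eq:divisor-moment-forms} and then deduce the tail bound \eqref{eq:divisor-tail-forms} from it by a Chebyshev/H\"older argument.

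\textbf{Moment bound.} Fix $\mathbf v\in\mathcal K_\Psi$. Each value $n_j=\widetilde W\Psi_j(\mathbf v)+b$ is a positive integer of size at most $\widetilde WN+b$. We may reduce $b$ modulo $\widetilde W$, so this size is $\le N^{2}$, since $\widetilde W\le e^{2w}q_0\le \exp(3(\log N)^{1/2})$.

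We use the standard bound $\tau(n)^\ell\ll_{\ell}\sum_{d\mid n,\ d\le n^{1/2^{\ell'}}}\tau(d)^{C_\ell}$. Alternatively, for $n\le N^2$ one has the Landreau-type inequality
\[
\tau(n)\le \Bigl(\sum_{d\mid n,\ d\le N^{\epsilon}}1\Bigr)^{O(1/\epsilon)}.
\]
Either way, this reduces $\prod_j\tau(n_j)^\ell$ to
\[
\sum_{d_1,\dots,d_J\le N^{\epsilon}}\prod_j g(d_j)\,1_{d_j\mid n_j},
\]
where $g$ has polylogarithmic average order.

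We then swap the sum over the $d_j$ with the sum over $\mathbf v$. The condition $d_j\mid \widetilde W\Psi_j(\mathbf v)+b$ has no solutions unless $(d_j,\widetilde W)=1$, because $(b,\widetilde W)=1$. Otherwise it is the condition $\Psi_j(\mathbf v)\equiv -b\widetilde W^{-1}\pmod{d_j}$. Since $x$ has coefficient $1$ in every form and $I$ has length $\ge N/L\ge N^{1-o(1)}$, the number of $x\in I$ satisfying all $J$ congruences jointly is, for fixed $\mathbf h$, at most
\[
|I|\cdot\frac{\rho(d_1,\dots,d_J)}{[d_1,\dots,d_J]}+O(\rho),
\]
where $\rho$ counts compatible residues. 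The $O(\rho)$ error is harmless because $\prod d_j\le N^{J\epsilon}\le N^{1/2}/L$.

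The key local issue is compatibility at primes $p\mid d_i,d_j$. There we need $\Psi_i\equiv\Psi_j$, which is a condition on $\mathbf h$ modulo $p$ unless $p\mid d_{ij}$. Averaging over $\mathbf h$, with the intervals $I_t$ of length $\ge H/L$ much larger than the moduli once we also cap $d_j\le H^{1/(2J)}$ (which is fine since $H\ge N/\exp(w^{1/2})$), yields a density factor $1/p$ per such prime. Primes dividing $\mathfrak d(\Psi)$ give at most a factor $\prod_{p\mid\mathfrak d}(1+O(1/p))$, and this is $O_K(1)$ by Lemma~\ref{lem:bad-prime-budget} for $p>w$. Primes $p\le w$ divide $W$ and are excluded by coprimality.

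Altogether, the normalised sum is
\[
\ll \sum_{d_1,\dots,d_J}\frac{\prod_j g(d_j)}{\prod_j d_j}\times(\text{multiplicative correction}).
\]
This is an Euler product bounded by $\prod_{p\le N}(1+O_{K,\ell}(1/p))\ll(\log N)^{C(K,\ell)}$.

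\textbf{Main obstacle.} The delicate step is this joint-congruence counting in the variables $\mathbf h$ at scale $H$ with coefficients up to $KB$: we need enough equidistribution of the system modulo $[d_1,\ldots,d_J]$. I would handle it by restricting all $d_j$ to be at most a small power of $H$. Then each residue condition on $\mathbf h$ cuts a box of side $\ge H/L$ into complete residue blocks up to a boundary error of relative size $L\,d/H\le H^{-1/2}$. Nonzero slopes are only guaranteed after removing primes dividing $\mathfrak d(\Psi)$, which is exactly where Lemma~\ref{lem:bad-prime-budget} enters.

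\textbf{Tail bound.} On the event $\tau(n_{j_0})>(\log N)^D$, insert the factor $(\tau(n_{j_0})/(\log N)^D)^{\ell}$ with $\ell$ chosen large. Then apply \eqref{eq:divisor-moment-forms} with exponents $A+\ell$ in slot $j_0$ and $A$ in the other slots; taking the maximum of these exponents is allowed since the moment bound holds for every $\ell$. This gives
\[
\ll_{K,A,\ell}(\log N)^{C(K,A+\ell)-D\ell}.
\]
Take $\ell=1$. Then $D_0=C(K,A+1)+E$ suffices, and the bound is $(\log N)^{-E}$.
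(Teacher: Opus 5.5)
Your deduction of \eqref{eq:divisor-tail-forms} is the same as the paper's: bound $1_{\tau(Y)>(\log N)^D}\le\tau(Y)(\log N)^{-D}$ and use the moment bound with exponent $A+1$ throughout. Your proof of \eqref{eq:divisor-moment-forms}, however, takes a different route.

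The paper applies H\"older's inequality to reduce to a single form. For each fixed $\mathbf h$, the sum over $x$ is then a sum of $\tau^Q$ along the progression $\widetilde Wy+b$, with $y$ in an interval of length at least $N/L$. Shiu's theorem bounds this by $|I|(\log N)^{O(1)}$ uniformly in $\mathbf h$, so the $\mathbf h$-sum is trivial.

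You instead open all $J$ divisor functions at once via a Landreau-type small-divisor inequality, count the joint congruences by the Chinese remainder theorem, and sum an Euler product. This is self-contained and works. However, the local analysis you single out as the main obstacle is unnecessary: compatibility at primes dividing several $d_j$, equidistribution of $\mathbf h$ modulo the moduli, and Lemma~\ref{lem:bad-prime-budget}. Since the target is only $(\log N)^{O(1)}$, you may bound the proportion of compatible $\mathbf h$ by $1$. For each fixed $\mathbf h$ there are then at most $|I|/\operatorname{lcm}(d_1,\ldots,d_J)+1$ admissible values of $x$. The local factor of $\sum\prod_j g(d_j)/\operatorname{lcm}(d_1,\ldots,d_J)$ is $1+O_{K,\ell}(1/p)$ at every prime, and the product over $p\le N$ is already $(\log N)^{O_{K,\ell}(1)}$. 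Equivalently, if you apply H\"older first, your small-divisor argument for one form is an elementary substitute for Shiu's theorem.

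Two small points need correcting.

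First, reducing $b$ modulo $\widetilde W$ does not change the integers $\widetilde W\Psi_j(\mathbf v)+b$; it only shifts the forms by a constant. So it does not give $n_j\le N^2$. What you actually need is $b\le N^{O(1)}$. This is not in the statement, but it holds in every application ($b\le W$, or $b_P=Wa+b\le WN+W$), and the paper's appeal to Shiu's theorem tacitly needs it as well.

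Second, the ``alternative'' inequality $\tau(n)\le(\sum_{d\mid n,\,d\le N^{\epsilon}}1)^{O(1/\epsilon)}$ does not by itself yield small moduli after expansion. The lcm of $O(1/\epsilon)$ divisors of size $N^{\epsilon}$ can be of size $N^{O(1)}$. Use the form $\tau(n)\ll_\epsilon\max_{d\mid n,\,d\le n^{\epsilon}}\tau(d)^{O(1/\epsilon)}$, as in your first version.
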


\begin{proof}
By H\"older's inequality, it is enough to prove the single form estimate
\begin{align}\label{eq:one-form-divisor-moment}
    \frac1{|\Omega|}
    \sum_{\substack{\mathbf v\in\Omega\\\Psi(\mathbf v)\in[N]}}
    \tau(\widetilde W\Psi(\mathbf v)+b)^Q
    \ll_{K,Q} (\log N)^{C_1(K,Q)}
\end{align}
for every integer $Q\geq1$, where $C_1(K,Q)$ is a suitable exponent.  Write
$$
    \Psi(x,\mathbf h)=x+a+\sum_{t=1}^s c_t h_t .
$$
For fixed $h_1,\ldots,h_s$, the admissible values of
$y=\Psi(x,\mathbf h)$ form an interval in $[N]$ contained in an interval
$I'_{\mathbf h}\subseteq[N]$ of length $|I|$.  Shiu's bound for a nonnegative
multiplicative function in a primitive arithmetic progression
\cite[Theorem~1]{shiu}, applied to $\tau^Q$, gives
\begin{align}\label{eq:ap-divisor-moment}
    \sum_{y\in I'_{\mathbf h}}\tau(\widetilde Wy+b)^Q
    \ll_{K,Q}|I|(\log N)^{C_1(K,Q)}.
\end{align}
Indeed, $|I|\geq N/L\geq N(\log N)^{-K}$,
$\widetilde W=\exp(O(w))=N^{o(1)}$, and $(b,\widetilde W)=1$; hence the
estimate is uniform in the location of $I'_{\mathbf h}$ and in all the
parameters above.  For each fixed $\mathbf h$, the sum over the admissible
$x\in I$ is at most the left-hand side of
\eqref{eq:ap-divisor-moment}.  Summing this bound over
$\mathbf h\in I_1\times\cdots\times I_s$ and dividing by
$|\Omega|=|I|\prod_{t=1}^s|I_t|$ proves
\eqref{eq:one-form-divisor-moment}, so
\eqref{eq:divisor-moment-forms} follows.

For the tail estimate,
\[
  1_{\tau(Y)>(\log N)^D}\leq \tau(Y)(\log N)^{-D}.
\]
Since $\tau(Y)\geq1$, the left side of
\eqref{eq:divisor-tail-forms} is therefore at most $(\log N)^{-D}$ times
the moment in \eqref{eq:divisor-moment-forms} with $\ell=A+1$.  Choosing
$D_0>C(K,A+1)+E$ proves \eqref{eq:divisor-tail-forms}.
\end{proof}

Fix a smooth function $\chi\colon\R\to\R_{\geq0}$ supported on $[-2,2]$
and equal to $1$ on $[-1,1]$, and set
\[
 I_\chi\coloneqq \int_0^\infty|\chi'(t)|^2\,\d t>0.
\]
For $R\geq2$, define the GPY sieve weight
\begin{align}\label{eq:selberg-weight-def}
 \Lambda_{\chi,R,2}(n)
 \coloneqq 
 \frac{\log R}{I_\chi}
 \left(\sum_{d\mid n}\mu(d)
 \chi\left(\frac{\log d}{\log R}\right)\right)^2.
\end{align}
The factor $(\log R)/I_\chi$ is chosen so that the average for a single form, apart
from its local factor, is $1+o(1)$.

\begin{lemma}[GPY linear forms estimate at two scales]
\label{lem:two-scale-selberg}
For every $K\geq2$ there are constants $\gamma=\gamma(K)>0$ and
$c_2=c_2(K)>0$ such that the following holds.  Let $N$ be sufficiently
large, let $w\to\infty$ with $W=\prod_{p\leq w}p=N^{o(1)}$, let
$(b,W)=1$, and set $R=N^\gamma$.
Let $B,L\geq1$, set $H=\lfloor N/B\rfloor$, suppose
$L\leq(\log N)^K$, and
\[
 B\leq\exp(w^{1/2}),\qquad
 |I|\geq N/L,\qquad |I_t|\geq H/L.
\]
Suppose that $\Psi_1,\ldots,\Psi_J$, with $1\leq J\leq K$, are forms with
structural complexity at most $K$, as in
Definition~\ref{def:relative-LFC}.
For
\[
 \Omega=I\times I_1\times\cdots\times I_s,\qquad
 \mathcal K=\{\mathbf v\in\Omega:\Psi_j(\mathbf v)\in[N]\ 
 \text{for every }j\},
\]
set
\[
 \nu_0(n)=1_{[N]}(n)\frac{\varphi(W)}W
 \Lambda_{\chi,R,2}(Wn+b).
\]
Then, for every $S\subseteq[J]$,
\begin{align}\label{eq:selberg-linear-forms-asymp}
 &\E_{\mathbf v\in\Omega}
 \prod_{j\in S}\nu_0(\Psi_j(\mathbf v))
 \prod_{j\notin S}1_{[N]}(\Psi_j(\mathbf v))\notag\\
 &\qquad=
 \frac{|\mathcal K|}{|\Omega|}\prod_p\beta_p(S)
 +O_K((\log N)^{-c_2}),
\end{align}
where $\beta_p(S)=1$ for $p\mid W$, while for $p\nmid W$,
\begin{align}\label{eq:local-factor-def}
 \beta_p(S)=
 \E_{\mathbf v\in(\Z/p\Z)^{s+1}}
 \prod_{j\in S}\frac{p}{p-1}
 1_{W\Psi_j(\mathbf v)+b\not\equiv0\pmod p}.
\end{align}
\end{lemma}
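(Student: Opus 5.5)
We follow the Green--Tao computation of correlations of GPY sieve weights (\cite[Appendix~D]{green-tao-linear-equations}, as adapted in \cite[Lemma~5.3]{teravainen-wang-sparse}), with two modifications. The first is to handle the two-scale box $\Omega$ and coefficients of size up to $KB$. The second is to replace the usual ``bounded coefficients'' assumption by Lemma~\ref{lem:bad-prime-budget}.

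\emph{Expansion.} Expand each square in \eqref{eq:selberg-weight-def} for $j\in S$. The left side of \eqref{eq:selberg-linear-forms-asymp} becomes
\[
\Bigl(\frac{\varphi(W)\log R}{W I_\chi}\Bigr)^{|S|}\sum_{(d_j,d_j')_{j\in S}}\prod_{j\in S}\mu(d_j)\mu(d_j')\chi\Bigl(\tfrac{\log d_j}{\log R}\Bigr)\chi\Bigl(\tfrac{\log d_j'}{\log R}\Bigr)\;\E_{\mathbf v\in\Omega}1_{\mathcal K}(\mathbf v)\prod_{j\in S}1_{[d_j,d_j']\mid W\Psi_j(\mathbf v)+b}.
\]
Since $(b,W)=1$, only $d_j$ coprime to $W$ survive.

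For fixed $\mathbf d$, let $D=\operatorname{lcm}_j[d_j,d_j']\leq R^{4K}$. The divisibility conditions are periodic modulo $D$ in every coordinate of $\mathbf v$. We cut $\mathcal K$ into boxes and count lattice points of $\mathcal K$ in residue classes modulo $D$. This gives
\[
\frac{|\mathcal K|}{|\Omega|}\,\alpha(\mathbf d)+O\Bigl(\frac{D}{H/L}\Bigr),
\]
where $\alpha(\mathbf d)=\prod_{p\mid D}\alpha_p$ is the local density. The shortest side of $\Omega$ has length $H/L\geq N\exp(-w^{1/2})(\log N)^{-K}$, which is $N^{1-o(1)}$ since $w\leq(\log N)^{1/2}$ forces $B\leq N^{o(1)}$. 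Summed over all $\mathbf d$, the error term is therefore $\ll R^{O(K)}N^{-1+o(1)}$. Choosing $\gamma(K)$ small makes this negligible.

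Here $\mathcal K$ is a convex polytope cut out by $O_K(1)$ inequalities, so the boundary error is handled by the same cell decomposition as in Lemma~\ref{lem:terminal-LFC}.

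\emph{Main term: contour integration.} We write $\chi(\log d/\log R)$ via its Fourier transform, as in Green--Tao. The main term becomes an integral of an Euler product
\[
\prod_{p>w}\Bigl(1+\sum_{\emptyset\ne\text{patterns}}(\pm)\,\alpha_p\,p^{-\ldots}\Bigr).
\]
We compare this product with $\prod_j\zeta$-factors. For $p>w$ with $p\nmid\mathfrak d(\Psi)$, the slope vectors are pairwise distinct modulo $p$. Hence the forms are pairwise non-proportional mod $p$, and $\alpha_p$ equals the product of the single-form densities exactly, up to $O(p^{-2})$. The contribution of these primes therefore yields $\prod_p\beta_p(S)$ times $1+O(1/w)$, using $\int|\chi'|^2=I_\chi$ for the normalisation, exactly as in \cite{green-tao-linear-equations}.

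\emph{Main obstacle: primes $p>w$ dividing $\mathfrak d(\Psi)$.} At such primes two forms collide modulo $p$, and the local factor differs from the generic one by a factor $1+O(1/p)$. This is the step we expect to be hardest. Lemma~\ref{lem:bad-prime-budget} gives
\[
\sum_{p>w,\ p\mid\mathfrak d}\frac1p\ll_K\frac{1}{w^{1/2}\log w}.
\]
Hence the product of these correction factors is $1+O_K(w^{-1/2})$. The same bound also controls the derivatives of these factors in the contour-shift argument, uniformly along the integration contour.

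One further subtlety is that $\beta_p(S)$ as defined in \eqref{eq:local-factor-def} already includes these primes. We therefore compare $\beta_p(S)$ with the generic factor, and both are $1+O(1/p)$ at bad primes. This gives a total error $O(w^{-1/2})$ in the Euler product.

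\emph{Conclusion.} Collecting the errors, the total error is
\[
O\bigl(w^{-1/2}+(\log R)^{-1/2}+R^{O(K)}N^{-1+o(1)}\bigr).
\]
This is $O_K((\log N)^{-c_2})$, provided $w$ grows at least like a small power of $\log N$. If $w$ grows more slowly, we only claim the $o(1)$ absorbed into the stated big-$O$ with the convention used for $w$ in the paper's application.
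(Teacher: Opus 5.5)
Your route is the paper's: open the squares, count points of the polytope $\mathcal K$ in residue classes modulo the combined modulus (a power saving, since that modulus is $N^{O_K(\gamma)}$ and $H=N^{1-o(1)}$), then run the Green--Tao divisor-sum computation with the large-prime budget for $\mathfrak d(\Psi)$. The gap is in the final accounting. Your main term picks up a factor $1+O(1/w)$ from the good primes, and an error $O(w^{-1/2})$ from comparing $\beta_p(S)$ with a ``generic'' factor at primes dividing $\mathfrak d(\Psi)$. You then concede that the total is $O_K((\log N)^{-c_2})$ only if $w$ is at least a power of $\log N$. The lemma asserts this bound for every $w\to\infty$ with $W=N^{o(1)}$, with $c_2$ depending only on $K$. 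So as written you prove a weaker statement: enough for Proposition~\ref{prop:prime-majorant}, where $w\asymp(\log N)^{c_0}$, but not the lemma itself.

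What is missing is that these $w$-dependent losses do not occur. Write $E_p(\mathbf z)$ for the local factor of the Euler product and $G$ for its ratio to $\prod_{j\in S}\zeta(1+z_j+z_j')/(\zeta(1+z_j)\zeta(1+z_j'))$. At $\mathbf z=\mathbf 0$ the alternating sum over divisibility patterns collapses to $E_p(\mathbf 0)=\Pr_{\mathbf v\bmod p}\bigl(p\nmid W\Psi_j(\mathbf v)+b\text{ for all }j\in S\bigr)$. Hence $G_p(\mathbf 0)=(1-1/p)^{-|S|}E_p(\mathbf 0)=\beta_p(S)$ exactly for every $p>w$, good or bad. The primes $p\leq w$ (where $E_p=1$) contribute $(W/\varphi(W))^{|S|}$, which the normalisation cancels. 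So the main term is exactly $\frac{|\mathcal K|}{|\Omega|}\prod_p\beta_p(S)$, and no comparison with generic factors is needed. The exceptional primes only affect the size of $G$ and its derivatives near $\mathbf 0$, i.e.\ they multiply the analytic error (Taylor error $\ll_K(1+\log w)(\log R)^{-1/2}$, plus Green--Tao's other $\log R$-power losses) by a bounded factor. The paper records this as the factor $e^{O(X)}$ with $X=\sum_{p>w,\,p\mid\mathfrak d(\Psi)}p^{-1/2}\ll_K1/\log w$. Your direct derivative bound on segments with $\operatorname{Re}z\geq0$, via $\sum_{p>w,\,p\mid\mathfrak d(\Psi)}\log p/p\ll_K\log B/w\leq w^{-1/2}$, would serve equally well. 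The estimate $\prod_p\beta_p(S)=1+o(1)$ belongs to Proposition~\ref{prop:prime-majorant}, not here. (Minor: $H=N^{1-o(1)}$ follows from $W=N^{o(1)}$, which gives $w=o(\log N)$ and hence $\exp(w^{1/2})=N^{o(1)}$; the bound $w\leq(\log N)^{1/2}$ you cite is not a hypothesis of this lemma.)
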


\begin{proof}
If $S=\varnothing$, the asserted identity is immediate, since both sides
equal $|\mathcal K|/|\Omega|$.  We may therefore assume that
$S\neq\varnothing$.
Set
\begin{align*}
    \rho_d=\mu(d)\chi\left(\frac{\log d}{\log R}\right).
\end{align*}
Opening the square in every GPY factor in
\eqref{eq:selberg-weight-def} and exchanging the order of summation gives
\begin{equation}\label{eq:two-scale-gpy-expansion}
\begin{aligned}
&\E_{\mathbf v\in\Omega}
 \prod_{j\in S}\nu_0(\Psi_j(\mathbf v))
 \prod_{j\notin S}1_{[N]}(\Psi_j(\mathbf v))\\
&\quad=
\left(\frac{\varphi(W)\log R}{WI_\chi}\right)^{|S|}
\sum_{\substack{1\leq d_j,e_j\leq R^2\\j\in S}}
\left(\prod_{j\in S}\rho_{d_j}\rho_{e_j}\right)
\frac1{|\Omega|}\\
&\qquad\qquad\times
\#\left\{\mathbf v\in\mathcal K:
 \operatorname{lcm}(d_j,e_j)\mid W\Psi_j(\mathbf v)+b
 \text{ for every }j\in S\right\}.
\end{aligned}
\end{equation}
For each tuple $(\mathbf d,\mathbf e)$, these divisibility conditions have
combined modulus
\begin{align*}
    m=\mathop{\rm lcm}_{j\in S}\operatorname{lcm}(d_j,e_j)
    \leq R^{O_K(1)}=N^{O_K(\gamma)}
\end{align*}
and define a set
$\mathcal R(\mathbf d,\mathbf e)\subseteq(\mathbb Z/m\mathbb Z)^{s+1}$
of residue classes.  The Chinese remainder theorem separates these
conditions one prime at a time.

It remains to establish, uniformly in $m$, the lattice point estimate needed
to count the solutions of the divisibility conditions in the preceding
display.  The set $\mathcal K$ is the set of integer points in a polytope
contained in a rectangular box with one side of length $O(N)$ and $s$ sides
of length $O(H)$, and this polytope has $O_K(1)$ faces.  For every modulus
$m\leq R^{O_K(1)}$ and every residue class
$\mathbf r\pmod m$, we claim that
\begin{align}\label{eq:two-scale-lattice-count}
 \#\bigl(\mathcal K\cap(\mathbf r+m\Z^{s+1})\bigr)
 =\frac{|\mathcal K|}{m^{s+1}}
 +O_K\left(\frac{NH^{s-1}+H^s}{m^s}+1\right).
\end{align}
Indeed, partition the rectangular box into half-open boxes aligned with
$m\mathbb Z^{s+1}$.  After decreasing $\gamma(K)$ if necessary, we have
$m\leq H$ for all sufficiently large $N$.  Each box lying wholly inside the
polytope contributes
one point to every residue class, so only boxes meeting its boundary can
contribute to the discrepancy in
\eqref{eq:two-scale-lattice-count}.  For each defining face, project along a
coordinate for which the corresponding coefficient of its normal vector has
maximal modulus.  The face is then a graph in that coordinate with slopes of
modulus at most $1$, so each projected $m$-box lifts to $O_K(1)$ boxes meeting
the face.  Projecting along the first coordinate $x$ gives a region contained
in an $s$-dimensional box of volume $O(H^s)$ and hence
$O_K(H^s/m^s+1)$ boundary boxes.  Projecting along the $h_t$-coordinate gives
a region contained in an $s$-dimensional box of volume $O(NH^{s-1})$ and
hence $O_K(NH^{s-1}/m^s+1)$ boundary boxes.  Summing over the
$O_K(1)$ faces proves \eqref{eq:two-scale-lattice-count}.

Since $|\Omega|\gg_KNH^s/L^{K+1}$, normalising
\eqref{eq:two-scale-lattice-count} gives
\begin{align*}
\frac1{|\Omega|}\#\bigl(\mathcal K\cap(\mathbf r+m\Z^{s+1})\bigr)
=\frac{|\mathcal K|}{|\Omega|m^{s+1}}
+O_K\left(\frac{L^{K+1}}{m^sH}
+\frac{L^{K+1}}{m^sN}
+\frac{L^{K+1}}{NH^s}\right).
\end{align*}
The last summand is the contribution of the endpoint term $1$ in
\eqref{eq:two-scale-lattice-count}.  Relative to the density
$m^{-(s+1)}$, the three error terms are
\begin{align*}
    O_K\left(L^{K+1}\left(
    \frac mH+\frac mN+\frac{m^{s+1}}{NH^s}\right)\right).
\end{align*}
Summing the lattice main term over
$\mathcal R(\mathbf d,\mathbf e)$ and then over the divisor tuples in
\eqref{eq:two-scale-gpy-expansion} therefore gives
\begin{align*}
\frac{|\mathcal K|}{|\Omega|}
\left(\frac{\varphi(W)\log R}{WI_\chi}\right)^{|S|}
\sum_{\substack{1\leq d_j,e_j\leq R^2\\j\in S}}
\left(\prod_{j\in S}\rho_{d_j}\rho_{e_j}\right)
\frac{|\mathcal R(\mathbf d,\mathbf e)|}{m^{s+1}}
+O_K\left(\frac{N^{O_K(\gamma)}L^{O_K(1)}}H\right).
\end{align*}
Here the two other normalised errors above, including the explicit endpoint
term, are absorbed by the displayed error after choosing $\gamma(K)$
sufficiently small.

The divisor sum algebra at each prime and the normalisation by $I_\chi$ are
those of \cite[Appendix~D]{green-tao-linear-equations}; they turn the main
term in the preceding display into
$|\mathcal K||\Omega|^{-1}\prod_p\beta_p(S)$.  The error in that
calculation, which saves a power of $\log N$, contains a factor $e^{O(X)}$,
where
\begin{align*}
   X= \sum_{\substack{p>w\\p\mid\mathfrak d(\Psi)}}p^{-1/2}
    \leq
    \frac{\log\mathfrak d(\Psi)}{\sqrt w\log w}
    \ll_K
    \frac{\log B}{\sqrt w\log w}
    \ll_K\frac1{\log w}.
\end{align*}
By~\eqref{eq:disc-size-bound} and $B\leq\exp(\sqrt w)$, this factor is
uniformly bounded.  Thus, after decreasing $c_2(K)$ if necessary, the
analytic error remains $O_K((\log N)^{-c_2})$.  Finally, the hypothesis on
$B$ gives $H=N^{1-o(1)}$, so choosing $\gamma(K)$ sufficiently small makes
the lattice error $O_K((\log N)^{-c_2})$.  This proves
\eqref{eq:selberg-linear-forms-asymp}.
\end{proof}

Combining the preceding two results, we can now construct a majorant bounded by a power of $\log N$ that we use in the proof of Theorem \ref{thm:prime-main}.
\begin{proposition}[Truncated GPY majorant]\label{prop:prime-majorant}
For every $K\geq2$ there are constants $c_0=c_0(K)>0$, $c_1=c_1(K)>0$,
$C=C(K)>0$ and $D=D(K)>0$ such that the following holds for all sufficiently
large $N$.
Let $L$ satisfy
$$
    2\leq L\leq(\log N)^{c_0},
$$
let
$$
    \tfrac12(\log N)^{c_0}\leq w\leq2(\log N)^{c_0},
$$
and set
\[
    W=\prod_{p\leq w}p.
\]
Let $1\leq b\leq W$ with $(b,W)=1$.  Let $B$ satisfy
\begin{align}\label{eq:prime-majorant-B-range}
    (\log N)^C\leq B\leq \min\left(\frac{N}{(\log N)^C},\exp(w^{1/2})\right).
\end{align}
Set $H=\lfloor N/B\rfloor$.
Then there is a nonnegative function
$\nu=\nu_{K,W,b}\colon\Z\to\R_{\geq0}$ with the following properties.
\begin{enumerate}
\item[\emph{(i)}] \emph{Ambient scale.}
The function $\nu$ is supported on $[N]$ and, for every $n\in[N]$,
\begin{align}\label{eq:prime-majorant-majorises}
    \frac{\varphi(W)}{W}\Lambda(Wn+b)
    \ll_K \nu(n).
\end{align}
For every $n\in\Z$,
\begin{align}\label{eq:prime-majorant-sup}
    0\leq \nu(n)\leq (\log N)^D.
\end{align}
At scale $(N,B,H)$, the function $\nu$ satisfies the
$(K,L,(\log N)^{-c_1})$ linear forms condition of
Definition~\ref{def:relative-LFC}, simultaneously for every $L$ in the
displayed range.

\item[\emph{(ii)}] \emph{Primitive affine pullbacks.}
Let $Q\geq1$ satisfy
\begin{align}\label{eq:prime-majorant-pullback-range}
    QB\leq\exp(w^{1/2}),
\end{align}
and let
\begin{align*}
    P=\{qn+a:1\leq n\leq N_P\}\subseteq[N],
    \qquad 1\leq q\leq Q,
    \qquad N_P\geq N/Q.
\end{align*}
Set $b_P=Wa+b$, and suppose that $(q,b_P)=1$.  Then the primitive affine
pullback
\begin{align}\label{eq:prime-majorant-pullback}
    \nu_P(n)=1_{[N_P]}(n)\nu(qn+a)
\end{align}
satisfies $0\leq\nu_P(n)\leq(\log N)^D$ for all $n\in\Z$.  At scale
$(N_P,B,\lfloor N_P/B\rfloor)$, it satisfies the
$(K,L,(\log N)^{-c_1})$ linear forms condition simultaneously for every $L$
in the displayed range.  Moreover, for every $n\in[N_P]$,
\begin{align}\label{eq:prime-majorant-pullback-majorises}
    \frac{\varphi(W)}W\Lambda(Wqn+b_P)\ll_K\nu_P(n).
\end{align}
\end{enumerate}
\end{proposition}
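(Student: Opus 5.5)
The natural candidate is the truncated GPY weight
\[
 \nu(n)\coloneqq 1_{[N]}(n)\,\frac{\varphi(W)}{W}\,\Lambda_{\chi,R,2}(Wn+b)\,1_{\tau(Wn+b)\leq(\log N)^{D}},
\]
with $R=N^{\gamma}$ and $\gamma=\gamma(K')$ taken from Lemma~\ref{lem:two-scale-selberg} for a suitably enlarged complexity $K'=K'(K)$. We then rescale by the normalising constant $\prod_p\beta_p(\{1\})$, which is $1$ here anyway since a single form has $\beta_p=1$.

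\emph{Majorisation and boundedness.} For a prime $Wn+b>R^2$ with $n\in[N]$, the inner divisor sum in \eqref{eq:selberg-weight-def} reduces to $d=1$. So $\Lambda_{\chi,R,2}(Wn+b)=\log R/I_\chi\gg_K\log N\geq\Lambda(Wn+b)$, and $\tau(Wn+b)=2$ survives the truncation. Prime powers and the finitely many $n$ with $Wn+b\leq R^2$ need separate handling. Prime powers contribute only $O(N^{1/2}\log N)$ mass, and I would either include them by adding $1_{Wn+b=p^j}\log N$ to $\nu$ (harmless for the linear forms condition by a crude count) or note that the main theorem only uses primes. For \eqref{eq:prime-majorant-sup}, note that $|\sum_{d\mid n}\rho_d|\leq\tau(n)$, so on the truncated set $\nu\ll(\log N)^{2D+1}$; rename $D$.

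\emph{Linear forms condition.} Fix a system of complexity at most $K$ and $W_j\in\{\nu,1_{[N]}\}$. First replace $\nu$ by the untruncated weight $\nu_0$. The error is bounded by the sum over $j_0$ of averages in which the $j_0$th factor carries $1_{\tau>(\log N)^D}$. Using $\Lambda_{\chi,R,2}(n)\ll\log N\,\tau(n)^2$, these are exactly the divisor tail averages \eqref{eq:divisor-tail-forms} with $A=2$, so they are $O((\log N)^{-E})$ once $D\geq D_0(K,2,E)$. Next apply Lemma~\ref{lem:two-scale-selberg} to both $\prod\nu_0$ and $\prod 1_{[N]}$; its hypotheses hold since $L\leq(\log N)^{c_0}$ and $B\leq\exp(w^{1/2})$. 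What remains is to show $\prod_p\beta_p(S)=1+O((\log N)^{-c})$. For $p>w$ not dividing $\mathfrak d(\Psi)$, the forms are pairwise independent mod $p$, so $\beta_p(S)=1+O(p^{-2})$, and the product over $p>w$ is $1+O(1/w)$. For $p>w$ dividing $\mathfrak d(\Psi)$ we have $\beta_p=1+O(1/p)$, and Lemma~\ref{lem:bad-prime-budget} makes this product $1+O(w^{-1/2})$. Since $w\asymp(\log N)^{c_0}$, this gives $\eta=(\log N)^{-c_1}$. Uniformity in $L$ is automatic, because the bounds depend on $L$ only through $L\leq(\log N)^{c_0}$.

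\emph{Pullbacks.} Here $\nu_P(n)=\frac{\varphi(W)}{W}\Lambda_{\chi,R,2}(Wqn+b_P)\cdot(\text{truncation})$ on $[N_P]$. The majorisation \eqref{eq:prime-majorant-pullback-majorises} and the sup bound follow verbatim from the argument above. For the linear forms condition at scale $(N_P,B,\lfloor N_P/B\rfloor)$, I would rerun the two steps with $\widetilde W=Wq$ in Lemma~\ref{lem:divisor-tail-forms} (this is why $q_0$ was allowed there), which is permitted because $(b_P,Wq)=1$. I would also rerun the GPY computation with modulus $Wq$. Primes dividing $q$ but not $W$ now have $\beta_p=1$, except that the normalising factor $\varphi(W)/W$ rather than $\varphi(Wq)/(Wq)$ produces a global constant $\prod_{p\mid q,p>w}(1-1/p)^{-1}=1+O(\log Q/(w\log w))$. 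This is $1+O(w^{-1/2})$ by \eqref{eq:prime-majorant-pullback-range}, and it can be absorbed into $\eta$. Equivalently, one can view the pullback forms $q\Psi_j+a$ as forms whose discriminant picks up the factor $q$, and apply Lemma~\ref{lem:bad-prime-budget} with $QB$ in place of $B$.

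\emph{Main obstacle.} I expect the main obstacle to be the local factor bookkeeping: checking that the modified Green--Tao Appendix~D computation is uniform when the slope differences share large primes, and making the pullback constant exactly $1+o(1)$ rather than merely bounded. I would handle both through the $e^{O(X)}$ factor already isolated in Lemma~\ref{lem:two-scale-selberg}, together with the bound $\log(QB)\leq w^{1/2}$.
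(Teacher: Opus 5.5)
Your proposal is correct and follows essentially the same route as the paper. You use the GPY weight plus a $(\log N)$-multiple of an exceptional-set indicator, control the truncation by the divisor-tail lemma, and invoke Lemma~\ref{lem:two-scale-selberg} with the Euler product split into good primes ($1+O(p^{-2})$) and bad primes budgeted by Lemma~\ref{lem:bad-prime-budget}. For pullbacks you rerun the argument with modulus $Wq$, where the primes $p\mid q$, $p>w$, contribute $(p/(p-1))^{|S|}$ and are absorbed using $\log(QB)\le w^{1/2}$. Truncating by $\tau$ rather than by the size of $\nu'$ is an inessential variant.

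One small correction: the set $\{n\in[N]:Wn+b\le R^2\}$ is not finite. It has about $R^2/W=N^{2\gamma-o(1)}$ elements, and since the GPY weight vanishes at primes $p\le R$, these $n$ must genuinely be added to the exceptional indicator. Appealing to the main theorem instead would not give the pointwise majorisation as stated. Your crude count still disposes of them once $\gamma<1/4$.
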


\begin{proof}
We choose $c_0=c_0(K)<1/2$ sufficiently small and $C=C(K)$ and $D=D(K)$
sufficiently large at the end of the proof.  All estimates below are uniform
for $w$ in the displayed interval, since changing $w$ by a bounded factor
preserves both $W=\exp(O(w))=N^{o(1)}$ and every stated power saving.  Let
$0<\gamma=\gamma(K)$ be a parameter, to be fixed subject to the constraints
below, and set $R=N^\gamma$.
Define
\begin{align}\label{eq:nu-prime-untruncated}
    \nu_0(n)
    \coloneqq 1_{[N]}(n)\frac{\varphi(W)}{W}\Lambda_{\chi,R,2}(Wn+b).
\end{align}
Let $\mathcal E$ be the set of $n\in[N]$ for which either $Wn+b<R^2$ or
$Wn+b$ is a proper prime power.  Set
\begin{align}\label{eq:nu-prime-before-trunc}
    \nu'(n)\coloneqq \nu_0(n)+(\log N)1_{\mathcal E}(n),
    \qquad
    \nu(n)\coloneqq \nu'(n)1_{\nu'(n)\leq(\log N)^D}.
\end{align}
The term $(\log N)1_{\mathcal E}$ is included only to cover the small primes and
proper prime powers in the von Mangoldt function.

We first verify the majorisation.  If $Wn+b$ is prime and $Wn+b\geq R^2$, then
only the divisor $1$ contributes to the inner sum in~\eqref{eq:selberg-weight-def},
so $\Lambda_{\chi,R,2}(Wn+b)=(\log R)/I_\chi\asymp_K\log N$.  For the
primes under consideration,
\[
    R^2\leq Wn+b\leq WN+b.
\]
Since $R=N^\gamma$ and $W=N^{o(1)}$, this gives
$\log(Wn+b)\asymp\log N$.
Thus~\eqref{eq:prime-majorant-majorises} holds for such primes.  Suppose now
that $\Lambda(Wn+b)\ne0$.  Then $Wn+b$ is a prime power.  In the divisor sum
in~\eqref{eq:selberg-weight-def}, only the squarefree divisors $1$ and the
underlying prime can contribute, and therefore $\nu'(n)\ll_K\log N$.
If this prime power is not a prime at least $R^2$, then $n\in\mathcal E$, and
hence
$$
    \nu'(n)\geq\log N
    \gg \frac{\varphi(W)}W\Lambda(Wn+b).
$$
Consequently, if $D>1$, the truncation does not affect any value needed for
the majorisation when $N$ is large.  The pointwise bound
\eqref{eq:prime-majorant-sup} follows immediately from the definition of $\nu$.

It remains to prove the linear forms condition.  Fix a system
$\Psi_1,\ldots,\Psi_J$ of structural complexity at most $K$ as in
Definition~\ref{def:relative-LFC}.  Let
$I\subseteq[N]$ and $I_1,\ldots,I_s\subseteq[H]$ be intervals satisfying
\begin{align*}
    |I|\geq N/L,
    \qquad |I_t|\geq H/L\quad\textnormal{for}\,\,1\leq t\leq s,
\end{align*}
and fix functions
$W_1,\ldots,W_J\in\{\nu,1_{[N]}\}$.  Set
\begin{align*}
    \Omega=I\times I_1\times\cdots\times I_s,
    \qquad
    S=\{j:W_j=\nu\}.
\end{align*}
Let
$$
    \mathcal K\coloneqq \{\mathbf v\in\Omega:\Psi_j(\mathbf v)\in[N]
      \text{ for all }1\leq j\leq J\}.
$$
The set $\mathcal K$ is obtained from the product box $\Omega$ by imposing $O_K(1)$
additional affine inequalities of the form $1\leq\Psi_j\leq N$.  Thus it is a
convex polytope with $O_K(1)$ faces, and this is the only boundary regularity
used below.  Lemma~\ref{lem:two-scale-selberg}, with $\gamma(K)$ chosen
sufficiently small, gives \eqref{eq:selberg-linear-forms-asymp}.

We now estimate the Euler product in
\eqref{eq:selberg-linear-forms-asymp}.  If $p>w$ and
$p\nmid \mathfrak d(\Psi)$, then the
slope vectors of the forms in $S$ remain pairwise distinct modulo $p$.
Consequently the events $W\Psi_j(\mathbf v)+b\equiv0\pmod p$ have the expected
codimension and pairwise intersections have codimension two, giving
\begin{align}\label{eq:good-local-factor}
    \beta_p(S)=1+O_K(p^{-2}).
\end{align}
For the remaining primes $p>w$ we use only the trivial bound
\begin{align}\label{eq:bad-local-factor}
    \beta_p(S)=1+O_K(p^{-1}).
\end{align}
Combining~\eqref{eq:good-local-factor},~\eqref{eq:bad-local-factor} and
Lemma~\ref{lem:bad-prime-budget}, we get
\begin{align}\label{eq:euler-product-one}
    \prod_p\beta_p(S)=1+O_K(w^{-1/3})
    =1+O_K((\log N)^{-c_3(K)})
\end{align}
for some $c_3(K)>0$.  Hence
\begin{align}\label{eq:nu0-lfs}
    \E_{\mathbf v\in\Omega}
    \prod_{j\in S}\nu_0(\Psi_j(\mathbf v))
    \prod_{j\notin S}1_{[N]}(\Psi_j(\mathbf v))
    =
    \frac{|\mathcal K|}{|\Omega|}+O_K((\log N)^{-c_3(K)}).
\end{align}
Here $|\mathcal K|/|\Omega|$ equals
$$
    \E_{\mathbf v\in\Omega}\prod_{j=1}^J1_{[N]}(\Psi_j(\mathbf v)).
$$

We next replace $\nu_0$ by $\nu'$.  The exceptional set $\mathcal E$ has size
$O(R^2/W+(WN)^{1/2})=O(N^{1/2+o(1)})$, provided $\gamma<1/4$.  For each
$\Psi_j$, fix the variables at scale $H$.  Since the coefficient of
$x$ is $1$, the map $x\mapsto\Psi_j(x,\mathbf h)$ is injective, so at most
$|\mathcal E|$ values of $x\in I$ enter the exceptional set.  As
$|I|\geq N/L$, the proportion of $\mathbf v\in\Omega$ for which
$\Psi_j(\mathbf v)\in\mathcal E$ is at most
$L|\mathcal E|/N=N^{-1/2+o_K(1)}$.  Using
Lemma~\ref{lem:divisor-tail-forms} with a large fixed moment to control the
other GPY factors, the total contribution of the terms containing at
least one factor $(\log N)1_{\mathcal E}$ associated with the exceptional set is
$O_K((\log N)^{-c_4(K)})$ for some $c_4(K)>0$.  Thus
\eqref{eq:nu0-lfs} remains true with $\nu'$ in place of $\nu_0$.

Finally, we truncate.  From~\eqref{eq:selberg-weight-def} we have the pointwise
bound
\begin{align}\label{eq:selberg-divisor-bound}
    \nu'(n)
    \ll_K (\log N)\tau(Wn+b)^2
    +(\log N)1_{\mathcal E}(n).
\end{align}
Expanding the difference between the average with $\nu'$ and the average with
$\nu$ and using the union bound, it suffices to estimate terms in which one of
the factors satisfies $\nu'(\Psi_j(\mathbf v))>(\log N)^D$.  By
\eqref{eq:selberg-divisor-bound} this forces either $\Psi_j(\mathbf v)\in\mathcal E$
or $\tau(W\Psi_j(\mathbf v)+b)>(\log N)^{(D-1)/2}$.  The exceptional set
contribution was just shown to be negligible.  If $D=D(K)$ is sufficiently
large, Lemma~\ref{lem:divisor-tail-forms} bounds the contribution from divisor tails
by $O_K((\log N)^{-c_4(K)})$.  Therefore the averages with $\nu$ and $\nu'$
differ by
$O_K((\log N)^{-c_4(K)})$ for every system and every choice of the functions
$W_j$ in the definition.  Combining this with~\eqref{eq:nu0-lfs} proves the
$(K,L,(\log N)^{-c_1})$ linear forms condition, where we may take
\[
    0<c_1=c_1(K)
    <\min(c_0/3,c_2(K),c_3(K),c_4(K)).
\]

It remains to verify the affine pullback assertion.  Fix $Q$ and $P$ as in the
statement.  Since $(b_P,W)=1$ and $(q,b_P)=1$, we have $(Wq,b_P)=1$.  Also
$q\leq Q\leq\exp(w^{1/2})$ and
\begin{align*}
    \frac{N_P}{B}\geq\frac{N}{QB}
    \geq N\exp(-w^{1/2})=N^{1-o(1)}.
\end{align*}
In particular, $N_P=N^{1-o(1)}$. Set $H_P=\lfloor N_P/B\rfloor$.  For a
system $(\Psi_1,\ldots,\Psi_J)$ at scale $(N_P,B,H_P)$, substituting
\eqref{eq:prime-majorant-pullback} into the required average replaces the
prime forms $W\Psi_j+b$ in the preceding argument by
\begin{align}\label{eq:prime-majorant-pulled-forms}
    Wq\Psi_j+b_P.
\end{align}
Every divisor tuple with a nonempty residue set consists of integers coprime
to $Wq$, since $(Wq,b_P)=1$; tuples failing this condition contribute zero.
Multiplication by $Wq$ is therefore invertible modulo every divisor modulus.
Repeating the proof of Lemma~\ref{lem:two-scale-selberg} for the forms
\eqref{eq:prime-majorant-pulled-forms}, the same Chinese remainder and lattice
point calculations apply.  The resulting local factors agree with
\eqref{eq:local-factor-def} away from primes dividing $q$; the primes
dividing $q$ are treated below.  The coefficient vectors of the underlying forms
$\Psi_j$ are unchanged, the divisor moduli are still
$N^{O_K(\gamma)}$, and the shortest parameter interval has length at least
$H_P/L\gg N_P/(BL)=N^{1-o(1)}$.  Consequently, the normalised boundary
contribution
is
\[
    \ll_K N^{O_K(\gamma)}L^{O_K(1)}\frac{B}{N_P}
    =N^{-c(K)}
\]
after decreasing $\gamma(K)$ if necessary; here
\[
    \frac{B}{N_P}\leq\frac{QB}{N}
    \leq\frac{\exp(w^{1/2})}{N}=N^{-1+o(1)}
\]
by \eqref{eq:prime-majorant-pullback-range}.

For the Euler product, a prime $p>w$ is good provided
$p\nmid q\mathfrak d(\Psi)$.  At every such prime the local factor is
$1+O_K(p^{-2})$.  At a prime dividing $\mathfrak d(\Psi)$ we use the bound
$1+O_K(p^{-1})$.  If $p>w$ divides $q$, the assumption $(q,b_P)=1$ ensures that none
of the forms in~\eqref{eq:prime-majorant-pulled-forms} vanishes modulo $p$;
the local factor is then $(p/(p-1))^{|S|}=1+O_K(p^{-1})$.  By
\eqref{eq:disc-size-bound} and
\eqref{eq:prime-majorant-pullback-range},
\begin{align}\label{eq:prime-majorant-pullback-bad-primes}
    \sum_{\substack{p>w\\p\mid q\mathfrak d(\Psi)}}\frac1p
    \leq\frac{\log(q\mathfrak d(\Psi))}{w\log w}
    \ll_K\frac{\log(qB)}{w\log w}
    \ll_K\frac1{w^{1/2}\log w}.
\end{align}
The analytic error in the GPY divisor sum estimate is controlled
after pullback by
\begin{align}\label{eq:prime-majorant-pullback-analytic-primes}
    \sum_{\substack{p>w\\p\mid q\mathfrak d(\Psi)}}p^{-1/2}
    &\leq
    \frac{\log(q\mathfrak d(\Psi))}{\sqrt w\log w}\notag\\
    &\ll_K
    \frac{\log(qB)}{\sqrt w\log w}
    \ll_K\frac1{\log w}.
\end{align}
Thus both the Euler product and this analytic error are uniform after
pullback, and the Euler product is
$1+O_K((\log N)^{-c_3(K)})$, after decreasing $c_3(K)$ if necessary.

It remains to control the exceptional set and the truncation.  Since
$n\mapsto qn+a$ is injective, the preimage in $[N_P]$ of $\mathcal E$ has
cardinality at most
$N^{1/2+o(1)}=N_P^{1/2+o(1)}$.  Its proportion on every admissible
interval at scale $N_P$ is therefore $N_P^{-1/2+o(1)}$.  The divisor moment and
tail estimates of Lemma~\ref{lem:divisor-tail-forms}, applied with $q_0=q$,
then bound both the exceptional set terms and the truncation error by
$O_K((\log N)^{-c_4(K)})$.  Together with the boundary and Euler product
estimates above, this proves the asserted linear forms condition.
The pointwise bound follows from~\eqref{eq:prime-majorant-sup} and
\eqref{eq:prime-majorant-pullback}, and
\eqref{eq:prime-majorant-pullback-majorises} follows from
\eqref{eq:prime-majorant-majorises} evaluated at $qn+a$.
\end{proof}

\begin{remark}\label{rmk:prime-majorant-range}
The upper bound $B\leq\exp(w^{1/2})$ has two roles: it gives the
budget for bad primes in Lemma~\ref{lem:bad-prime-budget}, and it ensures that the
parameter intervals at scale $H$ are $N^{1-o(1)}$, as needed for the boundary
estimates in the GPY expansion.  The first role can be replaced by the following explicit
condition on the systems of forms to be tested:
$$
    \sum_{\substack{p>w\\p\mid\mathfrak d(\Psi)}}\frac1p\leq(\log N)^{-c_K}.
$$
Without such a condition, or without making $w$ much larger, the fully uniform
claim for all coefficients $|c_{j,t}|\leq KB$ is false for the local reason
explained at the start of this section. It may however be possible to state a weaker pseudorandomness assumption that applies to almost all forms rather than all forms, and this may allow proving Theorem~\ref{thm:prime-main} in a larger range of $B$, matching the range of Theorem~\ref{thm:main}. However, due to the additional technicalities involved, we do not pursue this here. 
\end{remark}

\section{Density increment}\label{sec:density-increment}

In this section we isolate the density increment mechanism used to pass from a
uniform counting statement to an existence theorem.  The point of the formulation
below is that the iteration is independent of the particular source of the
uniform counting statement: in the bounded setting the input is
Theorem~\ref{thm:1+}, while in the prime setting the inputs are
Theorem~\ref{thm:relative-u1plus} and Proposition~\ref{prop:prime-majorant}.

If
$F\colon[N]\to\R_{\geq0}$ and $P=\{qn+a:1\leq n\leq N'\}\subseteq[N]$ is an
arithmetic progression, we write
$$
    F_P(n)\coloneqq F(qn+a)\quad\text{for }1\leq n\leq N' 
$$
for the affine pullback of $F$ to $P$.

\subsection{Uniform counting inputs}

We first record the bounded uniform counting input.  This is the part of the
argument where Theorem~\ref{thm:1+} is used.

\begin{lemma}[Count under uniformity]\label{lem:count}
Let $k\geq 3$, let $c_k>0$ be small enough in terms of $k$, let
$C_k=C_k(k)$ be sufficiently large, and suppose
$(\log N)^{C_k}\le B\le N(\log N)^{-C_k}$ and
$H=\lfloor N/(2B)\rfloor$. Let
$A\subseteq[N]$ have density $\delta_A=|A|/|[N]|\ge\tfrac12(\log N)^{-c_k}$ and satisfy
\begin{align}\label{eq:main-count-uniform-hyp}
    \|1_A-\delta_A1_{[N]}\|_{U^{1+}[N]} \le \xi
\end{align}
for some
\begin{align}\label{eq:cnt-xi}
    0<\xi \le \tfrac12\bigl(c_1\,\delta_A^{\,k}(\log N)^{-c_k}\bigr)^{C_k'},
\end{align}
where $c_1=c_1(k)>0$ is a sufficiently small constant and
$C_k'=C_k'(k)>0$ is sufficiently large.  Then, for all but an
$O_k((\log N)^{-c_k})$ proportion of tuples
$\mathbf{b}\in(\{B/2<b_i\le B\}\cap\mathbb{Z})^{k}$,
\begin{align}\label{eq:main-count-normalised}
    r_H(\mathbf{b};1_A,1_A,\ldots,1_A)
     \gg_k \left(\delta_A \right)^k \frac{N^2}{B}.
\end{align}
\end{lemma}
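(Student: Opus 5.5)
We argue by contradiction, using Theorem~\ref{thm:1+} with a weight $\lambda$ that selects the bad tuples. Write $f=1_A-\delta_A1_{[N]}$, and let $\mathcal B$ be the set of tuples $\mathbf b\in((B/2,B]\cap\Z)^k$ for which \eqref{eq:main-count-normalised} fails with a suitably small implied constant $c'_k$. Suppose that $|\mathcal B|\geq (\log N)^{-c_k}(B/2)^k$.

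For each $\mathbf b$, expand $r_H(\mathbf b;1_A,\ldots,1_A)$ multilinearly with $1_A=\delta_A1_{[N]}+f$. The main term is $\delta_A^k r_H(\mathbf b;1_{[N]},\ldots,1_{[N]})$. We next check that this main term is $\gg \delta_A^kN^2/B$ uniformly in $\mathbf b$. Since $b_i\leq B$ and $m\leq H\leq N/(2B)$, each shift satisfies $b_im\leq N/2$, so $x\in[N/2]$ gives all $k$ points in $[N]$. The count is therefore at least $\gg NH\asymp N^2/B$.

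So for $\mathbf b\in\mathcal B$, the sum of the $2^k-1$ cross terms has modulus $\gg\delta_A^kN^2/B$. Pigeonhole over the cross terms to fix one pattern $(g_1,\ldots,g_k)$, with $g_i\in\{\delta_A1_{[N]},f\}$ and at least one $g_i=f$, which is large for a subset $\mathcal B'\subseteq\mathcal B$ with $|\mathcal B'|\gg_k|\mathcal B|$. Now take
\[
\lambda(\mathbf b)=1_{\mathcal B'}(\mathbf b)\,\overline{\sgn r_H(\mathbf b;g_1,\ldots,g_k)}.
\]
This weight is $1$-bounded and supported on $[-B,B]^k$, so $C_0=1$. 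Summing over $\mathcal B'$ gives
\[
|R_H(\lambda;g_1,\ldots,g_k)|\gg_k \delta_A^k(\log N)^{-c_k}B^{k}\frac{N^2}{B}.
\]
The $g_i$ are $1$-bounded and supported on $[N]$.

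Apply Theorem~\ref{thm:1+} with $N$ and $H$, noting that $N/H\asymp B$; the scale mismatch $B$ versus $2B$ only changes constants. Take $\delta\asymp_k\delta_A^k(\log N)^{-c_k}$. Its range hypothesis $\delta^{-C}\ll B\ll N\delta^C$ holds because $\delta\geq(\log N)^{-(k+1)c_k}$ and $(\log N)^{C_k}\leq B\leq N(\log N)^{-C_k}$, provided $c_k$ is small relative to $C_k/C(k)$. The theorem gives $\min_i\|g_i\|_{U^{1+}[N]}\geq c\delta^C$. Choosing the $i$ with $g_i=f$, we get
\[
\|1_A-\delta_A1_{[N]}\|_{U^{1+}[N]}\geq c(\delta_A^k(\log N)^{-c_k})^{C}.
\]
This contradicts \eqref{eq:cnt-xi} once $C'_k\geq C$ and $c_1$ is small. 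Hence $|\mathcal B|\leq(\log N)^{-c_k}(B/2)^k$, which is the claim.

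The main obstacle is bookkeeping rather than new ideas. First, $H=\lfloor N/(2B)\rfloor$ rather than $N/B$, which we handle by applying Theorem~\ref{thm:1+} with $B'=N/H\in[2B,3B]$ and support parameter $C_0=1$. Second, the small constant $c'_k$ in \eqref{eq:main-count-normalised} must be chosen below half the lower bound for the main term. Third, the exponents must be consistent: $c_k$ small relative to $C_k$ so that Theorem~\ref{thm:1+} applies, and $C'_k$ at least the exponent $C(k)$ from that theorem.
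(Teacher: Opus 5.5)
Your proposal is correct and follows the paper's proof essentially step for step: the multilinear expansion of $1_A=\delta_A1_{[N]}+f$, the lower bound $\gg\delta_A^kN^2/B$ for the main term, pigeonholing a mixed pattern over the deficient tuples, the sign weight $\lambda$ supported on those tuples, and Theorem~\ref{thm:1+} at scale $N/H\asymp B$ with $\delta\asymp\delta_A^k(\log N)^{-c_k}$ to contradict the smallness of $\xi$. The only differences are cosmetic. You place $\delta_A$ inside the functions $g_i\in\{\delta_A1_{[N]},f\}$, whereas the paper keeps it in the coefficients $\delta_A^{k-|\eps|}$. For the main term you use the positivity of the $b_i$, whereas the paper restricts to $m\le H/4$ so that the argument works for any $|b_i|\le B$.
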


\begin{proof}
Write $g=1_A-\delta_A1_{[N]}$, so $1_A=\delta_A1_{[N]}+g$ with
$\|g\|_{U^{1+}[N]}\le\xi$ by~\eqref{eq:main-count-uniform-hyp}. Expanding every
slot of $r_H$,
\begin{align}\label{eq:cnt-expand}
    r_H(\mathbf b;1_A,\ldots,1_A)
    =\sum_{\eps\in\{0,1\}^k}\delta_A^{\,k-|\eps|}\,
       r_H(\mathbf b;h_1^{\eps_1},\ldots,h_k^{\eps_{k}}),
    \qquad h_j^0=1_{[N]},\ h_j^1=g,
\end{align}
where $|\eps|=\eps_1+\cdots+\eps_{k}$. The term $\eps=\mathbf0$ is the main term:
since $|b_i|\le B$, for each $m\le H/4$ the $k$ points $x+b_im$ lie in $[N]$ whenever
$Bm<x\le N-Bm$, a range of length $\ge N/2$; this leaves
$\gg_k NH\gg N^2/B$ pairs
$(x,m)$, so there is $c_0=c_0(k)>0$ with
\begin{align}\label{eq:cnt-main}
    \delta_A^{\,k}\,r_H(\mathbf b;1_{[N]},\ldots,1_{[N]})
     \ge c_0\,\delta_A^{\,k}\,\frac{N^2}{B}.
\end{align}
Call $\mathbf b$ \emph{deficient} if
$r_H(\mathbf b;1_A,\ldots,1_A)<\tfrac12 c_0\delta_A^k N^2/B$. By
\eqref{eq:cnt-expand}--\eqref{eq:cnt-main}, a deficient $\mathbf b$ has a mixed
pattern $\eps\ne\mathbf0$ with
\begin{align}\label{eq:cnt-mixed}
    \bigl|r_H(\mathbf b;h_1^{\eps_1},\ldots,h_k^{\eps_{k}})\bigr|
    >\frac{c_0}{2^{k+1}}\,\delta_A^{\,k}\,\frac{N^2}{B}.
\end{align}
Fix a mixed pattern $\eps^*$ realised by a $\ge2^{-k}$ fraction of the deficient
tuples, set $F_j=h_j^{\eps^*_j}\in\{1_{[N]},g\}$, let $\mathcal G$ be that set
of tuples, and take
$\lambda(\mathbf b)=\sgn\overline{r_H(\mathbf b;F_1,\ldots,F_k)}$ on
$\mathcal G$ and $0$ elsewhere.  Then $\lambda$ is $1$-bounded and supported on
$([-B,B]\cap\Z)^k$, and by~\eqref{eq:R-def} and~\eqref{eq:cnt-mixed},
$$
    |R_H(\lambda;F_1,\ldots,F_k)|
    > |\mathcal G|\cdot\frac{c_0}{2^{k+1}}\,\delta_A^{\,k}\,\frac{N^2}{B}.
$$
Set $\eta_0=c_5\delta_A^k(\log N)^{-c_k}$ with $c_5=c_5(k)>0$ sufficiently
small. The range hypothesis of Theorem~\ref{thm:1+} holds with $\eta_0$ in
place of $\delta$ and with coefficient scale $N/H\asymp B$, by the assumed
range of $B$ and $C_k$ large.
If $|\mathcal G|>C(\log N)^{-c_k}B^k$ for a sufficiently large constant
$C=C(k)$, then the
preceding display gives
$$
    |R_H(\lambda;F_1,\ldots,F_k)|
    \ge \eta_0 \left(\frac NH\right)^k\frac{N^2}{N/H}.
$$
Theorem~\ref{thm:1+}
would then force $\min_j\|F_j\|_{U^{1+}[N]}\gg_k\eta_0^{C_k'}$.  Since some
$F_j$ is equal to $g$, this contradicts~\eqref{eq:cnt-xi} after making $c_1$
sufficiently small. Thus $|\mathcal G|\ll_k(\log N)^{-c_k}B^k$.  Summing over
the $<2^k$ mixed patterns gives the claimed exceptional proportion, and every
tuple that is not deficient satisfies~\eqref{eq:main-count-normalised}.
\end{proof}

We shall also need the corresponding relative uniform counting input.  The proof
is the same expansion as above, with Theorem~\ref{thm:relative-u1plus} replacing
Theorem~\ref{thm:1+}.  We include the details because this is the form used in
the proof of Theorem~\ref{thm:prime-main}.

\begin{lemma}[Relative count under uniformity]\label{lem:relative-count-uniformity}
Let $k\geq3$.  There is a constant $C=C(k)$ with the following property.  Let
$0<\varepsilon,\alpha<1/2$, let $H=\lfloor N/(2B)\rfloor$, and let
$F\colon[N]\to\R_{\geq0}$ have mean $\E_{n\in[N]}F(n)=\alpha$.  Let
$\nu\colon[N]\to\R_{\geq0}$ satisfy
$F\leq \nu$.  Suppose that, with
$$
    \delta_0=c\,\alpha^k\varepsilon
$$
for a sufficiently small constant $c=c(k)>0$, the majorant
$\nu_+=(\nu+1_{[N]})/2$ satisfies, at threshold $\delta_0$, the hypotheses on
$\nu$ in Theorem~\ref{thm:relative-u1plus} for the choice $C_0=1$, and that
\begin{align}\label{eq:relative-uniformity-small}
    \|F-\alpha1_{[N]}\|_{U^{1+}[N]}
    \leq c\,\delta_0^C .
\end{align}
Then, for all but an $O_k(\varepsilon)$ proportion of tuples
$\mathbf b\in((B/2,B]\cap\Z)^k$, one has
\begin{align}\label{eq:relative-uniform-count}
    r_H(\mathbf b;F,F,\ldots,F)
     \gg_k \alpha^k\frac{N^2}{B}.
\end{align}
\end{lemma}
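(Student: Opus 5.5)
We follow the proof of Lemma~\ref{lem:count}, replacing Theorem~\ref{thm:1+} by Theorem~\ref{thm:relative-u1plus}. Write $g=F-\alpha1_{[N]}$. Expanding every slot gives
\[
 r_H(\mathbf b;F,\ldots,F)=\sum_{\eps\in\{0,1\}^k}\alpha^{k-|\eps|}\,r_H(\mathbf b;h_1^{\eps_1},\ldots,h_k^{\eps_k}),\qquad h_j^0=1_{[N]},\ h_j^1=g.
\]
The term $\eps=\mathbf 0$ is estimated exactly as in~\eqref{eq:cnt-main}. Since $H=\lfloor N/(2B)\rfloor$, for $m\leq H$ and $B/2<b_i\leq B$ one has $b_im\leq N/2$. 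Hence this term is at least $c_0\alpha^kN^2/B$ with $c_0=c_0(k)>0$.

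Call $\mathbf b$ deficient if $r_H(\mathbf b;F,\ldots,F)<\tfrac12c_0\alpha^kN^2/B$. For such $\mathbf b$ some mixed pattern $\eps\neq\mathbf0$ satisfies
\[
 |r_H(\mathbf b;F_1,\ldots,F_k)|>2^{-k-1}c_0\alpha^kN^2/B,\qquad F_j=h_j^{\eps_j},
\]
using $\alpha^{k-|\eps|}\leq1$. Pigeonhole a pattern $\eps^*$ realised by at least a $2^{-k}$ fraction of the deficient tuples, and call this set $\mathcal G$. Take $\lambda=\sgn\overline{r_H(\mathbf b;F_1,\ldots,F_k)}$ on $\mathcal G$ and $\lambda=0$ otherwise. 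This $\lambda$ is $1$-bounded and supported on $([-B,B]\cap\Z)^k$. If $|\mathcal G|>C'\varepsilon B^k$, then
\[
 |R_H(\lambda;F_1,\ldots,F_k)|\gg_k\varepsilon\alpha^kB^kN^2/B.
\]

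The coefficient scale is $N/H\asymp 2B$, so we renormalise. Each $F_j$ is either $1_{[N]}$ or $g$, and $|g|\leq\nu+1_{[N]}=2\nu_+$, while $1_{[N]}\leq2\nu_+$. So $F_j/2$ is dominated by $\nu_+$. Dividing by $2^k$ and absorbing $B$ versus $N/H$ into $C_0=1$ after halving scales (this is the reason for the factor $2$ in $H$), the count exceeds $\delta_0(N/H)^kN^2/(N/H)$ with $\delta_0=c\alpha^k\varepsilon$, provided $c$ is small.

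Theorem~\ref{thm:relative-u1plus} now gives $\|g/2\|_{U^{1+}[N]}\geq c_{\mathrm{rel}}\delta_0^{C}$, since some $F_j=g$. This contradicts~\eqref{eq:relative-uniformity-small} once $c$ is small and $C$ equals the exponent of that theorem. Hence $|\mathcal G|\ll_k\varepsilon B^k$. Summing over the fewer than $2^k$ mixed patterns bounds the deficient proportion by $O_k(\varepsilon)$, which proves~\eqref{eq:relative-uniform-count}.

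The main point to check is the bookkeeping in applying Theorem~\ref{thm:relative-u1plus}, which requires $|f_i|\leq\nu_+$ with $\nu_+$ satisfying its hypotheses at threshold $\delta_0$. That is why the rescaling by $2$ and the choice $\nu_+=(\nu+1_{[N]})/2$ matter. One must also confirm that the support parameter $C_0=1$ suffices with $B$ versus $N/H=2B$. If it does not, I would reparametrise with $B'=N/H$, whose support $[-B,B]\subseteq[-B',B']$ is fine since $B\leq B'$.
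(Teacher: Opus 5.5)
Your proposal is correct and follows the paper's proof essentially step for step. Both use the same expansion $F=\alpha1_{[N]}+g$ into mixed patterns, pigeonhole to a set $\mathcal G$ carrying the sign weight $\lambda$, and dominate $|G_j|/2$ by $\nu_+$ using $|g|\leq F+\alpha1_{[N]}$. Both then apply Theorem~\ref{thm:relative-u1plus} with $C_0=1$ at coefficient scale $N/H\asymp B$ to contradict \eqref{eq:relative-uniformity-small}, and your explicit checks of the main term (via $b_im\leq N/2$) and of the support inclusion $[-B,B]\subseteq[-N/H,N/H]$ fill in details the paper leaves implicit.
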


\begin{proof}
Set $g=F-\alpha1_{[N]}$.  Expanding
$F=\alpha1_{[N]}+g$ in each slot, the term in which every slot is constant is again
$\gg_k \alpha^kN^2/B$ for every $\mathbf b$ in the coefficient box, by the same
argument as in~\eqref{eq:cnt-main}.  Let $c_0=c_0(k)>0$ be the constant in
this lower bound for the constant term.  If $\mathbf b$ is deficient, meaning that
$r_H(\mathbf b;F,\ldots,F)<c_0\alpha^kN^2/(2B)$, then some mixed pattern
$\eps\neq\mathbf0$ satisfies
$$
    |r_H(\mathbf b;G_1,\ldots,G_k)|
    \gg_k \alpha^k\frac{N^2}{B},
    \qquad G_j\in\{1_{[N]},g\},
$$
with at least one $G_j=g$.  If this happens for more than
$C\varepsilon B^k$ tuples for a suitable $C=C(k)$, then after pigeonholing the
mixed pattern we obtain a set $\mathcal G$ of deficient tuples with the same
mixed pattern.  Define
\[
    \lambda(\mathbf b)
    =\sgn\overline{r_H(\mathbf b;G_1,\ldots,G_k)}
    \quad\text{for }\mathbf b\in\mathcal G,
\]
and set $\lambda(\mathbf b)=0$ otherwise.  Then
$$
    |R_H(\lambda;G_1,\ldots,G_k)|
    \geq \delta_0\left(\frac NH\right)^k\frac{N^2}{N/H}.
$$
Since $F,\alpha1_{[N]}\geq0$ we have
$|g|=|F-\alpha1_{[N]}|\leq F+\alpha1_{[N]}$, so that $|1_{[N]}|/2\leq\nu_+$ and
$|g|/2\leq (F+\alpha1_{[N]})/2\leq \nu_+$ (using $F\leq\nu$ and $\alpha\leq1$).
Theorem~\ref{thm:relative-u1plus}, with $C_0=1$ and at coefficient scale
$N/H\asymp B$, applied to the functions $G_j/2$ and the majorant $\nu_+$,
gives
$$
    \min_j\|G_j\|_{U^{1+}[N]}
    \gg_k \delta_0^{C}.
$$
This contradicts~\eqref{eq:relative-uniformity-small}, since one of the $G_j$ is
$g$, provided the constant $c=c(k)$ in~\eqref{eq:relative-uniformity-small} is
chosen small enough.  Hence the deficient tuples have density $O_k(\varepsilon)$,
and the remaining tuples satisfy~\eqref{eq:relative-uniform-count}.
\end{proof}

\subsection{The density increment reduction}

The following elementary lemma supplies the weighted increment used at every
stage of the density increment iteration in Lemma~\ref{lem:di-reduction}.  The
bounded case is obtained by taking $R=1$.

\begin{lemma}[Weighted density increment]\label{lem:densityincrement}
Let $F\colon[N]\to[0,R]$ be nonnegative, let
$n=|[N]|=\lfloor N\rfloor$ and $\alpha=\E_{m\in[N]}F(m)$, and let
$\rho\in(0,1]$ with $\rho n\geq2R$.  If
$$
    \|F-\alpha1_{[N]}\|_{U^{1+}[N]} > \rho,
$$
then there is an arithmetic progression $P'=\{qm+a:1\leq m\leq N'\}\subseteq[N]$
of common difference
$$
    q\leq 2R\rho^{-1}
$$
and length
$$
    N'\geq \frac{1}{12}\frac{\rho^2}{R^2}n
$$
such that the pullback $F'=F_{P'}$ satisfies
\begin{align}\label{eq:di-increment}
    \E_{m\in[N']}F'(m) \geq \alpha+\tfrac1{12}\rho .
\end{align}
\end{lemma}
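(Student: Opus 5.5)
The plan is the standard density increment on a subprogression. Write $g=F-\alpha1_{[N]}$. By the definition~\eqref{eq:U1-plus-def} of the $U^{1+}[N]$ norm there is an arithmetic progression $P=\{qm+a:1\le m\le \ell\}\subseteq[N]$ with $\bigl|\sum_{x\in P}g(x)\bigr|>\rho n$. Since $|g|\le R$ pointwise, this forces $|P|=\ell>\rho n/R$. Since $\ell\le n/q$, we also get $q<R\rho^{-1}$, which already gives the bound on the common difference with room to spare.

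The sign of $\sum_P g$ may be negative. To handle this, I complement $P$ inside $[N]$ by a partition into long progressions of the same difference. Because $\sum_{[N]}g=0$, the complement $[N]\setminus P$ carries $g$-mass of the opposite sign and size $>\rho n$. Split $[N]$ into the $q$ residue classes mod $q$; each class is an interval-progression of difference $q$ and length about $n/q$. The class $a\pmod q$ is cut by $P$ into at most three consecutive pieces: before $P$, $P$ itself, and after $P$. So $[N]$ is partitioned into at most $q+2$ progressions of difference $q$, one of which is $P$, and together they carry total signed $g$-mass zero.

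Now suppose the positive part $\sum_{Q}\max(\sum_{x\in Q}g(x),0)$ over the pieces $Q$ is at least $\rho n/2$. This holds in either sign case: if $\sum_P g>\rho n$ it is $P$ itself, and otherwise the other pieces carry mass $>\rho n$. Discard the pieces of length $<\rho n/(12 R q')$, where $q'$ denotes the number of pieces. Each discarded piece carries $|g|$-mass at most $R$ times its length, so together they carry at most $\rho n/12$. Among the surviving pieces, pigeonhole with respect to length: $\sum_Q \sum_{x\in Q}g \ge \tfrac{\rho}{12}\sum_Q|Q|$ on some $Q$, since $\sum|Q|\le n$ and the surviving positive mass is $\ge 5\rho n/12>\rho n/12$. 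On that $Q$ the average of $F$ is $\ge \alpha+\rho/12$, giving~\eqref{eq:di-increment}.

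The length bound is $|Q|\ge \rho n/(12R(q+2))\gg \rho^2 n/R^2$, using $q\le R/\rho$. Getting exactly the constant $1/12$ and $q\le 2R\rho^{-1}$ needs care: we must bound $q+2\le 2R/\rho$ (valid since $\rho n\ge 2R$ and $q\le R/\rho$, with $R/\rho\ge1$), and choose the discard threshold so that the constants match. This constant bookkeeping is the main, though minor, obstacle. I may instead partition each residue class into blocks of length about $\rho n/(R q)$ to make every piece long, at the cost of adjusting the constants. The pullback $F'=F_{Q}$ then has mean $\ge\alpha+\rho/12$ as required.
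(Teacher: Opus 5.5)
Your strategy (partition $[N]$ into the at most $q+2$ progressions of difference $q$ determined by $P$, discard short pieces, then pigeonhole $g$-mass against length) does prove the lemma. However, one step is false as written, and the constants you derive from it are unsupported.

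A progression of difference $q$ and length $\ell$ in $[N]$ satisfies only $q(\ell-1)\leq n-1$, not $\ell\leq n/q$. The bound $q<R\rho^{-1}$ can genuinely fail. Take $R=1$, $n$ odd and large, $F=1_P$ with $P=\{1,\frac{n+1}{2},n\}$, and $\rho=2.9/n$. This $P$ is an admissible choice, yet $q=\frac{n-1}{2}>R\rho^{-1}=n/2.9$.

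The correct deduction is $\ell-1>\rho n/R-1\geq\rho n/(2R)$, so $q\leq(n-1)/(\ell-1)<2R\rho^{-1}$. This is exactly where the hypothesis $\rho n\geq 2R$ is used, and it gives the bound in the statement. Consequently your length estimate ``using $q\leq R/\rho$'' is unsupported. Also, even granting $q\leq R/\rho$, the claim $q+2\leq 2R/\rho$ would need $R/\rho\geq2$, not $R/\rho\geq1$. With your threshold $\rho n/(12Rq')$ and the true bound $q'\leq q+2\leq 3q<6R\rho^{-1}$, you only get $N'>\rho^2 n/(72R^2)$.

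The repair is to discard the pieces of length $<\rho^2 n/(12R^2)$ instead. They carry $|g|$-mass less than $q'\cdot R\cdot\rho^2 n/(12R^2)<\rho n/2$. Meanwhile the total positive part is actually $>\rho n$, because $\sum_Q|\sum_{x\in Q}g(x)|\geq 2|\sum_{x\in P}g(x)|$. So the survivors carry positive mass $>\rho n/2\geq\frac{\rho}{2}\sum_Q|Q|$. Your ratio pigeonhole then gives a surviving piece of length $\geq\rho^2 n/(12R^2)$ with increment $>\rho/2$.

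After this fix, your argument differs from the paper's only in how the final piece is selected. The paper discards nothing: it takes the piece with the largest positive $g$-sum, which exceeds $\rho n/(2(q+2))\geq\rho n/(6q)$. The length bound is then automatic from $N'\geq R^{-1}\sum_{x\in P'}g(x)$. The increment follows because every progression of difference $q$ in $[N]$ has length at most $n/q+1\leq 2n/q$. The paper also handles the case $\sum_{x\in P}g(x)>0$ directly with $P'=P$, which your unified argument does not need. The paper's version avoids choosing a threshold; yours needs one but yields a larger increment.
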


\begin{proof}
Let $g=F-\alpha1_{[N]}$, so $\sum_{m\in[N]}g(m)=0$.  By the definition of the $U^{1+}$ norm (cf. 
\eqref{eq:U1-plus-def}) there is an arithmetic progression $P\subseteq[N]$ of
common difference $q$ and length $L$ with
$|\sum_{m\in P}g(m)|>\rho n$.  Since $|g|\leq R$ and $\rho n\geq2R$, we have
$L>\rho n/R$ and hence
$$
    q\leq \frac{n}{L-1}\leq 2R\rho^{-1}.
$$
If $\sum_{m\in P}g(m)>0$, then
$$
    \E_{m\in P}F(m)\geq \alpha+\frac{\rho n}{L}\geq \alpha+\rho,
$$
and the claim follows with $P'=P$; in this case $N'=L>\rho n/R$, which is
stronger than the stated lower bound.

It remains to consider the case $\sum_{m\in P}g(m)<0$.  Partition $[N]$ into the
residue classes modulo $q$, with the class containing $P$ split into $P$ and the
possibly empty parts before and after $P$.  This gives $J\leq q+2\leq3q$
arithmetic progressions $T_j$ of common difference $q$ which partition $[N]$.
Since the total sum of $g$ is zero,
$$
    \sum_j\left(\sum_{m\in T_j}g(m)\right)_+
    =\frac12\sum_j\left|\sum_{m\in T_j}g(m)\right|
    \geq \frac12\left|\sum_{m\in P}g(m)\right|
    >\frac12\rho n.
$$
Thus for some $T_{j_0}$,
$$
    \sum_{m\in T_{j_0}}g(m)>\frac{\rho n}{2(q+2)}\geq \frac{\rho n}{6q}.
$$
Let $P'=T_{j_0}$ and $N'=|P'|$.  Since $g\leq F\leq R$ on $P'$, the last display
implies
$$
    N'\geq \frac{1}{R}\sum_{m\in P'}g(m)>\frac{\rho n}{6qR}
    \geq \frac{1}{12}\frac{\rho^2}{R^2}n.
$$
Also $N'\leq n/q+1\leq2n/q$, and hence
$$
    \E_{m\in P'}F(m)-\alpha
    =\frac{1}{N'}\sum_{m\in P'}g(m)
    \geq \frac{\rho n/(6q)}{2n/q}
    =\frac{\rho}{12}.
$$
This proves the lemma.
\end{proof}

We now iterate the density increment to construct for any sequence a not too sparse affine pullback that behaves pseudorandomly in the $U^{1+}$ norm. 

\begin{lemma}[Density increment reduction]\label{lem:di-reduction}
Let $k\geq1$, $B\geq1$, and $R\geq1$.  Let $F\colon[N]\to[0,R]$ be
nonnegative, and let
$n=|[N]|=\lfloor N\rfloor$ and $\alpha_0=\E_{m\in[N]}F(m)$.  Let
$0<\rho\leq1$ and let $A_*\geq\alpha_0$.
Set
\begin{align}\label{eq:di-Qstar}
    M_*\coloneqq \left\lceil 12A_*\rho^{-1}\right\rceil,
    \qquad
    Q_*\coloneqq \left(12R^2\rho^{-2}\right)^{M_*}.
\end{align}
Assume that $\rho n/Q_*\geq2R$, and that every affine pullback $F_P$ with
$P\subseteq[N]$ an arithmetic progression of length at least $n/Q_*$ has mean at
most $A_*$.  Then there is an arithmetic progression
$P_{\mathrm{fin}}=\{qt+a:1\leq t\leq N_{\mathrm{fin}}\}\subseteq[N]$ with
\begin{align}\label{eq:di-reduction-compression}
    N_{\mathrm{fin}}\geq n/Q_*,\qquad q\leq 2n/N_{\mathrm{fin}}\leq 2Q_*,
\end{align}
such that, writing $F_{\mathrm{fin}}=(F)_{P_{\mathrm{fin}}}$ and
$\alpha_{\mathrm{fin}}=\E_{m\in[N_{\mathrm{fin}}]}F_{\mathrm{fin}}(m)$, one has
\begin{align}\label{eq:di-terminal-uniform}
    \alpha_{\mathrm{fin}}\geq\alpha_0,
    \qquad
    \|F_{\mathrm{fin}}-\alpha_{\mathrm{fin}}1_{[N_{\mathrm{fin}}]}\|_{U^{1+}[N_{\mathrm{fin}}]}\leq\rho .
\end{align}
Moreover, if for some coefficient tuple $\mathbf b$ there are
$x'\in\Z$ and $m'\in[N_{\mathrm{fin}}/(2B)]$ such that
$$
    \prod_{i=1}^k F_{\mathrm{fin}}(x'+b_im')>0,
$$
then there are $x\in\Z$ and $m\in[N/B]$ such that
$$
    \prod_{i=1}^k F(x+b_im)>0.
$$
\end{lemma}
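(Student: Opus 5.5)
We iterate Lemma~\ref{lem:densityincrement}, starting from $P_0=[N]$ and $F_0=F$. Suppose after $j$ steps we have a progression $P_j=\{q_jt+a_j:1\leq t\leq N_j\}\subseteq[N]$. Write $F_j=F_{P_j}$ and $\alpha_j=\E_{t\in[N_j]}F_j(t)$, and assume the invariants
\[
 \alpha_j\geq\alpha_0+\tfrac{j}{12}\rho,
 \qquad
 N_j\geq (12R^2\rho^{-2})^{-j}n .
\]
If $\|F_j-\alpha_j1_{[N_j]}\|_{U^{1+}[N_j]}\leq\rho$, we stop and set $P_{\mathrm{fin}}=P_j$. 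Otherwise we apply Lemma~\ref{lem:densityincrement} to $F_j\colon[N_j]\to[0,R]$. Its hypothesis $\rho N_j\geq2R$ holds provided $j\leq M_*$, because then $N_j\geq n/Q_*$ and we assumed $\rho n/Q_*\geq2R$. The lemma produces a subprogression $P'$ of $[N_j]$ with mean at least $\alpha_j+\rho/12$ and length at least $\rho^2R^{-2}N_j/12$. Composing the affine maps, $P_{j+1}$ is the image of $P'$ under $t\mapsto q_jt+a_j$. This is again an arithmetic progression contained in $[N]$, and $F_{j+1}=(F_j)_{P'}$, so both invariants propagate.

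The iteration must terminate within $M_*$ steps. Indeed, if it reached step $j=M_*$, then $P_{M_*}$ would have length at least $n/Q_*$ and mean at least $\alpha_0+M_*\rho/12\geq\alpha_0+A_*$. This exceeds $A_*$ unless $\alpha_0=0$. In that degenerate case $F\equiv0$, and the $U^{1+}$ condition holds already at $j=0$. Either way, the assumption that long pullbacks have mean at most $A_*$ is contradicted, so the iteration stops at some $j<M_*$ with $N_{\mathrm{fin}}\geq n/Q_*$ and $\alpha_{\mathrm{fin}}\geq\alpha_0$.

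For the bound on the common difference, the progression $P_{\mathrm{fin}}\subseteq[N]$ has $N_{\mathrm{fin}}$ terms with gap $q$. Hence $q(N_{\mathrm{fin}}-1)\leq n-1$. Since $N_{\mathrm{fin}}\geq n/Q_*\geq 2R/\rho\geq2$, this gives $q\leq 2n/N_{\mathrm{fin}}\leq 2Q_*$.

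For the final transfer statement, suppose $\prod_iF_{\mathrm{fin}}(x'+b_im')>0$. Then every $x'+b_im'$ lies in $[N_{\mathrm{fin}}]$, and the definition of the pullback gives $F_{\mathrm{fin}}(y)=F(qy+a)$. Set $x=qx'+a$ and $m=qm'$. Then $F(x+b_im)=F_{\mathrm{fin}}(x'+b_im')>0$ for each $i$. Finally, $m\leq q\,N_{\mathrm{fin}}/(2B)\leq (2n/N_{\mathrm{fin}})\cdot N_{\mathrm{fin}}/(2B)=n/B\leq N/B$, so $m\in[N/B]$.

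The main obstacle is bookkeeping rather than any new idea. One must check that the length lower bound and the hypothesis $\rho N_j\geq 2R$ survive every step, which is why $Q_*$ is defined as a power of $12R^2\rho^{-2}$. One must also use the bound $q\leq 2n/N_{\mathrm{fin}}$ of the composed progression, not the per-step bound on differences, so that the scale of $m$ transfers correctly.
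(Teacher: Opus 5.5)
Your proposal is correct and follows essentially the same route as the paper: iterate Lemma~\ref{lem:densityincrement} with the invariants $N_j\geq(12R^2\rho^{-2})^{-j}n$ and $\alpha_j\geq\alpha_0+j\rho/12$, bound $q$ through $q(N_{\mathrm{fin}}-1)\leq n-1$ with $N_{\mathrm{fin}}\geq2$, and lift via $x=qx'+a$, $m=qm'$. Your termination step is slightly more careful than the paper's wording. You apply the mean bound to $P_{M_*}$ itself, which is still of admissible length, instead of to a progression one step further whose length might fall below $n/Q_*$, and you treat the case $\alpha_0=0$ separately.
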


\begin{proof}
Start with $F_0=F$, $N_0=n$, $q^{(0)}=1$, and $a^{(0)}=0$.  Suppose that after
$i$ steps we have a pullback $F_i$ to an arithmetic progression
$P_i=\{q^{(i)}t+a^{(i)}:1\leq t\leq N_i\}\subseteq[N]$, with mean
$\alpha_i$, length $N_i\geq n/Q_*$ and compression $n/N_i\leq Q_*$.  If
$\|F_i-\alpha_i1_{[N_i]}\|_{U^{1+}[N_i]}\leq\rho$, we stop.  Otherwise,
Lemma~\ref{lem:densityincrement}, applied to $F_i$, gives a further progression
inside $[N_i]$ of common difference $q_i\leq2R\rho^{-1}$ and length
$N_{i+1}\geq(\rho^2/(12R^2))N_i$ on which the mean increases by at least
$\rho/12$.  Composing the affine maps gives the next pullback $F_{i+1}$.

Each nonterminal step raises the mean by at least $\rho/12$, while every
admissible pullback has mean at most $A_*$.  Since
$M_*=\lceil12A_*/\rho\rceil$, a further nonterminal step after step $M_*$
would force the mean above $A_*$.  Hence the process stops after at most
$M_*$ steps.  Along the way,
$$
    N_i\geq (12R^2\rho^{-2})^{-i}n\geq n/Q_*,
    \qquad
    q^{(i)}\leq 2n/N_i\leq 2Q_*,
$$
where the second inequality follows from
$q^{(i)}(N_i-1)\leq n-1$ and $N_i\geq2$.  This justifies each application of Lemma~\ref{lem:densityincrement}; the
condition $\rho n/Q_*\geq2R$ guarantees $\rho N_i\geq2R$.  Denote the
terminal pullback by $F_{\mathrm{fin}}$ on $P_{\mathrm{fin}}$.  It therefore
satisfies~\eqref{eq:di-reduction-compression} and~\eqref{eq:di-terminal-uniform}.

Finally, if $x'+b_im'$ lies in the support of $F_{\mathrm{fin}}$ for every $i$, then
under the affine map defining $P_{\mathrm{fin}}$ these points lift to
$$
    x+b_i m\coloneqq q(x'+b_i m')+a
    =(qx'+a)+b_i(qm') .
$$
Since $q\leq2n/N_{\mathrm{fin}}$, one has
$m=qm'\leq qN_{\mathrm{fin}}/(2B)\leq n/B\leq N/B$, proving the lifting claim.
\end{proof}

\subsection{Proof of Theorem~\ref{thm:main}}

\begin{proof}[Proof of Theorem~\ref{thm:main}]
Put $n=|[N]|=\lfloor N\rfloor$.  Let $\delta_A=|A|/n$, and let $C_k'$ be as in
Lemma~\ref{lem:count}.  We choose
$$
    \rho=c_2(\log N)^{-\theta},
    \qquad
    \theta\coloneqq c_kC_k'(k+1),
$$
where $c_2=c_2(k)>0$ is sufficiently small and $c_k$ is chosen small enough in
terms of $k$ that $\theta<c_k'$, where $c_k'$ is the exponent governing the
upper range in Theorem~\ref{thm:main}.  Apply Lemma~\ref{lem:di-reduction} to $F=1_A$, with
$R=A_*=1$.  Then
$$
    \log Q_*\ll_k (\log N)^\theta\log\log N,
    \qquad \rho n/Q_*=N^{1-o(1)}\geq2.
$$
Since $\theta<c_k'$, for large $N$ we have
$$
    C_k\log\log N+\log Q_*
    \ll_k(\log N)^\theta\log\log N
    \leq\tfrac12(\log N)^{c_k'}.
$$
Together with the hypothesis $B\leq N\exp(-(\log N)^{c_k'})$, this gives
\begin{align}\label{eq:di-terminal-range-main}
    2B(\log N)^{C_k}Q_*\leq n.
\end{align}
Write the terminal pullback supplied by Lemma~\ref{lem:di-reduction} as
$F_M\colon[N_M]\to\R_{\geq0}$.  Then $N_M$ satisfies
$(\log N_M)^{C_k}\leq B\leq N_M(\log N_M)^{-C_k}$ and
$\log N_M\asymp\log N$.

Let $A_M$ be the support of the terminal pullback $F_M$, and let
$\delta_M=|A_M|/|[N_M]|$.  By construction,
$\|1_{A_M}-\delta_M1_{[N_M]}\|_{U^{1+}[N_M]}\leq\rho$ and
$\delta_M\geq\delta_A$.  The choice of $\rho$ ensures
$$
    \rho\leq\tfrac12\bigl(c_1\delta_M^k(\log N_M)^{-c_k}\bigr)^{C_k'}.
$$
Since $N_M=N^{1-o(1)}$, the original density lower bound gives
$\delta_M\geq\delta_A\geq\tfrac12(\log N_M)^{-c_k}$ for all sufficiently
large $N$.  Lemma~\ref{lem:count}, applied at scale $N_M$, gives
$r_{\lfloor N_M/(2B)\rfloor}(\mathbf b;1_{A_M},\ldots,1_{A_M})>0$ for all but an
$O_k((\log N)^{-c_k})$ proportion of the coefficient tuples.  The lifting part of
Lemma~\ref{lem:di-reduction} then gives a configuration in $A$ with
$m\in[N/B]$ for the same set of good tuples.  The tuples with a repeated entry
form an $O(1/B)$ proportion of the box, which is negligible compared with
$(\log N)^{-c_k}$, and for all remaining tuples the lifted configuration is
nontrivial.  This proves Theorem~\ref{thm:main}.
\end{proof}

\subsection{Proof of Theorem~\ref{thm:prime-main}}\label{sec:proof-prime-main}

\begin{proof}
We pass to a majorised model obtained using the $W$-trick, apply the density increment
reduction, and finish with the relative counting theorem.  Set
$\alpha=(\log N)^{-c_k}$.  We choose $c_k$ small enough, in terms of $k$, so
that all polynomial losses below are absorbed by powers of $\log N$.  Let
$K$ be as in Theorem~\ref{thm:relative-u1plus} with $C_0=1$, and let
$\gamma=c_0(K)$ and
$\gamma_1=c_1(K)$ be the exponents supplied by
Proposition~\ref{prop:prime-majorant}.  Set $w=(\log N)^\gamma$ and
$W=\prod_{p\leq w}p$.  Since $W=N^{o(1)}$ and
$\sum_{p\mid W}\log p=\log W=o(\alpha N)$, the primes dividing $W$
contribute $o(\alpha N)$ to $\sum_{p\in A}\log p$.  The prime number theorem gives
$\pi(N^{1/2})=o(|A|)$, so all but $o(|A|)$ elements of $A$ exceed $N^{1/2}$;
thus $\sum_{p\in A}\log p\gg |A|\log N\gg\alpha N$.  The pigeonhole
principle therefore gives an integer $b_0$ with $1\leq b_0\leq W$ and
$(b_0,W)=1$ such that, for
$N_0=\lfloor(N-b_0)/W\rfloor$,
\begin{align}\label{eq:prime-main-pigeonhole-proof}
    \E_{n\in[N_0]}
    \frac{\varphi(W)}{W}\Lambda(Wn+b_0)1_A(Wn+b_0)
    \gg \alpha .
\end{align}
Choose a sufficiently small constant $c_*=c_*(k)>0$ and set
$$
    F_0(n)=c_*\frac{\varphi(W)}{W}\Lambda(Wn+b_0)1_A(Wn+b_0)
    \quad\text{for }n\in[N_0].
$$
Then $\E_{n\in[N_0]}F_0(n)\gg\alpha$.

Let $\varepsilon=(\log N)^{-c_k}$ and choose
$$
    \rho=(\log N)^{-\theta_{\mathbb P}},
    \qquad \theta_{\mathbb P}=C_1c_k(k+1),
$$
where $C_1=C_1(k)$ is large enough for the relative uniform counting lemma and
$c_k$ is small enough that $\theta_{\mathbb P}<\gamma/10$.
After decreasing $c_k$, the range in Theorem~\ref{thm:prime-main} gives
$(\log N_0)^{C(K)}\leq2B\leq4B\leq
N_0(\log N_0)^{-C(K)}$ and
$2B\leq\exp(w^{1/2})$.  Moreover,
$2^{-1/\gamma}\log N\leq\log N_0\leq\log N$ for all sufficiently large $N$, so the fixed value
$w=(\log N)^\gamma$ lies between
$\tfrac12(\log N_0)^\gamma$ and $2(\log N_0)^\gamma$ for large $N$.
We may therefore apply Proposition~\ref{prop:prime-majorant} with ambient
length $N_0$, coefficient scale $2B$, and the same $w,W$ and residue
$b_0$.  It provides a
GPY majorant $\nu_0$ for $F_0$ with
$$
    0\leq F_0\leq \nu_0,
    \qquad
    \|\nu_0\|_\infty\leq R\coloneqq (\log N)^D
$$
for some $D=D(k)$.  Choose $c_*$ initially small enough in terms of the
implicit constant in~\eqref{eq:prime-majorant-majorises} that
$F_0\leq\nu_0/4$.  We apply Lemma~\ref{lem:di-reduction} to $F_0$ with
$A_*=1/2$.  For these parameters
$$
    \log Q_*\ll_k (\log N)^{\theta_{\mathbb P}}\log\log N,
    \qquad \rho N_0/Q_*=N^{1-o(1)}\geq2(\log N)^D=2R.
$$
After decreasing $c_k$ if necessary, all affine pullbacks of compression at most
$Q_*$ have length at least $N_0/Q_*=N^{1-o(1)}$, and
\[
 \log(8Q_*B)\ll_k(\log N)^{\theta_{\mathbb P}}\log\log N+(\log N)^{c_k}
 \leq(\log N)^{\gamma/2}=w^{1/2}.
\]
Thus
\begin{align}\label{eq:prime-QB-budget}
    8Q_*B\leq \exp(w^{1/2}).
\end{align}
Consider an arithmetic progression
\begin{align*}
    P=\{qn+a:1\leq n\leq N_P\}\subseteq[N_0],
    \qquad N_P\geq N_0/Q_*,
\end{align*}
and set $b_P=Wa+b_0$.  Then $q\leq2Q_*$ and $(b_P,W)=1$.  

If
$(q,b_P)>1$, choose a prime $p\mid(q,b_P)$.  Every integer
\begin{align*}
    W(qn+a)+b_0=Wqn+b_P
\end{align*}
is divisible by $p$, and hence it can be prime only when it equals $p$.
Thus $(F_0)_P$ is supported at most one point, and
\begin{align}\label{eq:nonprimitive-pullback-mean}
    \E_{n\in[N_P]}(F_0)_P(n)
    \ll\frac{\log N}{N_P}
    \ll\frac{Q_*\log N}{N_0}
    =N^{-1+o(1)}\ll_k N^{-1/2}.
\end{align}

Suppose now that $(q,b_P)=1$, so that $(Wq,b_P)=1$, and define
\begin{align*}
    \nu_P(n)=1_{[N_P]}(n)\nu_0(qn+a).
\end{align*}
Then $(F_0)_P\leq\nu_P$ and $\|\nu_P\|_\infty\leq(\log N)^D$.
Put
\[
    H_P=\left\lfloor\frac{N_P}{2B}\right\rfloor.
\]
For sufficiently large $N$, one has $H_P\geq1$.  Set $B_P=N_P/H_P$; then
$2B\leq B_P\leq4B$.  The affine pullback assertion of
Proposition~\ref{prop:prime-majorant} applies at scale $(N_P,B_P,H_P)$ with
$Q=2Q_*$, since
$2Q_*B_P\leq8Q_*B\leq\exp(w^{1/2})$.  Thus, for all mesh parameters
$2\leq L\leq(\log N)^{\gamma}$, the function $\nu_P$ satisfies the
$(K,L,(\log N)^{-\gamma_1})$ linear forms condition at scale
$(N_P,B_P,H_P)$.  Its single form instance gives
\begin{align*}
    \E_{n\in[N_P]}\nu_P(n)
    =1+O((\log N)^{-\gamma_1}).
\end{align*}

It follows that every pullback of length at least $N_0/Q_*$ has mean at most
$1/2$: this follows from~\eqref{eq:nonprimitive-pullback-mean} in the
nonprimitive case and, in the primitive case, from $F_0\leq\nu_0/4$ and the
single form estimate.  In particular $\E_{n\in[N_0]}F_0(n)<1/2$.
Hence the hypotheses of Lemma~\ref{lem:di-reduction} hold.  Moreover, every
pullback produced by the iteration is primitive, since its mean is at least
$\E_{n\in[N_0]}F_0(n)\gg\alpha$, whereas a nonprimitive pullback has mean
$O_k(N^{-1/2})<\tfrac12\E_{n\in[N_0]}F_0(n)$ for all sufficiently large $N$.
We obtain a terminal pullback
$F_M\colon[N_M]\to\R_{\geq0}$, with mean $\alpha_M\geq c\alpha$, such that
$$
    \|F_M-\alpha_M1_{[N_M]}\|_{U^{1+}[N_M]}\leq\rho,
    \qquad
    N_M\geq N_0/Q_*.
$$
The terminal progression is primitive by the preceding paragraph.  Set
$\delta_0=c\alpha_M^k\varepsilon$, and let $C$ be the constant in
Theorem~\ref{thm:relative-u1plus}.  Since
$\alpha_M\gg\alpha=(\log N)^{-c_k}$, decreasing $c_k$ in terms of $k$ gives
\begin{align*}
    L_M\coloneqq \delta_0^{-C}\leq(\log N)^\gamma,
    \qquad
    (\log N)^{-\gamma_1}\leq\delta_0^C.
\end{align*}
Set
\[
    H_M=\left\lfloor\frac{N_M}{2B}\right\rfloor.
\]
For sufficiently large $N$, one has $H_M\geq1$.  Set $B_M=N_M/H_M$; then
$2B\leq B_M\leq4B$.  The corresponding GPY majorant $\nu_M$ after
pullback therefore satisfies the
linear forms hypothesis of Theorem~\ref{thm:relative-u1plus} with mesh
parameter $L_M$, by the same application of
Proposition~\ref{prop:prime-majorant} at scale $(N_M,B_M,H_M)$.  Define
\begin{align*}
    \nu_{M,+}=\frac{\nu_M+1_{[N_M]}}2.
\end{align*}
Expanding every occurrence of $\nu_{M,+}$ shows that it satisfies the same
linear forms condition with the same error.  It satisfies the remaining
hypotheses at scale $(N_M,B_M,H_M)$: the range follows
from \eqref{eq:prime-main-B-range}, \eqref{eq:prime-QB-budget}, and
$N_M=N^{1-o(1)}$, while the pointwise diagonal term
$R_{\nu_{M,+}}^C/B_M\leq R_{\nu_{M,+}}^C/(2B)
\leq(\log N)^{O_k(1)}/B$
is at most $\delta_0^{C}$.  Here $\delta_0$ is the threshold at which
Lemma~\ref{lem:relative-count-uniformity} invokes
Theorem~\ref{thm:relative-u1plus}; this holds once the lower exponent $1/c_k$ in
\eqref{eq:prime-main-B-range} is taken large in terms of $C$, $D$ and $k$.

Since $\rho$ was chosen smaller than a suitable
power of $\alpha_M^k\varepsilon$, and $\alpha_M<1/2$ by the preceding
pullback bound, Lemma~\ref{lem:relative-count-uniformity} applied to $F_M$ gives
$$
    r_{H_M}(\mathbf b;F_M,\ldots,F_M)>0
$$
for all but an $O_k((\log N)^{-c_k})$ proportion of tuples
$\mathbf b\in((B/2,B]\cap\Z)^k$.

For each good tuple, the positivity of this weighted count gives a configuration
in the support of $F_M$.  By the lifting part of Lemma~\ref{lem:di-reduction},
the support of $F_0$ contains a configuration
$$
    n_0+b_1m_0,\ n_0+b_2m_0,\ \ldots,\ n_0+b_km_0
$$
with $m_0\leq N_0/B$.  Returning to the original primes, all the numbers
$$
    W(n_0+b_im_0)+b_0\quad\text{for }1\leq i\leq k
$$
lie in $A$, and the multiplier in the original variable is
$Wm_0\leq WN_0/B\leq N/B$.  As in the proof of Theorem~\ref{thm:main},
the tuples with a repeated entry form an $O(1/B)$ proportion, negligible compared
with $(\log N)^{-c_k}$, and for all remaining tuples the configuration is
nontrivial.  This proves Theorem~\ref{thm:prime-main}.
\end{proof}

\begingroup
\sloppy
\renewcommand{\texttt}[1]{\url{#1}}
\bibliography{refs}
\bibliographystyle{plain}
\endgroup

\end{document}